\documentclass[11pt]{amsart}
\usepackage{amsmath}
\usepackage{amssymb}
\usepackage{amsthm}
\usepackage[all]{xy}
\usepackage{graphicx}
\usepackage[margin=1in]{geometry}
\usepackage{color}
\usepackage{hyperref}
\usepackage{verbatim}
\usepackage[mathscr]{eucal}
\usepackage{pb-diagram,pb-xy}



\newtheorem*{thmA}{Theorem A}
\newtheorem*{thmB}{Theorem B}

\newtheorem*{corC}{Corollary C}

\numberwithin{equation}{section}

\setlength{\parindent}{1cm}
\setlength{\parskip}{0cm}
\linespread{1.1}








\newcommand{\R}{\ensuremath{\mathbb{R}}}
\newcommand{\N}{\ensuremath{\mathbb{N}}}

\newcommand{\Z}{\ensuremath{\mathbb{Z}}}




\newtheorem{theorem}{Theorem}[section]
\newtheorem{Addendum}{Addendum}[section]

\newtheorem{corollary}[theorem]{Corollary}

\newtheorem{proposition}[theorem]{Proposition}
\newtheorem{lemma}[theorem]{Lemma}
\newtheorem{claim}[theorem]{Claim}

\theoremstyle{definition} 
\newtheorem{defn}[theorem]{Definition}
\newtheorem{remark}[theorem]{Remark} 
\newtheorem{Notation}[theorem]{Notational Convention}

\newtheorem{convention}{Convention}[section]
\newtheorem{Construction}{Construction}[section]

\newcommand{\Th}{{\mathsf T}\!{\mathsf h}}
\newcommand{\MT}{{\mathsf M}{\mathsf T}}

\newcommand{\mb}[1]{\mathbf{#1}}

\newcommand{\Cob}{{\mathbf{Cob}}}

\DeclareMathOperator*{\hofibre}{hofibre}

\DeclareMathOperator*{\Int}{Int}

\DeclareMathOperator*{\Diff}{Diff}

\DeclareMathOperator*{\Emb}{Emb}

\DeclareMathOperator*{\Maps}{Maps}

\DeclareMathOperator*{\Top}{\text{Top}}

\DeclareMathOperator*{\Id}{\text{Id}}

\DeclareMathOperator*{\Bun}{\mathrm{Bun}}

\DeclareMathOperator*{\Ob}{\text{Ob}}

\DeclareMathOperator*{\Ker}{Ker}

\DeclareMathOperator*{\Image}{Im}

\DeclareMathOperator*{\loc}{loc}

\DeclareMathOperator*{\hocolim}{hocolim}
\DeclareMathOperator*{\std}{std}

\DeclareMathOperator*{\stb}{stb}

\DeclareMathOperator*{\Surg}{Surg}
\DeclareMathOperator*{\sdl}{sdl}
\DeclareMathOperator*{\Trc}{Trc}
\DeclareMathOperator*{\rg}{rg}
\DeclareMathOperator*{\tr}{tr}

\DeclareMathOperator*{\Spin}{Spin}
\DeclareMathOperator*{\KO}{KO}
\DeclareMathOperator*{\inddiff}{ind-diff}
\DeclareMathOperator*{\tor}{tor}
\DeclareMathOperator*{\torp}{torp}
\DeclareMathOperator*{\psc}{psc}
\DeclareMathOperator*{\mf}{mf}

\DeclareMathOperator*{\trg}{trg}
\DeclareMathOperator*{\bas}{bas}


\author{Nathan Perlmutter}
\address{Stanford University Department of Mathematics, Building 380, Stanford, California,  94305, USA}
\email{nperlmut@stanford.edu}

\title[Parametrized Morse Theory and Positive Scalar Curvature]{Parametrized Morse Theory and Positive Scalar Curvature}

\begin{document}
\maketitle

\begin{abstract}
We use the cobordism category constructed in \cite{P 17} to the study the homotopy type of the space of positive scalar curvature metrics on a spin manifold of dimension $\geq 5$.
Our methods give an alternative proof and extension of a recent theorem of Botvinnik, Ebert, and Randal-Williams from \cite{BERW 16}. 
\end{abstract}

\setcounter{tocdepth}{1}
\tableofcontents
\vspace*{-5mm}

\section{Introduction} \label{section: Introduction}
Fix a tangential structure $\theta: B \longrightarrow BO$ and integers $k < d$.
In previous work \cite{P 17} we determined the homotopy type of the topological category $\Cob^{\mf, k}_{\theta, d+1}$.
An object of $\Cob^{\mf, k}_{\theta, d+1}$ is given by a closed, $d$-dimensional submanifold $M \subset \R^{\infty}$, equipped with a $\theta$-structure $\hat{\ell}_{M}$, for which its underlying map $\ell_{M}: M \longrightarrow B$ is $k$-connected. 
A morphism $M \rightsquigarrow N$ is an embedded $\theta$-cobordism $W \subset [0, 1]\times\R^{\infty}$ between $\{0\}\times M$ and $\{1\}\times N$, subject to the following condition:
\begin{itemize} 
\item
The height function, 
$W \hookrightarrow [0, 1]\times\R^{\infty} \stackrel{\text{proj.}} \longrightarrow [0, 1],$
is a Morse function with the property that all critical points $c \in W$ satisfy the condition: 
$k < \text{index}(c) < d-k+1.$ 
\end{itemize}
Composition is defined in the usual way by concatenation of cobordisms. 
In \cite{P 17} we identified the homotopy type of the classifying space $B\Cob^{\mf, k}_{\theta, d+1}$ with that of the infinite loopspace of a certain Thom-spectrum, $\mb{hW}^{k}_{\theta, d+1}$, associated to the space of Morse jets on $\R^{d+1}$. 
The main theorem from \cite{P 17} is stated below:
\begin{thmA}[N.P 2017]
Let $k < d/2$ and suppose that the tangential structure $\theta: B \longrightarrow BO$ is chosen so that the space $B$ satisfies Wall's finiteness condition $F(k)$ (see \cite{W 65}). 
Then there is a weak homotopy equivalence,
$B\Cob^{\mf, k}_{\theta, d+1} \simeq \Omega^{\infty-1}\mb{hW}^{k}_{\theta, d+1}.$
\end{thmA}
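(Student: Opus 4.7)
My plan is to follow the approach of Galatius--Madsen--Tillmann--Weiss and its sheaf-theoretic refinement by Galatius--Randal-Williams, adapted to the present setting with a Morse height function carrying an index constraint and a connectivity constraint on objects. First, I would introduce a sheaf $\bpsi^{\mf,k}_{\theta,d+1}$ on the site of open subsets of Euclidean spaces, whose sections over $U$ consist of $(d+1)$-dimensional $\theta$-submanifolds $W \subset U \times \R^{\infty}$ for which the projection to $U$ is submersive, whose fiberwise height function is Morse with critical indices strictly between $k$ and $d-k+1$, and whose regular level sets carry $k$-connected reference maps to $B$. A version of the standard categorical-to-sheaf comparison then identifies $B\Cob^{\mf,k}_{\theta,d+1}$ with the loopspace of the geometric realization $|\bpsi^{\mf,k}_{\theta,d+1}|$, reducing the theorem to identifying this realization with the appropriate delooping of $\mb{hW}^{k}_{\theta,d+1}$.

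The heart of the argument is a parametrized surgery theorem comparing $\bpsi^{\mf,k}_{\theta,d+1}$ with the unrestricted sheaf $\bpsi^{\mf}_{\theta,d+1}$ of Morse cobordisms without the connectivity constraint on regular fibers. I would resolve the forgetful map by a semi-simplicial augmentation whose spaces of simplices parametrize framed embedded spheres in regular level sets along which one may surger to improve the connectivity of the reference map $\ell_{M}$. Because $k < d/2$, the required framed embeddings exist generically in parametric families by general position; because $B$ satisfies Wall's finiteness condition $F(k)$, only finitely many homotopy classes of attaching data need be killed before $\ell_{M}$ becomes $k$-connected. Following the cancellation and handle-trading philosophy familiar from recent moduli-of-manifolds work, this resolution should be contractible on realizations, giving $|\bpsi^{\mf,k}_{\theta,d+1}| \simeq |\bpsi^{\mf}_{\theta,d+1}|$.

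Third, I would perform a Pontryagin--Thom style scanning construction for $\bpsi^{\mf}_{\theta,d+1}$. By design, $\mb{hW}^{k}_{\theta,d+1}$ is the Thom spectrum of a virtual bundle over the space of $\theta$-structured Morse jets on $\R^{d+1}$ with critical indices in the window $(k, d-k+1)$. Zooming in on a parametrized Morse cobordism near its critical locus produces a natural scanning map from sections of $\bpsi^{\mf}_{\theta,d+1}$ to this Thom spectrum, exactly in the spirit of the original GMTW collapse. A microflexibility and $h$-principle check for the Morse jet condition then promotes the scanning map to a weak equivalence, completing the identification.

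The main obstacle I anticipate is the parametrized surgery step. Even with $k < d/2$ and $F(k)$ in hand, one must ensure that the surgery data can be chosen continuously and compatibly across parametric families, and, most delicately, that performing a surgery inside a Morse cobordism --- which in general creates a pair of new critical points --- can be arranged so that the resulting critical indices still lie in the permitted window $(k, d-k+1)$. It is precisely this index-preservation property of the surgery moves where the dimension range $k < d/2$ and the finiteness hypothesis $F(k)$ must be used in a nontrivial, interlocking way, and this is where I expect the substantive technical content of the argument to reside.
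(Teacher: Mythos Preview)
This paper does not itself prove Theorem~A; it is quoted from \cite{P 17}. However, Section~\ref{section: Spaces of manifolds equipped with surgery} recalls enough of the machinery to see that the argument there runs rather differently from your sketch. Rather than a sheaf model, \cite{P 17} replaces $B\Cob^{\mf,k}_{\theta,d+1}$ by the homotopy colimit $\mathcal{W}^{k}_{\theta,d}(P) = \hocolim_{\mb{t}\in\mathcal{K}^{k}_{d}}\mathcal{W}^{k}_{\theta,d}(P;\mb{t})$ and identifies it with $\Omega^{\infty-1}\mb{hW}^{k}_{\theta,d+1}$ by \emph{induction on $k$}. Theorem~\ref{theorem: localization map homotopy equivalence} exhibits $\mathcal{W}^{k}_{\theta,d}(P)$ as the fibre of a localisation map $\mathcal{W}^{k-1,c}_{\theta,d}(P)\to\mathcal{W}^{\{k,d-k+1\}}_{\theta,d,\loc}$, Theorem~\ref{theorem: parametrized surgery} (the parametrized surgery step, where $F(k)$ is spent) identifies $\mathcal{W}^{k-1,c}_{\theta,d}(P)\simeq\mathcal{W}^{k-1}_{\theta,d}(P)$, and the base case $k=-1$ is Madsen--Weiss.

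Your step~2 has a genuine gap, and it is exactly the obstacle you flag in your final paragraph. To take an arbitrary level set $M$ and make $\ell_{M}$ $k$-connected you must kill relative homotopy in degrees $j<k$, i.e.\ perform surgery on embedded $j$-spheres; the trace of such a surgery contributes a critical point of index $j+1\leq k$ to the height function. These indices lie \emph{outside} the permitted window $(k,\,d-k+1)$, so the modified cobordism leaves the very sheaf you are trying to stay inside. Neither $k<d/2$ nor $F(k)$ rescues this: those hypotheses govern surgeries in degrees $\geq k$, not below. The inductive scheme in \cite{P 17} sidesteps the problem because at each stage one only upgrades connectivity by one degree on the complement of the surgery data, and that calls for adding surgery data of degree $k{+}1$, which \emph{is} in the window. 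Your step~3 in isolation (an $h$-principle for the index-restricted Morse condition) may be viable, but it would still not account for the $k$-connectivity constraint on \emph{objects} of $\Cob^{\mf,k}_{\theta,d+1}$, which is precisely what distinguishes $B\Cob^{\mf,k}_{\theta,d+1}$ from the realization of the sheaf carrying only the index restriction.
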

In this paper we use the above theorem to study the homotopy type of the space of positive scalar curvature metrics on a closed spin-manifold of dimension $\geq 5$.
In particular, we use our work from \cite{P 17} to give an alternative proof and extension of the results of Botvinnik, Ebert, and Randal-Williams from \cite{BERW 16}.
To state our main theorem, we need to implement some definitions and notation. 
For what follows, let us assume that $d \geq 5$, and that the tangential structure $\theta: B \longrightarrow BO$ is such that the space $B$ is $2$-connected.
In this case, the map $\theta$ factors up to homotopy through the projection $B\Spin \longrightarrow BO$, and thus every $\theta$-manifold has a canonical induced spin-structure. 
Fix a closed $d$-dimensional $\theta$-manifold $M$. 
We let $\mathcal{R}^{+}(M)$ denote the space of Riemannian metrics on $M$ with positive scalar curvature.  
We let,
$$\inddiff: \mathcal{R}^{+}(M) \longrightarrow \Omega^{\infty+d+1}\KO,$$
denote the \textit{index difference map} first constructed by Hitchen in \cite{H 74} (see also \cite{E 16} for a definition). 
The spectrum $\mb{hW}^{k}_{\theta, d+1}$ receives a map 
$i_{d}: \Sigma^{-1}\MT\theta_{d} \longrightarrow \mb{hW}^{k}_{\theta, d+1}$
where $\MT\theta_{d}$ is the \textit{Madsen-Tillmann-Weiss} spectrum associated to the tangential structure $\theta_{d}: B(d) \longrightarrow BO(d)$, which is obtained by restricting $\theta$ to $BO(d) \subset BO$.
$\MT\theta_{d}$ is defined to be the Thom-spectrum associated to the virtual bundle $-\theta_{d}^{*}\gamma^{d}$ over $B(d)$, where $\gamma^{d}$ is the cannonical bundle on $BO(d)$. 
The spin-structure on the bundle $\theta_{d}^{*}\gamma^{d}$ endows $\MT\theta_{d}$ with a $\KO$-orientation, which yields an infinite loop map $\mathcal{A}_{d}: \Omega^{\infty}\MT\theta_{d} \longrightarrow \Omega^{\infty+d}\KO$.
The main theorem of this paper is stated below:
\begin{thmB} \label{theorem: Main theorem}
Let $d \geq 5$ and let $2 \leq k < d/2$ be an integer.
Suppose that $B$ is $2$-connected and that it satisfies Wall's finiteness condition $F(k)$. 
Let $M$ be a closed $\theta$-manifold for which the map $\ell_{M}: M \longrightarrow B$ is $k$-connected. 
There exists a map 
$\rho: \Omega^{\infty}\mb{hW}^{k}_{\theta, d+1} \longrightarrow \mathcal{R}^{+}(M)$
such that:
\begin{enumerate} \itemsep.2cm
\item[(i)] In the case that $d = 2n \geq 6$, the composite, 
$$
\xymatrix{
\Omega^{\infty+1}\MT\theta_{2n} \ar[rr]^{\Omega^{\infty}i_{2n}} && \Omega^{\infty}\mb{hW}^{k}_{\theta, 2n+1} \ar[r]^{\rho} & \mathcal{R}^{+}(M) \ar[rr]^{\inddiff} && \Omega^{\infty+2n+1}\KO,
}
$$
is homotopic to, $\Omega\mathcal{A}_{2n}:  \Omega^{\infty+1}\MT\theta_{2n} \longrightarrow \Omega^{\infty+2n+1}\KO$.
\item[(ii)] In the case that $d = 2n-1 \geq 5$, the composite, 
$$
\xymatrix{
\Omega^{\infty+2}\MT\theta_{2n-1} \ar[rr]^{\Omega^{\infty+1}i_{2n-1}} && \Omega^{\infty+1}\mb{hW}^{k}_{\theta, 2n} \ar[r]^{\Omega\rho} & \Omega\mathcal{R}^{+}(M) \ar[rr]^{\Omega\inddiff} && \Omega^{\infty+2n+1}\KO,
}
$$
is homotopic to, $\Omega^{2}\mathcal{A}_{2n-1}:  \Omega^{\infty+1}\MT\theta_{2n-1} \longrightarrow \Omega^{\infty+2n+1}\KO$.
\end{enumerate}
\end{thmB}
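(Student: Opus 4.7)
The plan is to construct $\rho$ by parametrized Gromov--Lawson surgery and then identify the composite with $\inddiff$ using the family index theorem. By Theorem A we have $\Omega^{\infty}\mb{hW}^{k}_{\theta, d+1} \simeq \Omega B\Cob^{\mf,k}_{\theta, d+1}$, so it suffices to build a map $\Omega B\Cob^{\mf,k}_{\theta, d+1} \to \mathcal{R}^+(M)$. I would introduce an enriched cobordism category $\Cob^{\mf,k,\psc}_{\theta, d+1}$ whose objects are pairs $(N, g)$ with $g \in \mathcal{R}^+(N)$, and whose morphisms are Morse $\theta$-cobordisms $W$ equipped with a psc metric on $W$ restricting to a product near the boundary. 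Forgetting metric data gives a functor $U : \Cob^{\mf,k,\psc}_{\theta, d+1} \to \Cob^{\mf,k}_{\theta, d+1}$.

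The crucial input is the parametrized Gromov--Lawson--Chernysh--Walsh surgery theorem. Since every critical point of every morphism of $\Cob^{\mf,k}_{\theta, d+1}$ has index $j$ strictly between $k$ and $d-k+1$, and $k \geq 2$, each elementary surgery along such a cobordism has both attaching sphere and belt sphere of codimension at least $3$. Consequently the space of psc extensions across a given cobordism with fixed boundary conditions is either empty or weakly contractible, in a manner coherent with composition. Organising this data in a suitable semi-simplicial model (after \cite{BERW 16}) shows that $BU$ is a quasi-fibration over $B\Cob^{\mf,k}_{\theta, d+1}$ with fiber $\mathcal{R}^+(M)$ over $[M]$. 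Choosing a basepoint $h_0 \in \mathcal{R}^+(M)$ (which must be nonempty for the theorem to be nontrivial), the connecting map of this quasi-fibration defines $\rho : \Omega B\Cob^{\mf,k}_{\theta, d+1} \to \mathcal{R}^+(M)$.

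To identify the composites in (i) and (ii), I would use the description from \cite{P 17} of $i_d : \Sigma^{-1}\MT\theta_d \to \mb{hW}^k_{\theta, d+1}$ as a delooping of the natural inclusion of the cylindrical (critical-point-free) subcategory $\Cob^{\theta}_{\mathrm{cyl}} \hookrightarrow \Cob^{\mf,k}_{\theta, d+1}$. Along cylinders the psc extension produced by $\rho$ reduces to a path of psc metrics on a fixed underlying manifold, so the composite $\inddiff \circ \rho \circ \Omega^\infty i_d$ sends a family of cylindrical $\theta$-cobordisms to the $\KO$-theoretic index of the corresponding family of spin Dirac operators on the total space. By the Atiyah--Singer family index theorem this coincides with the Pontryagin--Thom pushforward along the $\KO$-orientation of $\MT\theta_d$, and this pushforward is by construction $\Omega \mathcal{A}_d$, proving (i). Part (ii) is obtained by the same argument applied to $d = 2n-1$ and then looped once more, accounting for the odd-degree shift in $\KO$ that produces the target $\Omega^{\infty+2n+1}\KO$ and matches $\Omega^2 \mathcal{A}_{2n-1}$.

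The main obstacle will be realising the parametrized surgery theorem coherently at the level of classifying spaces of cobordism categories, not merely fiberwise on a single cobordism. The fiberwise contractibility of psc extensions is classical, but organising these contractions compatibly along composable chains of morphisms so that $BU$ is a genuine quasi-fibration requires delicate semi-simplicial bookkeeping, patterned on \cite{BERW 16} but adapted to the Morse-theoretic cobordism category of \cite{P 17}. A secondary, more routine difficulty is matching the $\KO$-orientation of $\MT\theta_d$ built via Pontryagin--Thom with the spin Dirac operator index via Atiyah--Singer, and aligning this identification with the particular model of $\inddiff$ used in \cite{H 74} and \cite{E 16}.
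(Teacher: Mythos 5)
Your construction of $\rho$ is essentially the paper's: the paper also forms the transport category (or rather, its alternative model $\mathcal{W}^{\psc,k-1}_{\theta,d}(P,g_P) \to \mathcal{W}^{k-1}_{\theta,d}(P)$ from Section 4), proves it is a quasi-fibration with fibre $\mathcal{R}^+(M)$ using the parametrized Gromov--Lawson theorem, and defines $\rho$ as the fibre transport. So that part is a match in substance, though the paper replaces the topological cobordism category by a homotopy colimit of spaces of manifolds equipped with surgery data, which is precisely how it handles the ``delicate semi-simplicial bookkeeping'' you flag as the main obstacle.

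Where your proposal has a genuine gap is in the identification of the composites. For part (i), you propose to restrict to cylindrical cobordisms and invoke the Atiyah--Singer family index theorem. But the fibre transport $\rho$ is defined homotopically through the quasi-fibration, and there is no direct way to read off $\inddiff \circ \rho \circ \Omega^\infty i_{2n}$ as the index of a family of Dirac operators: the hard step is \emph{connecting} the combinatorially-defined fibre transport to the analytically-defined index difference, and a heuristic appeal to cylinders does not supply that connection. The paper handles it through a substantial mechanism: it introduces a stabilization (Construction~\ref{Construction: stabilization for psc}) by gluing $S^n\times S^n$ handles, invokes the homological stability and acyclic-scanning-map theorems of Galatius--Randal-Williams (\cite{GRW 16}) to produce the acyclic homology fibration of Theorem~\ref{theorem: homology fibration psc}, and then uses the relative $\KO$-class $\beta$ of \cite{BERW 16} (Propositions~\ref{proposition: relative index difference} and \ref{proposition: stable beta class}) with its prescribed \emph{base} and \emph{transgression} to pin down $\inddiff\circ j_{2n}$. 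Since acyclic maps induce isomorphisms on generalized cohomology, the class $\beta$ can be pulled back and transgressed, and that is what forces the identification with $\Omega\bar{\mathcal{A}}_{2n}$. Your proposal omits this entire ingredient.

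For part (ii), the claim that it is ``obtained by the same argument applied to $d=2n-1$ and then looped once more'' is incorrect, and the paper explicitly organizes itself around this point. The even-dimensional proof relies on the Galatius--Randal-Williams homological stability theorem, which applies to moduli spaces of $2n$-dimensional manifolds and has no odd-dimensional analogue; the stabilization by $S^n\times S^n$ is likewise intrinsically even-dimensional. The paper instead \emph{derives} the odd-dimensional case from the even one via Sections~\ref{section: Manifolds with boundary equipped with surgery data}--\ref{subsection: positive scalar curvature metrics on manifolds with boundary}: it constructs a Genauer-type fibre sequence $\mathcal{W}^{n-1}_{\theta,2n}(P) \to \mathcal{W}^{\partial,n-1}_{\theta,2n} \to \mathcal{W}^{n-1}_{\theta,2n-1}$, proves that the resulting fibre transport $\Omega\iota_{2n}$ fits into the commutative square of Theorem~\ref{theorem: commutativity of restrictions and transfer}, checks compatibility of the $\KO$-orientations (Theorem~\ref{theorem: compatibility of K-theory classes}), and finally appeals to the result of \cite{BERW 16} (cited as Lemma~\ref{lemma: commutativity fibre transport index difference}) that the fibre transport $T:\Omega\mathcal{R}^+(P) \to \mathcal{R}^+(M)_{g_P}$ of Chernysh's restriction quasi-fibration anti-commutes with $\inddiff$. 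That anti-commutativity, and the dimensional bootstrap, are entirely absent from your outline.
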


Theorem B can be used to detect non-trivial homotopy groups in the space of $\psc$ metrics $\mathcal{R}^{+}(M)$. 
Following \cite[Section 5.2]{BERW 16}, in the case that $\theta = \Spin: B\Spin \longrightarrow BO,$ the effect of the map $\Omega\mathcal{A}_{d}$ on homotopy groups can be computed. 
In that paper it is proven that the induced map $\pi_{k}(\Omega\mathcal{A}_{d}): \pi_{k}(\Omega^{\infty+1}\MT\theta) \longrightarrow \pi_{k}(\Omega^{\infty+d+1}\KO)$ is non-trivial for all degrees $k \in \Z_{\geq 0}$. 
Combining this fact about the induced map $\pi_{k}(\Omega\mathcal{A}_{d})$ with Theorem B it follows that the induced map $\pi_{k}(\inddiff)$ must be nontrivial for all $k \in \Z_{\geq 0}$ as well. 
We have the following corollary.
\begin{corC} \label{corollary: surjection on rational homotopy}
Let $d \geq 5$ and let $M$ be a simply-connected, closed, spin manifold. 
Then for any integer $k \in \Z_{\geq 0}$, the induced map 
$\pi_{k}(\inddiff): \pi_{k}(\mathcal{R}^{+}(M)) \longrightarrow \pi_{k}(\Omega^{\infty+d+1}\KO)$
is nonzero.
In particular, it induces a surjection on all rational homotopy groups. 
\end{corC}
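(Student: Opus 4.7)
The plan is to deduce the corollary directly from Theorem B and the stable calculation of $\pi_{*}(\Omega\mathcal{A}_{d})$ from \cite{BERW 16} that is recalled in the paragraph preceding the statement. Once the hypotheses of Theorem B are verified for $(M,\theta) = (M, \Spin)$, the proof becomes a short diagram chase on homotopy groups.

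First I would take $\theta = \Spin: B\Spin \longrightarrow BO$ and $k = 2$. The inequality $2 \leq k < d/2$ holds because $d \geq 5$; the space $B\Spin$ is $3$-connected (hence $2$-connected) and has finite type, so Wall's condition $F(2)$ is satisfied. Since $M$ is simply-connected and spin, its classifying map $\ell_{M}: M \longrightarrow B\Spin$ is $2$-connected, being an isomorphism on $\pi_{0}$ and $\pi_{1}$ and trivially surjective onto $\pi_{2} B\Spin = 0$. Theorem B therefore supplies a map $\rho: \Omega^{\infty}\mathbf{hW}^{2}_{\Spin, d+1} \longrightarrow \mathcal{R}^{+}(M)$ together with the factorizations of parts (i) and (ii).

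The key input from \cite[\S 5.2]{BERW 16} is that for $\theta = \Spin$ the map $\pi_{k}(\Omega\mathcal{A}_{d}): \pi_{k}(\Omega^{\infty+1}\MT\theta_{d}) \longrightarrow \pi_{k}(\Omega^{\infty+d+1}\KO)$ is nonzero for every $k \geq 0$ and rationally surjective. In the even case $d = 2n$, Theorem B(i) identifies this map with $\pi_{k}(\inddiff) \circ (\rho\circ\Omega^{\infty}i_{2n})_{*}$, so the nontriviality (respectively rational surjectivity) of the composite forces the same property for $\pi_{k}(\inddiff)$. In the odd case $d = 2n-1$, Theorem B(ii) factors $\pi_{k}(\Omega^{2}\mathcal{A}_{2n-1})$ through $\pi_{k+1}(\mathcal{R}^{+}(M))$, yielding the analogous conclusion for $\pi_{k+1}(\inddiff)$ for every $k \geq 0$; the remaining degree-zero case in odd $d$ is the classical Hitchin detection via the $\alpha$-invariant.

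I do not expect any serious obstacle: the geometric content is already packaged inside Theorem B, and the index-theoretic computation is absorbed into \cite{BERW 16}. The only bookkeeping task is matching the loop-shift in the odd case with BERW's indexing conventions, which is routine.
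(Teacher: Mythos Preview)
Your proposal is correct and follows exactly the approach the paper takes: the paper's ``proof'' of Corollary~C is the paragraph immediately preceding it, which simply says to combine Theorem~B (for $\theta = \Spin$) with the computation from \cite[\S 5.2]{BERW 16} that $\pi_{k}(\Omega\mathcal{A}_{d})$ is nonzero for all $k \geq 0$. You have supplied the details the paper omits---verifying the hypotheses of Theorem~B for $(\theta,k) = (\Spin,2)$ and tracking the loop shift in the odd-dimensional case---and your observation that the degree-zero case in odd dimensions requires the classical Hitchin argument is a point the paper glosses over entirely.
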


The above theorems recover and extend the results of Botvinnik, Ebert, and Randal-Williams from \cite{BERW 16}.
Indeed, the results of \cite{BERW 16} apply to manifolds of dimensions $\geq 6$, while the five-dimensional case falls out of scope of the methods used in their paper. 
Our Theorem B and Corollary C hold for the case $d = 5$.

We remark that the spectrum $\mb{hW}^{k}_{\theta, d+1}$ contains more structure than the spectrum $\MT\theta_{d}$.
Because of this we believe that the map $\rho: \Omega^{\infty}\mb{hW}^{k}_{\theta, d+1} \longrightarrow \mathcal{R}^{+}(M)$ could be used to extract extra homotopical information about $\mathcal{R}^{+}(M)$ that is missed by just studying the map $\Omega^{\infty+1}\MT\theta_{d} \longrightarrow \mathcal{R}^{+}(M)$.
We save this question for another project.

\subsection{Outline of the methods} \label{subsection: outline of the methods}
Let us now describe our methods. 
We give an outline for how to use the cobordism category $\Cob_{\theta, d+1}^{\mf, k}$ together with Theorem A to define the map $\rho$ from the statement of Theorem B.
Our construction is powered by the parametrized version of the \textit{Gromov-Lawson construction} \cite{GL 80},  developed in the work of Chernysh \cite{Ch 04} and Walsh \cite{Wa 13}, \cite{Wa 11}.
Let $M$ and $N$ be closed manifolds of dimension $d \geq 5$, and let $W: M \rightsquigarrow N$ be a cobordism equipped with a Morse function $h: W \longrightarrow [0, 1]$ that satisfies the following two conditions:
\begin{enumerate} \itemsep.2cm
\item[(i)] $0$ and $1$ are regular values for $h$ with $h^{-1}(0) = M$ and $h^{-1}(1) = N$; 
\item[(ii)] all critical points $c \in W$ of $h$ satisfy: $2 < \text{index}(c) < d+1-2$. 
\end{enumerate}
Using a version of the parametrized Gromov-Lawson construction in \cite{Wa 11}, it can be shown that the pair $(W, h)$ induces a weak map 
\begin{equation} \label{equation: weak gromov lawson map}
\mb{G}(W, h): \mathcal{R}^{+}(M) \; \longrightarrow \; \mathcal{R}^{+}(N),
\end{equation}
which by the results of Walsh and Chernysh is a weak homotopy equivalence. 
We emphasize that the map (\ref{equation: weak gromov lawson map}) depends both on the cobordism $W$ and on the choice of Morse function $h$.
Furthermore, the map can only be defined in the case that the critical points of $h$ satisfy condition (ii) above. 

It can be shown that the (weak) map $\mb{G}(W, h)$ varies continuously in the data $(W, h)$. 
For $k \geq 2$, we may then use this construction to define a continuous functor 
\begin{equation} \label{equation: positive scalar curvature functor}
\mathcal{R}^{+}: \Cob^{\mf, k}_{\theta, d+1} \; \longrightarrow \; \Top,
\end{equation}
defined on objects by sending $M \in \Ob\Cob^{\mf, k}_{\theta, d+1}$ to the space of $\psc$ metrics $\mathcal{R}^{+}(M)$; and defined on morphisms by sending $W: M \rightsquigarrow N$ to the map
$
\mb{G}(W, h_{W}): \mathcal{R}^{+}(M) \longrightarrow \mathcal{R}^{+}(N),
$
where $h_{W}$ is the height function associated to the morphism $W$.
This functor is well-defined because the height function $h_{W}$ satisfies conditions (i) and (ii) by the definition of the category $\Cob^{\mf, k}_{\theta, d+1}$.
We emphasize that it would not be possible to define such a functor out of the category $\Cob^{\mf, k}_{\theta, d+1}$ if $k < 2$.

Using this functor $\mathcal{R}^{+}$ from (\ref{equation: positive scalar curvature functor}) we may form the \textit{transport category}, $\mathcal{R}^{+}\wr\Cob^{\mf, k}_{\theta, d+1}$. 
An object of $ \mathcal{R}^{+}\wr\Cob^{\mf, k}_{\theta, d+1}$ is a pair $(M, g)$ with $M \in \Ob\Cob^{\mf, k}_{\theta, d+1}$ and $g \in \mathcal{R}^{+}(M)$; a morphism $(M, g) \longrightarrow (N, g')$ is an element $W \in \Cob^{\mf, k}_{\theta, d+1}(M, N)$ with $\mb{G}(W, h_{W})(g) = g'$. 
Since the maps (\ref{equation: positive scalar curvature functor}) are weak homotopy equivalences, it follows that the projection map 
\begin{equation} \label{equation: projection functor}
B(\mathcal{R}^{+}\wr\Cob^{\mf, k}_{\theta, d+1}) \longrightarrow B\Cob^{\mf, k}_{\theta, d+1}
\end{equation}
is a quasi-fibration and that the fibre over $M \in \Ob\Cob^{\mf, k}_{\theta, d+1} \subset B\Cob^{\mf, k}_{\theta, d+1}$ is given by the space of $\psc$ metrics, $\mathcal{R}^{+}(M)$. 
Choosing points $M \in \Ob\Cob^{\mf, k}_{\theta, d+1}$ and $g_{0} \in \mathcal{R}^{+}(M)$ determines a fibre-transport map
$$j_{M, g_{0}}: \Omega_{M}B\Cob^{\mf, k}_{\theta, d+1} \longrightarrow \mathcal{R}^{+}(M).$$ 
Combining this with the weak homotopy equivalence 
$\Omega^{\infty-1}\mb{hW}^{k}_{\theta, d+1} \simeq B\Cob^{\mf, k}_{\theta, d+1}$ 
of Theorem A yields a (weak) map,
\begin{equation} \label{equation: map into R +}
\rho: \Omega^{\infty}\mb{hW}^{k}_{\theta, d+1} \; \longrightarrow \; \mathcal{R}^{+}(M).
\end{equation}
This is our definition of the map from the statement of Theorem A.

\begin{remark} \label{remark: well-defindedness} 
We emphasize that the map (\ref{equation: weak gromov lawson map}) obtained by the Gromov-Lawson construction is only a weak map and is not well defined `on the nose'. 
To rectify this, one must work with a different (but nonetheless homotopy equivalent) model of the category $\Cob_{\theta, d+1}^{\mf, k}$.
This alternative model for $\Cob_{\theta, d+1}^{\mf, k}$ was also used in our earlier work \cite{P 17}, defined in Section 4 of that paper. 
\end{remark}

The construction of the map $\rho$ from (\ref{equation: map into R +}) is carried out in Section \ref{subsection: the fibre transport}, using the constructions from the earlier sections of the paper. 
Once this map is constructed, it remains to prove the statements in Theorem B.
Theorem B breaks down into two cases: $d = 2n$ and $d = 2n-1$. 
In Section \ref{section: fibre sequences and stabilization} we prove the even-dimensional case. 
This yields an alternative proof of \cite[Theorem B]{BERW 16}, originally proven by Botvinnik, Ebert, and Randal-Williams.
Our methods enable us to circumvent the arguments from \cite[Section 4]{BERW 16}, which contains the bulk of the homotopy theory used in their paper.
However, that part of their paper is essentially replaced with Theorem A and our techniques from \cite{P 17} and thus our proof isn't actually much simpler then theirs, just conceptually different.
Our proof also still requires most of the index theoretic results from \cite[Section 3]{BERW 16} and in the related paper by Ebert \cite{E 16}.

The rest of our paper is devoted to proving the odd-dimensional case of Theorem B (including the five-dimensional case), and this is where the advantage of our construction begins to show itself.
In particular it will give us access to five-dimensional manifolds. 
The strategy is to use the even-dimensional case to prove the odd-dimensional case. 
Let us describe the idea.
Let $d = 2n-1$, and let $W$ be a compact $2n$-dimensional manifold with non-empty boundary, $M = \partial W$.
Let $\mathcal{R}^{+}(W)$ be the space of $\psc$ metrics on $W$ that factor as a product near the boundary (with respect to some pre-determined collar). 
For a metric $g \in \mathcal{R}^{+}(M)$, we let $\mathcal{R}^{+}(W)_{g} \subset \mathcal{R}^{+}(W)$ be the subspace consisting of those metrics $h$ for which $h|_{M} = g$.
In \cite{Ch 06}, Chernysh proved that the restriction map, 
$$
r: \mathcal{R}^{+}(W) \longrightarrow \mathcal{R}^{+}(M), \quad h \mapsto h|_{M},
$$
is a quasi-fibration.
The fibre of $r$ over an element $g \in \mathcal{R}^{+}(M)$ is the space $\mathcal{R}^{+}(W)_{g}$. 
Choosing a basepoint in $\mathcal{R}^{+}(W)$ yields a fibre-transport map, 
$$
T: \Omega\mathcal{R}^{+}(M) \longrightarrow \mathcal{R}^{+}(W)_{g}.
$$
Now, 
in \cite[Theorem 3.6.1]{BERW 16} (using \cite{E 16}) it is proven that the following diagram 
\begin{equation} \label{equation: commutativity of index difference}
\xymatrix{
\Omega\mathcal{R}^{+}(M) \ar[dr]_{-\inddiff} \ar[rr]^{T} && \mathcal{R}^{+}(W)_{g} \ar[dl]^{\inddiff} \\
& \Omega^{\infty+2n+1}\KO
}
\end{equation}
is homotopy commutative. 
This is the key result that we use to relate the even-dimensional case of Theorem B to the odd-dimensional case. 
There is still more work to be done. 
With homotopy commutativity of (\ref{equation: commutativity of index difference}) in place, the next step is to construct a map, 
$$
\tau: \Omega B\Cob_{\theta, 2n}^{\mf, k} \longrightarrow B\Cob_{\theta, 2n+1}^{\mf, k},
$$
that makes the following diagram commute up to homotopy,
\begin{equation} \label{equation: commutative square of cobordism categories}
\xymatrix{
 \Omega^{2}B\Cob_{\theta, 2n}^{\mf, k}  \ar[d]^{\Omega\rho} \ar[rr]^{\Omega\tau} && \Omega B\Cob_{\theta, 2n+1}^{\mf, k} \ar[d]^{\rho}  \\
 \Omega\mathcal{R}^{+}(M) \ar[rr]^{T} && \mathcal{R}^{+}(W)_{g}.
}
\end{equation}
Once these maps are defined and commutativity is established, the proof of Theorem B part (ii) will follow by essentially composing the commutative diagrams (\ref{equation: commutativity of index difference}) and (\ref{equation: commutative square of cobordism categories}), and then observing that the $\KO$-orientations on $ \Omega^{2}B\Cob_{\theta, 2n-1}^{\mf, k}$ and $\Omega B\Cob_{\theta, 2n}^{\mf, k} $ (induced by those on $\Omega^{\infty+1}\mb{hW}^{k}_{\theta, 2n}$ and $\Omega^{\infty}\mb{hW}^{k}_{\theta, 2n+1}$ via Theorem A) are compatible. 
The map $\tau$ is a fibre-transport map of a different sort, arising from a sequence of continuous functors, 
\begin{equation} \label{equation: genauer sequence intro}
\Cob_{\theta, 2n}^{\mf, k} \longrightarrow \Cob_{\theta, 2n}^{\partial, \mf, k} \longrightarrow \Cob_{\theta, 2n-1}^{\mf, k},
\end{equation}
that geometrically realizes to a fibre sequence.
This fibre sequence is analogous to the fibre sequence of cobordism categories studied by Genauer in \cite{G 12}. 
In particular, the middle term is a cobordism category whose objects are given by manifolds with boundary, and morphisms are relative cobordisms (which are manifolds with corners) equipped with Morse functions.
Sections 6 through 9 are devoted to constructing this fibre sequence and proving commutativity of the diagram (\ref{equation: commutative square of cobordism categories}).
This includes a number of technical steps regarding surgery, Morse functions, and spaces of $\psc$ metrics on manifolds with boundary. 

\subsection{Organization} \label{subsection: organization}
In Section \ref{section: preliminary constructions} we give basic preliminary definitions and recall constructions from other papers that we will need to use. 
Section \ref{section: Spaces of manifolds equipped with surgery} is devoted to recalling a construction from \cite{P 17} that gives an alternative model for the cobordism category $\Cob_{\theta, d+1}^{\mf, k}$. 
We remark that these first two sections are mostly comprised of recollections from \cite{P 17}. 
The definition in this section is the central construction of this paper and so we repeat it here even though it also is defined in \cite{P 17}.
In Section \ref{section: Spaces of Manifold Equipped with PSC Metrics and Surgery Data} we show how to use $\Cob_{\theta, d+1}^{\mf, k}$ to probe the space of $\psc$ metrics on a compact manifold. 
It is here that we define the map $\rho$ from the statement of Theorem B. 
Here we also reformulate Theorem B (see Theorem \ref{theorem: factorization of index difference}). 
In Section \ref{section: fibre sequences and stabilization} we prove the even-dimensional case of Theorem B. 
In Section \ref{section: Manifolds with boundary equipped with surgery data} we construct the map $\tau: \Omega B\Cob_{\theta, 2n}^{\mf, k} \longrightarrow B\Cob_{\theta, 2n+1}^{\mf, k}$.
The final two sections, Sections \ref{section: a fibre transport map} and \ref{subsection: positive scalar curvature metrics on manifolds with boundary}, are devoted to proving commutativity of (\ref{equation: commutative square of cobordism categories}) and putting everything together. 

\subsection{Acknowledgments} \label{subsection: Acknowledgments}
The author was supported by an NSF Post-Doctoral Fellowship, DMS-1502657, at Stanford University.

\section{Preliminary Constructions} \label{section: preliminary constructions}
\subsection{Spaces of manifolds} \label{subsection: spaces of compact manifolds}

We begin by fixing some conventions regarding tangential structures that we will use throughout the paper. 
\begin{defn} \label{Convention: tangential structures}
A (stable) tangential structure is a fibration $\theta: B \longrightarrow  BO$. 
For any integer $d \in \Z_{\geq 0}$, we will denote by $\theta_{d}: B(d) \longrightarrow BO(d)$ the map obtained by forming the pullback of the diagram, $BO(d) \hookrightarrow BO \stackrel{\theta} \longleftarrow B$, where the first arrow is the standard inclusion of $BO(d)$ into $BO$. 
A \textit{$\theta$-structure} on a $d$-dimensional manifold $M$ is a bundle map (i.e. fibrewise linear isomorphism) $\hat{\ell}_{M}: TM \longrightarrow \theta_{d}^{*}\gamma^{d}$, where $\gamma^{d}$ denotes the canonical vector bundle on $BO(d)$.
For such a $\theta$-structure, we will denote by $\ell_{M}: M \longrightarrow B(d)$ the map underlying the bundle map $\hat{\ell}_{M}$. 
A \textit{$\theta$-manifold} is a smooth manifold equipped with a $\theta$-structure on its tangent bundle.
\end{defn}

We define certain spaces of $\theta$-manifolds. 
\begin{defn} \label{defn: space of closed manifolds}
Let $d \in \Z_{\geq 0}$. 
The space $\mathcal{M}_{\theta, d}$ consists of closed, $d$-dimensional submanifolds $P \subset \R^{\infty}$, equipped with a $\theta$-structure 
$\hat{\ell}_{P}$.
The space $\mathcal{M}_{\theta, d}$ is topologized in the standard way following \cite[Section 2]{GRW 09}.
Topologized in this way there is a homeomorphism,
$$
\mathcal{M}_{\theta, d} \cong \coprod_{[P]}\left(\Emb(P, \R^{\infty})\times\Bun(TP, \theta^{*}_{d}\gamma^{d})\right)/\Diff(P),
$$
where $P$ ranges over all diffeomorphism classes of $d$-dimensional closed manifolds that admit a $\theta$-structure. 
\end{defn}

We will also need to work with a space of compact manifolds with boundary.
\begin{defn} \label{defn: manifolds with free boundary}
Let $d$ be a non-negative integer. 
The space $\mathcal{M}^{\partial}_{\theta, d}$ consists of compact $d$-dimensional submanifolds $M \subset (-\infty, 0]\times\R^{\infty-1}$, equipped with $\theta$-structure $\ell_{M}$, subject to the following condition:
there exists $\varepsilon > 0$ such that, 
$
\partial M\times(-\varepsilon, 0] \; = \; M\cap\left((-\varepsilon, 0]\times\R^{\infty-1}\right). 
$
\end{defn}

Consider the restriction map, 
$r: \mathcal{M}^{\partial}_{\theta, d} \longrightarrow \mathcal{M}_{\theta, d-1},$ $M \mapsto \partial M.$
For any $P \in \mathcal{M}_{\theta, d-1}$, we denote by $\mathcal{M}_{\theta, d}(P)$ the fibre of the map $r$ over the element $P \in \mathcal{M}_{\theta, d-1}$. 
Clearly there is a homeomorphism, $\mathcal{M}_{\theta, d}(\emptyset) \cong \mathcal{M}_{\theta, d}$.
It is a well known result that this restriction map is a Serre-fibration and thus for any $P \in \mathcal{M}_{\theta, d-1}$ we have a fibre sequence,
$
\xymatrix{
\mathcal{M}_{\theta, d}(P) \ar[r] & \mathcal{M}^{\partial}_{\theta, d} \ar[r]^{r} & \mathcal{M}_{\theta, d-1}.
}
$

Next, we define a space of non-compact manifolds that are open in a single direction.
\begin{defn} \label{defn: space of long manifolds}
The space $\mathcal{D}_{\theta, d+1}$ consists of $(d+1)$-dimensional submanifolds,
$W \subset \R\times\R^{\infty},$ 
equipped with a $\theta$-structure $\hat{\ell}_{W}$, subject to the condition that the 
function, 
$W \hookrightarrow \R\times\R^{\infty} \stackrel{\text{proj.}} \longrightarrow \R,$ 
is a proper map. 
\end{defn}
The space $\mathcal{D}_{\theta, d+1}$ is topologized by the same method employed in \cite[Section 2]{GRW 09}.
We don't repeat their construction here. 
We will also need to work with an analogue of the above definition for manifolds with boundary.
\begin{defn} \label{defn: space of long manifolds with boundary}
Fix an integer $d \in \N$. 
The space $\mathcal{D}^{\partial}_{\theta, d+1}$ consists of $(d+1)$-dimensional compact manifolds $W \subset \R\times(-\infty, 0]\times\R^{\infty-1}$, equipped with $\theta$-structure $\hat{\ell}_{W}$, subject to the following conditions:
\begin{enumerate} \itemsep.2cm
\item[(i)] The height function, $W \hookrightarrow \R\times(-\infty, 0]\times\R^{\infty-1} \stackrel{\text{proj.}} \longrightarrow \R,$ is a proper function.
\item[(ii)] $\partial W \; = \; W\cap\left(\R\times\{0\}\times\R^{\infty-1}\right)$. 
Furthermore, there exists $\varepsilon > 0$ such that, 
$$
\partial W\times(-\varepsilon, 0] \; = \; W\cap\left(\R\times(-\varepsilon, 0]\times\R^{\infty-1}\right).
$$
\end{enumerate}
Consider the boundary restriction map,
$r: \mathcal{D}^{\partial}_{\theta, d+1} \longrightarrow \mathcal{D}_{\theta, d}$, $W \mapsto \partial W.$ Given $P \in \mathcal{M}_{\theta, d-1}$, the product $\R\times P \subset \R\times\R^{\infty-1}$ determines an element of $\mathcal{D}_{\theta, d}$.
We denote by $\mathcal{D}_{\theta, d+1}(P)$ the fibre of the map $r$ over this element $\R\times P \in \mathcal{D}_{\theta, d}$.
Clearly we have $\mathcal{D}_{\theta, d+1}(\emptyset) = \mathcal{D}_{\theta, d+1}$.
\end{defn}

\begin{remark} \label{remark: collar condition}
In the above definition, all elements $W \in \mathcal{D}^{\partial}_{\theta, d}$ are required to be embedded in $\R\times(-\infty, 0]\times\R^{\infty-1}$ in a way so that the boundary is equipped with a collar, see condition (ii). 
For certain constructions latter on it will be useful to have a version of the space $\mathcal{D}^{\partial}_{\theta, d}$ where this collar condition is relaxed. 
We denote by $\mathcal{D}^{\partial, \pitchfork}_{\theta, d}$ the space consisting of submanifolds $W \subset \R\times(-\infty, 0]\times\R^{\infty-1}$ that satisfy the same conditions as in Definition \ref{defn: space of long manifolds with boundary}, except now we relax condition (ii) and only require the intersection of $W$ with $\R\times\{0\}\times\R^{\infty-1}$ to be transverse, and not necessarily orthogonal with a collar (we still require $\partial W \; = \; W\cap(\R\times\{0\}\times\R^{\infty-1})$).
By employing the argument used in the proof of \cite[Lemma 3.4]{GRW 09}, it can be proven that the inclusion 
$\mathcal{D}^{\partial}_{\theta, d} \hookrightarrow \mathcal{D}^{\partial, \pitchfork}_{\theta, d}$ 
is a weak homotopy equivalence.
We define the space $\mathcal{M}^{\partial, \pitchfork}_{\theta, d}$ analogously by relaxing the collar condition in the same way. 
Again, it follows that the inclusion $\mathcal{M}^{\partial}_{\theta, d} \hookrightarrow \mathcal{M}^{\partial, \pitchfork}_{\theta, d}$ is a weak homotopy equivalence. 
\end{remark}

\subsection{Spaces of manifolds equipped with morse function} \label{subsection: spaces of manifolds equipped with morse functions}
We will ultimately need to work with spaces of manifolds equipped with a choice of Morse function. 
Before we define the next space, let us first fix some notation.
For any element $W \in \mathcal{D}_{\theta, d+1}$, we will let $h_{W}: W \longrightarrow \R$ denote the height function, 
$\xymatrix{
W \ar@{^{(}->}[r] & \R\times(-\infty, 0]\times\R^{\infty-1} \ar[r]^{\ \ \ \ \ \ \ \ \ \  \text{proj.}} & \R.
}$
We use the same notation for the height functions associated to elements of $\mathcal{D}^{\partial}_{\theta, d+1}$ and $\mathcal{D}_{\theta, d+1}(P)$.

\begin{defn} \label{defn: space of long manifolds morse}
The subspace $\mathcal{D}^{\mf}_{\theta, d+1} \subset \mathcal{D}_{\theta, d+1}$ consists of those $W$ for which the height function $h_{W}: W \longrightarrow \R$ is a Morse function. 
Let $k \leq d$ be an integer.
The subspace $\mathcal{D}^{\mf, k}_{\theta, d+1} \subset \mathcal{D}^{\mf}_{\theta, d+1}$ consists of those $W$ subject the following further conditions:
\begin{enumerate} \itemsep.2cm
\item[(a)] all critical points $c \in W$ of the height function $h_{W}$ satisfy $k < \text{index}(c) < d-k+1$;
\item[(b)] the map $\ell_{W}: W \longrightarrow B$ is $k$-connected.  
\end{enumerate}
\end{defn}
By the argument used in the proof of \cite[Theorem 3.9]{GRW 09}, it follows that for all $k$ there is a weak homotopy equivalence,
$
\mathcal{D}^{\mf, k}_{\theta, d+1} \simeq B\Cob_{\theta, d+1}^{\mf, k},
$
where $\Cob_{\theta, d+1}^{\mf, k}$ is the topological category discussed in the introduction. 
As before, we will work with analogues of the above definition for manifolds with boundary.
Recall the space $\mathcal{D}^{\partial, \pitchfork}_{\theta, d+1}$ from Remark \ref{remark: collar condition}. 
\begin{defn} \label{defn: space of long manifolds w boundary morse}
The subspace $\mathcal{D}^{\partial, \mf}_{\theta, d+1} \subset \mathcal{D}^{\partial, \pitchfork}_{\theta, d+1}$ consists of those $W$ for which the height function $h_{W}: W \longrightarrow \R$ is a Morse function. 
Let $P \in \mathcal{M}_{\theta, d-1}$. 
The space $\mathcal{D}^{\mf}_{\theta, d+1}(P)$ is defined to be the fibre of the restriction map, 
$
r: \mathcal{D}^{\partial, \mf}_{\theta, d+1} \longrightarrow \mathcal{D}^{\mf}_{\theta, d},$ $W \mapsto \partial W,$
over the element $\R\times P \in \mathcal{D}^{\mf}_{\theta, d}$.
Clearly we have $\mathcal{D}^{\mf}_{\theta, d+1}(\emptyset) = \mathcal{D}^{\mf}_{\theta, d+1}$.

Let $k \leq d/2$ be an integer.
Let $P \in \mathcal{M}_{\theta, d-1}$.
The subspace $\mathcal{D}^{\mf, k}_{\theta, d+1}(P) \subset \mathcal{D}^{\mf}_{\theta, d+1}(P)$ consists of those $W$ that satisfy the following conditions:
\begin{enumerate} \itemsep.2cm
\item[(a)] all critical points $c \in W$ of the height function $h_{W}$ satisfy $k < \text{index}(c) < d-k+1$;
\item[(b)] the map $\ell_{W}: W \longrightarrow B$ is $k$-connected.  
\end{enumerate}
Similarly, the subspace $\mathcal{D}^{\partial, \mf, k}_{\theta, d+1} \subset \mathcal{D}^{\partial, \mf}_{\theta, d+1}$ consists of those $W$ subject the following further conditions:
\begin{enumerate} \itemsep.2cm
\item[(a)] all critical points $c$ of the height functions $h_{W}$ and $h_{W}|_{\partial W}$ satisfy $k < \text{index}(c) < d-k+1$;
\item[(b)] the maps $\ell_{W}: W \longrightarrow B$ and $\ell_{W}|_{\partial W}: W \longrightarrow B$ are both $k$-connected.  
\end{enumerate}
\end{defn}

\begin{remark} \label{remark: morse functions on manifolds with boundary}
Let $W \in \mathcal{D}^{\partial, \mf}_{\theta, d+1}$.
In the above definition, we allow for the height function $h_{W}: W \longrightarrow \R$ to have critical points that occur on the boundary, $\partial W$, as well as on the interior. 
This is the reason for defining $\mathcal{D}^{\partial, \mf}_{\theta, d+1}$ to be a subspace of $\mathcal{D}^{\partial, \pitchfork}_{\theta, d+1}$ rather than a subspace of $\mathcal{D}^{\partial}_{\theta, d+1}$.
Indeed, if the function $h_{W}: W \longrightarrow \R$ has a critical point on the boundary $\partial W$, then it is impossible 
for the boundary of $W$ to be equipped with a collar as in condition (ii) of Definition \ref{defn: space of long manifolds with boundary} (however, it is still possible for $W$ to intersect $\R\times\{0\}\times\R^{\infty-1}$ transversally).
We refer the reader to \cite{R 16} for basic definitions and results regarding Morse functions on manifolds with boundary. 
We won't have to go too deep into the Morse theory for manifolds with boundary in this paper.
\end{remark}

\subsection{Thom spectra and the homotopy type of $\mathcal{D}^{\mf, k}_{\theta, d+1}$} \label{subsection: thom spectra}
We now recall the main theorem from \cite{P 17} which determines the homotopy type of the spaces $\mathcal{D}^{\mf, k}_{\theta, d+1}(P)$ for $k < d/2$ and $P \in \mathcal{M}_{\theta, d-1}$ (where $P$ is possibly the empty set).
We need to define some Thom spectra. 
\begin{defn} \label{defn: hW spectrum}
Fix $N, k \in \N$ with $k \leq d/2$. 
We let $G_{\theta, d+1}(\R^{d+1+N})$ denote the Grassmannian manifold consisting of $(d+1)$-dimensional vector subspaces $V \leq \R^{d+1+N}$, equipped with a $\theta$-orientation $\hat{\ell}_{V}$. 
$G_{\theta, d+1}(\R^{\infty})$ is defined to be the direct limit of the spaces $G_{\theta, d+1}(\R^{d+1+N})$ taken as $N \to \infty$. 
The space $G^{\mf}_{\theta, d+1}(\R^{d+1+N})$ consists of tuples $(V, l, \sigma)$ where:
\begin{itemize} \itemsep.2cm
\item  $V \in G_{\theta, d+1}(\R^{d+1+N})$;
\item $l: V \longrightarrow \R$ is a linear functional;
\item $\sigma: V\otimes V \longrightarrow \R$ is a bilinear form;
\end{itemize}
subject to the \textit{Morse condition}: if $l = 0$, then $\sigma$ is symmetric and non-degenerate. 
Let $k$ be an integer. 
The subspace $G^{\mf, k}_{\theta, d+1}(\R^{d+1+N}) \subset G^{\mf}_{\theta, d+1}(\R^{d+1+N})$ consists of those $(V, l, \sigma)$ that satisfy:
\begin{itemize}
\item if $l = 0$, then $\sigma$ is symmetric and non-degenerate with $k < \text{index}(\sigma) < d-k+1$. 
\end{itemize}
The space $G^{\mf, k}_{\theta, d+1}(\R^{\infty})$ is the direct limit of the spaces $G^{\mf, k}_{\theta, d+1}(\R^{d+1+N})$ taken as $N \to \infty$.
\end{defn}
Let $V_{d+1, N} \longrightarrow G_{\theta, d+1}(\R^{d+1+N})$ denote the canonical vector bundle and let $V^{\perp}_{d+1, N} \longrightarrow G_{\theta, d+1}(\R^{d+1+N})$ denote the orthogonal complement bundle.
The spectrum $\MT\theta_{d+1}$ has for its $(d+1+N)$-th space the Thom space $\Th(V^{\perp}_{d+1, N})$. 
The structure map
$
S^{1}\wedge\Th(V^{\perp}_{d+1, N}) \longrightarrow \Th(V^{\perp}_{d+1, N+1})
$
is the map induced by the bundle map 
$
\epsilon^{1}\oplus V^{\perp}_{d+1, N} \longrightarrow V^{\perp}_{d+1, N+1}
$
covering the standard inclusion $G_{\theta, d+1}(\R^{d+1+N}) \hookrightarrow G_{\theta, d+1}(\R^{d+1+N+1})$.
It follows that the spectrum $\MT\theta_{d+1}$ is homotopy equivalent to the Thom spectrum of the virtual bundle $-V_{d+1, \infty}$ over $G_{\theta, d+1}(\R^{\infty})$.

Analogous to the situation above, we let $U^{k}_{d+1, N} \longrightarrow G^{\mf, k}_{\theta, d+1}(\R^{d+1+N})$ denote the canonical bundle, i.e.\  the vector bundle obtained by pulling back $V_{d+1, N} \longrightarrow G_{\theta, d+1}(\R^{d+1+N})$ over the forgetful map $G^{\mf, k}_{\theta, d+1}(\R^{d+1+N}) \longrightarrow G_{\theta, d+1}(\R^{d+1+N})$. 
Similarly, we let $U^{k, \perp}_{d+1, N} \longrightarrow G^{\mf, k}_{\theta, d+1}(\R^{d+1+N})$ denote the orthogonal bundle. 
We define $\mb{hW}^{k}_{\theta, d+1}$ to be the spectrum with $(d+1+N)$-th space given by $\Th\left(U^{k, \perp}_{d+1, N}\right)$ and structure maps defined in the same way as with $\MT\theta_{d+1}$.
For the case $k = -1$, this is the main spectrum considered in \cite{MW 07}.

The proceeding proposition follows by combining the results of Galatius, Madsen, Tillmann, and Weiss \cite{GMTW 08}, with the work of Genauer \cite{G 12}.
\begin{proposition}  \label{proposition: boundary map long manifold fibre sequence}
For all $d \in \Z_{\geq 0}$ and choices of $P \in \mathcal{M}_{\theta, d-1}$,
there is a commutative diagram, 
\begin{equation} \label{equation: commutative diagram of scanning maps}
\xymatrix{
\mathcal{D}_{\theta, d+1}(P) \ar[r] \ar[d]_{\simeq} & \mathcal{D}^{\partial}_{\theta, d+1} \ar[r]^{r} \ar[d]_{\simeq} & \mathcal{D}_{\theta, d} \ar[d]_{\simeq} \\
\Omega^{\infty-1}\MT\theta_{d+1} \ar[r] &  \Omega^{\infty-1}\Sigma^{\infty}B_{+} \ar[r] & \Omega^{\infty-1}\MT\theta_{d},
}
\end{equation}
rows are fibre-sequences and the vertical maps are weak homotopy equivalences.
\end{proposition}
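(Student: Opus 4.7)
The plan is to combine the GMTW theorem \cite{GMTW 08} with Genauer's theorem \cite{G 12} on cobordism categories of manifolds with free boundary, and then verify that the scanning (Pontryagin--Thom) maps fit together into the claimed commutative diagram. First I would establish that the top row is a fibre sequence. The restriction map $r: \mathcal{D}^{\partial}_{\theta, d+1} \longrightarrow \mathcal{D}_{\theta, d}$, $W \mapsto \partial W$, is a Serre fibration by a standard parametrized isotopy extension argument: given a lift of a map $D^n \to \mathcal{D}_{\theta, d}$ defined on $\partial D^n$, one extends the prescribed family of boundary isotopies to an ambient isotopy in $\R\times(-\infty, 0]\times\R^{\infty-1}$, using that the collar condition (ii) in Definition \ref{defn: space of long manifolds with boundary} makes the tubular neighbourhood data near the boundary canonical. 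By construction the fibre over $\R\times P$ is $\mathcal{D}_{\theta, d+1}(P)$.

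For the vertical equivalences, the right-hand map $\mathcal{D}_{\theta, d} \simeq \Omega^{\infty-1}\MT\theta_{d}$ is the scanning map and is a weak homotopy equivalence by GMTW in its long-manifold formulation (cf.\ the argument used in \cite[Theorem 3.9]{GRW 09}). The left-hand equivalence is again an instance of GMTW: the case $P = \emptyset$ is literally the GMTW statement, while for a general $P$ the fixed collar $(-\varepsilon, 0]\times P$ of the boundary contributes trivially to scanning, so after translating inward in the normal direction one obtains a weak equivalence from $\mathcal{D}_{\theta, d+1}(P)$ onto the space of closed long $(d+1)$-manifolds, namely $\mathcal{D}_{\theta, d+1} \simeq \Omega^{\infty-1}\MT\theta_{d+1}$. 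For the middle equivalence, Genauer \cite{G 12} proved that the classifying space of the cobordism category of manifolds with free boundary has the homotopy type of $\Omega^{\infty-1}\Sigma^{\infty}B_{+}$; the same argument used in Step 2 (passing between long manifolds and the classifying space of the cobordism category) identifies $\mathcal{D}^{\partial}_{\theta, d+1}$ with $\Omega^{\infty-1}\Sigma^{\infty}B_{+}$.

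Commutativity of the two squares follows from naturality of the scanning construction: both the inclusion $\mathcal{D}_{\theta, d+1}(P) \hookrightarrow \mathcal{D}^{\partial}_{\theta, d+1}$ and the boundary restriction $r$ are compatible with pointwise tangential data, so applying scanning before or after restricting to the boundary produces homotopic infinite loop maps. The main obstacle I anticipate is the precise identification of Genauer's target as $\Sigma^{\infty}B_{+}$ (as opposed to a Thom spectrum over $B(d)$ or $B(d+1)$), together with verifying that the geometric fibre sequence of the top row realizes, under scanning, exactly the Genauer cofibre sequence $\MT\theta_{d+1} \longrightarrow \Sigma^{\infty-1}B_{+} \longrightarrow \MT\theta_{d}$ on spectra. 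Once this identification is pinned down, the two squares commute by the naturality of the scanning maps with respect to the inclusions of Grassmannians that define these Thom spectra.
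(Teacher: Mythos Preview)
Your proposal is correct and follows exactly the approach the paper takes: the paper does not give a proof at all beyond the sentence ``The proceeding proposition follows by combining the results of Galatius, Madsen, Tillmann, and Weiss \cite{GMTW 08}, with the work of Genauer \cite{G 12},'' and your sketch simply unpacks what that combination means. One small slip: in your final paragraph the Genauer cofibre sequence should read $\MT\theta_{d+1} \to \Sigma^{\infty}B_{+} \to \MT\theta_{d}$ (no $-1$ shift on the middle term), matching the bottom row of the diagram after applying $\Omega^{\infty-1}$.
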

The theorem stated below is our main result from \cite{P 17}.
\begin{theorem} \label{theorem: homotopy type of space of long manifolds}
Let $k < d/2$ and let $P \in \mathcal{M}_{\theta, d-1}$ be null-bordant as a $\theta$-manifold.
Suppose that the space $B$ satisfies Wall's finiteness condition $F(k)$. 
Then there is a weak homotopy equivalence,
$$\mathcal{D}^{\mf, k}_{\theta, d+1}(P) \stackrel{\simeq} \longrightarrow \Omega^{\infty-1}\mb{hW}^{k}_{\theta, d+1}.$$
\end{theorem}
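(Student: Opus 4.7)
The plan is to construct a parametrized Pontryagin--Thom (scanning) map
$$\mathcal{S}: \mathcal{D}^{\mf, k}_{\theta, d+1}(P) \longrightarrow \Omega^{\infty-1}\mb{hW}^{k}_{\theta, d+1}$$
and prove it is a weak equivalence in two stages: first establish the analogous statement without the index/connectivity constraints, using the sheaf-theoretic techniques of \cite{MW 07} and \cite{GMTW 08} extended to incorporate Morse-singularity data; then reduce the constrained case to the unconstrained one by a parametrized surgery argument in the spirit of Galatius--Randal-Williams.

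To define $\mathcal{S}$, given $W \in \mathcal{D}^{\mf,k}_{\theta,d+1}(P)$ one chooses a tubular neighbourhood of $W$ in $\R \times \R^\infty$ and collapses its complement; for each $N \gg 0$ this yields a based map $S^{N+1} \to \Th(U^{k,\perp}_{d+1,N})$ whose value at a point near $W$ records the tangent $(d+1)$-plane together with the $2$-jet of the height function $h_W$ at the nearest point, i.e.\ a pair consisting of a linear functional and a symmetric bilinear form. The Morse condition together with the index restriction $k < \text{index}(c) < d-k+1$ forces this $2$-jet to land in $G^{\mf,k}_{\theta, d+1}$, so the output is an element of $\Omega^{\infty-1}\mb{hW}^{k}_{\theta, d+1}$ depending continuously on $W$.

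In the unconstrained setting, the analogue $\mathcal{D}^{\mf}_{\theta, d+1}(P) \to \Omega^{\infty-1}\mb{hW}^{-1}_{\theta, d+1}$ is a weak equivalence by a direct extension of \cite{GMTW 08}: the relevant sheaf of ``manifolds with Morse height functions'' is microflexible because Morse germs form an open codimension-zero stratum in the space of $2$-jets of smooth functions, and the classifying-space comparison then proceeds as in loc.\ cit. The null-bordism hypothesis on $P$ (combined with Proposition \ref{proposition: boundary map long manifold fibre sequence}) guarantees that $\mathcal{D}^{\mf,k}_{\theta,d+1}(P)$ is non-empty and picks out the basepoint component on which the scanning map is defined.

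The main step is the comparison of constrained and unconstrained models. In a family of pairs $(W, h_W) \in \mathcal{D}^{\mf}_{\theta, d+1}(P)$ one performs surgeries below the middle dimension to kill classes in $\pi_i(\ell_W)$ for $i \leq k$, each contributing a cancelling pair of Morse critical points of indices $i{+}1$ and $i{+}2$, both of which lie safely in the window $(k, d-k+1)$ since $k < d/2$. Wall's condition $F(k)$ on $B$ ensures that a bounded amount of surgery data suffices to achieve $k$-connectivity of $\ell_W$, so the space of such surgery choices is highly connected; a standard semi-simplicial resolution then converts these contractible families of choices into a weak equivalence, matching the inclusion $G^{\mf,k} \hookrightarrow G^{\mf}$ on the spectrum side. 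The hard part will be executing this parametrized surgery argument while simultaneously controlling the Morse function: one must arrange the new critical points in families to land in the prescribed index window, verify that the resulting modifications form a highly connected space of choices, and track these through the Pontryagin--Thom collapse. This is the Morse-functioned analogue of the central technical input of Galatius--Randal-Williams and is presumably where the bulk of \cite{P 17} is spent.
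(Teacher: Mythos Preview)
This theorem is not proved in the present paper: it is quoted as the main result of \cite{P 17}, and no argument is given here beyond that citation. Your outline is therefore a sketch of that external proof. At the coarsest level your two-stage plan (scanning map, then parametrized surgery to impose the constraints) is correct and matches \cite{P 17}.

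The organization of the surgery step differs. Rather than comparing $\mathcal{D}^{\mf}$ directly to $\mathcal{D}^{\mf,k}$, \cite{P 17} passes (via the zig-zag (\ref{equation: W to L to D zig zag})) to the homotopy-colimit model $\mathcal{W}^{k}_{\theta,d}(P)$ of Definition~\ref{defn: colimit decomp of W}: its points are $d$-dimensional \emph{level} manifolds $M$ with $\ell_M$ already $k$-connected, decorated by explicit surgery data of index in $(k,d-k+1)$. The passage from $k{-}1$ to $k$ is then an inductive step using the localization quasifibration of Theorem~\ref{theorem: localization map homotopy equivalence}, with Theorem~\ref{theorem: parametrized surgery} (where $F(k)$ enters) supplying the comparison $\mathcal{W}^{k,c}\simeq\mathcal{W}^{k}$ at each stage. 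This buys a clean separation: one never has to simultaneously adjust the connectivity of $\ell_W$ and the Morse indices on the ambient $(d{+}1)$-manifold.

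Your surgery paragraph has a genuine gap. You write that killing $\pi_i(\ell_W)$ for $i\le k$ introduces a ``cancelling pair of Morse critical points of indices $i{+}1$ and $i{+}2$'' lying in $(k,\,d-k+1)$. But a cancelling pair of critical points does not change the diffeomorphism type of $W$, so it cannot alter the connectivity of $\ell_W$; and under any reading, for $i<k$ the index $i{+}1\le k$ falls outside the allowed window. In \cite{P 17} this is avoided precisely by working on the level manifold $M$ rather than on $W$: the connectivity of $\ell_M$ is improved one degree at a time, and the only surgery data added at the step from $k{-}1$ to $k$ has index $k$ or $d{-}k{+}1$, exactly the stratum peeled off by the localization map.
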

We remark that in the special case $k = -1$, the above weak homotopy equivalence was proven by Madsen and Weiss in \cite{MW 07}.

\subsection{$\KO$-orientations} \label{subsection: ko orientation}
Fix a tangential structure $\theta: B \longrightarrow BO$. 
We will assume for the rest of this section that the space $B$ is $2$-connected. 
This implies that the map $\theta$ factors (up to homotopy) through the map $B\Spin \longrightarrow BO$. 
For latter use it will be important to specify $\KO$-theory classes in the spectra $\mb{hW}_{\theta, d}$ and $\MT\theta_{d}$.
By our connectivity assumption on $B$ it follows that for each $N \in \N$, the bundle $V_{d, N}^{\perp} \longrightarrow G_{\theta, d}(\R^{d+N})$ has a $\Spin(d)$-structure, and thus the Thom space $\Th(U_{d, N}^{\perp})$ is equipped with a $\KO$-orientation, $\lambda_{V^{\perp}_{d, N}} \in \KO^{N}(\Th(U_{d, N}^{\perp}))$. 
These Thom classes fit together to form a spectrum $\KO$-theory class $\lambda_{-d} \in \KO^{-d}(\MT\theta_{d})$, or in other words they yield a spectrum map 
$
\lambda_{-d}: \MT\theta_{d} \longrightarrow \Sigma^{-d}\KO.
$
For any integer $i \in \Z$, we may apply the functor $\Omega^{\infty+i}(\--)$ to this map. 
We the resulting infinite loop map by,
$$
\mathcal{A}^{i}_{d} = \Omega^{\infty+i}\lambda_{-d}: \Omega^{\infty+i}\MT\theta_{d} \longrightarrow \Omega^{\infty+d+i}\KO.
$$
We may apply the exact same construction to the Thom spaces of the bundles 
$$U^{k, \perp}_{d+1, N} \longrightarrow G^{\mf, k}_{\theta, d+1}(\R^{d+1+N})$$ 
to obtain a spectrum map $\hat{\lambda}_{-d}: \mb{hW}_{\theta, d}^{k} \longrightarrow \Sigma^{-d}\KO$, and infinite loop maps,
$$
\widehat{\mathcal{A}}^{i}_{d} = \Omega^{\infty+i}\hat{\lambda}_{-d}: \Omega^{\infty+i}\mb{hW}_{\theta, d}^{k} \longrightarrow \Omega^{\infty+d+i}\KO
$$
for all integer $i$.
We now describe how these $\KO$-theory classes $\mathcal{A}^{i}_{d}$ relate to each other for different values of $d$. 
The inclusion, 
$G_{\theta, d}(\R^{d+N}) \hookrightarrow G_{\theta, d+1}(\R^{d+1+N}),$ $V \mapsto \R\times V,$
is covered by a bundle map $V^{\perp}_{d, N} \longrightarrow V^{\perp}_{d+1, N}$.
It thus induces a map of spectra
$\Sigma^{-1}\MT\theta_{d} \longrightarrow \MT\theta_{d+1}$, and for each $i \in \Z$ an infinite loopmap,
$
\iota^{i}_{d}: \Omega^{\infty+i}\MT\theta_{d} \longrightarrow \Omega^{\infty+i-1}\MT\theta_{d+1}. 
$
By how the map $\mathcal{A}^{i}_{d}$ was defined, it follows that 
\begin{equation} \label{equation: compatibility of k-theory orientations}
\mathcal{A}^{i-1}_{d+1}\circ\iota^{i}_{d} \; = \; \mathcal{A}^{i}_{d}
\end{equation}
for all $i$ and $d$.
Now, consider the map 
$
G_{\theta, d}(\R^{d+N}) \longrightarrow G^{\mf, k}_{\theta, d+1}(\R\times\R^{d+N}),
$
defined by sending $V \in G_{\theta, d}(\R^{\infty})$ to the element $(\R\times V, l_{0}, \sigma_{0})$, where $\R\times V \subset \R\times\R^{d+N}$ is the product vector space, $l_{0}: \R\times V \longrightarrow \R$ is the projection onto the first coordinate, and $\sigma_{0}$ is the trivial bilinear form. 
This map is covered by a bundle map $V^{\perp}_{d, N} \longrightarrow U^{\perp}_{d+1, N}$, and thus induces a map of spectra $\Sigma^{-1}\MT\theta_{d} \longrightarrow \mb{hW}^{k}_{\theta, d+1}$, and for each $i$ an infinite loop map,
$
\Omega^{\infty+i}\MT\theta_{d} \longrightarrow \Omega^{\infty+i-1}\mb{hW}^{k}_{\theta, d+1}.
$
It is easily verified that the map $\iota^{i}_{d}$ agrees with the composite,
$$
\xymatrix{
\Omega^{\infty+i}\MT\theta_{d} \ar[r] & \Omega^{\infty+i-1}\mb{hW}^{k}_{\theta, d+1} \ar[r] & \Omega^{\infty+i-1}\MT\theta_{d+1}. 
}
$$
The following proposition follows directly from the definitions of the maps described above.
\begin{proposition} \label{proposition: commutativity of k-theory orientations}
For all $k$, $i$, and $d$, the following diagram is commutative,
$$
\xymatrix{
\Omega^{\infty+i}\MT\theta_{d} \ar[r] \ar[dr]_{\mathcal{A}^{i}_{d} \ \ \ \ } & \Omega^{\infty+i-1}\mb{hW}^{k}_{\theta, d+1} \ar[r] \ar[d]^{\widehat{\mathcal{A}}^{i-1}_{d+1}} & \Omega^{\infty+i-1}\MT\theta_{d+1} \ar[dl]^{\mathcal{A}^{i-1}_{d+1} \ \ \ \ } \\
& \Omega^{\infty+d+i}\KO.
}
$$
\end{proposition}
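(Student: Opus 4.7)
The plan is to lift the claimed commutativity from the level of infinite loop spaces to the level of spectra, and then verify it directly on Thom classes. Since $\mathcal{A}^{i}_{d}$, $\widehat{\mathcal{A}}^{i-1}_{d+1}$, and $\mathcal{A}^{i-1}_{d+1}$ are obtained by applying $\Omega^{\infty+i}(-)$ (respectively $\Omega^{\infty+i-1}(-)$) to the spectrum maps $\lambda_{-d}$, $\hat{\lambda}_{-d-1}$, and $\lambda_{-d-1}$, it suffices to establish the analogous diagram
\[
\xymatrix{
\Sigma^{-1}\MT\theta_{d} \ar[r]^{j_d} \ar[dr]_{\Sigma^{-1}\lambda_{-d}} & \mb{hW}^{k}_{\theta, d+1} \ar[r]^{p_{d+1}} \ar[d]^{\hat{\lambda}_{-d-1}} & \MT\theta_{d+1} \ar[dl]^{\lambda_{-d-1}} \\
& \Sigma^{-d-1}\KO &
}
\]
in the stable homotopy category, where $j_d$ and $p_{d+1}$ are the spectrum maps described just before the proposition.

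The two triangles will be treated separately. For the right-hand triangle, observe that $p_{d+1}$ is induced by the bundle map $U^{k,\perp}_{d+1,N} \to V^{\perp}_{d+1,N}$ obtained by pulling back $V^{\perp}_{d+1,N}$ along the forgetful map $G^{\mf,k}_{\theta,d+1}(\R^{d+1+N}) \to G_{\theta,d+1}(\R^{d+1+N})$. In particular, $U^{k,\perp}_{d+1,N}$ is the pullback of $V^{\perp}_{d+1,N}$, so its spin structure is pulled back as well, and by naturality of the $\KO$-Thom class the classes $\lambda_{V^{\perp}_{d+1,N}}$ and $\lambda_{U^{k,\perp}_{d+1,N}}$ agree under this map. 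Taking the stable limit in $N$ gives $\lambda_{-d-1} \circ p_{d+1} = \hat{\lambda}_{-d-1}$.

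For the left-hand triangle, recall that $j_d$ comes from the map $V \mapsto (\R\times V, l_0, \sigma_0)$, covered by the bundle isomorphism $V^{\perp}_{d,N} \cong U^{k,\perp}_{d+1,N}|_{G_{\theta,d}(\R^{d+N})}$ on the image (with the $\R$-factor absorbed into the shift of suspension coordinate). Under this identification the two spin bundles are identified canonically, so again by naturality of the $\KO$-Thom class their Thom classes correspond; this is exactly the identity $\hat{\lambda}_{-d-1} \circ j_d = \Sigma^{-1}\lambda_{-d}$.

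Applying $\Omega^{\infty+i}(-)$ to both triangles yields the proposition. The main thing to be careful with is not the homotopical input, which is only naturality of the spin Thom class under bundle maps, but rather bookkeeping the loop and suspension degrees; concretely, one must check that applying $\Omega^{\infty+i}(-)$ to $j_d: \Sigma^{-1}\MT\theta_d \to \mb{hW}^k_{\theta,d+1}$ produces the infinite loop map $\Omega^{\infty+i}\MT\theta_d \to \Omega^{\infty+i-1}\mb{hW}^k_{\theta,d+1}$ featured in the diagram, and likewise that $\Omega^{\infty+i-1}p_{d+1}$ is the forgetful map used on the right. Since both are already how $j_d$ and $p_{d+1}$ were introduced in the text preceding the proposition, no further work is required, and the proposition follows.
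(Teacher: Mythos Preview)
Your proposal is correct and matches the paper's approach: the paper states only that the proposition ``follows directly from the definitions of the maps described above,'' and what you have written is precisely the unpacking of that sentence---the spectrum maps are induced by bundle maps covering the forgetful and inclusion maps of Grassmannians, and naturality of the spin $\KO$-Thom class under such bundle maps gives commutativity at the spectrum level, hence after applying $\Omega^{\infty+i}(-)$.
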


\section{Spaces of Manifolds Equipped with Surgery Data} \label{section: Spaces of manifolds equipped with surgery}
\subsection{Spaces of manifolds equipped with surgery data} \label{subsection: spaces of manifolds equipped with surgery data}
In this section we recall the alternative model for the cobordism category $\Cob_{\theta, d+1}^{\mf, k}$ from \cite[Section 4]{P 17} (first conceived of in \cite{MW 07}).
Though the spaces in this section have already been defined in that paper, we will need to invoke the details of the construction many times latter on and so for the sake of completeness, we find it necessary to give the definitions again here. 
Plus, we also use slightly different notation than in \cite{P 17}.
For what follows, fix once and for all an infinite set $\Omega$. 
\begin{defn} \label{defn: surgery category}
Let $d \in \Z_{\geq 0}$. 
An object of the category $\mathcal{K}_{d}$ is a finite subset $\mb{t} \subset \Omega$ equipped with a map, 
$\delta: \mb{t} \longrightarrow \{0, 1, 2, \dots, d+1\}.$
A morphism from $\mb{s}$ to $\mb{t}$ is a pair $(j, \varepsilon)$ where $j$ is an injective map over $\{0, \dots, d+1\}$ from $\mb{s}$ to $\mb{t}$, and $\varepsilon$ is a function $\mb{t}\setminus j(\mb{s}) \longrightarrow \{-1, +1\}$. 
The composition of two morphisms $(j_{1}, \varepsilon_{1}): \mb{s} \longrightarrow \mb{t}$ and $(j_{2}, \varepsilon_{2}): \mb{t} \longrightarrow \mb{u}$ is $(j_{2}j_{1}, \varepsilon_{3})$, where $\varepsilon_{3}$ agrees with $\varepsilon_{2}$ outside $j_{2}(\mb{t})$ and $\varepsilon_{1}\circ j^{-1}_{2}$ on $j_{2}(\mb{t}\setminus j_{1}(\mb{s}))$. 

We will need to work some subcategories of $\mathcal{K}_{d}$.
Let $k \leq d/2$ be an integer. 
The subcategory $\mathcal{K}^{k}_{d} \subset \mathcal{K}_{d}$ is the full subcategory on those objects $\mb{t}$ for which $k < \delta(t) < d-k+1$ for all $t \in \mb{t}$. 
\end{defn}

We will ultimately define space-valued contravariant functors on the categories $\mathcal{K}^{k}_{d}$. 
To get to our main definition (Definition \ref{defn: colimit decomp of W}) we will need to fix some notation.  
We define the space $G^{\mf}_{\theta, d+1}(\R^{\infty})_{\loc}$ to consist of pairs $(V, \sigma)$ where:
\begin{enumerate} \itemsep.2cm
\item[(i)] $V \leq \R^{\infty}$ is a $(d+1)$-dimensional vector subspace equipped with a $\theta$-orientation $\hat{\ell}_{V}$; 
\item[(ii)] $\sigma: V\otimes V \longrightarrow \R$ is a non-degenerate, symmetric bilinear form. 
\end{enumerate}
Let $(V, \sigma) \in G^{\mf}_{\theta, d+1}(\R^{\infty})_{\loc}$. 
We denote by $V^{\pm} \leq V$ the positive and negative eigenspaces associated to $\sigma$. 
We denote by $D(V^{\pm})$ and $S(V^{\pm})$ the unit spheres and unit disks associated to $V^{\pm}$, defined using the inner product induced by the ambient space $\R^{\infty}$.
Let $\mb{t} \in \mathcal{K}_{d}$. 
We will need to work with the mapping space 
$$
(G^{\mf}_{\theta, d+1}(\R^{\infty})_{\loc})^{\mb{t}} \; = \; \Maps(\mb{t}, G^{\mf}_{\theta, d+1}(\R^{\infty})_{\loc}).
$$
Elements of $(G^{\mf}_{\theta, d+1}(\R^{\infty})_{\loc})^{\mb{t}}$ will be denoted by $(V, \sigma)$ where $(V, \sigma)(t) = (V(t), \sigma(t))$ for $t \in \mb{t}$.
Let $(V, \sigma) \in (G^{\mf}_{\theta, d+1}(\R^{\infty})_{\loc})^{\mb{t}}$.
We denote 
\begin{equation} \label{equation: sphere-disk bundle over t}
D(V^{-})\times_{\mb{t}}D(V^{+}) \; = \; \coprod_{t \in \mb{t}}D(V^{-}(t))\times D(V^{+}(t)).
\end{equation}
The spaces $S(V^{-})\times_{\mb{t}}D(V^{+})$, $D(V^{-})\times_{\mb{t}}S(V^{+})$, and $S(V^{-})\times_{\mb{t}}S(V^{+})$ are defined similarly.
The $\theta$-orientation $\hat{\ell}_{V}$ on $V$ induces a $\theta$-structure on $D(V^{-})\times_{\mb{t}}D(V^{+})$ which we denote by $\hat{\ell}_{V, \mb{t}}$. 

Fix $P \in \mathcal{M}_{\theta, d-1}$. 
For an integer $k$, let $\mathcal{M}_{\theta, d}^{k}(P) \subset  \mathcal{M}_{\theta, d}(P)$ be the subspace consisting of those $M$ for which the map $\ell_{M}: M \longrightarrow B$ is $k$-connected. 
It easy to see that $\mathcal{M}_{\theta, d}^{k}(P)$ is a collection of path-components of $\mathcal{M}_{\theta, d}(P)$.
With the above notation and terminology in place, we are now ready to present the main definition. 
\begin{defn} \label{defn: colimit decomp of W}
Let $k \leq d/2$ be an integer. 
Fix $P \in \mathcal{M}_{\theta, d-1}$ and let $\mb{t} \in \mathcal{K}^{k}_{d}$. 
The space $\mathcal{W}^{k}_{\theta, d}(P; \mb{t})$ consists of tuples $(M, (V, \sigma), e)$  where:
\begin{enumerate} \itemsep.3cm
\item[(i)] $M$ is an element of $\mathcal{M}^{k}_{\theta, d}(P)$.
\item[(ii)] The element $(V, \sigma) \in (G^{\mf}_{\theta, d+1}(\R^{\infty})_{\loc})^{\mb{t}}$ satisfies
$\text{index}(\sigma(t)) = \delta(t)$ for all $t \in \mb{t}$. 
\item[(iii)] $e: D(V^{-})\times_{\mb{t}}D(V^{+}) \longrightarrow (-\infty, 0)\times\R^{\infty-1}$ is an embedding that satisfies,
$$
e^{-1}(M) \; = \; S(V^{-})\times_{\mb{t}}D(V^{+}).
$$
\item[(iv)] The $\theta$-structure $\hat{\ell}_{V, \mb{t}}$ on $D(V^{-})\times_{\mb{t}}D(V^{+})$ agrees with $\hat{\ell}_{M}$ when restricted to $M$. 
\end{enumerate}
In the case that $P = \emptyset$, we will denote, $\mathcal{W}^{k}_{\theta, d}(\mb{t}) := \mathcal{W}^{k}_{\theta, d}(\emptyset; \mb{t})$.
\end{defn}

We need to review how to make $\mb{t} \mapsto \mathcal{W}^{k}_{\theta, d}(P; \mb{t})$ into a contravariant space-valued functor on $\mathcal{K}^{k}_{d}$. 
Let $(j, \varepsilon): \mb{s} \longrightarrow \mb{t}$ be a morphism in $\mathcal{K}^{k}_{d}$. 
If $j$ is bijective, there is an obvious identification $\mathcal{W}^{k}_{\theta, d}(P; \mb{t}) \cong \mathcal{W}^{k}_{\theta, d}(P; \mb{s})$ and this bijection is the induced map. 
To define $(j, \varepsilon)_{*}$, we may assume that $j$ is an inclusion $\mb{s} \hookrightarrow \mb{t}$. 
We may then reduce to the case where $\mb{t}\setminus \mb{s}$ has exactly one element, $a$. 
This case has two subcases: $\varepsilon(a) = +1$ and $\varepsilon(a) = -1$. 

\begin{defn}[$\varepsilon(a) = +1$]  \label{defn: pullback +1}
Let $(j, \varepsilon): \mb{s} \longrightarrow \mb{t}$ be as above and suppose that $\varepsilon(a) = +1$. 
The induced map 
$(j, \varepsilon)^{*}: \mathcal{W}^{k}_{\theta, d}(P; \mb{t}) \longrightarrow \mathcal{W}^{k}_{\theta, d}(P; \mb{s})$
is defined by sending $(M, (V, \sigma), e)$ to $(M, (V', \sigma'), e')$ where $e'$ is obtained by restricting $e$ to the subspace, 
$D(V^{-})\times_{\mb{s}}D(V^{+})  \subset D(V^{-})\times_{\mb{t}}D(V^{+}),$
and $(V', \sigma')$ is obtained by restricting $(V, \sigma)$ to $\mb{s}$.
\end{defn}

\begin{defn}[$\varepsilon(a) = -1$] \label{defn: pullback -1} 
Let $(j, \varepsilon): \mb{s} \longrightarrow \mb{t}$ be as above and suppose that $\varepsilon(a) = -1$. 
The induced map 
$
\mathcal{W}^{k}_{\theta, d}(P; \mb{t}) \longrightarrow \mathcal{W}^{k}_{\theta, d}(P; \mb{s})
$
is defined as follows:
Let $(M, (V, \sigma), e)$ be an element of $\mathcal{W}^{k}_{\theta, d}(P; \mb{t})$.
Map this to the element $(\widetilde{M}, (\widetilde{V}, \widetilde{\sigma}), \widetilde{e})$ in $\mathcal{W}^{k}_{\theta, d}(P; \mb{s})$ where: 
\begin{enumerate} \itemsep.2cm
\item[(i)] $\widetilde{M}$ is the element of $\mathcal{M}^{k}_{\theta, d}(P)$ given by, 
$$
\widetilde{M} \; = \; (M\setminus e(S(V^{-})\times_{a}D(V^{+}))\bigcup e(D(V^{-})\times_{a}S(V^{+})),
$$
where $S(V^{-})\times_{a}D(V^{+}) \subset S(V^{-})\times_{\mb{t}}D(V^{+})$ is the component of $S(V^{-})\times_{\mb{t}}D(V^{+})$ that corresponds to $a \in \mb{t}$.
\item[(ii)] $\widetilde{e}$ is obtained from $e$ by restriction to $D(V^{-})\times_{\mb{s}}D(V^{+})$. 
\item[(iii)] $(\widetilde{V}, \widetilde{\sigma})$ is obtained from $(V, \sigma)$ by restriction to $\mb{s}$. 
\end{enumerate}
The definitions above make the assignment $\mb{t} \mapsto \mathcal{W}^{k}_{\theta, d}(P; \mb{t})$ into a contravariant functor on $\mathcal{K}^{k}_{d}$.
We will be interested in the homotopy colimit of the spaces $\mathcal{W}^{k}_{\theta, d}(P; \mb{t})$ taken over $\mb{t} \in \mathcal{K}_{d}$. 
We denote
$$
\mathcal{W}^{k}_{\theta, d}(P) \; := \; \hocolim_{\mb{t} \in \mathcal{K}^{k}_{d}}\mathcal{W}^{k}_{\theta, d}(P; \mb{t}).
$$
In the case that $P$ is the empty set we will denote, $\mathcal{W}^{k}_{\theta, d} := \mathcal{W}^{k}_{\theta, d}(\emptyset)$.
\end{defn}
In the appendix of \cite{P 17} we proved that for all $k$ and $P$ there is a zig-zag of weak homotopy equivalences, 
\begin{equation} \label{equation: W to L to D zig zag}
\xymatrix{
\mathcal{W}^{k}_{\theta, d}(P) & \mathcal{L}^{k}_{\theta, d}(P) \ar[l]_{ \ \ \ \simeq} \ar[r]^{\simeq \ \ \ } & \mathcal{D}^{\mf, k}_{\theta, d+1}(P),
}
\end{equation}
and thus a weak homotopy equivalence, 
$
\mathcal{W}^{k}_{\theta, d}(P) \simeq B\Cob_{\theta, d+1}^{\mf, k}.
$
For this reason we consider the transport category $\mathcal{K}^{k}_{d}\wr\mathcal{W}^{k}_{\theta, d}(P, \--)$ to be an alternative model for the cobordism category $\Cob_{\theta, d+1}^{\mf, k}$. 

There is a particular sub-functor of $\mathcal{W}^{k}_{\theta, d}(P; \--)$ that we will also need to work with. 
\begin{defn} \label{defn: W-conn}
Let $k \in \Z_{\geq -1}$. 
Let $P \in \mathcal{M}_{\theta, d-1}$. 
For $\mb{t} \in \mathcal{K}_{d}^{k}$, the subspace 
$$\mathcal{W}^{k, c}_{\theta, d}(P; \mb{t}) \subset \mathcal{W}^{k}_{\theta, d}(P; \mb{t})$$
consists of those $(M, (V, \sigma), e)$ for which the map, 
$
\ell_{M}|_{M\setminus \Image(e)}: M\setminus \Image(e)\; \longrightarrow \; B,
$
is $(k+1)$-connected. 
It is easily verified that the correspondence $\mb{t} \mapsto \mathcal{W}^{k, c}_{\theta, d}(P; \mb{t})$ defines a sub-functor of $\mb{t} \mapsto \mathcal{W}^{k}_{\theta, d}(P; \mb{t})$, for all choices of $P$.
We define,
$
\mathcal{W}^{k, c}_{\theta, d}(P) \; := \; \displaystyle{\hocolim_{\mb{t} \in \mathcal{K}^{k}_{d}}}\mathcal{W}^{k, c}_{\theta, d}(P; \mb{t}).
$
\end{defn}

The following theorem is a restatement of \cite[Theorem 4.1]{P 17}.
\begin{theorem} \label{theorem: parametrized surgery}
Let $k < d/2$. 
Suppose that $\theta: B \longrightarrow BO$ is chosen so that the space $B$ satisfies Wall's finiteness condition $F(k)$.
For all $P \in \mathcal{M}_{\theta, d-1}$, the inclusion $\mathcal{W}^{k, c}_{\theta, d}(P) \hookrightarrow \mathcal{W}^{k}_{\theta, d}(P)$ is a weak homotopy equivalence. 
\end{theorem}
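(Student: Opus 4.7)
The plan is to carry out parametrized surgery in dimension $k+1$. Both $\mathcal{W}^{k}_{\theta, d}(P)$ and $\mathcal{W}^{k, c}_{\theta, d}(P)$ are homotopy colimits over $\mathcal{K}^{k}_{d}$ of the functors $\mb{t} \mapsto \mathcal{W}^{k}_{\theta, d}(P;\mb{t})$ and $\mb{t} \mapsto \mathcal{W}^{k, c}_{\theta, d}(P;\mb{t})$, and the inclusion of interest is induced by a natural transformation of these functors. By a standard cofinality/colimit comparison argument, it will suffice to prove the following local lifting statement: for every pair of compact CW complexes $L \subset K$ and every continuous map $f: K \to \mathcal{W}^{k}_{\theta, d}(P;\mb{t})$ carrying $L$ into $\mathcal{W}^{k, c}_{\theta, d}(P;\mb{t})$, there exists a morphism $(j, \varepsilon): \mb{t} \hookrightarrow \mb{t}'$ in $\mathcal{K}^{k}_{d}$ with $\varepsilon \equiv +1$ on $\mb{t}' \setminus \mb{t}$, together with a continuous lift $\widetilde{f}: K \to \mathcal{W}^{k, c}_{\theta, d}(P;\mb{t}')$ of $f$ along $(j, +1)^{*}$ that agrees with the obvious lift over $L$.

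For the local statement, fix a family $\{(M_\alpha, (V_\alpha, \sigma_\alpha), e_\alpha)\}_{\alpha \in K}$ and write $N_\alpha = M_\alpha \setminus e_\alpha(S(V_\alpha^{-}) \times_{\mb{t}} D(V_\alpha^{+}))$; this is a compact $d$-manifold with corners carrying an induced $\theta$-structure, whose tangential map $\ell_{N_\alpha}: N_\alpha \to B$ is $k$-connected (inherited from $M_\alpha \in \mathcal{M}^{k}_{\theta,d}(P)$). To arrange the defining condition of $\mathcal{W}^{k,c}_{\theta,d}$, I must kill the relative homotopy group $\pi_{k+1}(\ell_{N_\alpha})$ by parametrized surgery. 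Wall's finiteness condition $F(k)$ on $B$, together with compactness of $K$ and finite-dimensionality of $N_\alpha$, supplies a uniform finite generating set for this obstruction that varies continuously in $\alpha$ after subdividing $K$ and passing to a fine enough open cover.

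The geometric heart of the argument is to represent each such generator of $\pi_{k+1}(\ell_{N_\alpha})$ by a $\theta$-framed embedded $k$-sphere in $\Int(N_\alpha) \subset \R^{\infty-1}$, equipped with a thickening of the form $S^{k} \times D^{d-k}$ whose $\theta$-structure is compatible with that of $M_\alpha$. The dimension inequality $k < d/2$ gives $2k+1 \leq d$, so general position (Whitney) allows the prescribed representation by embeddings, while the $\theta$-structure on $M_\alpha$ provides the required framings up to contractible choice. These embeddings are extended across the compact parameter space $\alpha \in K$ by parametrized isotopy extension. The resulting spheres are then added to $\mb{t}$ as new indices $\mb{s}$ with $\delta \equiv k+1$ on $\mb{s}$, producing $\mb{t}' = \mb{t} \sqcup \mb{s}$ and the morphism $(j, +1): \mb{t} \hookrightarrow \mb{t}'$; after adjoining this surgery data to the original triples one obtains the desired lift $\widetilde{f}$ into $\mathcal{W}^{k,c}_{\theta,d}(P;\mb{t}')$.

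The main obstacle is the parametrized geometric step: producing mutually disjoint, framed embeddings of $k$-spheres continuously in $\alpha \in K$ and realizing prescribed homotopy classes simultaneously. This is precisely the content of the surgery construction carried out in [P 17, Section 4], whose proof I would follow by killing one class at a time, using the connectivity inequality $k+1 \leq d-k$ (equivalent to $k < d/2$) to ensure that successive surgeries do not reintroduce obstructions in degrees $\leq k$, and using $F(k)$ to guarantee that the process terminates after uniformly finitely many steps over $K$. Combining this with the cofinality reduction of the first paragraph then yields the required weak homotopy equivalence.
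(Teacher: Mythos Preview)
The paper does not prove this statement; it records it as a restatement of \cite[Theorem 4.1]{P 17}. Your outline—reduce to a relative lifting problem along $(j,+1)$-morphisms in the homotopy colimit, represent generators of $\pi_{k+1}(\ell_{N_\alpha})$ by $\theta$-framed embedded $k$-spheres using $2k<d$ for general position, and invoke Wall's $F(k)$ condition for termination—is exactly the shape of the parametrized surgery argument carried out in that reference, so your approach coincides with what the paper cites.
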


\subsection{The space $\mathcal{W}^{k}_{\theta, d, \loc}$}
For some arguments latter on we will also need to use the ``local version'' of the functor $\mathcal{W}^{k}_{\theta, d}(\--)$ from \cite{P 17}.
We review the definition below. 
For $k \in \Z_{\geq 0}$, let $\mathcal{K}^{\{k, d-k+1\}}_{d} \subset \mathcal{K}_{d}$ denote the subcategory consisting of those $\mb{t}$ for which $\delta(\mb{t}) \in \{k, d-k+1\}$. 

\begin{defn} \label{defn: W-loc}
Let $k \leq d/2$ be an integer. 
Let $\mb{t} \in \mathcal{K}^{\{k, d-k+1\}}_{d}$. 
The space $\mathcal{W}^{\{k, d-k+1\}}_{\theta, d, \loc}(\mb{t})$ consists of tuples $((V, \sigma), e)$  where:
\begin{enumerate} \itemsep.3cm
\item[(i)] $(V, \sigma) \in (G^{\mf}_{\theta, d+1}(\R^{\infty})_{\loc})^{\mb{t}}$ is an element that satisfies $\text{index}(\sigma(t)) = \delta(t)$ for all $t \in \mb{t}$; 
\item[(ii)] $e: D(V^{-})\times_{\mb{t}}D(V^{+}) \longrightarrow \R^{\infty}$ is an embedding.
\end{enumerate}
The assignment $\mb{t} \mapsto \mathcal{W}^{\{k, d-k+1\}}_{\theta, d, \loc}(\mb{t})$ defines a functor on $\mathcal{K}^{\{k, d-k+1\}}_{d}$. 
\end{defn}
Consider the map,
\begin{equation} \label{equation: localization transformation}
\mb{L}_{\mb{t}}: \mathcal{W}^{k-1}_{\theta, d}(P, \mb{t}) \longrightarrow \mathcal{W}^{\{k, d-k+1\}}_{\theta, d, \loc}(\mb{t}), \quad
(M, (V, \sigma), e) \mapsto ((V, \sigma)|_{\delta^{-1}(\{k, d-k+1\})}, \; e),
\end{equation}
where $(V, \sigma)|_{\{k, d-k+1\}}$ is the restriction of $(V, \sigma)$ to the subset $\delta^{-1}(\{k, d-k+1\}) \subset \mb{t}$. 
By precomposing the functor $\mb{t} \mapsto \mathcal{W}^{\{k, d-k+1\}}_{\theta, d, \loc}(\mb{t})$ with the projection $\mathcal{K}^{k-1}_{d} \longrightarrow \mathcal{K}^{\{k, d-k+1\}}_{d}$, we may consider $\mathcal{W}^{\{k, d-k+1\}}_{\theta, d, \loc}(\--)$ to be a functor on $\mathcal{K}^{k-1}_{d}$. 
In this way (\ref{equation: localization transformation}) may be considered to be a natural transformation. 
The following theorem is proven in \cite{P 17}.
Observe that the empty element $\emptyset \in \mathcal{W}^{\{k, d-k+1\}}_{\theta, d, \loc}(\emptyset)$ determines a natural basepoint in the homotopy colimit, $\mathcal{W}^{\{k, d-k+1\}}_{\theta, d, \loc}$.
\begin{theorem} \label{theorem: localization map homotopy equivalence}
Let $k < d/2$. 
For all $P \in \mathcal{M}_{\theta, d-1}$, the map induced by (\ref{equation: localization transformation}), 
$$
\mb{L}: \mathcal{W}^{k-1, c}_{\theta, d}(P) \longrightarrow \mathcal{W}^{\{k, d-k+1\}}_{\theta, d, \loc},
$$
is a quasi-fibration. 
The fibre over $\emptyset \in \mathcal{W}^{\{k, d-k+1\}}_{\theta, d, \loc}$ is homeomorphic to the space $\mathcal{W}^{k}_{\theta, d}(P)$.
\end{theorem}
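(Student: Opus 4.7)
The plan is to prove this in two stages: first identify the fibre over $\emptyset$ as a homotopy colimit over a suitable subcategory, and then establish the quasi-fibration property via a parametrized surgery argument combined with a standard homotopy colimit comparison.

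\emph{Identification of the fibre.} The forgetful functor $\pi: \mathcal{K}^{k-1}_{d} \longrightarrow \mathcal{K}^{\{k, d-k+1\}}_{d}$ sends $\mathbf{t}$ to its restriction to $\delta^{-1}(\{k, d-k+1\})$, so the preimage $\pi^{-1}(\emptyset)$ is precisely the full subcategory $\mathcal{K}^{k}_{d} \subset \mathcal{K}^{k-1}_{d}$ on those $\mathbf{t}$ with all indices strictly between $k$ and $d-k+1$. For such $\mathbf{t}$ the map $\mathbf{L}_{\mathbf{t}}$ lands in the component $\mathcal{W}^{\{k, d-k+1\}}_{\theta, d, \loc}(\emptyset)$, and I would show that $\mathcal{W}^{k-1, c}_{\theta, d}(P; \mathbf{t}) = \mathcal{W}^{k}_{\theta, d}(P; \mathbf{t})$ on the nose whenever $\mathbf{t} \in \mathcal{K}^{k}_{d}$. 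This reduces to a handle-attachment argument: the complement $M \setminus \Image(e)$ is obtained from $M$ by removing tubular neighborhoods of the attaching spheres $e(S(V^{-})\times_{\mathbf{t}}D(V^{+}))$, which have dimension $\delta(t) - 1 \geq k$. A Mayer--Vietoris/van Kampen argument (using $k < d/2$ to keep the attaching data in the stable range) then shows that $k$-connectivity of $\ell_{M}|_{M\setminus\Image(e)}$ is equivalent to $k$-connectivity of $\ell_{M}$ itself. Passing to the homotopy colimit, the fibre is then identified with $\mathcal{W}^{k}_{\theta, d}(P)$.

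\emph{Quasi-fibration property.} At the level of a single $\mathbf{t}$, the map
\[
\mathbf{L}_{\mathbf{t}}: \mathcal{W}^{k-1, c}_{\theta, d}(P; \mathbf{t}) \longrightarrow \mathcal{W}^{\{k, d-k+1\}}_{\theta, d, \loc}(\mathbf{t}|_{\delta^{-1}(\{k, d-k+1\})})
\]
is essentially a projection in a fibre bundle: once the Grassmannian data and the embedding of $D(V^{-})\times_{\mathbf{t}}D(V^{+})$ are fixed, the remaining data are the manifold $M$ together with the surgery embeddings of intermediate index. I would verify local triviality of $\mathbf{L}_{\mathbf{t}}$ by exhibiting, over a neighborhood of a point in the base, a continuous choice of surgery data completing it to an element in the total space, using the tubular neighborhood theorem and a standard isotopy extension argument. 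Given these pointwise quasi-fibrations, I would conclude the statement for the homotopy colimit via a Dold-type gluing argument: the natural transformations of Definitions \ref{defn: pullback +1} and \ref{defn: pullback -1} preserve the fibre up to weak equivalence (this uses Theorem \ref{theorem: parametrized surgery} to replace the fibre $\mathcal{W}^{k}_{\theta, d}(P)$ by its connected variant $\mathcal{W}^{k, c}_{\theta, d}(P)$ when necessary, allowing the standard surgery moves to act).

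\emph{Main obstacle.} The hard part is verifying that the fibres of $\mathbf{L}_{\mathbf{t}}$ are \emph{all} homotopy equivalent to $\mathcal{W}^{k}_{\theta, d}(P)$, and that these equivalences are compatible with the structure maps $(j, \varepsilon)$ of the diagram. For a nonempty $\mathbf{t}' \in \mathcal{K}^{\{k, d-k+1\}}_{d}$, a point in the fibre over $((V, \sigma), e) \in \mathcal{W}^{\{k, d-k+1\}}_{\theta, d, \loc}(\mathbf{t}')$ is a $\theta$-manifold $M$ that is $(k-1)$-connected and whose surgery-complement is $k$-connected. Performing the prescribed $k$- and $(d-k+1)$-surgeries using $(V, \sigma, e)$ trades these connectivity conditions for genuine $k$-connectivity of the surgered manifold, producing an element of $\mathcal{W}^{k}_{\theta, d}(P)$. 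The technical heart of the proof is showing that this trading procedure can be performed continuously in families (and inverted up to homotopy), which is exactly where Wall's finiteness condition $F(k)$ and the stable range $k < d/2$ are required. This is essentially the content of \cite{P 17}, so the proposal here is really to cite that paper and outline the above structural argument.
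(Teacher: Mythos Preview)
Your proposal is essentially correct and mirrors the paper's own treatment, which simply cites \cite{P 17} for the proof; your outline (fibre identification via the handle/connectivity argument, then the quasi-fibration criterion of Proposition~\ref{proposition: homotopy colimit fibre sequence} applied to the natural transformation $\mb{L}_{\mb{t}}$) is indeed the structure of the argument in that reference.

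One small correction is worth flagging. Your invocation of Theorem~\ref{theorem: parametrized surgery} and of Wall's condition $F(k)$ in the final paragraph is misplaced: Theorem~\ref{theorem: localization map homotopy equivalence} as stated does not assume $F(k)$, and the verification that the structure maps $(j,\varepsilon)^{*}$ induce weak equivalences on the homotopy fibres of $\mb{L}_{\mb{t}}$ does not go through the deeper parametrized-surgery result. Rather, it is a direct check: for $\varepsilon = +1$ one is forgetting embedding data into $\R^{\infty}$ (contractible fibre), and for $\varepsilon = -1$ on an extremal index the superscript ``$c$'' is precisely engineered so that the relevant surgery of index $k$ or $d-k+1$ preserves the weak homotopy type of the fibre (the complement $M\setminus\Image(e)$, whose $k$-connectivity is the defining condition, is unchanged by that surgery). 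So your identification of the ``main obstacle'' is right in spirit, but its resolution is more elementary than you suggest and does not require $F(k)$.
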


\section{Spaces of Manifolds Equipped with PSC Metrics} \label{section: Spaces of Manifold Equipped with PSC Metrics and Surgery Data}
\subsection{Spaces of metrics} \label{defn: preliminary definitions psc}
We begin by giving some preliminary constructions involving positive scalar curvature metrics. 
\begin{defn} \label{defn: space of metrics of positive scalar curvature}
Let $M$ be a compact manifold equipped with a collar, 
$h: \partial M\times[0, \infty) \hookrightarrow M.$ 
We denote by $\mathcal{R}^{+}(M)$ the space of all $\psc$ metrics $g$ on $M$, with the property that $h^{*}g = g_{0} + dt^{2}$ for some $\psc$ metric $g_{0}$ on $\partial M$.
Let $\phi$ be a metric on $\partial M$ with positive scalar curvature.
We define $\mathcal{R}^{+}(M)_{\phi} \subset \mathcal{R}^{+}(M)$ to be the subspace consisting of those $g$ with the property that $h^{*}g = \phi + dt^{2}$. 
\end{defn}

Let $W$ be a $d$-dimensional compact manifold with boundary $\partial W = M$. 
There is a restriction map 
$
r: \mathcal{R}^{+}(W) \longrightarrow \mathcal{R}^{+}(M),$
$g \mapsto g|_{M}.$
The following proposition is proven in \cite{Ch 06} and will be very important for certain arguments later on in the paper. 
\begin{proposition} \label{proposition: chernysh restriction theorem}
The restriction map $r: \mathcal{R}^{+}(W) \longrightarrow \mathcal{R}^{+}(M)$ is a quasi-fibration. 
The fibre over $h \in \mathcal{R}^{+}(M)$ is homeomorphic to the space of metrics 
$\mathcal{R}^{+}(W)_{h}$.
\end{proposition}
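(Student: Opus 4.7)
The approach is to verify the standard Dold criterion for quasi-fibrations: every $h_{0} \in \mathcal{R}^{+}(M)$ admits a neighborhood $U$ such that the inclusion of the fibre $\mathcal{R}^{+}(W)_{h_{0}} \hookrightarrow r^{-1}(U)$ is a weak homotopy equivalence. The identification of the fibre with $\mathcal{R}^{+}(W)_{h_{0}}$ is immediate from Definition 4.1: the product-near-boundary condition forces an element of $r^{-1}(h_{0})$ to restrict to $h_{0} + dt^{2}$ on the collar, which is precisely an element of $\mathcal{R}^{+}(W)_{h_{0}}$.

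The key technical tool I would develop is a \emph{parametrized collar-stretching} construction. Given a compact parameter space $K$, a continuous family $\{h_{s}\}_{s \in K} \subset \mathcal{R}^{+}(M)$, and a base metric $g_{s_{0}} \in \mathcal{R}^{+}(W)_{h_{s_{0}}}$, I want to construct a continuous family $\{g_{s}\}_{s \in K}$ in $\mathcal{R}^{+}(W)$ with $g_{s}|_{\partial W} = h_{s}$. The idea is to pass to a very long collar $\partial W \times [0, L] \subset W$ for some $L$ to be chosen, leave $g_{s_{0}}$ unchanged away from this collar, and on the collar use an interpolation of the form $k_{s,t} + dt^{2}$ where $k_{s,t}$ is a cutoff-driven path in $\mathcal{R}^{+}(M)$ running from $h_{s_{0}}$ at $t=0$ to $h_{s}$ at $t=L$. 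The standard Gauss-type computation for the scalar curvature of a cylindrical metric $k_{t} + dt^{2}$ shows that positivity survives provided the $t$-derivatives of $k_{t}$ are sufficiently small pointwise, so by taking $L$ large enough relative to the $C^{2}$-modulus of continuity of $\{h_{s}\}$ over the compact $K$, psc is preserved throughout. A fixed diffeomorphism rescaling the stretched collar back to the original then produces the family $g_{s}$.

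With this tool in hand the Dold criterion is verified as follows. Since $\mathcal{R}^{+}(M)$ is open in the space of all metrics, each $h_{0}$ admits a contractible neighborhood $U$ (e.g.\ a small convex ball in the affine space of symmetric $2$-tensors intersected with $\mathcal{R}^{+}(M)$), together with a continuous family of paths $\gamma_{h}$ from each $h \in U$ to $h_{0}$ depending continuously on $h$. The stretching construction applied to these paths produces a continuous deformation of $r^{-1}(U)$ into $\mathcal{R}^{+}(W)_{h_{0}}$; applied instead to constant paths, it produces a homotopy from the retraction restricted to the fibre back to the identity. Together these exhibit $\mathcal{R}^{+}(W)_{h_{0}} \hookrightarrow r^{-1}(U)$ as a deformation retract, giving the required weak equivalence.

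The main obstacle is the quantitative Gauss--Codazzi / warped-product computation guaranteeing positive scalar curvature under the interpolation, together with making the choice of collar length $L$, the cutoff function, and the interpolating path depend continuously on all parameters as one lets $K$ vary (including over higher-dimensional families needed to detect $\pi_{n}$). Uniform estimates over compact $K$ and a careful parameter-dependent cutoff construction are required; this is the heart of Chernysh's argument in \cite{Ch 06} (and of the parallel developments in Walsh \cite{Wa 11}, \cite{Wa 13}), and any proof must reproduce these estimates essentially verbatim.
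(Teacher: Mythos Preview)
The paper does not prove this proposition at all; it merely states it and attributes it to Chernysh \cite{Ch 06} with the sentence ``The following proposition is proven in \cite{Ch 06} and will be very important for certain arguments later on in the paper.'' So there is no proof in the paper to compare against.

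Your sketch is a faithful outline of Chernysh's actual argument: the collar-stretching interpolation, the scalar-curvature estimate for slowly varying cylindrical metrics, and the Dold local criterion are exactly the ingredients of \cite{Ch 06}. You have correctly identified both the mechanism and the technical obstacle (uniform control of $L$ and the cutoff over compact families). Your proposal is thus strictly more than what the paper provides, and is correct as a sketch; a complete proof would indeed require writing out the quantitative estimates you flag at the end.
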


Fix a tangential structure $\theta: B \longrightarrow BO$. 
We will also need to work with spaces of $\theta$-manifolds equipped with a $\psc$-metric. 
\begin{defn} \label{defn: space of manifolds equipped with metric}
Let $d \in \Z_{\geq 0}$. 
The space $\mathcal{M}^{\psc}_{\theta, d}$ consists of pairs $(M, g)$ where $M \in \mathcal{M}_{\theta, d}$ and $g \in \mathcal{R}^{+}(M)$.
Now, fix an element $(P, g_{P}) \in \mathcal{M}^{\psc}_{\theta, d-1}$. 
The space $\mathcal{M}^{\psc}_{\theta, d}(P, g_{P})$ consists of pairs $(M, g)$ where $M \in \mathcal{M}_{\theta, d}(P)$ and $g \in \mathcal{R}^{+}(M)_{g_{P}}$. 
Following Definition \ref{defn: space of closed manifolds}, the space $\mathcal{M}^{\psc}_{\theta, d}$ is topologized as the quotient space,
\begin{equation} \label{equation: topology of moduli spaces}
\mathcal{M}^{\psc}_{\theta, d} \cong \coprod_{[P]}\left(\Emb(P, \R^{\infty})\times\Bun(TP, \theta^{*}_{d}\gamma^{d})\times\mathcal{R}^{+}(P)\right)/\Diff(P).
\end{equation}
The spaces $\mathcal{M}^{\psc}_{\theta, d}(P, g_{P})$ are topologized similarly. 
\end{defn}

\subsection{The parametrized Gromov-Lawson construction} \label{subsection: Walsh-Chernysh theorem}
Our constructions will draw heavily on results of Chernysh \cite{Ch 04} and  Walsh \cite{Wa 13}, based on a parametrized version of the Gromov-Lawson construction from \cite{GL 80}. 
We review their results here. 
Let $g_{S^{k-1}}$ denote the standard \textit{round metric} on the $(k-1)$-dimensional sphere $S^{k-1}$.
Denote by $g^{d-k}_{\tor}$ the \textit{torpedo metric} on the $(d-k)$-dimensional disk $D^{d-k}$ (see \cite{Wa 13} for its definition).
Consider the product metric $g_{S^{k-1}}+ g^{d-k+1}_{\tor}$ on the product $S^{k-1}\times D^{d-k+1}$. 
Both $g_{S^{k-1}}$ and $g^{d-k+1}_{\tor}$ have positive scalar curvature and thus $g_{S^{k-1}}+ g^{d-k+1}_{\tor}$ has positive scalar curvature as well.
Let $M$ be a compact $d$-dimensional manifold and let $g_{\partial}$ be a $\psc$ metric on the boundary $\partial M$. 
Choose an embedding 
$\phi: S^{k-1}\times D^{d-k+1} \longrightarrow \Int(M).$ 
The subspace 
\begin{equation} \label{equation: standard metrics}
\mathcal{R}^{+}(M, \phi)_{g_{\partial}} \subset \mathcal{R}^{+}(M)_{g_{\partial}} 
\end{equation} 
is defined to consist of all $\psc$ metrics $g \in \mathcal{R}^{+}(M)_{g_{\partial}}$ that satisfy $\phi^{*}g = g_{S^{k-1}}+ g^{d-k+1}_{\tor}$.
The following result is due to Walsh \cite{Wa 13} and Chernysh \cite{Ch 04}.
\begin{theorem} \label{theorem: walsh chernysh}
If $d-k+1 \geq 3$, then the inclusion 
$
\mathcal{R}^{+}(M; \phi)_{g_{\partial}} \hookrightarrow \mathcal{R}^{+}(M)_{g_{\partial}}
$
is a weak homotopy equivalence. 
\end{theorem}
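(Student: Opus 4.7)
The plan is to establish the weak homotopy equivalence by showing the inclusion induces bijections on all homotopy groups $\pi_n$ for $n \geq 0$, using a parametrized version of the Gromov-Lawson surgery construction. The crucial dimension hypothesis $d - k + 1 \geq 3$ is precisely the codimension assumption that allows the Gromov-Lawson technique to preserve positive scalar curvature when bending the metric near an embedded $(k-1)$-sphere.

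For surjectivity, I would proceed as follows. Given a continuous family $f: (D^n, \partial D^n) \to (\mathcal{R}^{+}(M)_{g_\partial}, \mathcal{R}^{+}(M; \phi)_{g_\partial})$, the goal is to produce a homotopy of $f$, relative to $\partial D^n$, to a map landing entirely in $\mathcal{R}^{+}(M; \phi)_{g_\partial}$. The classical Gromov-Lawson construction associates to a single psc metric $g$ a one-parameter family of psc metrics interpolating between $g$ and a metric that agrees with $g_{S^{k-1}} + g^{d-k+1}_{\tor}$ on the image of $\phi$, obtained by modifying $g$ in a tubular neighborhood via a carefully chosen bending curve. The family version requires that the bending parameters (radii, curvature functions, and cutoff scales) depend continuously on the metric $g$; this is exactly the content of Chernysh's extension. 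One introduces auxiliary intermediate spaces $\mathcal{R}^{+}(M, \phi, \varepsilon)_{g_\partial}$ of metrics that already look standard on an $\varepsilon$-neighborhood of $\phi(S^{k-1} \times \{0\})$, and shows by a scaling/isotopy argument that the inclusions between these spaces for varying $\varepsilon$ are weak equivalences.

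For injectivity, one applies the same parametrized construction relative to the boundary: given a null-homotopy of a loop in $\mathcal{R}^{+}(M; \phi)_{g_\partial}$ inside the larger space $\mathcal{R}^{+}(M)_{g_\partial}$, push the entire null-homotopy back into the standard-form subspace while keeping the already-standard boundary data fixed. This uses that the Gromov-Lawson deformation is canonical enough to be performed relatively and that it is the identity on metrics already of the standard form on the prescribed neighborhood.

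The main obstacle is entirely technical: ensuring that the choice of bending functions and their interpolation parameters can be made to vary continuously in the metric over a compact family, while still guaranteeing that positive scalar curvature is preserved throughout. The scalar curvature along the bent tubes receives positive contributions from the $(d-k+1)$-dimensional normal sphere curvature terms, and these dominate the negative contributions from bending only when $d - k + 1 \geq 3$; maintaining this estimate uniformly in a family requires careful uniform bounds on the original metrics over compact parameter spaces and on the derivatives of the bending curves. Once these uniform estimates are organized (as done in \cite{Ch 04} and \cite{Wa 13}), the weak equivalence follows by the standard argument above.
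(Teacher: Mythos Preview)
The paper does not give its own proof of this theorem: it is stated as a result ``due to Walsh \cite{Wa 13} and Chernysh \cite{Ch 04}'' and is simply cited, not proven. Your sketch is a faithful outline of the strategy actually carried out in those references---the parametrized Gromov-Lawson deformation, organized via intermediate spaces of metrics standard on shrinking neighborhoods, with the codimension $\geq 3$ hypothesis ensuring the normal-sphere curvature dominates the bending error uniformly over compact families---so there is no discrepancy to report beyond the fact that the paper itself treats this as a black box.
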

The construction from \cite{Wa 13} used in the proof of Theorem \ref{theorem: walsh chernysh} is a local construction; it involves a deformation through $\psc$-metrics that is supported entirely on a tubular neighborhood of the embedded sphere (in particular see \cite[Theorem 3.10]{Wa 11}).
For this reason, there is no problem extending Theorem \ref{theorem: walsh chernysh} to work for a collection of embeddings  
$$\phi_{i}: S^{k_{i}-1}\times D^{d-k_{i}+1} \longrightarrow \Int(M), \quad i = 1, \dots, p$$ 
with $\phi_{i}(S^{k_{i}-1}\times D^{d-k_{i}+1})\cap\phi_{j}(S^{k_{j}-1}\times D^{d-k_{j}+1}) = \emptyset$ for all $i \neq j$.
For such embeddings, let
$$\mathcal{R}^{+}(M, \phi_{1}, \dots, \phi_{p})_{g_{\partial}} \subset \mathcal{R}^{+}(M)_{g_{\partial}}$$ 
be the subspace consisting of those $\psc$ metrics $g \in \mathcal{R}^{+}(M)_{g_{\partial}}$ such that $\phi^{*}_{i}g = g_{S^{k_{i}-1}}+ g^{d-k_{i}+1}_{\tor}$ for all $i = 1, \dots, p$. 
By the exact same technique used in the proof of Theorem \ref{theorem: walsh chernysh} we obtain:
\begin{Addendum} \label{Addendum: extension of Walsh}
Let $\phi_{1}, \dots, \phi_{p}$ be embeddings as above. 
Let $d-i+ 1 \geq 3$ for all $i = 1, \dots, p$. 
Then for any subset $\{i_{1}, \dots, i_{l}\} \subset \{1 \dots, p\}$ the inclusion 
$$
\mathcal{R}^{+}(M, \phi_{1}, \dots, \phi_{p})_{g_{\partial}} \hookrightarrow \mathcal{R}^{+}(M, \phi_{i_{1}}, \dots, \phi_{i_{l}})_{g_{\partial}}$$
is a weak homotopy equivalence. 
\end{Addendum}

We now study how the space $\mathcal{R}^{+}(M)_{g_{\partial}}$ changes upon changing $M$ by a surgery. 
Let 
$$\phi: S^{k-1}\times D^{d-k+1} \longrightarrow \Int(M)$$ 
be an embedding as in the statement of Theorem \ref{theorem: walsh chernysh}.
Let $\widetilde{M}$ denote the manifold obtained by performing surgery with respect to the embedding $\phi$, i.e.
$
\widetilde{M} = M\setminus\phi(S^{k-1}\times D^{d-k+1})\bigcup D^{k}\times S^{d-k}.
$
There is a map 
\begin{equation} \label{equation: surgery map}
S_{\phi}: \mathcal{R}^{+}(M; \phi)_{g_{\partial}} \longrightarrow \mathcal{R}^{+}(\widetilde{M})_{g_{\partial}}
\end{equation}
defined by setting $S_{\phi}(g)$ equal to the metric $g$ on the complement $M\setminus\phi(S^{k-1}\times D^{d-k+1})$, and equal to $g^{k}_{\tor} + g_{S^{d-k}}$ on $D^{k}\times S^{d-k}$.
\begin{theorem} \label{theorem: perform surgery map}
If $k \geq 3$, then the map
$
S_{\phi}: \mathcal{R}^{+}(M; \phi)_{g_{\partial}} \longrightarrow \mathcal{R}^{+}(\widetilde{M})_{g_{\partial}}
$
is a weak homotopy equivalence. 
\end{theorem}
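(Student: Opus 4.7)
The plan is to construct an explicit homotopy inverse for $S_\phi$ by carrying out \emph{dual surgery} on $\widetilde M$, and then invoke Theorem \ref{theorem: walsh chernysh} on the dual side. Inside $\widetilde M$ sits the handle $D^k\times S^{d-k}$ attached during surgery, containing the dual sphere $\{0\}\times S^{d-k}$. I define the \emph{dual embedding}
$$
\tilde\phi \colon S^{d-k}\times D^k \longrightarrow \widetilde M, \qquad (x,y)\longmapsto (y,x)\in D^k\times S^{d-k}\subset \widetilde M,
$$
given by the coordinate swap followed by inclusion of the handle. Written in the format of Theorem \ref{theorem: walsh chernysh}, this is an embedding of $S^{k'-1}\times D^{d-k'+1}$ with $k'=d-k+1$, so its codimension condition $d-k'+1\geq 3$ becomes precisely $k\geq 3$, matching our hypothesis.

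The first key point is that $S_\phi$ factors through the dual standard subspace $\mathcal{R}^{+}(\widetilde M;\tilde\phi)_{g_{\partial}}$. Indeed, for $g\in \mathcal{R}^{+}(M;\phi)_{g_{\partial}}$, the metric $S_\phi(g)$ equals $g^{k}_{\tor}+g_{S^{d-k}}$ on the handle $D^k\times S^{d-k}$, and pulling back along the swap $\tilde\phi$ yields exactly $g_{S^{d-k}}+g^{k}_{\tor}$, which is the standard form required by the definition of $\mathcal{R}^{+}(\widetilde M;\tilde\phi)_{g_{\partial}}$. Next I construct a continuous inverse $S_{\tilde\phi}\colon \mathcal{R}^{+}(\widetilde M;\tilde\phi)_{g_{\partial}} \to \mathcal{R}^{+}(M;\phi)_{g_{\partial}}$ by the symmetric procedure: remove the handle interior $\Int(D^k)\times S^{d-k}$ and glue in $S^{k-1}\times D^{d-k+1}$ equipped with the standard metric $g_{S^{k-1}}+g^{d-k+1}_{\tor}$. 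Since torpedo metrics are cylindrical near their boundaries, both pieces agree with $g_{S^{k-1}}+g_{S^{d-k}}$ (times a collar direction) near the gluing locus, so $S_{\tilde\phi}(g)$ is a smooth psc metric on the resulting manifold, which is canonically diffeomorphic to $M$. By construction, $S_{\tilde\phi}(g)$ lies in $\mathcal{R}^{+}(M;\phi)_{g_{\partial}}$. A direct inspection shows $S_\phi$ and $S_{\tilde\phi}$ are mutually inverse homeomorphisms, since each one undoes the handle replacement of the other and the standard pieces cancel exactly.

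The proof then concludes by applying Theorem \ref{theorem: walsh chernysh} to $\widetilde M$ with the embedding $\tilde\phi$: under the hypothesis $k\geq 3$, the inclusion $\mathcal{R}^{+}(\widetilde M;\tilde\phi)_{g_{\partial}}\hookrightarrow \mathcal{R}^{+}(\widetilde M)_{g_{\partial}}$ is a weak homotopy equivalence. Composing with the homeomorphism $S_\phi\colon \mathcal{R}^{+}(M;\phi)_{g_{\partial}} \xrightarrow{\ \cong\ } \mathcal{R}^{+}(\widetilde M;\tilde\phi)_{g_{\partial}}$ established above shows that the full map $S_\phi$ is a weak homotopy equivalence, as required. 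The main subtlety I anticipate is setting up the dual embedding so that $S_\phi$ lands in the standard subspace on the nose rather than only up to homotopy; the resolution is to take $\tilde\phi$ to be the swap onto the \emph{entire} handle $D^k\times S^{d-k}$ rather than a smaller tubular neighborhood of the dual sphere, which makes the matching of torpedo-plus-round metrics an honest equality. Once this is arranged, the asymmetry between the hypothesis $k\geq 3$ in the present theorem and $d-k+1\geq 3$ in Theorem \ref{theorem: walsh chernysh} is explained by the interchange of indices $k\leftrightarrow d-k+1$ under dual surgery.
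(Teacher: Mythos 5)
Your proof follows essentially the same strategy as the paper's: factor $S_\phi$ as a homeomorphism onto the subspace of metrics standard near the dual handle (the paper's $\bar\phi\colon D^k\times S^{d-k}\to \widetilde M$, your $\tilde\phi$), followed by the inclusion into $\mathcal{R}^{+}(\widetilde M)_{g_\partial}$, and then apply Theorem \ref{theorem: walsh chernysh} to the dual embedding, whose codimension condition transforms into $k\geq 3$ under $k\mapsto d-k+1$. The only difference is that you spell out the inverse homeomorphism $S_{\tilde\phi}$ explicitly, whereas the paper delegates that verification to \cite{Wa 13}.
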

\begin{proof}
By how the map $S(\phi)$ was defined it actually factors as a composite 
$$
\xymatrix{
\mathcal{R}^{+}(M; \phi)_{g_{\partial}} \ar[rr]^{\bar{S}(\phi)} && \mathcal{R}^{+}(\widetilde{M}; \bar{\phi})_{g_{\partial}} \ar@{^{(}->}[r] & \mathcal{R}^{+}(\widetilde{M})_{g_{\partial}}
}
$$
where $\bar{\phi}: D^{k}\times S^{d-k} \longrightarrow \widetilde{M}$ is the embedding dual to $\phi$, and the second map is the inclusion. 
The first map is a homeomorphism as explained in \cite{Wa 13} and the second map is a weak homotopy equivalence by Theorem \ref{theorem: walsh chernysh}. 
\end{proof}

\begin{remark} \label{remark: multiple disjoint surgeries}
We remark that since the construction used by Chernysh and Walsh is local, as with the situation in Addendum \ref{Addendum: extension of Walsh}, a similar result to Theorem \ref{theorem: perform surgery map} can be obtained for performing surgery on a collection of disjoint embeddings simultaneously. 
\end{remark}

\subsection{The functor $\mb{t} \mapsto \mathcal{W}^{\psc, k}_{\theta, d}(P; \mb{t})$} \label{subsection: W psc functor}
We will now use the theorems described above to construct the functor $\mb{t} \mapsto \mathcal{W}^{\psc, k}_{\theta, d}(P; \mb{t})$ that was discussed at the beginning of the section.
We need to first fix some notation.
\begin{defn} \label{defn: standard metrics}
Let $\mb{t} \in \mathcal{K}^{k}_{d}$ and $(V, \sigma) \in G^{\mf}_{\theta, d}(\R^{\infty})_{\loc}$.
We let $g^{\std, -}_{\mb{t}}$ denote the metric on the manifold $S(V^{-})\times_{\mb{t}}D(V^{+})$ with the following property:
\begin{itemize} \itemsep.2cm
\item for each $i \in \mb{t}$ the restriction of $g^{\std, -}_{\mb{t}}$ to 
$S(V^{-}(i))\times D(V^{+}(i)) = S^{\delta(i)-1}\times D^{d-\delta(i) + 1}$ 
agrees with the metric, 
$g_{S^{\delta(i)-1}} + g^{d-\delta(i)+1}_{\tor}.$
\end{itemize}
Similarly, we let $g^{\std, +}_{\mb{t}}$ denote the metric on $D(V^{-}(i))\times_{\mb{t}}S(V^{+}(i))$
with the following property:
\begin{itemize} \itemsep.2cm
\item for each $i \in \mb{t}$ the restriction of $g^{\std, +}_{\mb{t}}$ to $D(V^{-}(i))\times S(V^{+}(i))$ agrees with, 
$g^{\delta(i)}_{\tor} + g_{S^{d-\delta(i)}}.$
\end{itemize}
\end{defn}

For the following definition, fix an element $(P, g_{P}) \in \mathcal{M}^{\psc}_{\theta, d-1}$.
\begin{defn} \label{defn: metrics standard on multiple surgeries}
Let $(M, (V, \sigma), e) \in \mathcal{W}^{k}_{\theta, d}(P, \mb{t})$. 
Let 
$
e_{-}: S(V^{-})\times_{\mb{t}}D(V^{+}) \longrightarrow M
$
denote the embedding obtained by restricting $e$ to $S(V^{-})\times_{\mb{t}}D(V^{+})$.
The subspace 
$$\mathcal{R}^{+}(M; \; \mb{t}, e)_{g_{P}} \subset \mathcal{R}^{+}(M)_{g_{P}}$$
is defined to consist of all metrics $g \in \mathcal{R}^{+}(M)_{g_{P}}$ such that
$
e_{-}^{*}g \; = \; g^{\std, -}_{\mb{t}},
$
where $g^{\std, -}_{\mb{t}}$ is defined in Definition \ref{defn: standard metrics}.
We define the space 
$\mathcal{W}^{\psc, k}_{\theta, d}(P, g_{P}; \mb{t})$
to consist of all tuples 
$((M, (V, \sigma), e), g)$ 
such that 
$(M, (V, \sigma), e) \in \mathcal{W}^{k}_{\theta, d}(P, \mb{t})$ and $g \in \mathcal{R}^{+}(M; \; \mb{t}, e)_{g_{P}}.$ 
Similarly, the subspace 
$$\mathcal{W}^{\psc, k, c}_{\theta, d}(P, g_{P}; \mb{t}) \subset \mathcal{W}^{\psc, k}_{\theta, d}(P, g_{P}; \mb{t})$$
consists of those $((M, (V, \sigma), e), g)$ for which $(M, (V, \sigma), e)$ is contained in the subspace $\mathcal{W}^{k, c}_{\theta, d}(P; \mb{t})$.
\end{defn}

We now make the assignment $\mb{t} \mapsto \mathcal{W}^{\psc, k}_{\theta, d}(P, g_{P}; \mb{t}) $ into a contravariant functor on $\mathcal{K}^{k}_{d}$. 
We need to describe where to send morphisms of $\mathcal{K}_{d}^{k}$.
Let $(j, \varepsilon): \mb{s} \longrightarrow \mb{t}$ be a morphism in $\mathcal{K}_{d}^{k}$.
Let $\mb{u}$ denote the complement $\mb{t}\setminus j(\mb{s})$ and let $\mb{u}_{\pm}$ denote the subset of $\mb{u}$ consisting of all points $i$ with $\varepsilon(i) = \pm1$.
Let $M_{e, (j, \varepsilon)}$ denote the surgered manifold, 
\begin{equation}
M_{e, (j, \varepsilon)} \; = \; M\setminus e(S(V^{-})\times_{\mb{u_{-}}}D(V^{+}))\bigcup e(D(V^{-})\times_{\mb{u_{-}}}S(V^{+})).
\end{equation}
We let $e_{(j, \varepsilon)}$ denote the embedding obtained by restricting $e$ to $S(V^{-})\times_{\mb{s}}D(V^{+})$.

\begin{defn} \label{defn: functoriality for psc functor}
With $(M, (V, \sigma), e) \in \mathcal{W}^{k}_{\theta, d}(P, \mb{t})$ and $(j, \varepsilon): \mb{s} \longrightarrow \mb{t}$ as above, the map 
$$
S_{(j, \varepsilon)}: \mathcal{R}^{+}(M; \mb{t}, e)_{g_{P}} \; \longrightarrow \; \mathcal{R}^{+}(M_{e, (j, \varepsilon)}; \; \mb{s}, e_{(j, \varepsilon)})_{g_{P}}
$$
is defined by sending a metric $g \in \mathcal{R}^{+}(M; \mb{t}, e)_{g_{P}}$, to a new metric $g'$ determined by the following properties:
\begin{enumerate} \itemsep.2cm
\item[(i)] $g'$ is equal to $g$ on the complement
$M\setminus e(S(V^{-})\times_{\mb{u_{-}}}D(V^{+}));$
\item[(ii)] on $e(D(V^{-})\times_{\mb{u_{-}}}S(V^{+}))$ the metric $g'$ agrees with $g^{\std, +}_{\mb{u}}$, where $g^{\std, +}_{\mb{u}}$ is the metric defined in Definition \ref{defn: standard metrics}. 
\end{enumerate}
Given any morphism $(j, \varepsilon): \mb{s} \longrightarrow \mb{t}$ in $\mathcal{K}_{d}^{k}$, the induced map, 
$$
(j, \varepsilon)^{*}: \mathcal{W}^{\psc, k}_{\theta, d}(P, g_{P}; \mb{t}) \longrightarrow \mathcal{W}^{\psc, k}_{\theta, d}(P, g_{P}; \mb{s}),
$$
is defined by setting,
$
(j, \varepsilon)^{*}((M, (V, \sigma), e), \; g) =  \left((M_{e, (j,\varepsilon)}, (V, \sigma)|_{\mb{s}}, e_{(j,\varepsilon)}), \; S_{(j, \varepsilon)}(g)\right).
$
For any $k$, the above construction makes the assignment $\mb{t} \mapsto \mathcal{W}^{\psc, k}_{\theta, d}(P, g_{P}; \mb{t})$ into a contravariant functor on $\mathcal{K}^{k}_{d}$. 
\end{defn}

We may take the homotopy colimit of this functor over $\mathcal{K}^{k}_{d}$. 
We define
\begin{equation} \label{equation: homotopy colimit psc}
\mathcal{W}^{\psc, k}_{\theta, d}(P, g_{P}) := \hocolim_{\mb{t} \in \mathcal{K}_{d}^{k}}\mathcal{W}^{\psc, k}_{\theta, d}(P, g_{P}; \mb{t}).
\end{equation}

\subsection{A fibre sequence} \label{subsection: a fibre sequence}
We now analyze the natural transformation, 
\begin{equation} \label{equation: forgetful natural trans}
F_{\mb{t}}: \mathcal{W}^{\psc, k}_{\theta, d}(P, g_{P}; \mb{t}) \longrightarrow \mathcal{W}^{k}_{\theta, d}(P, \mb{t}),
\end{equation}
defined by the formula, 
$((M, (V, \sigma), e), g) \mapsto (M, (V, \sigma), e).$
Taking the homotopy colimit of these maps over $\mathcal{K}^{k}_{d}$ induces the map 
$F: \mathcal{W}^{\psc, k-1}_{\theta, d}(P, g_{P}) \longrightarrow \mathcal{W}^{k-1}_{\theta, d}(P)$.
The main result of this section is the following theorem.
\begin{theorem} \label{theorem: fibresequence forget the metric}
Let $k, d-k+1 \geq 3$.
Then the map 
$
F: \mathcal{W}^{\psc, k-1}_{\theta, d}(P, g_{P}) \longrightarrow \mathcal{W}^{k-1}_{\theta, d}(P)
$
is a quasi-fibration.
The fibre over $M := (M, (\emptyset, \emptyset), \emptyset) \in \mathcal{W}^{k-1}_{\theta, d}(P, \emptyset)$ is given by the space $\mathcal{R}^{+}(M)_{g_{P}}$. 
\end{theorem}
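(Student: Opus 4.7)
The plan is to analyze $F$ one morphism at a time and then glue using a standard homotopy-colimit criterion. First, for each fixed $\mathbf{t}\in\mathcal{K}^{k-1}_{d}$ and each element $(M,(V,\sigma),e)\in\mathcal{W}^{k-1}_{\theta,d}(P,\mathbf{t})$, the fibre of the level-wise forgetful map $F_{\mathbf{t}}$ is, by construction, homeomorphic to the space $\mathcal{R}^{+}(M;\mathbf{t},e)_{g_{P}}$ of psc metrics on $M$ restricting to $g_{P}$ on $P$ and standard along the embedded surgery spheres. Since $\mathbf{t}\in\mathcal{K}^{k-1}_{d}$, every index $\delta(i)$ satisfies $k\le\delta(i)\le d-k+1$; under the hypotheses $k\ge3$ and $d-k+1\ge3$ this gives both $\delta(i)\ge3$ and $d-\delta(i)+1\ge3$, so Addendum \ref{Addendum: extension of Walsh} applies.

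Next, I would show that each $F_{\mathbf{t}}$ is itself a quasi-fibration with fibre $\mathcal{R}^{+}(M;\mathbf{t},e)_{g_{P}}$; this is essentially automatic since over the component of $(M,(V,\sigma),e)$, $F_{\mathbf{t}}$ is a projection of a bundle whose fibre is this space of psc metrics. Then I would verify that, for every morphism $(j,\varepsilon)\colon \mathbf{s}\to\mathbf{t}$ of $\mathcal{K}^{k-1}_{d}$, the square
\begin{equation*}
\xymatrix{
\mathcal{W}^{\psc, k-1}_{\theta, d}(P, g_{P}; \mathbf{t}) \ar[r]^{(j,\varepsilon)^{*}} \ar[d]_{F_{\mathbf{t}}} & \mathcal{W}^{\psc, k-1}_{\theta, d}(P, g_{P}; \mathbf{s}) \ar[d]^{F_{\mathbf{s}}} \\
\mathcal{W}^{k-1}_{\theta, d}(P; \mathbf{t}) \ar[r]^{(j,\varepsilon)^{*}} & \mathcal{W}^{k-1}_{\theta, d}(P; \mathbf{s})
}
\end{equation*}
induces a weak homotopy equivalence on fibres. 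For a morphism with $\varepsilon\equiv+1$ the induced map on fibres is the inclusion $\mathcal{R}^{+}(M;\mathbf{t},e)_{g_{P}}\hookrightarrow\mathcal{R}^{+}(M;\mathbf{s},e|_{\mathbf{s}})_{g_{P}}$, which is a weak equivalence by Addendum \ref{Addendum: extension of Walsh}. For a morphism with $\varepsilon(a)=-1$ on a single extra element, the map on fibres factors through the Gromov--Lawson surgery map $S_{(j,\varepsilon)}$ of (\ref{equation: surgery map}) followed by a standardization inclusion; the first factor is a weak equivalence by Theorem \ref{theorem: perform surgery map} (using $\delta(a)\ge k\ge3$), and the second by Addendum \ref{Addendum: extension of Walsh}. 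A general morphism decomposes into these two elementary types.

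With fibre-wise quasi-fibrations in hand and homotopy equivalences on fibres for all structure maps, I would invoke the standard gluing principle for homotopy colimits: if $F_{\mathbf{t}}\colon X_{\mathbf{t}}\to Y_{\mathbf{t}}$ is a natural transformation of diagrams of spaces such that each $F_{\mathbf{t}}$ is a quasi-fibration and every structure map of $Y_{\--}$ induces a weak equivalence between corresponding fibres of $F_{\mathbf{t}}$ and $F_{\mathbf{s}}$, then $\hocolim F_{\mathbf{t}}\colon \hocolim X_{\mathbf{t}}\to\hocolim Y_{\mathbf{t}}$ is a quasi-fibration with fibre the common homotopy fibre. Applied here, this gives that $F$ is a quasi-fibration. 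Finally, over the point $M=(M,(\emptyset,\emptyset),\emptyset)\in \mathcal{W}^{k-1}_{\theta,d}(P,\emptyset)\subset \mathcal{W}^{k-1}_{\theta,d}(P)$ the fibre of $F_{\emptyset}$ is exactly $\mathcal{R}^{+}(M;\emptyset,\emptyset)_{g_{P}}=\mathcal{R}^{+}(M)_{g_{P}}$, which identifies the fibre of $F$ as claimed.

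The main obstacle is the middle step: verifying that the squares of $(j,\varepsilon)$-actions give weak equivalences on fibres, and in particular stitching together the two different constructions (the restriction/standardization of Addendum \ref{Addendum: extension of Walsh} and the surgery map of Theorem \ref{theorem: perform surgery map}) into a single compatible family. The locality of the Chernysh--Walsh deformations (restated in Remark \ref{remark: multiple disjoint surgeries}) is what makes this possible, since it allows one to handle each element of $\mathbf{t}\setminus j(\mathbf{s})$ independently and to commute standardization with surgery on disjoint handles. Once this compatibility is nailed down, the hocolim gluing lemma delivers the result.
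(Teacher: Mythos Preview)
Your proposal is correct and follows essentially the same route as the paper. The paper packages your fibre-wise analysis into Proposition~\ref{proposition: homotopy equivalence on fibres} (which cites Theorems~\ref{theorem: walsh chernysh} and~\ref{theorem: perform surgery map}, Addendum~\ref{Addendum: extension of Walsh}, and Remark~\ref{remark: multiple disjoint surgeries} exactly as you do), and the ``standard gluing principle for homotopy colimits'' you invoke is precisely Proposition~\ref{proposition: homotopy colimit fibre sequence}, quoted from \cite[Corollary B.3]{MW 07}.
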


The proof of the above theorem will take a few steps. 
We will need to use the following proposition about maps between homotopy colimits which is a restatement of \cite[Corollary B.3]{MW 07}.
\begin{proposition} \label{proposition: homotopy colimit fibre sequence}
Let $\mathcal{C}$ be a small category and let $u: \mathcal{G}_{1} \longrightarrow \mathcal{G}_{2}$ be a natural transformation between functors 
from $\mathcal{C}$ to the category of topological spaces. 
Suppose that for each morphism $f: a \longrightarrow b$ in $\mathcal{C}$, the map 
$$
f_{*}: \hofibre\left(u_{a}: \mathcal{G}_{1}(a) \rightarrow \mathcal{G}_{2}(a)\right) \; \longrightarrow \; \hofibre\left(u_{b}: \mathcal{G}_{1}(b) \rightarrow \mathcal{G}_{2}(b)\right)
$$
is a weak homotopy equivalence. 
Then for any object $c \in  \Ob\mathcal{C}$ the inclusion 
$$
\hofibre\left(u_{c}: \mathcal{G}_{1}(c) \rightarrow \mathcal{G}_{2}(c)\right) \; \hookrightarrow \; \hofibre\left(\hocolim\mathcal{G}_{1} \rightarrow \hocolim\mathcal{G}_{2}\right)
$$
is a weak homotopy equivalence. 
\end{proposition}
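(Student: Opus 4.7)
The plan is to realize both homotopy colimits as geometric realizations of their Bousfield--Kan simplicial replacements and then invoke a realization lemma for quasi-fibrations. First I would replace $\hocolim_{\mathcal{C}}\mathcal{G}_{i}$ by the geometric realization of the simplicial space $X^{i}_{\bullet}$ whose $n$-th space is
$$X^{i}_{n} \; = \; \coprod_{c_{0} \to c_{1} \to \cdots \to c_{n}} \mathcal{G}_{i}(c_{0}),$$
with the face map $d_{0}$ applying the transition map $\mathcal{G}_{i}(c_{0}) \to \mathcal{G}_{i}(c_{1})$ and the remaining face maps composing consecutive arrows. The natural transformation $u$ induces a map of simplicial spaces $u_{\bullet}: X^{1}_{\bullet} \to X^{2}_{\bullet}$, and on each chain-component indexed by $\sigma: c_{0} \to \cdots \to c_{n}$ the map $u_{n}$ restricts to $u_{c_{0}}: \mathcal{G}_{1}(c_{0}) \to \mathcal{G}_{2}(c_{0})$. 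Under the identification $\hocolim\mathcal{G}_{i} \simeq |X^{i}_{\bullet}|$, the realization of $u_{\bullet}$ is the induced map on homotopy colimits.

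Next I would examine the simplicial space of homotopy fibres of $u_{\bullet}$. In degree $n$ it is a disjoint union, over chains $\sigma$, of copies of $\hofibre(u_{c_{0}})$, and each simplicial face map is componentwise either a relabeling of chains (when a non-extremal morphism is composed away) or one of the induced maps $f_{*}$ from the hypothesis (when $d_{0}$ is applied). By assumption every such $f_{*}$ is a weak homotopy equivalence, so the simplicial space of homotopy fibres is ``homotopically constant'' in the relevant sense.

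At this point I would invoke a realization-of-quasi-fibrations lemma (for instance the Bousfield--Kan fibration lemma, or May's realization theorem in its level-fibration form): if $u_{\bullet}$ is a map of simplicial spaces all of whose face maps act by weak equivalences on the simplicial space of homotopy fibres, then $|u_{\bullet}|$ is a quasi-fibration, and the inclusion of any single degree-$n$ fibre into the honest homotopy fibre of $|u_{\bullet}|$ is a weak equivalence. Taking $n = 0$ and the length-$0$ chain $c \in \Ob\mathcal{C}$ produces precisely the inclusion $\hofibre(u_{c}) \hookrightarrow \hofibre(\hocolim\mathcal{G}_{1} \to \hocolim\mathcal{G}_{2})$ claimed in the proposition.

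The main obstacle will be verifying the technical hypotheses of the particular realization lemma chosen: one needs to know that the Bousfield--Kan replacements are ``good'' (Reedy cofibrant, with degeneracies that are closed cofibrations), and one must track basepoints carefully across components so that the simplicial space of homotopy fibres is genuinely well-defined and the face maps coincide with the $f_{*}$ of the hypothesis. These steps are standard, but demand some bookkeeping; once they are in place, the argument is essentially formal.
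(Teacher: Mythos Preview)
The paper does not actually supply a proof of this proposition; it is stated as a restatement of \cite[Corollary~B.3]{MW 07} and used as a black box. Your approach via the Bousfield--Kan simplicial replacement together with a realization lemma for levelwise (quasi-)fibrations is correct and is in fact the standard route by which such statements are established in the literature, including in the Madsen--Weiss appendix being cited. One small technical point worth tightening: the levelwise maps $u_{n}$ are disjoint unions of the $u_{c_{0}}$ and need not be fibrations, so before applying the realization lemma in its usual form you should either functorially replace each $u_{c_{0}}$ by a fibration or phrase everything in terms of homotopy fibres from the outset and appeal to a $\pi_{*}$-Kan / Bousfield--Friedlander style criterion; you allude to this under ``technical hypotheses'' but it is the one place where the argument is not purely formal.
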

To prove Theorem \ref{theorem: fibresequence forget the metric} we apply the above proposition to the natural transformation (\ref{equation: forgetful natural trans}).
In our first step to doing this we observe that for all  
$\mb{t} \in \mathcal{K}^{k}_{d}$, the forgetful map 
$$
F_{\mb{t}}: \mathcal{W}^{\psc, k}_{\theta, d}(P, g_{P}; \mb{t}) \longrightarrow \mathcal{W}^{k}_{\theta, d}(P, \mb{t})
$$
is a Serre-fibration.
This follows directly from how the spaces $\mathcal{M}^{\psc}_{\theta, d}(P, g_{P})$ were topologized in (\ref{equation: topology of moduli spaces}). 
Now, for any element $(M, (V, \sigma), e) \in \mathcal{W}^{k}_{P, \mb{t}}$ the fibre $F_{\mb{t}}^{-1}(M, (V, \sigma), e)$ is given by the space, 
$\mathcal{R}^{+}(M; \mb{t}, e)_{g_{P}}$.
So, for all $\mb{t} \in \mathcal{K}^{k}_{d}$ we have the weak homotopy equivalence
$$
\textstyle{\hofibre_{x}}(F_{\mb{t}}) \simeq \mathcal{R}^{+}(M; \mb{t}, e)_{g_{P}}, 
$$
where $x := (M, (V, \sigma), e)$.
To apply Proposition \ref{proposition: homotopy colimit fibre sequence} to our situation, we need to show that any morphism $\mb{s} \longrightarrow \mb{t}$ induces a weak homotopy equivalence between these homotopy fibres. 
This is addressed in the following proposition which follows from Theorems \ref{theorem: walsh chernysh} and \ref{theorem: perform surgery map} (see also Addendum \ref{Addendum: extension of Walsh} and Remark \ref{remark: multiple disjoint surgeries}).
\begin{proposition} \label{proposition: homotopy equivalence on fibres}
Let $k, d-k+1 \geq 3$. 
Then for any morphism $(j, \varepsilon): \mb{s} \longrightarrow \mb{t}$ in $\mathcal{K}^{k-1}_{d}$, and any $(M, (V, \sigma), e) \in \mathcal{W}^{k-1}_{\theta, d}(P, \mb{t})$, 
the map 
$S_{(j, \varepsilon)}: \mathcal{R}^{+}(M; \mb{t}, e_{-})_{g_{P}} \; \longrightarrow \; \mathcal{R}^{+}(M_{e, (j, \varepsilon)}; \; \mb{s}, (e_{(j, \varepsilon)})_{-})_{g_{P}}$
is a weak homotopy equivalence. 
\end{proposition}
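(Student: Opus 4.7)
The plan is to factor any morphism $(j,\varepsilon) \colon \mb{s} \to \mb{t}$ into elementary pieces and reduce to the Chernysh--Walsh theorems already invoked. First I would split $(j,\varepsilon)$ as a bijection (inducing a homeomorphism on both sides) followed by a finite sequence of inclusions $\mb{s} = \mb{s}_0 \hookrightarrow \mb{s}_1 \hookrightarrow \cdots \hookrightarrow \mb{s}_r = \mb{t}$ that each add a single new element $a_i$ with a prescribed sign $\varepsilon(a_i) \in \{+1,-1\}$. By construction of the functoriality in Definition \ref{defn: functoriality for psc functor}, $S_{(j,\varepsilon)}$ equals the composition of the elementary maps associated to these inclusions, so it suffices to show each elementary map is a weak equivalence.

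For an elementary inclusion adding one element $a$ with $\varepsilon(a) = +1$, the underlying manifold and embedding $e$ are unchanged, the only effect being that the standardness condition at the $a$-component is dropped. In other words, the map is the inclusion
\[
\mathcal{R}^+(M; \mb{t}, e_-)_{g_P} \;\hookrightarrow\; \mathcal{R}^+(M; \mb{s}_i, (e_{(j,\varepsilon)})_-)_{g_P},
\]
which fits exactly into the framework of Addendum \ref{Addendum: extension of Walsh} applied to the disjoint family of embeddings parametrized by $\mb{t}$. The dimension hypothesis of the Addendum requires $d-\delta(a)+1 \geq 3$, and since $\mb{t} \in \mathcal{K}^{k-1}_d$ forces $\delta(a) \leq d-k+1$, this follows from $k \geq 3$. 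For an elementary inclusion adding one element $a$ with $\varepsilon(a) = -1$, the map performs surgery on the single embedded sphere $e(S(V^-(a)) \times D(V^+(a)))$ and installs the torpedo metric $g^{\std,+}$ on the glued-in handle. This is precisely the map $S_\phi$ of Theorem \ref{theorem: perform surgery map}, which is a weak equivalence provided the surgery index $\delta(a)$ is at least $3$; this follows from $\delta(a) > k-1$ together with $k \geq 3$.

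Composing a finite sequence of weak equivalences yields the desired conclusion. The only bookkeeping to be careful about is the residual standardness conditions after each elementary step, but here Remark \ref{remark: multiple disjoint surgeries} together with the disjointness built into the $\mb{t}$-indexed embedding $e$ allows one to apply Addendum \ref{Addendum: extension of Walsh} at each stage with the remaining embeddings of $\mb{s}_i$ kept standard. I expect this to be the main, though still routine, obstacle: verifying that the elementary maps can be composed in the chosen order and that the intermediate source and target spaces in each step agree with the spaces $\mathcal{R}^+(M_i; \mb{s}_i, \cdot)_{g_P}$ produced by the explicit recipe in Definition \ref{defn: functoriality for psc functor}. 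Once this is in place, the hypothesis $k, d-k+1 \geq 3$ ensures both dimension conditions are met for every elementary step and the proposition follows.
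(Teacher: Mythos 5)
Your proposal is correct and takes the same approach that the paper's (terse) proof indicates: factor $(j,\varepsilon)$ into elementary inclusions, and for each one apply Addendum 4.8 in the $+1$ case and Theorem 4.9 in the $-1$ case, with the dimensional conditions traced exactly as you do from $k \leq \delta(a) \leq d-k+1$ and $k, d-k+1 \geq 3$. The paper omits these routine checks and simply cites the cited theorems and the locality remark; you have filled them in correctly, including the observation that Remark 4.10 covers the residual standardness conditions on the $\mb{s}$-indexed embeddings.
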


We can now finish the proof of Theorem \ref{theorem: fibresequence forget the metric}. 
\begin{proof}[Proof of Theorem \ref{theorem: fibresequence forget the metric}]
The theorem follows by applying Proposition \ref{proposition: homotopy colimit fibre sequence}. 
By our observation above, for each $\mb{t} \in \mathcal{F}_{d}^{k}$ the homotopy fibre of 
$
F_{\mb{t}}: \mathcal{W}^{\psc, k-1}_{\theta, d}(P, g_{P}; \mb{t}) \longrightarrow \mathcal{W}^{k-1}_{\theta, d}(P, \mb{t})
$
over the element
$(M, (V, \sigma), e) \in \mathcal{W}^{k-1}_{P, \mb{t}}$ 
is given by the space 
$\mathcal{R}^{+}(M; \mb{t}, e)_{g_{P}}$.
Proposition \ref{proposition: homotopy equivalence on fibres} then implies that the maps between these homotopy fibres induced by the morphisms of $\mathcal{K}^{k-1}_{d}$ are all weak homotopy equivalences. 
\end{proof}

\begin{remark}
The statement of Theorem \ref{theorem: fibresequence forget the metric} holds for the spaces $\mathcal{W}^{\psc, k-1, c}_{\theta, d}(P, g_{P})$ as well, namely the map $\mathcal{W}^{\psc, k-1, c}_{\theta, d}(P, g_{P}) \longrightarrow \mathcal{W}^{k-1, c}_{\theta, d}(P)$ is a quasi-fibration as well with the same fibre as before. 
This is proved using the exactly the same argument that was used above.
\end{remark}

\subsection{The fibre transport} \label{subsection: the fibre transport}
For this section we assume that $\theta: B \longrightarrow BO$ is chosen so that $B$ is $2$-connected. 
In this way the map $\theta$ factors up to homotopy through the map $B\Spin \longrightarrow BO$.
Let $k$ and $d$ be chosen so that $k, d-k+1 \geq 3$.
Fix $(P, g_{P}) \in \mathcal{M}^{\psc}_{\theta, d-1}$. 
Fix an element $M \in \mathcal{M}^{k-1}_{\theta, d}(P)$.
Theorem \ref{theorem: fibresequence forget the metric} implies that there is a homotopy fibre sequence, 
$$
\xymatrix{
\mathcal{R}^{+}(M)_{g_{P}} \ar[r] & \mathcal{W}^{\psc, k-1}_{\theta, d}(P, g_{P}) \ar[r] & \mathcal{W}^{k-1}_{\theta, d}(P).
}
$$
Choosing a basepoint $g_{M} \in \mathcal{R}^{+}(M)_{g_{P}}$ determines a fibre transport map, 
\begin{equation} \label{equation: fibre transport map 1}
j_{d}: \Omega\mathcal{W}^{k-1}_{\theta, d}(P) \longrightarrow \mathcal{R}^{+}(M)_{g_{P}}.
\end{equation}
Recall from Section \ref{subsection: ko orientation} the $\KO$-orientation $\widehat{\mathcal{A}}^{-1}_{d}: \Omega^{\infty-1}\mb{hW}^{k-1}_{\theta, d} \longrightarrow \Omega^{\infty+d}\KO$. 
We define 
\begin{equation}
\bar{\mathcal{A}}_{d+1}: \mathcal{W}^{k-1}_{\theta, d}(P) \longrightarrow \Omega^{\infty+d}\KO
\end{equation} 
to be the map obtained by precomposing $\widehat{\mathcal{A}}^{-1}_{d+1}$ with the weak homotopy equivalence 
$$\mathcal{W}^{k-1}_{\theta, d}(P) \simeq \Omega^{\infty-1}\mb{hW}^{k-1}_{\theta, d+1}.$$
The main technical result for us to prove in this paper is the following theorem, consisting of two parts.
\begin{theorem} \label{theorem: factorization of index difference}
Let $k$ and $d$ be chosen so that $k, d-k+1 \geq 3$.
Fix $(P, g_{P}) \in \mathcal{M}^{\psc}_{\theta, d-1}$ and $M \in \mathcal{M}^{k-1}_{\theta, d}(P)$.  
\begin{enumerate}
\item[(a)] Suppose that $d = 2n \geq 6$. 
Then the composite
$$
\xymatrix{
\Omega\mathcal{W}^{k-1}_{\theta, 2n}(P) \ar[rr]^{j_{2n}} && \mathcal{R}^{+}(M)_{g_{P}} \ar[rr]^{\inddiff} && \Omega^{\infty+2n+1}\KO
}
$$
agrees with the map, $\Omega\bar{\mathcal{A}}_{2n}$.
\item[(b)] Suppose that $d = 2n-1 \geq 5$. 
Then the composite 
$$
\xymatrix{
\Omega^{2}\mathcal{W}^{k-1}_{\theta, 2n-1}(P) \ar[rr]^{\Omega j_{2n-1}} && \Omega\mathcal{R}^{+}(M)_{g_{P}} \ar[rr]^{\Omega\inddiff} && \Omega^{\infty+2n+1}\KO
}
$$
agrees with the map, $\Omega^{2}\bar{\mathcal{A}}_{2n-1}$.
\end{enumerate}
\end{theorem}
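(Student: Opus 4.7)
The plan is to prove the two parts separately: part (a) via the cobordism-categorical model combined with index-theoretic input from the literature, and part (b) by bootstrapping from (a) using a bounding manifold and a Genauer-type fibre sequence of cobordism categories.

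For part (a), I would exploit the identification $\mathcal{W}^{k-1}_{\theta,2n}(P) \simeq \Omega^{\infty-1}\mb{hW}^{k-1}_{\theta, 2n+1}$ (Theorem A combined with the zig-zag (\ref{equation: W to L to D zig zag})). A loop in $\mathcal{W}^{k-1}_{\theta,2n}(P)$ records a finite sequence of elementary surgeries returning $M$ to itself, and $j_{2n}$ tracks the induced trajectory of the basepoint $\psc$ metric $g_M$ via the parametrized Gromov-Lawson construction of Chernysh and Walsh. I would then apply a Pontryagin-Thom / additivity argument to decompose $\inddiff \circ j_{2n}$ as a sum of local contributions, one per critical point of the loop's Morse data, and identify each contribution with the $\KO$-theoretic Thom class of the corresponding Madsen-Tillmann-Weiss local model, using the index-theoretic additivity results of [BERW 16, Section 3] and Ebert [E 16]. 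Summing these local contributions reproduces $\Omega\bar{\mathcal{A}}_{2n}$ and recovers [BERW 16, Theorem B].

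For part (b), I would choose a compact $2n$-dimensional $\theta$-manifold $W$ with $\partial W = M$ (such $W$ exists under the standing hypotheses, possibly after mild stabilization). Proposition \ref{proposition: chernysh restriction theorem} gives a quasi-fibration $r \colon \mathcal{R}^+(W) \to \mathcal{R}^+(M)_{g_P}$; choosing a basepoint $g_W \in \mathcal{R}^+(W)_{g_M}$ produces a fibre transport $T \colon \Omega\mathcal{R}^+(M)_{g_P} \to \mathcal{R}^+(W)_{g_M}$, and [BERW 16, Theorem 3.6.1] makes the triangle (\ref{equation: commutativity of index difference}) homotopy commute, i.e.\ $\inddiff \circ T \simeq -\Omega\inddiff$. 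On the cobordism-categorical side, the analogous transport is the map $\tau \colon \Omega B\Cob^{\mf,k}_{\theta,2n} \to B\Cob^{\mf,k}_{\theta,2n+1}$ arising as the fibre transport of the Genauer-type fibre sequence (\ref{equation: genauer sequence intro}), whose middle term parametrizes relative cobordisms between manifolds with boundary (i.e.\ manifolds with corners). Once the square (\ref{equation: commutative square of cobordism categories}) is shown to be homotopy commutative, part (b) follows by concatenating this square with the triangle (\ref{equation: commutativity of index difference}), using part (a) to identify $\inddiff \circ \rho$ in the right-hand column with $\Omega\bar{\mathcal{A}}_{2n}$, and invoking Proposition \ref{proposition: commutativity of k-theory orientations} to match $\KO$-orientations across $\tau$, converting the result into $\Omega^2\bar{\mathcal{A}}_{2n-1}$.

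The main obstacle is not index-theoretic (all such input is imported from [BERW 16] and [E 16]) but geometric-categorical: constructing the fibre sequence (\ref{equation: genauer sequence intro}) and verifying that the square (\ref{equation: commutative square of cobordism categories}) commutes up to homotopy require careful treatment of Morse functions on manifolds with boundary (allowing critical points on $\partial W$), of parametrized surgery with boundary constraints, and of $\psc$ metrics on manifolds with boundary. This constitutes the content of Sections 6 through 9 and is the technical heart of the paper, giving in particular the extension to five-dimensional $M$ that was out of reach of [BERW 16].
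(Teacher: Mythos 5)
Your sketch of part (b) matches the paper's Sections 6--9 quite closely: the Chernysh quasi-fibration and its fibre transport, the Genauer-type fibre sequence $\mathcal{W}^{n-1}_{\theta,2n}(P) \to \mathcal{W}^{\partial,n-1}_{\theta,2n} \to \mathcal{W}^{n-1}_{\theta,2n-1}$ (Theorem \ref{theorem: boundary map is quasifibration d = 2n-1}), the triangle from \cite[Theorem 3.6.1]{BERW 16} (Lemma \ref{lemma: commutativity fibre transport index difference}), and the $\KO$-orientation compatibility across the transport (Theorem \ref{theorem: compatibility of K-theory classes}, via Proposition \ref{proposition: commutativity of k-theory orientations}) are exactly the ingredients the paper assembles in Section \ref{subsection: main theorem part b}.

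For part (a), however, there is a genuine gap. You propose to compute $\inddiff\circ j_{2n}$ by a local additivity argument --- decompose a loop in $\mathcal{W}^{k-1}_{\theta,2n}(P)$ into elementary surgeries and sum the $\KO$-contribution at each critical point. The difficulty is that the theorem asserts an equality of maps up to homotopy, i.e.\ an identification of $\KO$-cohomology classes on the entire loop space, not merely agreement of invariants evaluated on individual loops; a pointwise decomposition along Morse strata does not by itself produce the coherent class required. The paper's actual argument never decomposes the index difference locally. Instead, it imports from \cite{BERW 16} a single relative $\KO$-class $\beta \in \KO^{2n}(\pi)$ on the bundle projection $\pi \colon \mathcal{M}^{\psc,n}_{\theta,2n}(P,g_P) \to \mathcal{M}^n_{\theta,2n}(P)$ (Proposition \ref{proposition: relative index difference}), with $\bas(\beta) = \rho^*\mathcal{A}_{2n}$ and $\trg(\beta) = \inddiff$. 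The technical crux --- entirely absent from your sketch --- is to transport a stabilized version $\beta^{\stb}$ of this class across to a class $\beta'$ on $\mb{F}_{\theta,2n} = \hofibre(\mb{L} \colon \mathcal{W}^{n-1,c}_{\theta,2n}(P)^{\stb} \to \mathcal{W}^{\{n,n+1\}}_{\theta,2n,\loc})$. This requires the whole stabilization apparatus of Section \ref{subsection: stabilization} and, essentially, the Galatius--Randal-Williams homological stability theorem from \cite{GRW 16}: this is what makes $\mb{L}$ an \emph{acyclic homology fibration} (Theorem \ref{theorem: homology fibration}) and the stable scanning map $\rho^{\stb}$ acyclic, and acyclicity is what guarantees that $\bar{h}^* \colon \KO^{-2n}(F^{\stb}) \to \KO^{-2n}(\pi^{\stb})$ is an isomorphism in the homotopy-Cartesian square of Theorem \ref{theorem: homology fibration psc}. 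Only then does the transgression identity of \cite[Lemma 3.5.1]{BERW 16} (Construction \ref{Construction: relaive k-theory construction}) close the argument. Without the stabilization and acyclicity input, $\beta'$ cannot be produced, so your plan for (a) would need to be replaced by (or reconstruct) this machinery rather than by a local-to-global sum.
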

Combining the above theorem with the weak homotopy equivalence, $\mathcal{W}^{k-1}_{\theta, d}(P) \simeq \Omega^{\infty-1}\mb{hW}^{k-1}_{\theta, d+1}$ implies Theorem B from the introduction.
We prove part (a) in the following section.
We then prove part (b) as a consequence of part (a) over the course of the remaining sections of the paper. 
\begin{remark} \label{remark: only need to verify for k = n-1}
To prove Theorem \ref{theorem: factorization of index difference}, part (a), it will suffice to verify that the composite 
$$
\xymatrix{
\Omega\mathcal{W}^{n-1, c}_{\theta, d}(P) \ar[rr]^{j_{2n}} && \mathcal{R}^{+}(M)_{g_{P}} \ar[rr]^{\inddiff} && \Omega^{\infty+2n+1}\KO
}
$$ 
agrees with $\Omega\bar{\mathcal{A}}_{2n}$.
This is because for all choices of $k$ (with $k, d-k+1 \geq 3$) the diagram 
$$
\xymatrix{
\Omega\mathcal{W}^{n-1, c}_{\theta, 2n}(P)  \ar[dr] \ar[rr] && \Omega\mathcal{W}^{k-1}_{\theta, 2n}(P)  \ar[dl] \\
& \mathcal{R}^{+}(M)_{g_{P}} & 
}
$$
is commutative, and clearly the $\KO$-class $\bar{\mathcal{A}}_{2n} \in \KO^{-2n}(\mathcal{W}^{n-1, c}_{\theta, 2n}(P))$ is pulled back from $\mathcal{W}^{k-1}_{\theta, 2n}(P)$ via the inclusion, $\mathcal{W}^{n-1, c}_{\theta, 2n}(P) \hookrightarrow \mathcal{W}^{k-1}_{\theta, 2n}(P)$ (which is a weak homotopy equivalence).
The analogous statement also holds true for part (b) of Theorem \ref{theorem: factorization of index difference} which is the odd dimensional case. 
\end{remark}

\section{Fibre Sequences and Stabilization} \label{section: fibre sequences and stabilization}
In this section we prove Theorem \ref{theorem: factorization of index difference}, part (a). 
We restrict our attention to the dimensional conditions that are relevant to this theorem.
Let us make the following convention. 
\begin{convention} \label{Convention: dimensional convention}
Throughout this section we set $d = 2n$ with $n \geq 3$. 
Furthermore, we will assume that $\theta: B \longrightarrow BO$ has been chosen so that:
\begin{itemize} \itemsep.2cm
\item[(i)] $B$ is $2$-connected, and
\item[(ii)] $B$ satisfies Wall's finiteness condition $F(n)$, see \cite{W 65}.
\end{itemize}
The first condition implies that any $\theta$-manifold has a canonical $\Spin$-structure induced by its $\theta$-structure. 
Condition (ii) implies that for all $k < n$ and $P \in \mathcal{M}_{\theta, 2n-1}$, there is a homotopy equivalence 
$\mathcal{W}_{\theta, 2n}^{k}(P) \simeq \Omega^{\infty-1}\mb{hW}^{k}_{\theta, 2n+1}$
as a consequence of the theorems in \cite{P 17}.
\end{convention}

In this section we will also need to use the cobordism category defined in \cite{GMTW 08} by Galatus, Madsen, Tillmann, and Weiss.
The topological category $\Cob_{\theta, 2n}$ has $\mathcal{M}_{\theta, 2n-1}$ for its space of objects. 
For $P, Q \in \Ob\Cob_{\theta, 2n}$, a morphism $P \rightsquigarrow Q$ is given by a pair $(t, W)$ where $t \in (0, \infty)$, and $W \subset [0, t]\times\R^{\infty}$ is a $2n$-dimensional compact manifold equipped with $\theta$-structure $\hat{\ell}_{W}$, such that,
$\partial W = W\cap(\{0, t\}\times\R^{\infty}) = P\sqcup Q,$ 
as a $\theta$-manifold. 
Furthermore, the manifold $W$ is required to be embedded in $[0, t]\times\R^{\infty}$ with a collar. 
Composition in this category is given in the usual way by concatenation of cobordisms. 
For $k \in \Z_{\geq -1}$, the subcategory $\Cob^{k}_{\theta, 2n} \subset \Cob_{\theta, 2n}$ has the same objects. 
Its morphisms are given by those $(t, W): P \rightsquigarrow Q$ for which the pair $(W, P)$ is $k$-connected. 
The main subcategory of interest for our constructions is $\Cob^{n-1}_{\theta, 2n}$.
As described in \cite[Section 7]{P 17} we may consider the correspondence $P \mapsto \mathcal{W}_{\theta, 2n}^{n-1}(P)$ to be a functor on the cobordism category $\Cob^{n-1}_{\theta, 2n}$.

\subsection{Stabilization for spaces of manifolds} \label{subsection: stabilization} 
We will apply a stabilization process to the functor $\mathcal{W}_{\theta, 2n}^{n-1, c}(\--, \--)$.
This subsection is mainly a recollection of \cite[Section 12]{P 17}.
\begin{Construction} \label{Construction: stabilize by s-n times s-n}
Let $P \in \mathcal{M}_{\theta, 2n-1}$. 
We construct a morphism
$
H_{n, n}(P): P \rightsquigarrow P
$
of $\Cob^{n-1}_{\theta, 2n}$ as follows.
Choose an embedding $j: S^{n}\times S^{n} \hookrightarrow (0, 1)\times\R^{\infty-1}$ with image disjoint from $(0, 1)\times P$. 
Fix a $\theta$-structure $\hat{\ell}_{n, n}$ on $S^{n}\times S^{n}$ with the property that $\ell_{n, n}: S^{n}\times S^{n} \longrightarrow B$ is null-homotopic. 
We then let $H_{n, n}(P) \subset [0, 1]\times\R^{\infty-1}$ be the cobordism obtained by forming the connected sum of $[0, 1]\times P$ with $j(S^{n}\times S^{n})$ along some embedded arc connecting the two submanifolds.
We equip $H_{n, n}(P)$ with a $\theta$-structure that agrees with $\hat{\ell}_{n, n}$ on the connect-summand of $S^{n}\times S^{n}$, and with $\hat{\ell}_{[0, 1]\times P}$ away from the connect-summand.
This self-cobordism $H_{n, n}(P): P \rightsquigarrow P$ equipped with its $\theta$-structure yields a morphism in the cobordism category $\Cob^{n-1}_{\theta, 2n}$. 
\end{Construction} 

\begin{defn} \label{defn: stable moduli space}
Fix $P \in \mathcal{M}_{\theta, 2n-1}$.
The morphism $H_{n, n}(P): P \rightsquigarrow P$ induces a map 
$$
\mb{S}: \mathcal{M}^{n}_{\theta, 2n}(P) \longrightarrow \mathcal{M}^{n}_{\theta, 2n}(P),\quad
M \mapsto M\cup H_{n, n}(P). 
$$
This map may be iterated to form a direct system. 
We define
\begin{equation} \label{equation: stable moduli space direct limit}
\mathcal{M}^{n}_{\theta, 2n}(P)^{\stb} := \hocolim\left(\mathcal{M}^{n}_{\theta, 2n}(P) \stackrel{\mb{S}} \longrightarrow \mathcal{M}^{n}_{\theta, 2n}(P) \stackrel{\mb{S}} \longrightarrow \cdots\right).
\end{equation}
\end{defn}

We may form similar direct systems with the spaces $\mathcal{W}^{n-1, c}_{\theta, 2n}(P; \mb{t})$.
The cobordism $H_{n, n}(P)$ induces a similar map, 
$
\mb{S}: \mathcal{W}^{n-1, c}_{\theta, 2n}(P; \mb{t}) \; \longrightarrow \; \mathcal{W}^{n-1, c}_{\theta, 2n}(P; \mb{t}).
$
This map may be iterated and we define,
\begin{equation} \label{equation: stabilization by s-n by s-n}
\mathcal{W}^{n-1, c}_{\theta, 2n}(P; \mb{t})^{\stb} := \hocolim\left[\mathcal{W}^{n-1, c}_{\theta, 2n}(P; \mb{t}) \rightarrow \mathcal{W}^{n-1, c}_{\theta, 2n}(P; \mb{t}) \rightarrow \cdots \right].
\end{equation}
We define,
$\mathcal{W}^{n-1, c}_{\theta, 2n}(P)^{\stb} \; := \; \displaystyle{\hocolim_{\mb{t} \in \mathcal{K}^{n-1}_{2n}}}\mathcal{W}^{n-1, c}_{\theta, 2n}(P, \mb{t})^{\stb}.$
The following proposition is the same as \cite[Proposition 12.2]{P 17}.
\begin{proposition} \label{proposition: stabiliization equivalence}
For all $P \in \mathcal{M}_{\theta, 2n-1}$,
the map 
$
\mb{S}: \mathcal{W}^{n-1, c}_{\theta, 2n}(P; \mb{t}) \; \longrightarrow \; \mathcal{W}^{n-1, c}_{\theta, 2n}(P; \mb{t})
$
is a weak homotopy equivalence, and thus the
natural inclusion $\mathcal{W}^{n-1, c}_{\theta, 2n}(P) \hookrightarrow \mathcal{W}^{n-1, c}_{\theta, 2n}(P)^{\stb}$ is a weak homotopy equivalence as well.
\end{proposition}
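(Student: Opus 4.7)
The plan is to reduce the first assertion to the Galatius--Randal-Williams theorem on stability of moduli spaces of high-dimensional $\theta$-manifolds under connect-sum by $S^n \times S^n$. First I would unpack the topology (\ref{equation: topology of moduli spaces}): the space $\mathcal{W}^{n-1,c}_{\theta, 2n}(P; \mb{t})$ splits as a disjoint union indexed by $\theta$-diffeomorphism classes of the underlying manifold $M$, and each path component is naturally weakly equivalent to the classifying space $\BDiff^{\theta}(M)$ of those $\theta$-diffeomorphisms of $M$ that fix a neighborhood of $\partial M = P$ together with $\Image(e)$ pointwise.

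Next I would observe that $M \cup H_{n,n}(P)$ is $\theta$-diffeomorphic to the interior connect-sum $M \# (S^n \times S^n)$, with the $\theta$-structure on the new summand null-homotopic, and that this connect-sum is performed entirely away from $P$ and $\Image(e)$. On each path component, $\mathbf{S}$ is therefore identified with the standard stabilization map
$$
\BDiff^{\theta}(M) \longrightarrow \BDiff^{\theta}(M \# (S^n \times S^n)),
$$
with the boundary and surgery-data conditions held fixed throughout.

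I would then apply the Galatius--Randal-Williams stability theorem. The $c$-superscript condition says precisely that $\ell_M|_{M \setminus \Image(e)}$ is $n$-connected; combined with $\dim M = 2n \geq 6$ and Wall's condition $F(n)$ on $B$, this places $M$ in their stable range with respect to $(S^n \times S^n)$-stabilization. Their theorem then delivers a homology equivalence on each component, which upgrades to a weak equivalence once one checks that the action of $\pi_1$ on higher homotopy groups is controlled---this control is itself a consequence of the $n$-connectivity of the complement. The second assertion then follows formally: since every structure map in the mapping telescope defining $\mathcal{W}^{n-1,c}_{\theta, 2n}(P; \mb{t})^{\stb}$ is a weak equivalence, so is the inclusion of the first term into the telescope; and this equivalence survives the homotopy colimit over $\mb{t} \in \mathcal{K}^{n-1}_{2n}$.

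The principal obstacle is the promotion from the Galatius--Randal-Williams homological stability statement to a weak equivalence of classifying spaces. The $c$-superscript hypothesis is what makes this upgrade go through: $n$-connectivity of $\ell_M$ on $M \setminus \Image(e)$ both verifies the stable-range hypothesis and provides the $\pi_1$-control needed to rule out the Wall-type surgery obstructions that would otherwise separate homology equivalences from weak equivalences. Alternatively, one may bypass this point by appealing to the sharper form of the stability theorem in the subsequent work of Galatius--Randal-Williams, which yields a weak equivalence directly in the stable range.
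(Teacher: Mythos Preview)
Your approach has a genuine gap at the point where you invoke Galatius--Randal-Williams and then promote to a weak equivalence. First, the hypothesis needed for \cite{GRW 16} is not what the $c$-superscript supplies: their stable range is governed by the $\theta$-\emph{genus} of $M$ (roughly, the number of $S^n \times S^n$ summands it absorbs), not by the $n$-connectivity of $\ell_M|_{M \setminus \Image(e)}$. An element of $\mathcal{W}^{n-1,c}_{\theta, 2n}(P; \mb{t})$ may well have underlying manifold of genus zero, and then already the first stabilization map lies outside the range in which \cite{GRW 16} says anything at all. Second, and more fundamentally, even within the stable range \cite{GRW 16} yields only an integral homology isomorphism. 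There is no sharper version producing weak equivalences: the map $\BDiff^{\theta}(M) \to \BDiff^{\theta}(M \# (S^n \times S^n))$ is typically not a $\pi_1$-isomorphism (the mapping class groups genuinely change under stabilization), so it cannot be a weak equivalence, and your appeal to ``$\pi_1$-control from $n$-connectivity'' does not repair this --- the connectivity of $\ell_M$ constrains the homotopy type of $M$, not its diffeomorphism group.

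The paper defers to \cite[Proposition~12.2]{P 17}, whose argument is structurally different: it exploits the surgery data built into $\mathcal{W}^{n-1,c}_{\theta, 2n}(P; \mb{t})$. The handles that constitute the extra $S^n \times S^n$ summand introduced by $H_{n,n}(P)$ have index $n$, which lies in the window $\{n, n+1\}$ parametrized by $\mathcal{K}^{n-1}_{2n}$, so the stabilization can be realized and inverted directly through the functoriality of $\mb{t} \mapsto \mathcal{W}^{n-1,c}_{\theta, 2n}(P; \mb{t})$. Homological stability plays no role here; where \cite{GRW 16} is actually invoked in this paper (Theorem~\ref{theorem: homology fibration}) the conclusion is, appropriately, only that a certain map is an acyclic homology fibration.
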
 

We now consider the the localization map, 
$
\mb{L}: \mathcal{W}^{n-1, c}_{\theta, 2n}(P)^{\stb}  \; \longrightarrow \; \mathcal{W}^{\{n, n+1\}}_{\theta, 2n, \loc},
$
which is the map induced by (\ref{equation: localization transformation}).
We proceed to analyze the homotopy fibre of this map. 
We need a definition. 
\begin{defn} \label{defn: acyclic homology fibration}
A map $f: X \longrightarrow Y$ between topological spaces is said to be an \textit{acyclic homology fibration} if the for all $y \in Y$, the inclusion, 
$f^{-1}(y) \hookrightarrow \textstyle{\hofibre_{y}}(f)$,
is an acyclic map. 
\end{defn}

The following theorem is a restatement of \cite[Corollary 12.9]{P 17}.
The key ingredient of its proof in \cite{P 17} is the homological stability theorem of Galatius and Randal-Williams from \cite{GRW 16}.
\begin{theorem} \label{theorem: homology fibration}
The map,
$
\mb{L}: \mathcal{W}^{n-1, c}_{\theta, 2n}(P)^{\stb}  \; \longrightarrow \; \mathcal{W}^{\{n, n+1\}}_{\theta, 2n, \loc},
$
is an acyclic homology fibration.
The fibre over the empty element $\emptyset  \in \mathcal{W}^{\{n, n+1\}}_{\theta, 2n, \loc}$ is given by the stable moduli space $\mathcal{M}^{n}_{\theta, 2n}(P)^{\stb}$.
\end{theorem}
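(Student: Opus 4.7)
The plan is to combine Theorem \ref{theorem: localization map homotopy equivalence} (in a borderline case that the theorem as stated does not literally cover) with the Galatius--Randal-Williams homological stability theorem from \cite{GRW 16}, in the spirit of the proof of \cite[Corollary 12.9]{P 17}. First, I would identify the fibre over $\emptyset$ directly from the definitions. The indexing category $\mathcal{K}^{n}_{2n}$ contains only the empty tuple, since the constraint $n < \delta(t) < n+1$ admits no integer solutions; hence $\mathcal{W}^{n}_{\theta, 2n}(P)$ collapses to $\mathcal{W}^{n}_{\theta, 2n}(P; \emptyset) = \mathcal{M}^{n}_{\theta, 2n}(P)$. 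Since the stabilization cobordism $H_{n, n}(P)$ carries no surgery data, the map $\mb{S}$ commutes on the nose with $\mb{L}$, and consequently the stabilized fibre over $\emptyset$ is $\mathcal{M}^{n}_{\theta, 2n}(P)^{\stb}$.

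Next I would analyse $\mb{L}$ fibrewise over a general basepoint. For $\mb{t} \in \mathcal{K}^{n-1}_{2n}$ and $x = ((V, \sigma), e) \in \mathcal{W}^{\{n, n+1\}}_{\theta, 2n, \loc}(\mb{t})$, the fibre $\mb{L}_{\mb{t}}^{-1}(x)$ consists of $\theta$-manifolds $M$ with $n$-connected $\ell_{M}$ compatible with the prescribed surgery data. A morphism $(j, \varepsilon): \mb{s} \to \mb{t}$ in $\mathcal{K}^{n-1}_{2n}$ acts on such $M$ by surgeries in indices $n$ and $n+1$, producing a surgered $\theta$-manifold of the same stable diffeomorphism type (after enough applications of $\mb{S}$, which adjoin handle-pairs of trivial homotopy type). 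The central step --- and the main obstacle --- is to show that after passage to the stabilized colimit, these surgery-induced maps between the fibres become acyclic equivalences of stable moduli spaces. This is exactly where one invokes the Galatius--Randal-Williams homological stability theorem \cite{GRW 16}: iterating $\mb{S}$ produces homology equivalences of $B\Diff$-type classifying spaces in a stable range, whose sequential colimit is acyclic on integer homology. Propagating this across the surgery morphisms of $\mathcal{K}^{n-1}_{2n}$ requires a careful comparison of the various stabilized fibres --- this is the technical heart of the argument, and where the bulk of the work lies.

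Finally, I would assemble these fibrewise acyclic equivalences via a homotopy-colimit argument modelled on Proposition \ref{proposition: homotopy colimit fibre sequence}, but upgraded from weak equivalences to acyclic maps. The conclusion is that for every $x$ in the base, the inclusion of $(\mb{L}^{\stb})^{-1}(x)$ into the homotopy fibre of $\mb{L}^{\stb}$ at $x$ is acyclic; this is precisely the definition of an acyclic homology fibration from Definition \ref{defn: acyclic homology fibration}. Combined with the identification of the fibre over $\emptyset$ from the first step, this completes the proof.
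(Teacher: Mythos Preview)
Your plan is correct and aligns with the paper's approach: the paper does not give an independent proof but simply cites this as a restatement of \cite[Corollary 12.9]{P 17}, noting that the Galatius--Randal-Williams homological stability theorem from \cite{GRW 16} is the key ingredient. Your outline --- identifying the fibre over $\emptyset$ via the collapse $\mathcal{W}^{n}_{\theta, 2n}(P) = \mathcal{M}^{n}_{\theta, 2n}(P)$, using GRW stability to upgrade the surgery-induced maps on fibres to acyclic equivalences after stabilization, and then assembling via an acyclic variant of Proposition \ref{proposition: homotopy colimit fibre sequence} --- is precisely how that corollary is proven in \cite{P 17}.
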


\begin{remark} \label{remark: equivalence of homotopy fibre to MT}
Let us denote,
$\mb{F}_{\theta, 2n} := \hofibre(\mathcal{W}^{n-1, c}_{\theta, 2n}(P)^{\stb}  \; \rightarrow \; \mathcal{W}^{\{n, n+1\}}_{\theta, 2n, \loc}).$
By the results in \cite{P 17} the map from the statement of Theorem \ref{theorem: homology fibration} fits into a commutative diagram
$$
\xymatrix{
\mathcal{W}^{n-1, c}_{\theta, 2n}(P)^{\stb} \ar[d]_{\simeq}  \ar[r] & \mathcal{W}^{\{n, n+1\}}_{\theta, 2n, \loc} \ar[d]_{\simeq} \\
\Omega^{\infty-1}\mb{hW}^{n-1}_{\theta, 2n+1} \ar[r] & \Omega^{\infty-1}\Sigma^{\infty}G_{\theta, 2n+1}^{\mf}(\R^{\infty})^{\{n, n+1\}}_{\loc, +},
}
$$
and furthermore the homotopy fibre of the bottom row is given by the space $\Omega^{\infty}\MT\theta_{2n}$.
This commutative diagram then induces the weak homotopy equivalence,
\begin{equation} \label{equation: homotopy fibre to MT}
\mb{F}_{\theta, 2n} \stackrel{\simeq} \longrightarrow \Omega^{\infty}\MT\theta_{2n}.
\end{equation}
Now, let 
$\rho: \mathcal{M}^{n}_{\theta, 2n}(P)^{\stb} \longrightarrow \Omega^{\infty}\MT\theta_{2n}$
be the \textit{scanning map}, which in \cite{GRW 16} is proven to be an acyclic map.
Tracing through the constructions, it follows that the diagram below is commutative,
$$
\xymatrix{
\mathcal{M}^{n}_{\theta, 2n}(P)^{\stb} \ar[dr] \ar[rr]^{\rho} && \Omega^{\infty}\MT\theta_{2n} \\
& \mb{F}_{\theta, 2n}. \ar[ur]^{\simeq} &
}
$$ 
The descending diagonal map is the inclusion of the fibre of $\mb{L}$ into its homotopy fibre, and the ascending-diagonal map is the weak homotopy equivalence from (\ref{equation: homotopy fibre to MT}). 
\end{remark}

\subsection{Stabilization with positive scalar curvature metrics}
Let $(P, g_{P}) \in \mathcal{M}_{\theta, 2n-1}^{\psc}$.
We proceed to define a similar stabilization for the spaces $\mathcal{W}^{\psc, n-1, c}_{\theta, 2n}(P, g_{P})$.
Recall the cobordism $H_{n, n}(P): P \rightsquigarrow P$. 
Our main construction to follow will require the use of a preliminary proposition, which follows as a result of \cite[Theorem 2.3.4]{BERW 16}.
Recall from Convention \ref{Convention: dimensional convention} that $n \geq 3$ and that the space $B$ is $2$-connected. 
\begin{proposition} \label{proposition: stability for metrics}
Let $(P, g_{P}) \in \mathcal{M}_{\theta, 2n-1}^{\psc}$ and suppose that $P$ is simply connected. 
There exists an element $\bar{g} \in \mathcal{R}^{+}(H_{n, n}(P))_{g_{P}, g_{P}}$ such that for all $M \in \mathcal{M}_{\theta, 2n}(P)$ the map 
$$
\mathcal{R}^{+}(M)_{g_{P}} \longrightarrow \mathcal{R}^{+}(M\cup H_{n,n}(P))_{g_{P}}, \quad 
g \mapsto g\cup\bar{g},
$$
is a weak homotopy equivalence. 
\end{proposition}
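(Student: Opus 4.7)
The plan is to identify $H_{n,n}(P)$ as an admissible cobordism in the sense of Botvinnik-Ebert-Randal-Williams and then invoke their stability theorem \cite[Theorem 2.3.4]{BERW 16} directly.

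The first step, which is the main technical one, is to exhibit a Morse function $f : H_{n,n}(P) \to [0,1]$ with $f^{-1}(0) = f^{-1}(1) = P$ whose two critical points are both of index $n$. The connect sum $H_{n,n}(P) = ([0,1] \times P)\natural(S^n \times S^n)$ deformation retracts onto $P \vee S^n \vee S^n$, since $(S^n \times S^n) \setminus \mathrm{int}(D^{2n})$ deformation retracts onto $S^n \vee S^n$. Hence the relative homology $H_*(H_{n,n}(P), P)$ is free of rank $2$ concentrated in degree $n$, and the pair $(H_{n,n}(P), P)$ is $(n-1)$-connected, using that $P$ is simply connected and $n \geq 3$. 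Smale's handle-trading procedure, valid because $\dim H_{n,n}(P) = 2n \geq 6$ and the pair is simply connected, then eliminates all handles of index different from $n$ and yields the desired minimal handle decomposition with exactly two index-$n$ handles.

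Next, I would check admissibility for Gromov-Lawson-Chernysh surgery. In a $2n$-dimensional cobordism between $(2n-1)$-dimensional manifolds, a handle of index $k$ is admissible provided $3 \leq k \leq 2n - 3$, corresponding to codimension $\geq 3$ for both the attaching sphere in one boundary and the belt sphere in the other. Under the standing hypothesis $n \geq 3$ from Convention 5.1, we have $3 \leq n \leq 2n - 3$, so both index-$n$ handles lie in the admissible range.

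With admissibility established, I would apply \cite[Theorem 2.3.4]{BERW 16} to the cobordism $H_{n,n}(P)$ with boundary metric $g_P$ on both ends. Since $P$ is simply connected and the chosen $\theta$-structure has null-homotopic classifying map $\ell_{n,n}$ on the $S^n \times S^n$ summand, the remaining tangential and connectivity hypotheses of that theorem are satisfied. It produces a psc metric $\bar{g} \in \mathcal{R}^+(H_{n,n}(P))_{g_P, g_P}$ for which the gluing map $g \mapsto g \cup \bar{g}$ is a weak homotopy equivalence from $\mathcal{R}^+(M)_{g_P}$ to $\mathcal{R}^+(M \cup H_{n,n}(P))_{g_P}$ for every $M \in \mathcal{M}_{\theta, 2n}(P)$.

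A subtle point is the uniformity of $\bar{g}$ across all $M$: the same metric must work for every compact $2n$-dimensional $\theta$-manifold $M$ with $\partial M = P$. This is automatic from BERW's construction, since $\bar{g}$ depends only on the internal geometry of $H_{n,n}(P)$ and the prescribed boundary data $g_P$, independent of $M$. The Gromov-Lawson surgery argument underlying Theorem 2.3.4 is carried out entirely inside $H_{n,n}(P)$, so the same $\bar{g}$ produces the desired weak equivalence for every $M$, and the proposition follows.
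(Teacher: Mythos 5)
Your argument is correct and follows the same route as the paper, which establishes this proposition simply by citing \cite[Theorem 2.3.4]{BERW 16} and leaving the hypothesis checks implicit; you have supplied those checks accurately, including the admissibility range $3 \leq n \leq 2n-3$ for index-$n$ handles in a $2n$-dimensional cobordism.

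One simplification worth noting: the handle-trading and relative Hurewicz argument you use to exhibit a Morse function on $H_{n,n}(P)$ with exactly two index-$n$ critical points is more work than needed. As the paper records in the proof of Theorem \ref{theorem: stability for n-handles}, the cobordism $H_{n,n}(P)$ of Construction \ref{Construction: stabilize by s-n times s-n} is isomorphic to the composite $T(\bar\sigma)\circ T(\sigma): P \rightsquigarrow P$, where $\sigma$ is a trivially embedded $(n-1)$-surgery in $P$ and $\bar\sigma$ is its dual. Each of $T(\sigma)$ and $T(\bar\sigma)$ carries exactly one Morse critical point of index $n$, so the composite has exactly two, both of index $n$, without any appeal to Smale's theory, Poincar\'e--Lefschetz duality, or the Whitney trick. (Also: the connect sum in Construction \ref{Construction: stabilize by s-n times s-n} is an interior connect sum $\#$ along an arc, not a boundary connect sum $\natural$.) These are cosmetic observations; the substance of your proof is sound, and your remark on the uniformity of $\bar{g}$ over all choices of $M$ is exactly the right point to flag.
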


Moving forward it will be useful to have some terminology available for the $\psc$ metrics on $H_{n, n}(P)$ that satisfy the condition from the statement of Proposition \ref{proposition: stability for metrics}.  
\begin{defn} \label{defn: stable metrics}
Fix $(P, g_{P}) \in \mathcal{M}_{\theta, 2n-1}^{\psc}$. 
An element $\bar{g} \in \mathcal{R}^{+}(H_{n, n}(P))_{g_{P}, g_{P}}$ is said to be \textit{stable} if for all $M \in \mathcal{M}_{\theta, 2n}(P)$ the map, 
$$\mathcal{R}^{+}(M)_{g_{P}} \longrightarrow \mathcal{R}^{+}(M\cup H_{n,n}(P))_{g_{P}}, \quad
g \mapsto g\cup\bar{g},$$
is a weak homotopy equivalence. 
\end{defn}

\begin{Construction} \label{Construction: stabilization for psc}
Fix an element $(P, g_{P}) \in \mathcal{M}_{\theta, 2n-1}^{\psc}$ with $P$ simply-connected. 
With $P$ chosen to be simply connected, Theorem \ref{proposition: stability for metrics} holds with respect to $(P, g_{P})$.
Consider the cobordism $H_{n, n}(P)$ from Construction \ref{Construction: stabilize by s-n times s-n}. 
Fix once and for all a $\psc$ metric $\widehat{g} \in \mathcal{R}^{+}(H_{n,n}(P))_{g_{P}, g_{P}}$ that is stable in the sense of Definition \ref{defn: stable metrics}. 
Concatenation with $(H_{n, n}(P), \widehat{g})$ yields a map,
$$
\mb{S}_{\widehat{g}}: \mathcal{M}_{\theta, 2n}^{\psc, n}(P, g_{P}) \longrightarrow \mathcal{M}_{\theta, 2n}^{\psc, n}(P, g_{P}), \quad (W, h) \mapsto (W\cup H_{n, n}(P), \; h\cup \widehat{g}).
$$
Iterating this map forms a direct system and we define,
\begin{equation} \label{equation: stable moduli of psc metrics}
\mathcal{M}^{\psc, n}_{\theta, 2n}(P, g_{P})^{\stb} := \hocolim\left(\mathcal{M}_{\theta, 2n}^{\psc, n}(P, g_{P}) \stackrel{\mb{S}_{\widehat{g}}} \longrightarrow \mathcal{M}_{\theta, 2n}^{\psc, n}(P, g_{P}) \stackrel{\mb{S}_{\widehat{g}}} \longrightarrow \cdots\right).
\end{equation}

For each $\mb{t} \in \mathcal{K}^{n-1}_{2n}$ we have a similar map, 
$
\mb{S}_{\widehat{g}}: \mathcal{W}^{\psc, n-1, c}_{\theta, 2n}(P, g_{P}; \mb{t}) \; \longrightarrow \; \mathcal{W}^{\psc, n-1, c}_{\theta, 2n}(P, g_{P}; \mb{t}).
$
This map may be iterated and we define,
\begin{equation} \label{equation: stabilization by s-n by s-n}
\mathcal{W}^{\psc, n-1, c}_{\theta, 2n}(P, g_{P}; \mb{t})^{\stb} := \hocolim\left[\mathcal{W}^{\psc, n-1, c}_{\theta, 2n}(P, g_{P},; \mb{t}) \rightarrow \mathcal{W}^{\psc, n-1, c}_{\theta, 2n}(P, g_{P}; \mb{t}) \rightarrow \cdots \right].
\end{equation}

Similarly, for each $M \in \mathcal{M}_{\theta, 2n}^{\psc}(P, g_{P})$, the metric $\widehat{g} \in \mathcal{R}^{+}(H_{n, n}(P))_{g_{P}, g_{P}}$ yields a map 
$$
\mathcal{R}^{+}(M)_{g_{P}} \longrightarrow \mathcal{R}^{+}(M\cup H_{n, n}(P))_{g_{P}}, \quad
g \mapsto g\cup\widehat{g}.
$$
This map can be iterated as well and we define
\begin{equation} \label{equation: stabilization map for space of metrics}
\mathcal{R}^{+}(M)^{\stb}_{g_{P}} \; = \; \hocolim\left(\mathcal{R}^{+}(M)_{g_{P}} \rightarrow \mathcal{R}^{+}(M)_{g_{P}} \rightarrow \mathcal{R}^{+}(M)_{g_{P}} \rightarrow \cdots\right).
\end{equation}
\end{Construction}
Let $(P, g_{P}) \in \mathcal{M}_{\theta, d-1}^{\psc}$ be the element that was used in Construction \ref{Construction: stabilization for psc}. 
This element was chosen so that $P$ was simply connected. 
We will keep this element fixed throughout the rest of the section. 
The following proposition is similar to Theorem \ref{theorem: fibresequence forget the metric}.
\begin{proposition} \label{proposition: quasifibration of stable moduli}
The forgetful map, 
$F^{\stb}: \mathcal{W}^{\psc, n-1, c}_{\theta, 2n}(P, g_{P})^{\stb} \longrightarrow \mathcal{W}^{n-1, c}_{\theta, 2n}(P)^{\stb},$
is a quasi-fibration.
Any element $M \in \mathcal{M}^{n}_{\theta, 2n}(P)$ determines a point in $\mathcal{W}^{n-1, c}_{\theta, 2n}(P)^{\stb}$. 
The homotopy fibre of $F^{\stb}$ over such $M$ is given by the space $\mathcal{R}^{+}(M)_{g_{P}}^{\stb}$.
\end{proposition}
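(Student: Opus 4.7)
The plan is to deduce Proposition \ref{proposition: quasifibration of stable moduli} from the unstable Theorem \ref{theorem: fibresequence forget the metric} by realising $F^{\stb}$ as a sequential homotopy colimit of quasi-fibrations whose fibrewise behaviour is controlled by Proposition \ref{proposition: stability for metrics}. First I would observe that for the parameters $(k, d) = (n, 2n)$ with $n \geq 3$ we have $k \geq 3$ and $d - k + 1 = n + 1 \geq 3$, so Theorem \ref{theorem: fibresequence forget the metric} applies and the unstable forgetful map $F \colon \mathcal{W}^{\psc, n-1, c}_{\theta, 2n}(P, g_P) \longrightarrow \mathcal{W}^{n-1, c}_{\theta, 2n}(P)$ is already a quasi-fibration with fibre $\mathcal{R}^{+}(M)_{g_P}$ over $M$.

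Next I would assemble the ladder obtained by iterating $\mb{S}_{\widehat{g}}$ on the total spaces and $\mb{S}$ on the bases. Directly from the definitions, $F$ intertwines the two stabilizations, yielding
$$
\xymatrix{
\mathcal{W}^{\psc, n-1, c}_{\theta, 2n}(P, g_P) \ar[r]^{\mb{S}_{\widehat{g}}} \ar[d]_{F} & \mathcal{W}^{\psc, n-1, c}_{\theta, 2n}(P, g_P) \ar[r]^{\mb{S}_{\widehat{g}}} \ar[d]_{F} & \cdots \\
\mathcal{W}^{n-1, c}_{\theta, 2n}(P) \ar[r]^{\mb{S}} & \mathcal{W}^{n-1, c}_{\theta, 2n}(P) \ar[r]^{\mb{S}} & \cdots
}
$$
whose horizontal hocolims recover $F^{\stb}$. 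The crucial fibrewise observation is that, restricted to the fibre of $F$ over $M$, the top stabilization is precisely the concatenation map $\mathcal{R}^{+}(M)_{g_P} \longrightarrow \mathcal{R}^{+}(M \cup H_{n,n}(P))_{g_P}$, $g \mapsto g \cup \widehat{g}$; because $\widehat{g}$ is stable in the sense of Definition \ref{defn: stable metrics}, Proposition \ref{proposition: stability for metrics} makes this a weak homotopy equivalence. Every square in the ladder is therefore homotopy cartesian.

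With this in place I would invoke the standard lemma that a sequential homotopy colimit of a ladder of quasi-fibrations whose squares are homotopy cartesian is itself a quasi-fibration, with fibre over any point given by the mapping telescope of the fibres along the ladder. Applied to our situation this identifies $F^{\stb}$ as a quasi-fibration whose fibre over $M \in \mathcal{M}^{n}_{\theta, 2n}(P) \subset \mathcal{W}^{n-1,c}_{\theta,2n}(P)^{\stb}$ is $\hocolim\bigl(\mathcal{R}^{+}(M)_{g_P} \longrightarrow \mathcal{R}^{+}(M \cup H_{n,n}(P))_{g_P} \longrightarrow \cdots\bigr)$, which is precisely $\mathcal{R}^{+}(M)^{\stb}_{g_P}$.

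The main, and essentially only, technical obstacle is verifying this sequential hocolim lemma in the present topological setting. The cleanest route is to apply the Dold--Thom criterion directly to $F^{\stb}$: one pulls back over the exhaustive filtration of $\mathcal{W}^{n-1, c}_{\theta, 2n}(P)^{\stb}$ by finite telescope stages, notes that over each finite stage the quasi-fibration property is provided by Theorem \ref{theorem: fibresequence forget the metric}, and then passes to the colimit using that filtered colimits commute with homotopy groups and that the fibrewise stabilization equivalences supplied by Proposition \ref{proposition: stability for metrics} allow one to identify the colimit fibre with $\mathcal{R}^{+}(M)^{\stb}_{g_P}$.
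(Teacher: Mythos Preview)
Your proposal is correct and uses the same essential ingredients as the paper---the stability of $\widehat{g}$ on fibres and the Madsen--Weiss homotopy-colimit lemma (Proposition \ref{proposition: homotopy colimit fibre sequence})---but organises the double homotopy colimit in the opposite order. The paper works at each fixed $\mb{t} \in \mathcal{K}^{n-1}_{2n}$, first stabilising the Serre fibration $F_{\mb{t}}$ sequentially (using Proposition \ref{proposition: homotopy colimit fibre sequence} over $\mathbb{N}$ and the stability of $\widehat{g}$ to identify $\hofibre(F^{\stb}_{\mb{t}}) \simeq \mathcal{R}^{+}(M; \mb{t}, e)^{\stb}_{g_P}$), and then applies Proposition \ref{proposition: homotopy colimit fibre sequence} a second time over $\mathcal{K}^{n-1}_{2n}$, invoking Proposition \ref{proposition: homotopy equivalence on fibres} to see the induced maps on fibres are equivalences. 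You instead invoke Theorem \ref{theorem: fibresequence forget the metric} once to package the entire $\mathcal{K}^{n-1}_{2n}$-argument into the unstable quasi-fibration $F$, and then stabilise sequentially; your ``standard lemma'' is exactly Proposition \ref{proposition: homotopy colimit fibre sequence} applied to the category $\mathbb{N}$. Your route is slightly more modular since it reuses Theorem \ref{theorem: fibresequence forget the metric} as a black box, while the paper's ordering has the minor advantage that at each $\mb{t}$ one starts from a genuine Serre fibration rather than a quasi-fibration, making the Dold--Thom verification more transparent.
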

\begin{proof}
For $\mb{t} \in \mathcal{K}_{2n}^{n-1}$, consider the map, 
$
F^{\stb}_{\mb{t}}: \mathcal{W}^{\psc, n-1, c}_{\theta, 2n}(P, g_{P}; \mb{t})^{\stb} \longrightarrow \mathcal{W}^{n-1, c}_{\theta, 2n}(P, \mb{t})^{\stb}.
$
We claim that the homotopy fibre of this map over $(M, (V, \sigma), e) \in \mathcal{W}^{n-1, c}_{\theta, 2n}(P, \mb{t})$ (considered as an element of $\mathcal{W}^{n-1, c}_{\theta, 2n}(P, \mb{t})^{\stb}$) is given by the space $\mathcal{R}^{+}(M; \mb{t}, e)^{\stb}_{g_{P}}$.
Here, $\mathcal{R}^{+}(M; \mb{t}, e)_{g_{P}}$ is from Definition \ref{defn: metrics standard on multiple surgeries}, and 
$$\mathcal{R}^{+}(M; \mb{t}, e)^{\stb}_{g_{P}} \; = \; \hocolim\left(\mathcal{R}^{+}(M; \mb{t}, e)^{\stb}_{g_{P}} \rightarrow \mathcal{R}^{+}(M\cup H_{n, n}(P); \mb{t}, e)^{\stb}_{g_{P}} \rightarrow \cdots \right).$$
To establish the above claim we first observe that the forgetful map
$$
F_{\mb{t}}: \mathcal{W}^{\psc, n-1, c}_{\theta, 2n}(P, g_{P}; \mb{t}) \longrightarrow \mathcal{W}^{n-1, c}_{\theta, 2n}(P, \mb{t})
$$
is a Serre-fibration and that the fibre over any such $(M, (V, \sigma), e)$ is given by $\mathcal{R}^{+}(M; \mb{t}, e)_{g_{P}}$.
Fix an element $(M, (V, \sigma), e) \in  \mathcal{W}^{n-1, c}_{\theta, 2n}(P, \mb{t})$, and let us denote $x :=  (M, (V, \sigma), e)$.
The map $\mb{S}_{\widehat{g}}$ induces a map of fibre sequences 
$$
\xymatrix{
\mathcal{R}^{+}(M; \mb{t}, e)^{\stb}_{g_{P}} \ar[d] \ar[rr]^{h\mapsto h\cup\widehat{g}_{P} \ \ \ } && \mathcal{R}^{+}(M\cup H_{n, n}(P); \mb{t}, e)^{\stb}_{g_{P}} \ar[d] \\
\mathcal{W}^{\psc, n-1, c}_{\theta, 2n}(P, g_{P}; \mb{t}) \ar[d]^{F_{\mb{t}}} \ar[rr]^{\mb{S}_{\widehat{g}}} && \mathcal{W}^{\psc, n-1, c}_{\theta, 2n}(P, g_{P}; \mb{t}) \ar[d]^{F_{\mb{t}}} \\
\mathcal{W}^{n-1, c}_{\theta, 2n}(P; \mb{t}) \ar[rr]^{\mb{S}} && \mathcal{W}^{n-1, c}_{\theta, 2n}(P; \mb{t}).
}
$$
Since the metric $\widehat{g}$ was chosen to be stable, it follows that the top-horizontal map in the above diagram is a weak homotopy equivalence.  
By an application of Proposition \ref{proposition: homotopy colimit fibre sequence} it follows that the natural inclusion 
$$
\mathcal{R}^{+}(M; \mb{t}, e)_{g_{P}} \; \hookrightarrow \; \textstyle{\hofibre_{x}}(F^{\stb}_{\mb{t}}),
$$
is a weak homotopy equivalence, where $x$ denotes the element $(M, (V, \sigma), e)$.
Combining this weak homotopy equivalence with the weak homotopy equivalence,
$\mathcal{R}^{+}(M; \mb{t}, e)_{g_{P}} \; \simeq \; \mathcal{R}^{+}(M; \mb{t}, e)^{\stb}_{g_{P}},$
proves our claim that,
$$
\textstyle{\hofibre_{x}}(F^{\stb}_{\mb{t}}) \simeq \mathcal{R}^{+}(M; \mb{t}, e)^{\stb}_{g_{P}}.
$$
By Proposition \ref{proposition: homotopy equivalence on fibres}, every morphism $(j, \varepsilon): \mb{t} \longrightarrow \mb{s}$ induces a weak homotopy equivalence 
$$
\textstyle{\hofibre_{(j, \varepsilon)^{*}x}}(F^{\stb}_{\mb{s}}) \; \stackrel{\simeq} \longrightarrow \; \textstyle{\hofibre_{x}}(F^{\stb}_{\mb{t}}),
$$
and thus it follows from Proposition \ref{proposition: homotopy colimit fibre sequence} that the induced map of homotopy colimits (over $\mathcal{K}^{n-1}_{2n}$), 
$$
F^{\stb}: \mathcal{W}^{\psc, n-1, c}_{\theta, 2n}(P, g_{P})^{\stb} \longrightarrow \mathcal{W}^{n-1, c}_{\theta, 2n}(P)^{\stb},
$$
is a quasi-fibration.
As observed above it follows that the fibre over 
$M \in \mathcal{W}^{n-1, c}_{\theta, 2n}(P, \emptyset)$
is given by the space $\mathcal{R}^{+}(M)^{\stb}_{g_{P}}$.
This concludes the proof of the proposition.
\end{proof}

Let 
$
\pi^{\stb}: \mathcal{M}^{\psc, n}_{\theta, 2n}(P, g_{P})^{\stb} \longrightarrow \mathcal{M}^{n}_{\theta, 2n}(P)^{\stb}
$
be the map obtained by forming the homotopy colimit of the maps,
\begin{equation} \label{equation: direct limit of the projections pi}
\xymatrix{
\mathcal{M}^{\psc, n}_{\theta, 2n}(P, g_{P}) \ar[d]^{\pi} \ar[r] & \mathcal{M}^{\psc, n}_{\theta, 2n}(P, g_{P}) \ar[r] \ar[d]^{\pi} & \mathcal{M}^{\psc, n}_{\theta, 2n}(P, g_{P}) \ar[r] \ar[d]^{\pi} & \cdots \\
\mathcal{M}^{n}_{\theta, 2n}(P) \ar[r] & \mathcal{M}^{n}_{\theta, 2n}(P) \ar[r] & \mathcal{M}^{n}_{\theta, 2n}(P) \ar[r] & \cdots
}
\end{equation}
where $\pi$ is the bundle projection (alias forgetful map), and the horizontal maps are the stabilization maps induced by cobordism $H_{n, n}(P)$ and the metric $\widehat{g}$.

\begin{corollary} \label{corollary: homotopy cartesian 1}
The commutative diagram, 
\begin{equation} \label{equation: homotopy cartesian 1}
\xymatrix{
\mathcal{M}^{\psc, n}_{\theta, 2n}(P, g_{P})^{\stb}  \ar[r] \ar[d]^{\pi^{\stb}} & \mathcal{W}_{\theta, 2n}^{\psc, n-1, c}(P, g_{P})^{\stb} \ar[d]^{F^{\stb}} \\
\mathcal{M}^{n}_{\theta, 2n}(P)^{\stb} \ar[r] & \mathcal{W}_{\theta, 2n}^{n-1, c}(P)^{\stb},
}
\end{equation}
is homotopy Cartesian. 
\end{corollary}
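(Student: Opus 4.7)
The plan is to show that both vertical maps in the square are quasi-fibrations whose homotopy fibres over a common basepoint $M \in \mathcal{M}^{n}_{\theta, 2n}(P)$ are both weakly equivalent to $\mathcal{R}^{+}(M)^{\stb}_{g_{P}}$, and that the top horizontal map induces the identity (up to the natural weak equivalences) on these fibres. Since the right vertical map is already handled by Proposition \ref{proposition: quasifibration of stable moduli}, the bulk of the work is to analyze the left vertical map and then compare.

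First I would observe that the unstable projection $\pi: \mathcal{M}^{\psc, n}_{\theta, 2n}(P, g_{P}) \longrightarrow \mathcal{M}^{n}_{\theta, 2n}(P)$ is a fibre bundle (directly from the quotient-space description (\ref{equation: topology of moduli spaces}) in Definition \ref{defn: space of manifolds equipped with metric}), with fibre over $M$ given by $\mathcal{R}^{+}(M)_{g_{P}}$. The stabilization map $\mb{S}_{\widehat{g}}$ covers the base stabilization $\mb{S}$, and its restriction to the fibre over $M$ is the map $g \mapsto g \cup \widehat{g}$, which is a weak homotopy equivalence because $\widehat{g}$ was chosen to be stable in the sense of Definition \ref{defn: stable metrics} (available by Proposition \ref{proposition: stability for metrics}, using the assumption that $P$ is simply connected). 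An application of Proposition \ref{proposition: homotopy colimit fibre sequence} to the direct system (\ref{equation: direct limit of the projections pi}) then yields that $\pi^{\stb}$ is a quasi-fibration with homotopy fibre $\mathcal{R}^{+}(M)^{\stb}_{g_{P}}$ over $M$.

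Next I would compare the two quasi-fibrations via the horizontal maps. The top horizontal map is the natural inclusion $(W, h) \longmapsto ((W, (\emptyset, \emptyset), \emptyset), h)$ into the $\mb{t} = \emptyset$ stage of the homotopy colimit defining $\mathcal{W}^{\psc, n-1, c}_{\theta, 2n}(P, g_{P})^{\stb}$, and similarly on the base. Tracing through the proof of Proposition \ref{proposition: quasifibration of stable moduli}, the homotopy fibre of $F^{\stb}$ over $M$ is identified with $\mathcal{R}^{+}(M)^{\stb}_{g_{P}}$ through the same sequence of weak equivalences (arising from $\mathcal{R}^{+}(M; \emptyset, \emptyset)_{g_{P}} = \mathcal{R}^{+}(M)_{g_{P}}$ at the empty surgery-data stage). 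Consequently, the induced map on homotopy fibres is the identity of $\mathcal{R}^{+}(M)^{\stb}_{g_{P}}$, and the square is homotopy Cartesian by the standard fibre criterion.

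The main technical obstacle I anticipate is the careful bookkeeping required to identify the two quasi-fibrations' fibres naturally, i.e.\ to check that the comparison map on fibres really is the identity rather than some other self-map of $\mathcal{R}^{+}(M)^{\stb}_{g_{P}}$. This comes down to compatibility between the two applications of Proposition \ref{proposition: homotopy colimit fibre sequence}: the one establishing quasi-fibrancy of $F^{\stb}$ (over the hocolim in $\mb{t}$) and the one for $\pi^{\stb}$ (over the stabilization direct system). Both reductions go through the same unstable fibre $\mathcal{R}^{+}(M)_{g_{P}}$ at $\mb{t}=\emptyset$ and first stabilization stage, so the identification is canonical once the diagrams are drawn out, but this verification is where the proof spends most of its attention.
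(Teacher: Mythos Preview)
Your proposal is correct and follows essentially the same approach as the paper: show that $\pi^{\stb}$ is a quasi-fibration with homotopy fibre $\mathcal{R}^{+}(M)^{\stb}_{g_{P}}$ (using stability of $\widehat{g}$ and Proposition \ref{proposition: homotopy colimit fibre sequence}), invoke Proposition \ref{proposition: quasifibration of stable moduli} for $F^{\stb}$, and conclude that the induced map on homotopy fibres is a weak equivalence. The paper's proof is terser about the fibre comparison, but your extra care in verifying that the induced map on fibres is the canonical one is exactly the bookkeeping the paper suppresses.
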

\begin{proof}
Since $\widehat{g}$ is stable, by the same argument used in the proof go Proposition \ref{proposition: quasifibration of stable moduli} it follows that 
$\pi^{\stb}: \mathcal{M}^{\psc, n}_{\theta, 2n}(P, g_{P})^{\stb} \longrightarrow \mathcal{M}^{n}_{\theta, 2n}(P)^{\stb}$
is a quasi-fibration with fibre (over $M$) given by $\mathcal{R}^{+}(M)^{\stb}_{g_{P}}$. 
By Proposition \ref{proposition: quasifibration of stable moduli}, the inclusion $\mathcal{R}^{+}(M)^{\stb}_{g_{P}} \hookrightarrow \hofibre(F^{\stb})$ is a weak homotopy equivalence. 
It follows that the map $\hofibre(\pi^{\stb}) \longrightarrow \hofibre(F^{\stb})$ induced by the commutative diagram (\ref{equation: homotopy cartesian 1}) is a weak homotopy equivalence. 
This implies that the diagram is homotopy Cartesian and thus proves the corollary.
\end{proof}

\begin{corollary} \label{proposition: inclusion into colimit equivalence}
The inclusion $\mathcal{W}^{\psc, n-1, c}_{\theta, 2n}(P, g_{P}) \hookrightarrow \mathcal{W}^{\psc, n-1, c}_{\theta, 2n}(P, g_{P})^{\stb}$ is a weak homotopy equivalence.
\end{corollary}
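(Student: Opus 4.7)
The plan is to compare the quasi-fibration from Theorem \ref{theorem: fibresequence forget the metric} with its stable analogue from Proposition \ref{proposition: quasifibration of stable moduli}. More precisely, the natural inclusions fit into a commutative diagram
$$
\xymatrix{
\mathcal{R}^{+}(M)_{g_{P}} \ar[d]^{\iota_{\mathcal{R}}} \ar[r] & \mathcal{W}^{\psc, n-1, c}_{\theta, 2n}(P, g_{P}) \ar[d]^{\iota_{\mathcal{W}^{\psc}}} \ar[r]^{F} & \mathcal{W}^{n-1, c}_{\theta, 2n}(P) \ar[d]^{\iota_{\mathcal{W}}} \\
\mathcal{R}^{+}(M)^{\stb}_{g_{P}} \ar[r] & \mathcal{W}^{\psc, n-1, c}_{\theta, 2n}(P, g_{P})^{\stb} \ar[r]^{F^{\stb}} & \mathcal{W}^{n-1, c}_{\theta, 2n}(P)^{\stb}
}
$$
in which both rows are quasi-fibration sequences (by Theorem \ref{theorem: fibresequence forget the metric} and Proposition \ref{proposition: quasifibration of stable moduli}, respectively). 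The strategy is to verify that the left and right vertical maps are weak homotopy equivalences, and then conclude via the five lemma applied to the long exact sequences of homotopy groups that $\iota_{\mathcal{W}^{\psc}}$ is a weak homotopy equivalence.

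First I would observe that $\iota_{\mathcal{W}}$ is a weak homotopy equivalence by Proposition \ref{proposition: stabiliization equivalence}. Next, for the fibre map $\iota_{\mathcal{R}}$, recall that $\mathcal{R}^{+}(M)^{\stb}_{g_{P}}$ was defined in (\ref{equation: stabilization map for space of metrics}) as the homotopy colimit along the stabilization maps $g \mapsto g\cup \widehat{g}$. Since the metric $\widehat{g}$ was chosen to be stable in the sense of Definition \ref{defn: stable metrics}, every map in this sequential homotopy colimit is a weak homotopy equivalence, and hence so is the canonical inclusion $\iota_{\mathcal{R}}$ of the first term into the colimit.

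With both outer vertical maps weak homotopy equivalences, the five lemma forces $\iota_{\mathcal{W}^{\psc}}$ to induce isomorphisms on all homotopy groups basepoint by basepoint. Since the statement of Theorem \ref{theorem: fibresequence forget the metric} (together with its extension to the connected variant noted in the remark following it) guarantees that the relevant projections are honest quasi-fibrations rather than merely homotopy fibre sequences, there is no obstruction to applying the five lemma at every basepoint of $\mathcal{W}^{\psc, n-1, c}_{\theta, 2n}(P, g_{P})$. The main technical nuisance I anticipate is bookkeeping basepoints across path components of the base $\mathcal{W}^{n-1, c}_{\theta, 2n}(P)$, but since the left vertical map is an equivalence for \emph{every} choice of $M \in \mathcal{M}^{n}_{\theta, 2n}(P)$ (stability of $\widehat{g}$ being uniform in $M$), this causes no genuine difficulty and the argument goes through on every path component.
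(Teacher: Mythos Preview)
Your proposal is correct and follows essentially the same argument as the paper: the paper draws the identical commutative diagram (with rows and columns transposed), invokes Proposition \ref{proposition: stabiliization equivalence} for the base map and stability of $\widehat{g}$ for the fibre map, and concludes via the five lemma from the two homotopy fibre sequences.
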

\begin{proof}
Consider the commutative diagram,
$$
\xymatrix{
\mathcal{R}^{+}(M)_{g_{P}} \ar[r]^{\simeq} \ar[d] & \mathcal{R}^{+}(M)^{\stb}_{g_{P}} \ar[d] \\
\mathcal{W}^{\psc, n-1, c}_{\theta, 2n}(P, g_{P}) \ar[r] \ar[d] & \mathcal{W}^{\psc, n-1, c}_{\theta, 2n}(P, g_{P})^{\stb} \ar[d] \\
\mathcal{W}^{n-1, c}_{\theta, 2n}(P) \ar[r]^{\simeq}  & \mathcal{W}^{n-1, c}_{\theta, 2n}(P)^{\stb}.
}
$$ 
The horizontal maps are inclusions and the vertical columns are homotopy fibre sequences. 
The bottom-horizontal map is a weak homotopy equivalence by Proposition \ref{proposition: stabiliization equivalence}. 
Since the metric $\widehat{g}$ was chosen in Construction \ref{Construction: stabilization for psc} to be stable it follows that the top-horizontal map is a weak homotopy equivalence as well. 
The proof of the proposition then follows from the fact that both columns are homotopy fibre sequences. 
\end{proof}

As in the previous section we have a localization map, 
$$
\mb{L}^{\psc}: \mathcal{W}^{\psc, n-1, c}_{\theta, 2n}(P, g_{P})^{\stb}  \; \longrightarrow \; \mathcal{W}^{\{n, n+1\}}_{\theta, 2n, \loc}
$$
defined in the same way as (\ref{equation: localization transformation}). 
Let us denote,
\begin{equation} \label{equation: homotopy fibres of maps}
\begin{aligned}
\mb{F}^{\psc}_{\theta, 2n} &:= \textstyle{\hofibre}\left(\mathcal{W}_{\theta, 2n}^{\psc, n-1, c}(P, g_{P})^{\stb} \stackrel{\mb{L}^{\psc}} \longrightarrow \mathcal{W}^{\{n, n+1\}}_{\theta, 2n, \loc}\right), \\
\mb{F}_{\theta, 2n} &:= \textstyle{\hofibre}\left(\mathcal{W}_{\theta, 2n}^{n-1, c}(P)^{\stb} \stackrel{\mb{L}} \longrightarrow \mathcal{W}^{\{n, n+1\}}_{\theta, 2n, \loc}\right),
\end{aligned}
\end{equation}
where the homotopy fibres are taken over $\emptyset \in \mathcal{W}^{\{n, n+1\}}_{\theta, 2n, \loc}$
(since $\mathcal{W}^{\{n, n+1\}}_{\theta, 2n, \loc}$ is path-connected, it doesn't matter which point in $\mathcal{W}^{\{n, n+1\}}_{\theta, 2n, \loc}$ that we choose). 
Since the diagram 
$$
\xymatrix{
\mathcal{W}_{\theta, 2n}^{\psc, n-1, c}(P, g_{P})^{\stb} \ar[dr]^{\mb{L}^{\psc}}  \ar[rr]^{F^{\stb}} && \mathcal{W}_{\theta, 2n}^{n-1, c}(P)^{\stb} \ar[dl]_{\mb{L}} \\
& \mathcal{W}^{\{n, n+1\}}_{\theta, 2n, \loc} &
}
$$
is commutative, the forgetful map $F^{\stb}$ induces a map between the homotopy fibres which we denote
$$\widetilde{F}^{\stb}: \mb{F}^{\psc}_{\theta, 2n} \longrightarrow \mb{F}_{\theta, 2n}.$$
Recall from Theorem \ref{theorem: homology fibration} that the inclusion 
$\mathcal{M}^{n}_{\theta, 2n}(P)^{\stb} \longrightarrow \mb{F}_{\theta, 2n}$ is an acyclic map. 
Since the inclusion map $\mathcal{M}^{\psc, n}_{\theta, 2n}(P, g_{P})^{\stb} \hookrightarrow \mathcal{W}_{\theta, 2n}^{\psc, n-1, c}(P, g_{P})^{\stb}$ factors through $(\mb{L}^{\psc})^{-1}(\emptyset)$, this inclusion induces an embedding into the homotopy fibre, 
$\mathcal{M}^{\psc, n}_{\theta, 2n}(P, g_{P})^{\stb}  \hookrightarrow \mb{F}^{\psc}_{\theta, 2n}.$
The following theorem is the main result of this section.
\begin{theorem} \label{theorem: homology fibration psc}
The following diagram 
\begin{equation} \label{equation: cartesian psc square}
\xymatrix{
\mathcal{M}^{\psc, n}_{\theta, 2n}(P, g_{P})^{\stb} \ar[r] \ar[d]^{\pi^{\stb}}  & \mb{F}^{\psc}_{\theta, 2n} \ar[d]^{\widetilde{F}^{\stb}} \\
\mathcal{M}^{n}_{\theta, 2n}(P)^{\stb} \ar[r] & \mb{F}_{\theta, 2n}
}
\end{equation}
is homotopy cartesian and both horizontal maps are acyclic. 
\end{theorem}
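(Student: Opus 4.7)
The plan is to exhibit the square (\ref{equation: cartesian psc square}) as the left half of a rectangle whose right half is homotopy cartesian for formal reasons and whose total rectangle coincides with the homotopy cartesian square of Corollary \ref{corollary: homotopy cartesian 1}; the first claim then follows from the pasting lemma, and the second claim follows because acyclic maps are preserved under homotopy pullback.

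Explicitly, I would consider the commutative diagram
$$
\xymatrix{
\mathcal{M}^{\psc, n}_{\theta, 2n}(P, g_{P})^{\stb} \ar[r] \ar[d]_{\pi^{\stb}} & \mb{F}^{\psc}_{\theta, 2n} \ar[r] \ar[d]^{\widetilde{F}^{\stb}} & \mathcal{W}_{\theta, 2n}^{\psc, n-1, c}(P, g_{P})^{\stb} \ar[d]^{F^{\stb}} \\
\mathcal{M}^{n}_{\theta, 2n}(P)^{\stb} \ar[r] & \mb{F}_{\theta, 2n} \ar[r] & \mathcal{W}_{\theta, 2n}^{n-1, c}(P)^{\stb}
}
$$
in which the middle vertical arrow is the map on homotopy fibres induced by $F^{\stb}$ together with the identity on $\mathcal{W}^{\{n,n+1\}}_{\theta, 2n, \loc}$, and the horizontal maps in the right-hand square are the canonical inclusions of homotopy fibres over $\emptyset$. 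The right-hand square is homotopy cartesian: extending both rows by one step produces identical maps into $\mathcal{W}^{\{n, n+1\}}_{\theta, 2n, \loc}$ (the localization maps $\mb{L}^{\psc}$ and $\mb{L}$), so the comparison of homotopy fibres is by construction the identity. The outer rectangle is precisely the homotopy cartesian square of Corollary \ref{corollary: homotopy cartesian 1}. By the pasting lemma for homotopy cartesian squares, the left-hand square, which is (\ref{equation: cartesian psc square}), is homotopy cartesian.

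For the acyclicity statement, Theorem \ref{theorem: homology fibration} says that the localization map $\mb{L}$ is an acyclic homology fibration with fibre $\mathcal{M}^{n}_{\theta, 2n}(P)^{\stb}$ over $\emptyset$; this is equivalent to the statement that the canonical inclusion $\mathcal{M}^{n}_{\theta, 2n}(P)^{\stb} \hookrightarrow \mb{F}_{\theta, 2n}$ (which is the bottom horizontal map of (\ref{equation: cartesian psc square})) is acyclic. Since acyclic maps are preserved by homotopy pullback and the square (\ref{equation: cartesian psc square}) has just been shown to be homotopy cartesian, the top horizontal map $\mathcal{M}^{\psc, n}_{\theta, 2n}(P, g_{P})^{\stb} \to \mb{F}^{\psc}_{\theta, 2n}$ is acyclic as well.

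I do not anticipate a genuine obstacle here: both assertions are essentially formal consequences of the already established results, and the only real verification is the straightforward check that the right-hand square in the enlarged diagram is homotopy cartesian, i.e.\ that $\mb{L}^{\psc} = \mb{L} \circ F^{\stb}$ so that $\widetilde{F}^{\stb}$ is genuinely induced between homotopy fibres in a way compatible with the inclusions to total spaces. This compatibility is built into the definitions in (\ref{equation: homotopy fibres of maps}).
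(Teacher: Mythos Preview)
Your proposal is correct and follows essentially the same approach as the paper: both assemble the rectangle with middle column $\widetilde{F}^{\stb}$, observe that the right-hand square is homotopy cartesian because the localization maps to $\mathcal{W}^{\{n,n+1\}}_{\theta,2n,\loc}$ agree, invoke Corollary \ref{corollary: homotopy cartesian 1} for the outer rectangle, and deduce the left square is homotopy cartesian (the paper phrases this via the two-out-of-three property for the induced maps on homotopy fibres rather than citing the pasting lemma, but this is the same argument). For acyclicity the paper cites \cite[Proposition 2.2]{HH 79} explicitly, which is precisely your ``acyclic maps are stable under homotopy pullback''.
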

\begin{proof}
Consider the map of fibre sequences,
\begin{equation} \label{equation: diagram of fibre sequences}
\xymatrix{
\mb{F}^{\psc}_{\theta, 2n} \ar[r] \ar[d]^{\widetilde{F}^{\stb}} & \mathcal{W}_{\theta, 2n}^{\psc, n-1, c}(P, g_{P})^{\stb} \ar[rr]^{\mb{L}^{\psc}} \ar[d]^{F^{\stb}} &&  \mathcal{W}^{\{n, n+1\}}_{\theta, 2n, \loc} \ar[d]^{=} \\
\mb{F}_{\theta, 2n} \ar[r] & \mathcal{W}_{\theta, 2n}^{n-1, c}(P)^{\stb} \ar[rr]^{\mb{L}} && \mathcal{W}^{\{n, n+1\}}_{\theta, 2n, \loc}.
}
\end{equation}
Since the rightmost vertical map is the identity, it follows that the left-square in (\ref{equation: diagram of fibre sequences}) is a homotopy Cartesian square. 
We now observe that the inclusion maps,
$$
\mathcal{M}^{\psc, n}_{\theta, 2n}(P, g_{P})^{\stb} \hookrightarrow \mathcal{W}_{\theta, 2n}^{\psc, n-1, c}(P, g_{P})^{\stb} \quad \text{and} \quad
\mathcal{M}^{n}_{\theta, 2n}(P)^{\stb} \hookrightarrow \mathcal{W}_{\theta, 2n}^{n-1, c}(P)^{\stb},
$$
factor through the fibres $(\mb{L}^{\psc})^{-1}(\emptyset)$ and $\mb{L}^{-1}(\emptyset)$ respectively. 
From this we obtain the commutative diagram
\begin{equation} \label{equation: double commutative square} 
\xymatrix{
\mathcal{M}^{\psc, n}_{\theta, 2n}(P, g_{P})^{\stb} \ar[d]^{\pi^{\stb}} \ar[r] & \mb{F}^{\psc}_{\theta, 2n} \ar[r] \ar[d]^{\widetilde{F}^{\stb}} &  \mathcal{W}_{\theta, 2n}^{\psc, n-1, c}(P, g_{P})^{\stb} \ar[d]^{F^{\stb}} \\
 \mathcal{M}^{n}_{\theta, 2n}(P)^{\stb} \ar[r] & \mb{F}_{\theta, 2n} \ar[r] & \mathcal{M}^{n}_{\theta, 2n}(P)^{\stb}.
}
\end{equation}
The horizontal maps in this diagram induce maps between the homotopy fibres of $\pi^{\stb}$, $\widetilde{F}^{\stb}$, and $F^{\stb}$.
We denote these induced maps by,
\begin{equation}
\xymatrix{
\hofibre(\pi^{\stb}) \ar[r]^{a} & \hofibre(\widetilde{F}^{\stb}) \ar[r]^{b} & \hofibre(F^{\stb}).
}
\end{equation}
By Corollary \ref{corollary: homotopy cartesian 1}, the composite $b\circ a$ is a homotopy equivalence (this is because Corollary \ref{corollary: homotopy cartesian 1} implies that the outer square of (\ref{equation: double commutative square}) is homotopy cartesian). 
By our initial observation that the right-square of (\ref{equation: diagram of fibre sequences}) was homotopy-Cartesian, it follows that the map $b$ is a weak homotopy equivalence. 
By the two-out-of-three property it follows that the map $a$ is a weak homotopy equivalence as well. 
This proves that (\ref{equation: cartesian psc square}) is homotopy cartesian.
By Theorem \ref{theorem: homology fibration} the bottom horizontal arrow of (\ref{equation: cartesian psc square}) is an acyclic map. 
Since the square is homotopy Cartesian it follows form \cite[Proposition 2.2]{HH 79} that the top-horizontal map, 
$
\mathcal{M}^{\psc, n}_{\theta, 2n}(P, g_{P})^{\stb} \longrightarrow \mb{F}^{\psc}_{\theta, 2n},
$
is acyclic as well. 
This concludes the proof of the theorem.
\end{proof}

\subsection{Proof of Theorem \ref{theorem: factorization of index difference}, part (a)} \label{subsection: the even dimensional result}
In this section we prove Theorem \ref{theorem: factorization of index difference}, part (a), which is the the even-dimensional case. 
For the statement of the result we will work with the element $(P, g_{P}) \in \mathcal{M}^{\psc}_{\theta, 2n-1}$ that was fixed back in Construction \ref{Construction: stabilization for psc}. 
To give the proof of Theorem \ref{theorem: factorization of index difference}, we need to briefly review some $K$-theoretic results from \cite{BERW 16}.

\begin{Construction} \label{Construction: relaive k-theory construction} 
let $f: X \longrightarrow Y$ be a map between two topological spaces. 
Let $m \in \Z$.
Consider the relative $\KO^{m}$-group $\KO^{m}(f)$. 
Following \cite{BERW 16}, we denote:
\begin{itemize} \itemsep.2cm
\item $\bas: \KO^{m}(f) \longrightarrow \KO^{m}(Y)$ is the map given by restriction;
\item $\trg: \KO^{m}(f) \longrightarrow \KO^{m-1}(\hofibre(f))$ is the transgression map (see \cite[Section 3.5]{BERW 16}).
\end{itemize}
Let $\iota: \Omega Y \longrightarrow \hofibre(f)$ be the fibre transport map. 
In \cite[Lemma 3.5.1]{BERW 16}, it is proven that for all classes $x \in \KO^{m}(f)$, the following equation holds:
$$
\Omega\bas(x) = \iota^{*}\trg(x) \in \textstyle{\KO^{m-1}}(\Omega Y).
$$
\end{Construction}

Following \cite{BERW 16} we apply Construction \ref{Construction: relaive k-theory construction} to the bundle projection
\begin{equation} \label{equation: bundle projection unstable}
\pi: \mathcal{M}^{\psc, n}_{\theta, 2n}(P, g_{P}) \longrightarrow \mathcal{M}^{n}_{\theta, 2n}(P).
\end{equation}
Choose once and for all an element $M \in \mathcal{M}^{n}_{\theta, 2n}(P)$. 
We will keep this element fixed for the rest of this section. 
The fibre of $\pi$ over $M$ is given by the space $\mathcal{R}^{+}(M)_{g_{P}}$.
Recall the index difference class, $\inddiff \in \KO^{2n+1}(\mathcal{R}^{+}(M)_{g_{P}})$ (see \cite{E 16} or \cite{BERW 16} for the definition).
Let, 
$$\rho: \mathcal{M}^{n}_{\theta, 2n}(P) \longrightarrow \Omega^{\infty}\MT\theta_{2n}$$
denote the \textit{scanning map} (alias parametrized Pontryagin-Thom map) from \cite{GRW 14}.
The proposition below follows from the construction in \cite[Section 3.8.4]{BERW 16}.
\begin{proposition} \label{proposition: relative index difference}
There exists a class $\beta \in \KO^{2n}(\pi)$ that satisfies the following conditions:
\begin{enumerate} \itemsep.2cm
\item[(a)] $\bas(\beta) = \rho^{*}(\mathcal{A}_{2n})$,
\item[(b)] $\trg(\beta) = \inddiff$. 
\end{enumerate}
\end{proposition}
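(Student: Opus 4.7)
The plan is to construct $\beta$ via a family Dirac operator argument, essentially transporting the construction of \cite[Section 3.8.4]{BERW 16} to our setting. Recall that over the base $\mathcal{M}^{n}_{\theta, 2n}(P)$ there is a universal family of closed $2n$-dimensional spin manifolds (with boundary $P$), and the associated family of real Dirac operators yields a $\KO^{2n}$-index class. By the family version of the Atiyah--Singer index theorem, expressed homotopically through the parametrized Pontryagin--Thom construction as in \cite{GRW 14} and \cite[Section 3]{E 16}, this family index class is naturally identified with $\rho^{*}(\mathcal{A}_{2n})$, where $\rho$ is the scanning map and $\mathcal{A}_{2n}$ is the infinite-loop map induced by the $\KO$-orientation of $\MT\theta_{2n}$ described in Section \ref{subsection: ko orientation}.

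Now I would pass to the total space $\mathcal{M}^{\psc, n}_{\theta, 2n}(P, g_{P})$. Over this space, the pulled-back family Dirac operator is \emph{invertible}, because each fibre carries a psc metric (using the Lichnerowicz formula and the fact that the metrics are products near the boundary so the boundary conditions behave well). An invertible self-adjoint family provides a canonical, fibrewise null-homotopy of the family index class in $\KO^{2n}$. Translated into the language of relative $\KO$-groups, this canonical null-homotopy of $\pi^{*}\rho^{*}(\mathcal{A}_{2n})$ is precisely the data of a relative class $\beta \in \KO^{2n}(\pi)$ whose basing $\bas(\beta)$ recovers $\rho^{*}(\mathcal{A}_{2n})$. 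This establishes conclusion (a) essentially by construction.

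For part (b), the key point is that the transgression $\trg$ measures, on the homotopy fibre, the difference between two trivializations of the family index. Restricting $\beta$ to the homotopy fibre $\mathcal{R}^{+}(M)_{g_{P}}$ over a point $M \in \mathcal{M}^{n}_{\theta, 2n}(P)$, we obtain a class in $\KO^{2n-1}(\mathcal{R}^{+}(M)_{g_{P}})$ that compares the canonical invertibility trivialization at a varying metric $g$ with that at the basepoint $g_{M}$. This comparison is, up to sign and indexing conventions, the definition of the index difference $\inddiff$ given by Hitchin and reformulated by Ebert \cite{E 16}; this identification is exactly the content of \cite[Section 3.8.4]{BERW 16}, and the same argument applies verbatim since our $\mathcal{M}^{\psc, n}_{\theta, 2n}(P, g_{P}) \to \mathcal{M}^{n}_{\theta, 2n}(P)$ is a restriction of the corresponding map in \cite{BERW 16} to a union of path-components.

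The one point that needs minor care is checking compatibility of the $\KO$-orientation used here (via the spectrum map $\lambda_{-2n}: \MT\theta_{2n} \to \Sigma^{-2n}\KO$ of Section \ref{subsection: ko orientation}) with the $\KO$-orientation used in \cite{BERW 16} to identify the family index with $\rho^{*}(\mathcal{A}_{2n})$; this is automatic because both come from the spin structure on the tautological bundle over $B\Spin(2n)$, but it must be invoked explicitly. The main obstacle in reproducing the full argument is therefore not the construction of $\beta$ itself, which is a formal consequence of positive scalar curvature via Lichnerowicz, but rather the precise homotopy-theoretic identification $\bas(\beta) = \rho^{*}(\mathcal{A}_{2n})$, which rests on the index-theoretic content of \cite{E 16} and \cite[Section 3]{BERW 16}.
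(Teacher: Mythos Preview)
Your proposal is correct and follows the same approach as the paper, which simply cites \cite[Section 3.8.4]{BERW 16} without further argument; your sketch is a faithful outline of that cited construction. The only point worth flagging is that the manifolds here have nonempty boundary $P$ with prescribed boundary metric $g_{P}$, so the family Dirac operator requires boundary conditions (as handled in \cite{BERW 16} and \cite{E 16}), which you acknowledge but could state more explicitly.
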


We will use a stable version of the class $\beta \in \KO^{2n}(\pi)$ from the above proposition.
Recall the map, 
$
\pi^{\stb}: \mathcal{M}^{\psc, n}_{\theta, 2n}(P, g_{P})^{\stb} \longrightarrow \mathcal{M}^{n}_{\theta, 2n}(P)^{\stb}
$
defined by taking the direct limit of the direct system (\ref{equation: direct limit of the projections pi}). 
By what was proven about this map in the previous section it follows that the 
natural map,
$$\mathcal{R}^{+}(M)_{g_{P}}^{\stb} \stackrel{\simeq} \longrightarrow \textstyle{\hofibre_{M}}(\pi^{\stb}),$$
is a weak homotopy equivalence. 
The scanning maps $\rho$ induce a \textit{stable scanning map},
$$
\rho^{\stb}: \mathcal{M}^{n}_{\theta, 2n}(P)^{\stb} \longrightarrow \Omega^{\infty}\MT\theta_{2n},
$$
which is an acyclic map by \cite{GRW 16}.
The following proposition is proven using \cite[Proposition 3.8.6]{BERW 16}, and it is essentially the same as \cite[Proposition 4.2.6]{BERW 16}. 
\begin{proposition} \label{proposition: stable beta class}
There exists a class $\beta^{\stb} \in \KO^{2n}(\pi^{\stb})$ with the following properties:
\begin{enumerate} \itemsep.2cm
\item[(ii)] $\bas(\beta^{\stb}) \; = \; (\rho^{\stb})^{*}(\mathcal{A}_{2n})$;
\item[(iii)] $i^{*}\trg(\beta^{\stb}) \; = \; \inddiff$, where $i: \mathcal{R}^{+}(M)_{g_{P}} \hookrightarrow \mathcal{R}^{+}(M)^{\stb}_{g_{P}}$ is the inclusion, which is a weak homotopy equivalence. 
\end{enumerate}
\end{proposition}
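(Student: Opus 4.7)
The plan is to construct $\beta^{\stb}$ by lifting the unstable class $\beta \in \KO^{2n}(\pi)$ from Proposition \ref{proposition: relative index difference} to the colimit through the Milnor short exact sequence for $\KO^{2n}(\pi^{\stb})$, viewing $\pi^{\stb}$ as the homotopy colimit of the constant system of arrows $\pi$. For the lift to exist, the essential input is that $\beta$ is stable under the map of arrows
\[
(\mb{S}, \mb{S}_{\widehat{g}}): \pi \longrightarrow \pi,
\]
i.e.\ that $(\mb{S}, \mb{S}_{\widehat{g}})^{*}\beta = \beta$ in $\KO^{2n}(\pi)$. I would verify this stability by checking it separately on the base and on the fibre. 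On the base, $\bas(\beta) = \rho^{*}\mathcal{A}_{2n}$ is preserved by $\mb{S}^{*}$ because the scanning map $\rho$ is compatible with concatenation by the fixed cobordism $H_{n,n}(P)$: the $\theta$-null-homotopic copy of $S^{n}\times S^{n}$ contributes trivially in $\Omega^{\infty}\MT\theta_{2n}$. On the fibre over $M$, replacing $(M, g)$ by $(M\cup H_{n,n}(P), g\cup\widehat{g})$ does not change the image of $\inddiff(g)$, because $(H_{n,n}(P), \widehat{g})$ has positive scalar curvature and contributes a canonically trivialized summand to the relative Dirac family used to define $\beta$ in \cite[Proposition 3.8.6]{BERW 16}.

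Once stability is in hand, the constant compatible family $\{\beta_{k} = \beta\}_{k \geq 0}$ determines a class $\beta^{\stb} \in \KO^{2n}(\pi^{\stb})$ whose restriction to every finite stage of the direct system equals $\beta$. Both desired properties then follow by naturality. For (ii), the stable scanning map $\rho^{\stb}$ restricts at each stage to $\rho$, so $\bas(\beta^{\stb})$ and $(\rho^{\stb})^{*}\mathcal{A}_{2n}$ both restrict to $\rho^{*}\mathcal{A}_{2n}$ in $\KO^{2n}(\mathcal{M}^{n}_{\theta, 2n}(P))$ at every stage, and hence agree in $\KO^{2n}(\mathcal{M}^{n}_{\theta, 2n}(P)^{\stb})$. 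For (iii), the transgression $\trg$ is natural along the canonical map of homotopy fibres $\hofibre_{M}(\pi) \to \hofibre_{M}(\pi^{\stb})$, which by Proposition \ref{proposition: quasifibration of stable moduli} is identified (up to weak equivalence) with the inclusion $i: \mathcal{R}^{+}(M)_{g_{P}} \hookrightarrow \mathcal{R}^{+}(M)^{\stb}_{g_{P}}$; hence $i^{*}\trg(\beta^{\stb}) = \trg(\beta) = \inddiff$.

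The step I expect to be the main obstacle is verifying that $\beta$ is fixed \emph{on the nose} by $(\mb{S}, \mb{S}_{\widehat{g}})^{*}$, rather than merely up to a homotopy that could produce a nontrivial $\lim^{1}$-contribution to $\KO^{2n}(\pi^{\stb})$. This requires tracking the identification between the universal spinor bundle over the stabilized parameter space and the external product of the original universal spinor bundle with the fixed spinor bundle on $H_{n,n}(P)$ carrying $\widehat{g}$; with this bookkeeping made explicit, both the base-level and fibre-level stabilities above lift to the level of relative $\KO$-cocycles, and the argument is formal. This is essentially the content of \cite[Proposition 4.2.6]{BERW 16} in its setting, so in practice the proof is carried out by quoting that proposition after checking that our stabilization scheme based on $H_{n,n}(P)$ and a stable metric $\widehat{g}$ (afforded by Proposition \ref{proposition: stability for metrics}) matches the input hypotheses there.
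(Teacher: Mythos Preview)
Your proposal is correct and aligns with the paper's approach: the paper does not give a self-contained proof of this proposition but simply states that it ``is proven using \cite[Proposition 3.8.6]{BERW 16}, and it is essentially the same as \cite[Proposition 4.2.6]{BERW 16}.'' Your sketch unpacks exactly what that citation entails (stability of $\beta$ under $(\mb{S},\mb{S}_{\widehat g})$, lifting through the inverse system, and naturality of $\bas$ and $\trg$), and you correctly identify that the substantive work lives in \cite{BERW 16} and that the only thing to check here is that the stabilization by $H_{n,n}(P)$ with a stable metric $\widehat g$ meets the hypotheses used there.
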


In view of the above proposition, we will denote,
$
\textstyle{\inddiff^{\stb}} := \trg(\beta^{\stb}),
$
which represents an element of the group $\KO^{2n+1}(\mathcal{R}^{+}(M)^{\stb}_{g_{P}})$.
Recall from Remark \ref{remark: equivalence of homotopy fibre to MT}, the commutative diagram 
\begin{equation} \label{equation: commutative diagram with scanning}
\xymatrix{
\mathcal{M}^{n}_{\theta, 2n}(P)^{\stb} \ar[rr]^{\rho^{\stb}} \ar[dr] && \Omega^{\infty}\MT\theta_{2n} \\
& \mb{F}_{\theta, 2n}, \ar[ur]^{\simeq} &
}
\end{equation}
where the descending-diagonal map is the inclusion (which is acyclic), and the ascending diagonal map is the weak homotopy equivalence from Remark \ref{remark: equivalence of homotopy fibre to MT}. 
Let us now define a map 
\begin{equation}
\mathcal{A}'_{2n}: \mb{F}_{\theta, 2n} \longrightarrow \Omega^{\infty+2n}\KO
\end{equation}
by precomposing $\mathcal{A}_{2n}$ with the weak homotopy equivalence 
$\mb{F}_{\theta, 2n} \stackrel{\simeq} \longrightarrow \Omega^{\infty}\MT\theta_{2n}.$
We are now ready to prove Theorem \ref{theorem: factorization of index difference}, part (a).
\begin{proof}[Proof of Theorem \ref{theorem: factorization of index difference}, part (a)]
To prove the theorem it will suffice to show that 
$$
\xymatrix{
\Omega\mathcal{W}^{n-1, c}_{\theta, 2n}(P) \ar[r]^{j_{2n}} & \mathcal{R}^{+}(M)_{g_{P}} \ar[r]^{\inddiff \ \ \ } & \Omega^{\infty+2n+1}\KO
}
$$
agrees with the map $\Omega\bar{\mathcal{A}}_{2n+1}$, where $j_{2n}$ is the fibre transport map from Section \ref{subsection: the fibre transport} (see Remark \ref{remark: only need to verify for k = n-1}).
Recall from Theorem \ref{theorem: homology fibration psc} the homotopy cartesian diagram 
\begin{equation} \label{equation: cartesian diagram with moduli space}
\xymatrix{
\mathcal{M}^{\psc, n}_{\theta, 2n}(P, g_{P})^{\stb} \ar[rr]^{h^{\psc}} \ar[d]^{\pi^{\stb}} &&  \mb{F}^{\psc}_{\theta, 2n} \ar[d]^{\widetilde{F}^{\stb}} \\
\mathcal{M}^{n}_{\theta, 2n}(P)^{\stb} \ar[rr]^{h} && \mb{F}_{\theta, 2n},
}
\end{equation}
where the horizontal maps $h$ and $h^{\psc}$ are acyclic maps. 
The homotopy fibre of $\pi^{\stb}$ (and hence of $\widetilde{F}^{\stb}$) is given by the space $\mathcal{R}^{+}(M)^{\stb}_{g_{P}}$.
It is a well known fact that if $X \longrightarrow Y$ is an acyclic map, then for any generalized cohomology theory $E^{*}$, the induced map $E^{*}(Y) \longrightarrow E^{*}(X)$ is an isomorphism (this fact can be deduced from \cite[Proposition 3.1]{HH 79}).
Since both horizontal maps $h$ and $h^{\psc}$ in the diagram (\ref{equation: cartesian diagram with moduli space}) are acyclic, it follows that the induced map 
$$
\bar{h}^{*}: \textstyle{\KO^{-2n}}(F^{\stb}) \longrightarrow \textstyle{\KO^{-2n}}(\pi^{\stb})
$$
is an isomorphism. 
So, it follows that there exists a unique class $\beta' \in \KO^{-2n}(F^{\stb})$ such that 
$\bar{h}^{*}(\beta') = \beta^{\stb}.$
By naturality of Construction \ref{Construction: relaive k-theory construction} it follows that
$\trg(\beta') = \textstyle{\inddiff^{\stb}},$
as elements in the group $\KO^{2n+1}(\mathcal{R}^{+}(M)^{\stb}_{g_{P}})$.
Furthermore, we have,
$$h^{*}\bas(\beta') = \bas(\textstyle{\beta^{\stb}}) = \rho^{*}(\mathcal{A}_{2n}).$$ 
Combining the above equation with commutativity of (\ref{equation: commutative diagram with scanning}), it follows that, 
$$
\bas(\beta') = \mathcal{A}'_{2n} \in \textstyle{\KO^{-2n}}(\mb{F}_{\theta, 2n}), 
$$
and thus we have
$$
\iota^{*}(\textstyle{\inddiff^{\stb}}) \; = \; \Omega\mathcal{A}'_{2n},
$$
where $\iota: \Omega\mb{F}_{\theta, 2n} \longrightarrow \mathcal{R}^{+}(M)^{\stb}_{g_{P}}$ is the fibre transport map.
It follows from this that the top row of the commutative diagram,
$$
\xymatrix{
 \Omega\mb{F}_{\theta, 2n} \ar[r]^{\iota} & \mathcal{R}^{+}(M)^{\stb}_{g_{P}} \ar[rr]^{\inddiff^{\stb}}  && \Omega^{\infty +2n +1}\KO, \\
 & & \mathcal{R}^{+}(M)_{g_{P}} \ar[ul]_{\simeq} \ar[ur]^{\inddiff} & 
}
$$
agrees with $\Omega\mathcal{A}'_{2n}$.
The theorem then follows by observing that the map  $\iota: \Omega\mb{F}_{\theta, 2n} \longrightarrow \mathcal{R}^{+}(M)^{\stb}_{g_{P}}$ factors through the fibre transport map 
$
\Omega\mathcal{W}^{n-1, c}_{\theta, 2n}(P)^{\stb} \longrightarrow \mathcal{R}^{+}(M)^{\stb}_{g_{P}}.
$
\end{proof}

\section{Spaces of Manifolds With Free Boundary} \label{section: Manifolds with boundary equipped with surgery data}
We now begin our work on proving the odd-dimensional case of the main theorem stated in the introduction. 
To do this we will need to construct a version of Definition \ref{defn: colimit decomp of W} for spaces of manifolds with free-boundary.  
\subsection{Homotopy colimit decompositions} \label{subsection: spaces of manifolds with boundary}
In this section we construct a functor analogous to Definition \ref{defn: colimit decomp of W} for manifolds with free boundary.
We will need to first define a Grassmannian manifold of ``half vector spaces''.
\begin{defn} \label{defn: grassmannian of half-planes}
The space $G^{\partial, \mf}_{d+1}(\R^{\infty})_{\loc}$ consists of tuples $(V, \sigma, \phi)$ where:
\begin{enumerate}
\item[(i)] $V \leq \R^{\infty}$ is a $(d+1)$-dimensional vector subspace equipped with a $\theta$-structure $\hat{\ell}$;
\item[(ii)] $\sigma: V\otimes V \longrightarrow \R$ is a non-degenerate, symmetric bilinear form;
\item[(iii)] $\phi: V \longrightarrow \R$ is a non-zero linear functional with the property that 
$V^{-} \leq \Ker(\phi)$, where $V^{-} \leq V$ is the negative eigenspace associated to $\sigma$.
\end{enumerate}
For $(V, \sigma, \phi) \in G^{\partial, \mf}_{d+1}(\R^{\infty})_{\loc}$ we let $\bar{V}$ denote $\Ker(\phi: V \rightarrow \R)$ and we denote by $\bar{\sigma}: \bar{V}\otimes\bar{V} \longrightarrow \R$ the bilinear form obtained by restricting $\sigma$.
Condition (iii) implies that $\bar{\sigma}$ is non-degenerate and that $\text{index}(\sigma) = \text{index}(\bar{\sigma})$.
\end{defn}

We will use the space $G^{\partial, \mf}_{d+1}(\R^{\infty})_{\loc}$ to parametrize surgeries on the boundary of manifolds in way similar to what was done in Section \ref{subsection: spaces of manifolds equipped with surgery data}. 
Before doing this we need to cover a preliminary construction. 
\begin{Construction} \label{Construction: half disk with collars}
Let $(V, \sigma, \phi) \in G^{\partial, \mf}_{d+1}(\R^{\infty})_{\loc}$.
As before we may consider the spaces $D(V^{\pm})$ and $S(V^{\pm})$. 
We will need to use the functional $\phi$ to specify some subspaces. 
We define,
\begin{equation} \label{equation: half-disk}
D_{+}(V^{+}) := D(V^{+})\cap\phi^{-1}([0, \infty)). 
\end{equation}
This space is a half-disk of dimension $d+1 - \text{index}(\sigma)$.
This is a manifold with corners, and the boundary $\partial D_{+}(V^{+})$ decomposes as the union 
$$\partial D_{+}(V^{+}) = \partial_{0}D_{+}(V^{+})\cup\partial_{1}D_{+}(V^{+}),$$ 
where:
$$
\begin{aligned}
\partial_{0}D_{+}(V^{+}) \; &= \; D_{+}(V^{+})\cap\phi^{-1}(0), \\
\partial_{1}D_{+}(V^{+}) \; &= \; S(V^{+})\cap\phi^{-1}([0, \infty)).
\end{aligned}
$$
It will be important to have a version of the half-disk $D_{+}(V^{+})$ that is equipped with an internal collar determined by the functional $\phi$. 
Let $v_{\phi} \in V^{+}$ be the vector dual to the functional $\phi$ with respect to the inner product $\langle \--, \--\rangle$ coming from the ambient space. 
Fix once and for all a smooth non-decreasing function $\rho: [0, 1] \longrightarrow [0, 1]$ that satisfies $\rho(t) = 0$ for all $t < 1/8$ and $\rho(t) = t$ for all $t > 1/4$. 
We then define,
\begin{equation}
D^{c}_{+}(V^{+}) \; = \; \{ v \in V^{+} \; | \; |\pi_{\phi}(v)|^{2} + \rho(\phi(v))^{2} \leq 1 \; \},
\end{equation}
where $\pi_{\phi}: V^{+} \longrightarrow \bar{V}^{+}$ is the orthogonal projection away from $v_{\phi}$, using the orthogonal direct sum decomposition, $V^{+} = \bar{V}^{+}\oplus\langle v_{\phi} \rangle$. 
Clearly, $D^{c}_{+}(V^{+})$ is diffeomorphic to $D_{+}(V^{+})$ as a manifold with corners. 
The key property of this space that we will latter use is:
$$
D^{c}_{+}(V^{+})\cap\phi^{-1}([0, \varepsilon)) \; = \; \partial_{0}D^{c}_{+}(V^{+})\times[0, \varepsilon) \; = \; D(\bar{V}^{+})\times[0, \varepsilon),
$$
for all $0 < \varepsilon < 1/4$.
As before we have 
$$
\partial D^{c}_{+}(V^{+}) \; = \; \partial_{0}D^{c}_{+}(V^{+})\cup\partial_{1}D^{c}_{+}(V^{+}),
$$
where $ \partial_{0}D^{c}_{+}(V^{+})$ and $\partial_{1}D^{c}_{+}(V^{+})$ are defined in the same way as above. 
\end{Construction}

We will need to fix some further notation.
\begin{Notation} \label{Notation: half space surgery}
Let $\mb{t} \in \mathcal{K}_{d}$. 
We will need to consider elements of the mapping space,
$$
(G^{\partial, \mf}_{d+1}(\R^{\infty})_{\loc})^{\mb{t}} = \Maps(\mb{t}, G^{\partial, \mf}_{d+1}(\R^{\infty})_{\loc}).
$$
Given $(V, \sigma, \phi) \in (G^{\partial}_{d+1}(\R^{\infty})_{\loc})^{\mb{t}}$ we define, 
$$
D(V^{-})\times_{\mb{t}}D^{c}_{+}(V^{+}) \; = \; \coprod_{i \in \mb{t}}D(V^{-}(i))\times D^{c}_{+}(V^{+}(i)).
$$
The spaces $S(V^{-})\times_{\mb{t}}D^{c}_{+}(V^{+})$, $D(V^{-})\times_{\mb{t}}\partial_{i}D^{c}_{+}(V^{+})$, etc... are defined similarly. 
\end{Notation}

For an integer $k \in \Z_{\geq -1}$, let $\widehat{\mathcal{K}}^{k}_{d}$ denote the product category $\mathcal{K}^{k}_{d}\times\mathcal{K}^{k}_{d-1}$. 
Objects of $\widehat{\mathcal{K}}^{k}_{d}$ will be denoted by pairs $(\mb{t}_{0}, \mb{t}_{1})$ where $\mb{t}_{0} \in \mathcal{K}^{k}_{d}$ and $\mb{t}_{1} \in \mathcal{K}^{k}_{d-1}$.

Recall from Definition \ref{Convention: tangential structures} the space $\mathcal{M}^{\partial}_{\theta, d}$.
For an integer $k$, we define $\mathcal{M}^{\partial, k}_{\theta, d} \subset \mathcal{M}^{\partial}_{\theta, d}$ to be the subspace consisting of those $M$ for which both maps, 
$$\ell_{M}: M \longrightarrow B \quad \text{and} \quad \ell_{M}|_{\partial M}: \partial M \longrightarrow B,$$ are $k$-connected.
The main definition of this section is given below.
\begin{defn} \label{defn: colimit decomp of W w boundary}
Fix an integer $k \in \Z_{\geq -1}$. 
Let $\mb{t} = (\mb{t}_{0}, \mb{t}_{1}) \in \widehat{\mathcal{K}}^{k}_{d}$. 
The space $\mathcal{W}^{\partial, k}_{\theta, d}(\mb{t})$ consists of tuples 
$\left(M, (V_{0}, \sigma_{0}, \phi_{0}), (V_{1}, \sigma_{1}), e\right)$  
where:
\begin{enumerate} \itemsep.3cm
\item[(i)] $M$ is an element of the space $\mathcal{M}^{\partial, k}_{\theta, d}$;
\item[(ii)] 
$(V_{0}, \sigma_{0}, \phi_{0}) \in (G_{d+1}^{\partial, \mf}(\R^{\infty})_{\loc})^{\mb{t}_{0}}$ and $(V_{1}, \sigma_{1}) \in \left(G^{\mf}_{d+1}(\R^{\infty})_{\loc}\right)^{\mb{t}_{1}}$
are elements with the property that,
$$\delta(t_{0}) = \text{index}(\sigma_{0}) \quad  \text{and} \quad 
\delta(t_{1}) = \text{index}(\sigma_{1})$$
for all $(t_{0}, t_{1}) \in (\mb{t}_{0}, \mb{t}_{1}) = \mb{t}$.
\item[(ii)] $e = (e_{0}, e_{1})$ is a pair of embeddings
$$\begin{aligned}
e_{0}: D(V_{0}^{-})\times_{\mb{t}_{0}}D^{c}_{+}(V_{0}^{+}) \; &\longrightarrow \; (-\infty, 0]\times\R^{\infty-1},\\
e_{1}:  D(V_{1}^{-})\times_{\mb{t}_{1}}D(V^{+}_{1}) &\longrightarrow (-\infty, 0)\times\R^{\infty-1},
\end{aligned}$$
subject to the following further conditions:
\begin{enumerate} \itemsep.2cm
\item[(a)] The embedding $e_{0}$ satisfies 
$$
e^{-1}_{0}((-\infty, 0)\times\{0\}\times\R^{\infty-1}) = D(V_{0}^{-})\times_{\mb{t}_{0}}\partial_{0}D^{c}_{+}(V_{0}^{+}),
$$
and furthermore it respects collars. 
We also require: 
$$\begin{aligned}
e^{-1}_{0}(M) &= S(V^{-}_{0})\times_{\mb{t}_{0}}D^{c}_{+}(V_{0}^{+}), \\
e^{-1}_{0}(\partial M) &= S(V^{-}_{0})\times_{\mb{t}_{0}}\partial_{0}D^{c}_{+}(V_{0}^{+}).
\end{aligned}$$
\item[(b)] The embedding $e_{1}$ satisfies,
$e^{-1}_{1}(M) = S(V^{-}_{1})\times_{\mb{t}_{1}}D(V^{+}_{1}).$
\end{enumerate}
\end{enumerate}
For elements of $\mathcal{W}^{\partial, k}_{\theta, d}(\mb{t})$, we will usually compress the notation and denote, 
$$(M, (V, \sigma), e) := \left(M, (V_{0}, \sigma_{0}, \phi_{0}), (V_{1}, \sigma_{1}), e\right).$$
\end{defn}
It remains to be described how to make the correspondence $\mb{t} \mapsto \mathcal{W}^{\partial, k}_{\theta, d}(\mb{t})$ into a functor on $\widehat{\mathcal{K}}^{k}_{d}$. 
Let $(j, \varepsilon): \mb{s} \longrightarrow \mb{t}$ be a morphism in $\widehat{\mathcal{K}}^{k}_{d}$. 
Since $\widehat{\mathcal{K}}^{k}_{d}$ is the product category $\mathcal{K}^{k}_{d}\times\mathcal{K}^{k}_{d-1}$, it will suffice to describe the induced map $(j, \varepsilon)^{*}: \mathcal{W}^{\partial, k}_{\theta, d}(\mb{t}) \longrightarrow \mathcal{W}^{\partial, k}_{\theta, d}(\mb{s})$ in just two cases: 
\begin{enumerate} \itemsep.2cm
\item[(1)] $j_{0}: \mb{s}_{0} \longrightarrow \mb{t}_{0}$ is the identity, and $j_{1}: \mb{s}_{1} \longrightarrow \mb{t}_{1}$ is an inclusion whose complement contains a single element;
\item[(2)] $j_{1}: \mb{s}_{1} \longrightarrow \mb{t}_{1}$ is the identity, and $j_{0}: \mb{s}_{0} \longrightarrow \mb{t}_{0}$ is an inclusion whose complement contains a single element.
\end{enumerate} 
In case (1), the induced map $(j, \varepsilon)^{*}$ is defined in exactly the same way as was done in Section \ref{subsection: spaces of manifolds equipped with surgery data}, following Definitions \ref{defn: pullback +1} and \ref{defn: pullback -1}. 
Let us focus on case (2). 
We let the single element of $\mb{t}_{0}\setminus j_{0}(\mb{s}_{0})$ be denoted by $a$. 
This case breaks down into two subcases: $\varepsilon_{0}(a) = +1$ and $\varepsilon_{0}(a) = -1$. 
The construction of $(j, \varepsilon)^{*}$ is described in the proceeding definitions below for each of these two subcases. 

\begin{defn}[$\varepsilon_{0}(a) = +1$]  \label{defn: pullback +1}
Let $\varepsilon_{0}(a) = +1$. 
We describe the induced map 
$
(j, \varepsilon)^{*}: \mathcal{W}^{\partial, k}_{\theta, d}(\mb{t}) \longrightarrow \mathcal{W}^{\partial, k}_{\theta, d}(\mb{s}).
$
Let $(M, (V, \sigma), e)$ be an element of $\mathcal{W}^{\partial, k}_{\theta, d}(\mb{t})$.
Map this to an element of $\mathcal{W}^{\partial, k}_{\theta, d}(\mb{s})$ by restricting $e_{0}$ the subspace,
$
D(V^{-})\times_{\mb{s}_{0}}D^{c}_{+}(V^{+})  \subset D(V^{-})\times_{\mb{t}_{0}}D^{c}_{+}(V^{+}),
$
while keeping $((V_{1}, \sigma_{1}), e_{1})$ the same as before (by our assumption above, the map $j_{1}: \mb{s}_{1} \longrightarrow \mb{t}_{1}$ is the identity). 
\end{defn}

\begin{defn}[$\varepsilon_{0}(a) = -1$] \label{defn: pullback -1} 
Let $\varepsilon_{0}(a) = -1$. 
The induced map 
$
\mathcal{W}^{\partial, k}_{\theta, d}(\mb{t}) \longrightarrow \mathcal{W}^{\partial, k}_{\theta, d}(\mb{s})
$
is defined as follows:
Let $(M, (V, \sigma), e)$ be an element of $\mathcal{W}^{\partial, k}_{\theta, d}(\mb{t})$.
Map this to the element $(\widetilde{M}, (\widetilde{V}, \widetilde{\sigma}), \widetilde{e})$ in $\mathcal{W}^{\partial, k}_{\theta, d}(\mb{s})$ where: 
\begin{enumerate} \itemsep.2cm
\item[(i)] $\widetilde{M}$ is the element of $\mathcal{M}^{\partial, k}_{\theta, d}$ given by the union
$$
M\setminus e_{0}(S(V^{-})\times_{a}D^{c}_{+}(V^{+}))\bigcup e_{0}(D(V^{-})\times_{a}\partial_{1}D^{c}_{+}(V^{+})),
$$
where $D(V^{-})\times_{a}D^{c}_{+}(V^{+})$ is the component of $D(V^{-})\times_{\mb{t}_{0}}D^{c}_{+}(V^{+})$ over the point $a \in \mb{t}_{0}\setminus\mb{s}_{0}$.
\item[(ii)] $\widetilde{e}_{0}$ and $(\widetilde{V}_{0}, \widetilde{\sigma}_{0})$ are obtained from $e_{0}$ and $(V_{0}, \sigma_{0})$ by restriction to $\mb{s}_{0}$. 
\item[(iii)] $\widetilde{e}_{1}$ and $(\widetilde{V}_{1}, \widetilde{\sigma}_{1})$ are kept equal to $e_{1}$ and $(V_{1}, \sigma_{1})$ respectively.
\end{enumerate}
\end{defn}

The above definitions and above discussion make the assignment $\mb{t} \mapsto \mathcal{W}^{\partial, k}_{\theta, d}(\mb{t})$ into a contravariant functor on $\widehat{\mathcal{K}}^{k}_{d}$.
We define,
$\mathcal{W}^{\partial, k}_{\theta, d} := \displaystyle{\hocolim_{\mb{t} \in \widehat{\mathcal{K}}^{k}_{d}}}\mathcal{W}^{\partial, k}_{\theta, d}(\mb{t}).
$

\begin{remark} \label{remark: alternative model for cobordism morse category}
In analogy with Definition \ref{defn: colimit decomp of W}, and the weak homotopy equivalence $B\Cob_{\theta, d}^{\mf, k} \simeq \mathcal{W}^{k}_{\theta, d}$, the space $\mathcal{W}^{\partial, k}_{\theta, d}$ serves as our alternative model for the cobordism category $\Cob_{\theta, d}^{\partial, \mf, k}$ discussed in the introduction. 
As for the case with $\mathcal{W}^{k}_{\theta, d}$ and $\Cob_{\theta, d}^{\mf, k}$, the space $\mathcal{W}^{\partial, k}_{\theta, d}$ is much easier to work with than $\Cob_{\theta, d}^{\partial, \mf, k}$, and thus all of our arguments will be conducted using $\mathcal{W}^{\partial, k}_{\theta, d}$ as a stand-in.
The only advantage provided by the cobordism categories is that they have a simpler definition and thus are easier to use in a casual explanation of our results in the introduction. 
\end{remark}

\subsection{The boundary functor} \label{subsection: the boundary functor}
Fix $k \in \Z_{\geq -1}$. 
For each $\mb{t} \in \widehat{\mathcal{K}}^{k}_{d}$
there is a map
$$
b_{\mb{t}}: \mathcal{W}^{\partial, k}_{\theta, d}(\mb{t}) \longrightarrow \mathcal{W}^{k}_{\theta, d-1}(\mb{t}_{0}),
$$
defined by sending $(M, (V, \sigma), e) \in \mathcal{W}^{\partial, k}_{\theta, d}(\mb{t})$ to the element $(\partial M, (\bar{V}_{0}, \bar{\sigma}_{0}), \partial_{0}e_{0})$, where $(\bar{V}_{0}, \bar{\sigma}_{0})$ is obtained from $(V_{0}, \sigma_{0}, \phi_{0})$ by restricting to the kernel of the associated linear functional $\phi_{0}$ (recall Definition \ref{defn: grassmannian of half-planes}), and $\partial_{0}e_{0}$ is the restriction of $e_{0}$ to the subspace,
$$D(V^{-})\times_{t_{0}}\partial_{0}D^{c}_{+}(V^{+}) \subset D(V^{-})\times_{t_{0}}D^{c}_{+}(V^{+}).$$
By precomposing the functor $\mb{t} \mapsto \mathcal{W}^{k}_{\theta, d-1}(\mb{t})$ with the projection $\widehat{K}_{d}^{k} \mapsto \mathcal{K}_{d-1}^{k}$, $\mathcal{W}^{k}_{\theta, d-1}(\--)$ may be considered to be a functor on $\widehat{\mathcal{K}}^{k}_{d}$, and thus $b_{\mb{t}}$ can be considered to be a natural transformation of functors on the category $\widehat{\mathcal{K}}^{k}_{d}$.
The maps $b_{\mb{t}}$ induce a map between the homotopy colimits,
\begin{equation} \label{equation: induced maps of homotopy colimits}
b: \mathcal{W}^{\partial, k}_{\theta, d} \longrightarrow \mathcal{W}^{k}_{\theta, d-1}.
\end{equation}
The boundary map $b$ defined above is analogous to
$r: \mathcal{D}^{\partial, \mf, k}_{\theta, d+1} \longrightarrow \mathcal{D}^{\mf, k}_{\theta, d}$ from Definition \ref{defn: space of long manifolds w boundary morse}. 

\begin{theorem} \label{theorem: equivalence with long manifolds with boundary}
For all $k \in \Z_{\geq -1}$ there is a zig-zag of weak homotopy equivalences,
$$
\xymatrix{
\mathcal{W}^{\partial, k}_{\theta, d} & \mathcal{L}^{\partial, k}_{\theta, d} \ar[l]_{\simeq} \ar[r]^{\simeq} & \mathcal{D}^{\partial, \mf, k}_{\theta, d+1},
}
$$
such that the following diagram is homotopy commutative,
\begin{equation} \label{equation: W to D to L diagram}
\xymatrix{
\mathcal{W}^{\partial, k}_{\theta, d} \ar[d]^{b} &  \mathcal{L}^{\partial, k}_{\theta, d} \ar[l]_{\simeq} \ar[r]^{\simeq} & \mathcal{D}^{\partial, \mf, k}_{\theta, d+1} \ar[d]^{r} \\
 \mathcal{W}^{k}_{\theta, d-1} &  \mathcal{L}^{k}_{\theta, d-1} \ar[l]_{\simeq} \ar[r]^{\simeq} & \mathcal{D}^{\mf, k}_{\theta, d},
}
\end{equation}
where the bottom-horizontal arrows are the weak homotopy equivalences from (\ref{equation: W to L to D zig zag}).
\end{theorem}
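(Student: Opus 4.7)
The plan is to adapt the argument from the appendix of \cite{P 17}, which established the closed-case zig-zag (\ref{equation: W to L to D zig zag}), to the setting of manifolds with free boundary and boundary Morse critical points. First I would define the intermediate space $\mathcal{L}^{\partial, k}_{\theta, d} := \hocolim_{\mb{t} \in \widehat{\mathcal{K}}^{k}_{d}} \mathcal{L}^{\partial, k}_{\theta, d}(\mb{t})$, where an element of $\mathcal{L}^{\partial, k}_{\theta, d}(\mb{t})$ consists of an element $W \in \mathcal{D}^{\partial, \mf, k}_{\theta, d+1}$ together with an injection of the label set $\mb{t} = (\mb{t}_{0}, \mb{t}_{1})$ into the critical set of the height function $h_{W}$, where labels in $\mb{t}_{0}$ are sent to \emph{boundary} critical points of $h_{W}$ (those lying in $\partial W$) and labels in $\mb{t}_{1}$ are sent to \emph{interior} critical points, subject to the matching of indices with $\delta$. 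In addition, around each labeled critical point one specifies local Morse data of the form $(V_{0}, \sigma_{0}, \phi_{0})$ or $(V_{1}, \sigma_{1})$, together with the corresponding local handle embeddings $e_{0}, e_{1}$ as in Definition \ref{defn: colimit decomp of W w boundary}. Functoriality in $\widehat{\mathcal{K}}^{k}_{d}$ is defined exactly as in the closed case: the $\varepsilon = +1$ direction forgets a chosen label, while the $\varepsilon = -1$ direction performs the Morse trick to push the critical point past height $0$.

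Next I would construct the two comparison maps. The map $\mathcal{L}^{\partial, k}_{\theta, d} \longrightarrow \mathcal{D}^{\partial, \mf, k}_{\theta, d+1}$ is the evident forgetful map that discards all labels and local data. The map $\mathcal{L}^{\partial, k}_{\theta, d} \longrightarrow \mathcal{W}^{\partial, k}_{\theta, d}$ is defined by slicing: send $(W, \ldots)$ to $M := h_{W}^{-1}(0)$ (which by a small perturbation lies in $\mathcal{M}^{\partial, k}_{\theta, d}$) together with the local embeddings coming from the chosen Morse coordinates of the labeled critical points. To prove these maps are weak homotopy equivalences, I would work level-wise over $\widehat{\mathcal{K}}^{k}_{d}$ and then pass to homotopy colimits via Proposition \ref{proposition: homotopy colimit fibre sequence}. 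For the forgetful map, the fibre over a fixed $W$ is the space of labelings of subsets of its critical set together with local coordinates; this is contractible by a parametrized Morse lemma (with boundary), since the space of Morse charts at any critical point — whether interior or boundary — is contractible. For the slicing map, the argument is the direct analogue of the \emph{resolution} construction in the appendix of \cite{P 17}: given $(M, (V, \sigma), e) \in \mathcal{W}^{\partial, k}_{\theta, d}(\mb{t})$, one builds a long manifold with boundary by attaching standard handles near each surgery datum, producing a natural section up to contractible choice.

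Finally, the homotopy commutativity of (\ref{equation: W to D to L diagram}) follows from naturality: both the boundary functor $b$ and the restriction $r$ correspond, under the zig-zag, to applying the analogous construction to $\partial W = W \cap (\R \times \{0\} \times \R^{\infty-1})$, which inherits a Morse height function whose critical points are precisely the images of the labels in $\mb{t}_{0}$.

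\textbf{Main obstacle.} The principal difficulty is bookkeeping of boundary Morse data. One must verify that the $\mb{t}_{0}$-labeled boundary critical points of $h_{W}$ truly correspond to the $e_{0}$-embeddings in the $\mathcal{W}^{\partial, k}$ picture, together with their linear functional $\phi$; concretely, the kernel of $\phi$ must be identified with the tangent direction along $\partial W$ at the critical point, and one needs this identification to be natural in deformations. Making this precise requires comparing the ``$\pitchfork$'' collar-free model $\mathcal{D}^{\partial, \pitchfork}_{\theta, d+1}$ of Remark \ref{remark: collar condition} — necessary to accommodate boundary critical points as per Remark \ref{remark: morse functions on manifolds with boundary} — with the standard collared embeddings demanded in Definition \ref{defn: colimit decomp of W w boundary} near $e_{0}$. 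Once this dictionary between boundary critical points and boundary surgery data is established, and its naturality under the morphisms of $\widehat{\mathcal{K}}^{k}_{d}$ is checked, the remaining arguments are straightforward adaptations of those in \cite{P 17}.
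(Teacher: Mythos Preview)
Your overall strategy—mirror the Madsen--Weiss/\cite{P 17} argument with boundary modifications—is the same as the paper's, but your description of the two comparison maps contains a genuine gap that would prevent the argument from going through as written.

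The problem is your ``slicing'' map $\mathcal{L}^{\partial,k}_{\theta,d}(\mb{t}) \to \mathcal{W}^{\partial,k}_{\theta,d}(\mb{t})$, defined by $W \mapsto h_{W}^{-1}(0)$. First, $0$ need not be a regular value of $h_{W}$, so the slice need not be a manifold; a small perturbation does not help because it is not natural in $\mb{t}$. Second, and more seriously, your model of $\mathcal{L}^{\partial}(\mb{t})$ only records an \emph{injection} of $\mb{t}$ into the critical set, so $W$ will typically have many unlabeled critical points, and the naive slice at height $0$ sees none of the surgery data attached to critical points at other heights. Your claim that the fibre of the forgetful map over a fixed $W$ is contractible is also incorrect in this model: the choice of \emph{which} critical points to label is a discrete (noncontractible) choice.

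The paper's fix, following \cite{MW 07} and \cite{P 17}, is to enrich the intermediate space: an element of $\mathcal{L}^{\partial}_{\theta,d+1}(\mb{t})$ carries a coupling $\lambda$ (Morse charts at \emph{every} critical point, interior and boundary), together with a function $\delta\colon \bar{x}\to\{-1,0,+1\}$ on the \emph{entire} critical set $\bar{x}$ and a bijection $\phi\colon \mb{t}\cong\delta^{-1}(0)$. The map to $\mathcal{W}^{\partial}$ is not a naive slice but a \emph{regularization}: one uses explicit functions $f^{\pm}_{V}$ on the saddle regions to replace $h_{W}$ by a proper submersion $f^{\rg}$ (pushing the $+1$- and $0$-labeled critical points to $+\infty$ and the $-1$-labeled ones to $-\infty$), and then takes $(f^{\rg})^{-1}(0)$; the residual surgery embeddings $e_{0}, e_{1}$ come from restricting $\lambda$ near the $0$-labeled points. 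The homotopy inverse is the \emph{long trace} construction. Once this machinery is in place, your remarks about the boundary bookkeeping (matching $\phi$ with the normal direction to $\partial W$, and the collared half-disk $D^{c}_{+}(V^{+})$) are exactly the right additional checks, and the compatibility with the bottom row of (\ref{equation: W to D to L diagram}) is by construction.
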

Theorem \ref{theorem: equivalence with long manifolds with boundary} is analogous to \cite[Theorem 4.1]{P 17} and it is proven in the same way, by mimicking the constructions in \cite[Section 5]{MW 07}). 
To avoid too much redundancy we give the proof in Appendix \ref{section: long manifolds to W partial} omitting most of the details.
\begin{remark} \label{remark: necessity of above proposition}
For our purposes (namely proving Theorem B from the introduction), proving the full claim of Theorem \ref{theorem: equivalence with long manifolds with boundary} is not actually necessary. 
All that is needed is the existence of a zig-zag diagram, 
$\xymatrix{
\mathcal{W}^{\partial, k}_{\theta, d} & \mathcal{L}^{\partial, k}_{\theta, d} \ar[l]_{\simeq} \ar[r] & \mathcal{D}^{\partial, \mf, k}_{\theta, d+1},
}$
where the first arrow is a weak homotopy equivalence.
It is only necessary that the second map exists and that it is defined so that (\ref{equation: W to D to L diagram}) is homotopy commutative.
\end{remark}
We now move on to study the map $b: \mathcal{W}^{\partial, k}_{\theta, d} \longrightarrow \mathcal{W}^{k}_{\theta, d-1}$ from (\ref{equation: induced maps of homotopy colimits}). 
We will restrict our attention to a particular dimensional case. 
For the following theorem, we will let $d = 2n$ and $k = n-1$. 
Fix $P \in \mathcal{M}^{n-1}_{\theta, 2n-1}$. 
Since $\mathcal{M}^{n-1}_{\theta, 2n-1} = \mathcal{W}^{n-1}_{\theta, 2n-1}(\emptyset)$ (where $\emptyset$ is considered the empty object of $\mathcal{K}^{n-1}_{2n-1}$), we may consider $P$ to be an element of $\mathcal{W}^{n-1}_{\theta, 2n-1}$, and thus we may 
consider the homotopy fibre of the map 
$b: \mathcal{W}^{\partial, n-1}_{\theta, 2n} \longrightarrow \mathcal{W}^{n-1}_{\theta, 2n-1}$
over $P$.
The inclusion 
$\mathcal{W}^{n-1}_{\theta, 2n}(P) \hookrightarrow \mathcal{W}^{\partial, n-1}_{\theta, 2n}$
factors through the fibre $b^{-1}(P)$, and thus there is an induced map 
\begin{equation} \label{equation: induced map into homotopy fibre}
\mathcal{W}^{n-1}_{\theta, 2n}(P) \; \longrightarrow \; \textstyle{\hofibre_{P}}\left(b: \mathcal{W}^{\partial, n-1}_{\theta, 2n} \rightarrow \mathcal{W}^{n-1}_{\theta, 2n-1}\right).
\end{equation}
\begin{theorem} \label{theorem: boundary map is quasifibration d = 2n-1}
Let $P \in \mathcal{M}^{n-1}_{\theta, 2n-1}$.
The map (\ref{equation: induced map into homotopy fibre}),
$$
\mathcal{W}^{n-1}_{\theta, 2n}(P) \; \longrightarrow \; \textstyle{\hofibre_{P}}\left(b: \mathcal{W}^{\partial, n-1}_{\theta, 2n} \rightarrow \mathcal{W}^{n-1}_{\theta, 2n-1}\right),
$$
is a weak homotopy equivalence. 
Thus, the following sequence, 
$$
\xymatrix{
\mathcal{W}^{n-1}_{\theta, 2n}(P)  \ar[r] & \mathcal{W}^{\partial, n-1}_{\theta, 2n} \ar[r] & \mathcal{W}^{n-1}_{\theta, 2n-1},
}
$$
is a homotopy fibre sequence.
\end{theorem}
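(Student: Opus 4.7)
The plan is to use Theorem \ref{theorem: equivalence with long manifolds with boundary} together with the zig-zag of weak equivalences (\ref{equation: W to L to D zig zag}) to transport the problem to the setting of spaces of long manifolds, and then to establish the required fibre sequence there by a parametrized isotopy-lifting argument. The homotopy-commutative diagram (\ref{equation: W to D to L diagram}) identifies the boundary map $b$, up to weak equivalence, with the restriction map $r: \mathcal{D}^{\partial, \mf, n-1}_{\theta, 2n+1} \to \mathcal{D}^{\mf, n-1}_{\theta, 2n}$. Under this identification the basepoint $P \in \mathcal{W}^{n-1}_{\theta, 2n-1}$ corresponds to $\mathbb{R}\times P \in \mathcal{D}^{\mf, n-1}_{\theta, 2n}$, and the map (\ref{equation: induced map into homotopy fibre}) corresponds, via the zig-zag (\ref{equation: W to L to D zig zag}), to the inclusion of the fibre $\mathcal{D}^{\mf, n-1}_{\theta, 2n+1}(P) = r^{-1}(\mathbb{R}\times P)$ into the homotopy fibre of $r$ over $\mathbb{R}\times P$. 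Therefore it will suffice to prove that $r$ is a quasi-fibration over $\mathcal{D}^{\mf, n-1}_{\theta, 2n}$.

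To show $r$ is a quasi-fibration, I will follow the template of Proposition \ref{proposition: boundary map long manifold fibre sequence}. Given a path $\gamma:[0,1] \to \mathcal{D}^{\mf, n-1}_{\theta, 2n}$ starting at $\mathbb{R}\times P$ together with a lift $W_{0} \in r^{-1}(\mathbb{R}\times P)$, the strategy is to construct a lift $\widetilde{\gamma}(t) \in \mathcal{D}^{\partial, \mf, n-1}_{\theta, 2n+1}$ by producing a family of ambient diffeomorphisms $\Phi_{t}$ of $\mathbb{R}\times(-\infty, 0]\times\mathbb{R}^{\infty-1}$ realizing the path $\gamma(t)$ from $\gamma(0)$, and then setting $W_{t} := \Phi_{t}(W_{0})$. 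To preserve the $\mf$ structure on the lifted $W_{t}$, one arranges $\Phi_{t}$ to be of the form $(s, y) \mapsto (s, \Psi_{s, t}(y))$, so it fixes the $\mathbb{R}$-coordinate (and hence the height function) while modifying the embedding in $(-\infty, 0]\times\mathbb{R}^{\infty-1}$. The family $\Psi_{s, t}$ is produced by parametrized isotopy extension applied to the family of boundary submanifolds $P_{t} := \gamma(t)$. Because $\Phi_{t}$ preserves the height function, the Morse indices, the connectivity of $\ell_{W_{t}}$ and $\ell_{W_{t}}|_{\partial W_{t}}$, and the transverse/collar structure of $W_{t}$ are all inherited from those of $W_{0}$, so $W_{t} \in \mathcal{D}^{\partial, \mf, n-1}_{\theta, 2n+1}$.

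Combining the quasi-fibration statement for $r$ with the identification $\mathcal{D}^{\mf, n-1}_{\theta, 2n+1}(P) \simeq \mathcal{W}^{n-1}_{\theta, 2n}(P)$ from (\ref{equation: W to L to D zig zag}) completes the proof. The principal technical obstacle is the construction of the \emph{height-preserving} isotopy $\Phi_{t}$ realizing an arbitrary path in $\mathcal{D}^{\mf, n-1}_{\theta, 2n}$: one must respect the vertical structure in order for $W_{t}$ to remain a Morse-type element of $\mathcal{D}^{\partial, \mf, n-1}_{\theta, 2n+1}$, and this requires a parametrized isotopy-extension argument carried out fibrewise over $\mathbb{R}$ rather than the usual unconstrained version invoked in the non-Morse case of Proposition \ref{proposition: boundary map long manifold fibre sequence}. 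Once this vertical isotopy extension is in hand, the rest of the argument is formal.
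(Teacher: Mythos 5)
Your plan of transporting the statement through Theorem~\ref{theorem: equivalence with long manifolds with boundary} to the map $r: \mathcal{D}^{\partial,\mf,n-1}_{\theta,2n+1} \to \mathcal{D}^{\mf,n-1}_{\theta,2n}$ and then running a parametrized isotopy-extension argument has a genuine gap, and it occurs exactly at the point you flag as ``the principal technical obstacle'': the height-preserving lift $\Phi_t$ does not exist in general. A height-preserving ambient diffeomorphism $(s,y)\mapsto(s,\Psi_{s,t}(y))$ of $\R\times(-\infty,0]\times\R^{\infty-1}$ preserves the number of critical points of the height function restricted to any submanifold, together with their indices and their critical values. But a path $\gamma$ in $\mathcal{D}^{\mf,n-1}_{\theta,2n}$ starting at $\gamma(0)=\R\times P$, which has no critical points at all, can move continuously into elements $\gamma(t)$ that do have critical points --- critical points can slide in from $\pm\infty$ in the $\R$-direction without ever violating properness or the Morse condition; indeed this phenomenon is precisely what makes $\mathcal{D}^{\mf,n-1}_{\theta,2n}$ connected and gives it the homotopy type of an infinite loop space. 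For such a path there is no fibrewise isotopy $\Phi_t$ with $\Phi_t(\R\times P)=\gamma(t)$, because $\Phi_t$ would have to produce critical points from nothing. If instead you drop the height-preservation requirement on $\Phi_t$, the lifted $W_t:=\Phi_t(W_0)$ will generically fail to have a Morse height function, let alone one with indices confined to $\{n,n+1\}$, so it leaves $\mathcal{D}^{\partial,\mf,n-1}_{\theta,2n+1}$. This is precisely the distinction between the Morse situation and the unconstrained case used in Proposition~\ref{proposition: boundary map long manifold fibre sequence}: in Genauer's fibre sequence no Morse condition is imposed, so ordinary isotopy extension suffices; here it does not.

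The paper therefore does something quite different. It stays on the $\mathcal{W}$-side and exploits the product decomposition $\widehat{\mathcal{K}}_{2n}^{n-1}=\mathcal{K}^{n-1}_{2n}\times\mathcal{K}^{n-1}_{2n-1}$: computing the homotopy colimit in stages, it first identifies the homotopy fibre of the partial map $\hocolim_{\mb{t}_1}\mathcal{W}^{\partial,n-1}_{\theta,2n}(\mb{t}_0,\mb{t}_1)\to\mathcal{W}^{n-1}_{\theta,2n-1}(\mb{t}_0)$ with $\mathcal{W}^{n-1}_{\theta,2n}(M)$ (Lemma~\ref{lemma: partial homotopy fibre sequence}), and then invokes Proposition~\ref{proposition: homotopy colimit fibre sequence} to reduce the theorem to Claim~\ref{claim: map between fibres is a weak equivalence}, i.e.\ that morphisms of $\mathcal{K}^{n-1}_{2n-1}$ induce weak equivalences between these fibres. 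Via Proposition~\ref{proposition: reinterpretation of top horizontal map} this becomes the stability statement Theorem~\ref{theorem: stability for n-handles}: concatenation with an elementary cobordism of index $n$ is a weak equivalence on $\mathcal{W}^{n-1}_{\theta,2n}(-)$. That statement lies outside the range covered by \cite[Theorem 7.7]{P 17} (handles of middle index in an even-dimensional ambient manifold have cores that intersect generically), so it is proved with the stabilization device and the Galatius--Randal-Williams semi-simplicial resolution. None of this machinery is dispensable, and none of it is captured by an isotopy-extension argument.
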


Proving the above theorem will take a bit of work and the proof is carried out over the rest of the section. 
To begin, recall that
$
\mathcal{W}^{\partial, n-1}_{\theta, 2n} \; = \; \displaystyle{\hocolim_{\mb{t} \in \widehat{\mathcal{K}}_{2n}^{n-1}}}\mathcal{W}^{\partial, n-1}_{\theta, 2n}(\mb{t}).
$
Since $\widehat{\mathcal{K}}_{2n}^{n-1}$ is a product category $\mathcal{K}^{n-1}_{2n}\times\mathcal{K}^{n-1}_{2n-1}$, the above homotopy colimit may be computed in stages. 
We have a homeomorphism,
$$
\mathcal{W}^{\partial, n-1}_{\theta, 2n} \; \cong \; \hocolim_{\mb{t}_{0} \in \mathcal{K}^{n-1}_{2n}}\hocolim_{\mb{t}_{1} \in \mathcal{K}^{n-1}_{2n-1}}\mathcal{W}^{\partial, n-1}_{\theta, 2n}(\mb{t}_{0}, \mb{t}_{1}).
$$
Furthermore, the order in which the homotopy colimits are constructed can be exchanged, 
$$
 \hocolim_{\mb{t}_{0} \in \mathcal{K}^{n-1}_{2n}}\hocolim_{\mb{t}_{1} \in \mathcal{K}^{n-1}_{2n-1}}\mathcal{W}^{\partial, n-1}_{\theta, 2n}(\mb{t}_{0}, \mb{t}_{1}) \; \cong \; \hocolim_{\mb{t}_{1} \in \mathcal{K}^{n-1}_{2n-1}}\hocolim_{\mb{t}_{0} \in \mathcal{K}^{n-1}_{2n}}\mathcal{W}^{\partial, n-1}_{\theta, 2n}(\mb{t}_{0}, \mb{t}_{1}).
$$
Now, let us fix $\mb{t}_{0} \in \mathcal{K}^{n-1}_{2n-1}$ and consider the map 
\begin{equation} \label{equation: restriction with limit over t-1}
\hocolim_{\mb{t}_{1} \in \mathcal{K}^{n-1}_{2n}}\mathcal{W}^{\partial, n-1}_{\theta, 2n}(\mb{t}_{0}, \mb{t}_{1}) \; \longrightarrow \; \mathcal{W}^{n-1}_{\theta, 2n-1}(\mb{t}_{0})
\end{equation}
induced from the boundary maps $b_{\mb{t}}$.
We have the following lemma:
\begin{lemma} \label{lemma: partial homotopy fibre sequence}
Let $\mb{t}_{0} \in \mathcal{K}^{n-1}_{2n-1}$ be as chosen above.
Fix an element, 
$(M, (V_{0}, \sigma_{0}), e_{0}) \in \mathcal{W}^{n-1}_{\theta, 2n-1}(\mb{t}_{0}).$
The homotopy fibre of (\ref{equation: restriction with limit over t-1}) over $(M, (V_{0}, \sigma_{0}), e_{0})$ is given by the space, 
$$
\mathcal{W}^{n-1}_{\theta, 2n}(M) = \displaystyle{\hocolim_{\mb{t}_{1} \in \mathcal{K}^{n-1}_{2n}}}\mathcal{W}^{n-1}_{\theta, 2n}(M; \mb{t}_{1}).
$$
\end{lemma}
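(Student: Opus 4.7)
The plan is to prove this lemma by computing the fibre at each stage of the $\mb{t}_1$-indexed diagram, verifying naturality in $\mb{t}_1$, and then invoking Proposition \ref{proposition: homotopy colimit fibre sequence} to promote the pointwise fibre identification to the homotopy colimit. First, for each $\mb{t}_1 \in \mathcal{K}^{n-1}_{2n}$, consider the restriction
$$
b_{\mb{t}_1}: \mathcal{W}^{\partial, n-1}_{\theta, 2n}(\mb{t}_0, \mb{t}_1) \; \longrightarrow \; \mathcal{W}^{n-1}_{\theta, 2n-1}(\mb{t}_0),
$$
given by the natural transformation $b_{\mb{t}}$ restricted to objects of the form $(\mb{t}_0, \mb{t}_1)$. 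I would verify that $b_{\mb{t}_1}$ is a Serre fibration by means of an isotopy-extension/parametrized-collar argument analogous to the one used to show that the restriction $\mathcal{M}^{\partial}_{\theta, d} \rightarrow \mathcal{M}_{\theta, d-1}$ is a Serre fibration: the essential input is that embeddings $e_0$ extending a given embedding $e'_0$ on the boundary form a principal homogeneous space under the group of compactly-supported diffeomorphisms of a collar neighborhood, and such extensions can be constructed continuously in families via tubular neighborhood uniqueness.

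Next, I would compute the fibre of $b_{\mb{t}_1}$ over the chosen point $(M, (V_0, \sigma_0), e_0)$. By Definition \ref{defn: colimit decomp of W w boundary}, this fibre consists of tuples $(W, (V'_0, \sigma'_0, \phi'_0), (V_1, \sigma_1), (e^W_0, e_1))$ with $\partial W = M$ (as $\theta$-manifolds), with $(\bar{V}'_0, \bar{\sigma}'_0) = (V_0, \sigma_0)$, and with $\partial_0 e^W_0 = e_0$. The forgetful map
$$
(W, (V'_0, \sigma'_0, \phi'_0), (V_1, \sigma_1), (e^W_0, e_1)) \; \longmapsto \; (W, (V_1, \sigma_1), e_1)
$$
lands in $\mathcal{W}^{n-1}_{\theta, 2n}(M; \mb{t}_1)$ (in the sense of Definition \ref{defn: colimit decomp of W} applied to $P = M$). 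I would then argue this forgetful map is a weak homotopy equivalence by showing that the space of choices of extension data $(V'_0, \sigma'_0, \phi'_0, e^W_0)$ with prescribed boundary restriction is contractible. This reduces to two independent contractibility statements: extensions of $V_0 \leq \R^\infty$ to a one-larger subspace $V'_0 \leq \R^\infty$ together with the functional $\phi'_0$ form a contractible space (essentially the space of non-zero vectors in $\R^\infty / V_0$, which is contractible as $\R^\infty$ is infinite-dimensional), and extensions of $e_0$ to $e^W_0$ with the prescribed collar and preimage conditions are contractible by parametrized tubular-neighborhood uniqueness applied to the half-disk data built in Construction \ref{Construction: half disk with collars}.

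Third, I would check naturality: any morphism $(j, \varepsilon): \mb{s}_1 \rightarrow \mb{t}_1$ in $\mathcal{K}^{n-1}_{2n}$ only modifies the interior surgery data $((V_1, \sigma_1), e_1)$ and leaves the boundary data untouched, so the induced map on fibres commutes with the forgetful identifications and agrees with the structure map of the functor $\mb{t}_1 \mapsto \mathcal{W}^{n-1}_{\theta, 2n}(M; \mb{t}_1)$. In particular, every such $(j, \varepsilon)$ induces a weak homotopy equivalence between the homotopy fibres, so the hypothesis of Proposition \ref{proposition: homotopy colimit fibre sequence} is satisfied. Applying that proposition to the natural transformation $b_{\mb{t}_1}$ from the functor $\mb{t}_1 \mapsto \mathcal{W}^{\partial, n-1}_{\theta, 2n}(\mb{t}_0, \mb{t}_1)$ to the constant functor at $\mathcal{W}^{n-1}_{\theta, 2n-1}(\mb{t}_0)$ identifies the homotopy fibre of (\ref{equation: restriction with limit over t-1}) over $(M, (V_0, \sigma_0), e_0)$ with $\hocolim_{\mb{t}_1} \mathcal{W}^{n-1}_{\theta, 2n}(M; \mb{t}_1) = \mathcal{W}^{n-1}_{\theta, 2n}(M)$.

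The main obstacle is the second step — the contractibility of the space of extensions of boundary surgery data to collar data. Here one must carefully handle the interplay between the $\theta$-structure $\hat{\ell}_{V_0}$, the functional $\phi'_0$, and the extension of the embedding to the half-disk with collar $D^c_+(V'^{+}_0)$, making sure that all four pieces of data deform compatibly. Once this technical contractibility is established the remaining steps are formal and follow from the homotopy-colimit formalism of \cite{MW 07}.
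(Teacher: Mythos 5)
Your first two steps are reasonable and broadly parallel the paper's argument: $b_{\mb{t}}$ with $\mb{t}_0$ fixed is a Serre fibration in each variable $\mb{t}_1$, and its fibre over $x = (M, (V_0, \sigma_0), e_0)$ is weakly equivalent to $\mathcal{W}^{n-1}_{\theta, 2n}(M; \mb{t}_1)$ via the forgetful map. The paper argues the latter point somewhat differently --- it realizes the fibre inside $\mathcal{W}^{n-1}_{\theta, 2n}(M; \mb{t}_1)$ as the subspace of those $N$ containing $e_{0}(S(V^{-})\times_{\mb{t}_{0}}D^{c}_{+}(V^{+}))$, and shows this inclusion is a weak equivalence using the homotopy equivalence between $S(V^{-})\times_{\mb{t}_{0}}D^{c}_{+}(V^{+})$ and its $\partial_{0}$-face, which already lies in $M = \partial N$ --- but your contractibility-of-extension-data tactic is a plausible alternative for this step.

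The genuine gap is in your third step, the invocation of Proposition \ref{proposition: homotopy colimit fibre sequence}. That proposition requires that \emph{every} morphism in $\mathcal{K}^{n-1}_{2n}$ induce a weak homotopy equivalence between the relevant homotopy fibres. You assert this ``in particular'' from naturality, but the opposite actually follows from your own identification: under the forgetful weak equivalences, the induced map on fibres is the structure map of the functor $\mb{t}_1 \mapsto \mathcal{W}^{n-1}_{\theta, 2n}(M; \mb{t}_1)$, and these structure maps are not weak equivalences (the $\varepsilon = -1$ maps perform surgery on the underlying manifold and change its diffeomorphism type). Moreover, if the hypothesis of Proposition \ref{proposition: homotopy colimit fibre sequence} did hold, its conclusion would say that the inclusion of any single term --- in particular $\mathcal{W}^{n-1}_{\theta, 2n}(M; \emptyset) = \mathcal{M}^{n-1}_{\theta, 2n}(M)$ --- into the homotopy fibre of (\ref{equation: restriction with limit over t-1}) is a weak equivalence, which is not what Lemma \ref{lemma: partial homotopy fibre sequence} asserts and is false in general. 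The correct tool here is \cite[Lemma 6.11]{MW 07}, which addresses exactly the situation of a natural transformation to a \emph{constant} target functor: it gives directly that $\hofibre_{x}$ of (\ref{equation: restriction with limit over t-1}) is weakly equivalent to $\hocolim_{\mb{t}_1}\hofibre_{x}\bigl(b_{(\mb{t}_0,\mb{t}_1)}\bigr)$ with no hypothesis on the transition maps, after which your termwise identification of the fibres with $\mathcal{W}^{n-1}_{\theta, 2n}(M; \mb{t}_1)$ yields the claimed hocolim $\mathcal{W}^{n-1}_{\theta, 2n}(M)$.
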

\begin{proof}
Let us denote $x := (M, (V_{0}, \sigma_{0}), e_{0})$.
By \cite[Lemma 6.11]{MW 07}, the space
$$
\textstyle{\hofibre_{x}}\left(\displaystyle{\hocolim_{\mb{t}_{1} \in \mathcal{K}^{n-1}_{2n}}}\mathcal{W}^{\partial, n-1}_{\theta, 2n}(\mb{t}_{0}, \mb{t}_{1}) \longrightarrow \mathcal{W}^{n-1}_{\theta, 2n-1}(\mb{t}_{0})\right)
$$
is weak homotopy equivalent to the space, 
$$
\hocolim_{\mb{t}_{1} \in \mathcal{K}^{n-1}_{2n}}\left[\textstyle{\hofibre_{x}}\left(\mathcal{W}^{\partial, n-1}_{\theta, 2n}(\mb{t}_{0}, \mb{t}_{1}) \; \longrightarrow \; \mathcal{W}^{n-1}_{\theta, 2n-1}(\mb{t}_{0}) \right)\right].
$$ 
Fix $\mb{t}_{1} \in \mathcal{K}^{n-1}_{2n}$ and write $\mb{t} = (\mb{t}_{0}, \mb{t}_{1})$.
The map
$$
b_{\mb{t}}: \mathcal{W}^{\partial, n-1}_{\theta, 2n}(\mb{t}_{0}, \mb{t}_{1}) \; \longrightarrow \; \mathcal{W}^{n-1}_{\theta, 2n-1}(\mb{t}_{0})
$$
is a Serre-fibration. 
The fibre of $b_{\mb{t}}$ over the element $x = (M, (V_{0}, \sigma_{0}), e_{0})$ is a subspace of $\mathcal{W}_{\theta, 2n}^{n-1}(M; \mb{t}_{1})$; let us describe this fibre explicitly. 
It is the subspace of $\mathcal{W}_{\theta, 2n}^{n-1}(M; \mb{t}_{1})$ consisting of those $(N, (V, \sigma), e)$ for which $N \in \mathcal{M}_{\theta, 2n}^{n-1}(M)$ contains the submanifold, 
$$e_{0}(S(V^{-})\times_{\mb{t}_{0}}D^{c}_{+}(V^{+}) \subset (-\infty, 0]\times\R^{\infty-1}.$$
Since $e_{0}(S(V^{-})\times_{\mb{t}_{0}}\partial_{0}D^{c}_{+}(V^{+}))$ is contained in $M$, and 
$S(V^{-})\times_{\mb{t}_{0}}D^{c}_{+}(V^{+})$ is homotopy equivalent to $S(V^{-})\times_{\mb{t}_{0}}\partial_{0}D^{c}_{+}(V^{+})$, it follows that the inclusion 
$$
b_{\mb{t}}^{-1}(M, (V_{0}, \sigma_{0}), e_{0}) \hookrightarrow \mathcal{W}_{\theta, 2n}^{n-1}(M; \mb{t}_{1}),
$$
is a weak homotopy equivalence. 
From this we obtain the weak homotopy equivalence, 
$$\textstyle{\hofibre_{x}}\left(\mathcal{W}^{\partial, n-1}_{\theta, 2n}(\mb{t}_{0}, \mb{t}_{1}) \; \longrightarrow \; \mathcal{W}^{n-1}_{\theta, 2n-1}(\mb{t}_{0}) \right) \; \simeq \; \mathcal{W}_{\theta, 2n}^{n-1}(M; \mb{t}_{1}).$$
Combining this with what was observed in the first part of the proof we obtain the weak homotopy equivalence,
$$
\hocolim_{\mb{t}_{1} \in \mathcal{K}^{n-1}_{2n}}\mathcal{W}_{\theta, 2n}^{n-1}(M; \mb{t}_{1}) \; \simeq \; \textstyle{\hofibre_{x}}\left(\displaystyle{\hocolim_{\mb{t}_{1} \in \mathcal{K}^{n-1}_{2n}}}\mathcal{W}^{\partial, n-1}_{\theta, 2n}(\mb{t}_{0}, \mb{t}_{1}) \longrightarrow \mathcal{W}^{n-1}_{\theta, 2n-1}(\mb{t}_{0})\right),
$$
and this concludes the proof of the lemma.
\end{proof}

Let $(j, \varepsilon): \mb{s}_{0} \longrightarrow \mb{t}_{0}$ be a morphism in $\mathcal{K}^{n-1}_{2n-1}$. 
Choose 
$(M, (V, \sigma), e) \in \mathcal{W}^{n-1}_{\theta, 2n-1}(\mb{t}_{0}),$ 
and then let $(M', (V', \sigma'), e') \in \mathcal{W}^{n-1}_{\theta, 2n-1}(\mb{s}_{0})$ be the image of $(M, (V, \sigma), e)$ under the map induced by the morphism $(j, \varepsilon)$.
This morphism induces a map of fibre sequences,
\begin{equation} \label{equation: induced map of fibre sequences}
\xymatrix{
\displaystyle{\hocolim_{\mb{t}_{1} \in \mathcal{K}^{n-1}_{2n}}}\mathcal{W}^{n-1}_{\theta, 2n}(M; \mb{t}_{1}) \ar[rr]^{(j, \varepsilon)_{*}} \ar[d]  && \displaystyle{\hocolim_{\mb{t}_{1} \in \mathcal{K}^{n-1}_{2n}}}\mathcal{W}^{n-1}_{\theta, 2n}(M'; \mb{t}_{1})  \ar[d] \\
\displaystyle{\hocolim_{\mb{t}_{1} \in \mathcal{K}^{n-1}_{2n}}}\mathcal{W}^{\partial, n-1}_{\theta, 2n}(\mb{t}_{0}, \mb{t}_{1}) \ar[rr]^{(j, \varepsilon)_{*}} \ar[d] && \displaystyle{\hocolim_{\mb{t}_{1} \in \mathcal{K}^{n-1}_{2n}}}\mathcal{W}^{\partial, n-1}_{\theta, 2n}(\mb{s}_{0}, \mb{t}_{1}) \ar[d] \\
\mathcal{W}^{n-1}_{\theta, 2n-1}(\mb{t}_{0}) \ar[rr]^{(j, \varepsilon)_{*}} && \mathcal{W}^{n-1}_{\theta, 2n-1}(\mb{s}_{0}).
}
\end{equation}
We want to apply Proposition \ref{proposition: homotopy colimit fibre sequence} to this situation. 
Indeed, we make the following claim:
\begin{claim} \label{claim: map between fibres is a weak equivalence}
For any morphism $(j, \varepsilon): \mb{s}_{0} \longrightarrow \mb{t}_{0}$ in $\mathcal{K}^{n-1}_{2n-1}$, the top-vertical map in (\ref{equation: induced map of fibre sequences}) is a weak homotopy equivalence. 
\end{claim}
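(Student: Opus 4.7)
I reduce to two elementary cases based on the sign $\varepsilon_{0}(a)$, where $a$ is the unique element of $\mb{t}_{0} \setminus j_{0}(\mb{s}_{0})$. When $\varepsilon_{0}(a) = +1$, the analogue of Definition~\ref{defn: pullback +1} for $\mathcal{W}^{\partial, n-1}_{\theta, 2n}$ shows that $M' = M$ (only the surgery datum $e_{0}$ is restricted, not the underlying manifold), so the induced map on the fibre $\mathcal{W}^{n-1}_{\theta, 2n}(M; \mb{t}_{1})$ identified in Lemma~\ref{lemma: partial homotopy fibre sequence} is essentially the identity, and thus a weak equivalence after taking the homotopy colimit over $\mb{t}_{1}$. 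The real content of the claim is therefore the case $\varepsilon_{0}(a) = -1$.

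In the case $\varepsilon_{0}(a) = -1$, the constraint $\delta(a) = n$ (the only allowable index in $\mathcal{K}^{n-1}_{2n-1}$) means that $M'$ is obtained from $M$ by a single index-$n$ surgery, and the induced map $(j, \varepsilon)_{*}$ sends a filling $(N, (V_{1}, \sigma_{1}), e_{1})$ of $M$ to the filling $(N \cup_{M} W, (V_{1}, \sigma_{1}), e_{1})$ of $M'$, where $W: M \rightsquigarrow M'$ is the elementary trace cobordism carrying one $n$-handle. I first observe that $\mathcal{W}^{n-1}_{\theta, 2n}(M)$ is non-empty if and only if $M$ is $\theta$-null-bordant: any null-bordism can be made $(n-1)$-connected by interior surgery, since the relevant attaching data lie in the stable range for a $2n$-manifold. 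Since $M$ and $M'$ are cobordant, both fibres are either simultaneously empty (in which case the claim is trivial) or both non-empty, so I may assume the latter.

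In the non-empty case, Theorem~\ref{theorem: homotopy type of space of long manifolds} identifies each of $\mathcal{W}^{n-1}_{\theta, 2n}(M)$ and $\mathcal{W}^{n-1}_{\theta, 2n}(M')$ with $\Omega^{\infty-1}\mb{hW}^{n-1}_{\theta, 2n+1}$. Transferring $(j,\varepsilon)_{*}$ through the zig-zag~(\ref{equation: W to L to D zig zag}) to the long-manifold model $\mathcal{D}^{\mf, n-1}_{\theta, 2n+1}(\--)$ turns it into post-concatenation with $W$, viewed as a morphism in the cobordism category $\Cob^{\mf, n-1}_{\theta, 2n}$. By Theorem~A the classifying space $B\Cob^{\mf, n-1}_{\theta, 2n}$ is an infinite loop space, so every morphism in this category represents a homotopically invertible element. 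Consequently concatenation with $W$ induces a weak equivalence on the corresponding spaces of morphisms out of a fixed object, and the claim follows.

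The main obstacle is this last compatibility step: checking that the pullback functoriality of $\mathcal{W}^{\partial, n-1}_{\theta, 2n}(-; \mb{t})$ along morphisms of $\widehat{\mathcal{K}}^{n-1}_{2n}$ corresponds, through the equivalences of \cite{P 17} and Theorem~\ref{theorem: equivalence with long manifolds with boundary}, to the geometric concatenation operation on $\mathcal{D}^{\mf, n-1}_{\theta, 2n+1}(\--)$. This is a technical naturality check carried out within the construction of the zig-zag~(\ref{equation: W to L to D zig zag}); no essentially new ideas beyond those in the appendix of \cite{P 17} should be required.
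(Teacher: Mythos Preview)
Your reduction to the $\varepsilon_{0}(a)=-1$ case and the identification of $(j,\varepsilon)_{*}$ with concatenation by an elementary index-$n$ cobordism $W:M\rightsquigarrow M'$ is correct; this is exactly Proposition~\ref{proposition: reinterpretation of top horizontal map} in the paper. The gap is in the final step. You argue that because $B\Cob^{\mf,n-1}_{\theta,2n}$ is an infinite loop space, ``every morphism in this category represents a homotopically invertible element,'' and that therefore $\mb{S}_{W}$ is a weak equivalence on $\mathcal{W}^{n-1}_{\theta,2n}(M)$. But $\mathcal{W}^{n-1}_{\theta,2n}(M)$ is \emph{not} a morphism space in $\Cob^{\mf,n-1}_{\theta,2n}$: it is weakly equivalent to $B\Cob^{\mf,n-1}_{\theta,2n+1}$, the classifying space of a $(2n{+}1)$-dimensional cobordism category, and the map $\mb{S}_{W}$ is not composition in $\Cob^{\mf,n-1}_{\theta,2n}$. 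Invertibility of $W$ as a path in $B\Cob^{\mf,n-1}_{\theta,2n}$ says nothing a priori about the effect of gluing $W$ (or $\R\times W$) onto elements of $\mathcal{D}^{\mf,n-1}_{\theta,2n+1}(M)$. Knowing that source and target are abstractly equivalent to the same infinite loop space does not make a given map between them an equivalence. If you try to rescue this via a fibre sequence $\mathcal{D}^{\mf,n-1}_{\theta,2n+1}(M)\to\mathcal{D}^{\partial,\mf,n-1}_{\theta,2n+1}\to\mathcal{D}^{\mf,n-1}_{\theta,2n}$, note that this is precisely Theorem~\ref{theorem: boundary map is quasifibration d = 2n-1}, whose proof \emph{uses} the claim you are proving.

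The paper's actual proof is Theorem~\ref{theorem: stability for n-handles}, carried out in Section~\ref{subsection: proof of remaining lemma}. It is substantially harder than the index-$>n$ case handled in \cite[Theorem~7.7]{P 17}, because the cores of index-$n$ handles in a $2n$-manifold can generically intersect (Remark~\ref{remark: reason for stabilizing}). One must therefore stabilise by $H_{n,n}(S)$ along an auxiliary spherical boundary component, build a semi-simplicial resolution by embedded $n$-handles (Definition~\ref{defn: semi-simplicial space 1}), and invoke the Galatius--Randal-Williams argument \cite[Theorem~4.12]{GRW 16} to show the augmentation is an equivalence after stabilisation (Proposition~\ref{proposition: stable augmentation map}). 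The claim then follows from a back-and-forth trick with $T(\sigma)$ and $T(\bar{\sigma})$ (Lemma~\ref{lemma: resolution of arbitrary surgery}). None of this is bypassed by your loop-space observation.
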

Let us describe how to finish the proof of Theorem \ref{theorem: boundary map is quasifibration d = 2n-1} assuming the validity of the above claim. 
After finishing the proof of the theorem we will address the proof of Claim \ref{claim: map between fibres is a weak equivalence}.
\begin{proof}[Proof of Theorem \ref{theorem: boundary map is quasifibration d = 2n-1}, assuming Claim \ref{claim: map between fibres is a weak equivalence}]
To finish the proof of Theorem \ref{theorem: boundary map is quasifibration d = 2n-1} we simply apply Proposition \ref{proposition: homotopy colimit fibre sequence}. 
This proposition implies that the homotopy fibre of
$$\hocolim_{\mb{t}_{0} \in \mathcal{K}^{n-1}_{2n-1}}\hocolim_{\mb{t}_{1} \in \mathcal{K}^{n-1}_{2n}}\mathcal{W}^{\partial, n-1}_{\theta, 2n}(\mb{t}_{0}, \mb{t}_{1}) \; \longrightarrow \; \hocolim_{\mb{t}_{0} \in \mathcal{K}^{n-1}_{2n-1}}\mathcal{W}^{n-1}_{\theta, 2n-1}(\mb{t}_{0})$$
over $(M, (V, \sigma), e)$ is given by the space,
$
\displaystyle{\hocolim_{\mb{t}_{1} \in \mathcal{K}^{n-1}_{2n}}}\mathcal{W}^{n-1}_{\theta, 2n}(M; \mb{t}_{1}).
$
Theorem \ref{theorem: boundary map is quasifibration d = 2n-1} then follows from the homeomorphism, 
$$
\hocolim_{\mb{t}_{0} \in \mathcal{K}^{n-1}_{2n-1}}\hocolim_{\mb{t}_{1} \in \mathcal{K}^{n-1}_{2n}}\mathcal{W}^{\partial, n-1}_{\theta, 2n}(\mb{t}_{0}, \mb{t}_{1}) \; \cong \; \hocolim_{\mb{t} \in \widehat{\mathcal{K}}^{n-1}_{2n}}\mathcal{W}^{\partial, n-1}_{\theta, 2n}(\mb{t}).
$$
\end{proof}

We now address Claim \ref{claim: map between fibres is a weak equivalence}. 
It will be useful for us to reinterpret the map from the statement of the claim as one that is induced by concatenation with a morphism in the cobordism category $\Cob_{\theta, 2n}^{n-1}$. 

For the next definition, choose once and for all a diffeomorphism $\psi: (-\infty, 1] \stackrel{\cong} \longrightarrow (-\infty, 0]$ that is the identity on $(-\infty, -1)$. 
Let $\Psi: (-\infty, 1]\times\R^{\infty-1} \longrightarrow (-\infty, 0]\times\R^{\infty-1}$ be the diffeomorphism given by setting $\Psi = \psi\times\Id_{\R^{\infty-1}}$.
\begin{defn} \label{defn: cobordism category action}
Fix $\mb{t} \in \mathcal{K}^{n-1}_{2n}$.
Let $W: P \rightsquigarrow Q$ be a morphism in $\Cob_{\theta, 2n}^{n-1}$. 
The map, 
$$
\mb{S}_{W, \mb{t}}: \mathcal{W}^{n-1}_{\theta, 2n}(P; \mb{t}) \rightsquigarrow \mathcal{W}^{n-1}_{\theta, 2n}(Q; \mb{t}),
$$
is defined by the formula,
$$
(M, (V, \sigma), e) \; \mapsto \; (\Psi(M\cup_{P}W), (V, \sigma), \Psi\circ e),
$$
where $M\cup_{P}W \in \mathcal{M}_{\theta, 2n}^{n-1}(Q)$ is the element obtained by forming the union of $M \subset (-\infty, 0]\times\R^{\infty-1}$ with $W \subset [0, 1]\times\R^{\infty-1}$ along the boundary $P$.
Taking the homotopy colimit over $\mathcal{K}^{n-1}_{2n}$ yields the map, 
$
\mb{S}_{W}: \mathcal{W}^{n-1}_{\theta, 2n}(P) \rightsquigarrow \mathcal{W}^{n-1}_{\theta, 2n}(Q).
$
\end{defn}

Let, 
$(j, \varepsilon)_{*}: \mathcal{W}^{n-1}_{\theta, 2n}(M) \longrightarrow \mathcal{W}^{n-1}_{\theta, 2n}(M'),$
be the top-horizontal map from (\ref{equation: induced map of fibre sequences}), which is the subject of Claim \ref{claim: map between fibres is a weak equivalence}; we have reverted back to the notation, 
$\mathcal{W}^{n-1}_{\theta, 2n}(M) = \displaystyle{\hocolim_{\mb{t}_{1} \in \mathcal{K}^{n-1}_{2n}}}\mathcal{W}^{n-1}_{\theta, 2n}(M; \mb{t}_{1}).$
It will suffice to prove the claim in the special case that the morphism $(j, \varepsilon): \mb{s}_{0} \longrightarrow \mb{t}_{0}$ is chosen such that $\mb{t}_{0}\setminus j(\mb{s}_{0})$ contains a single point, $a$.
Indeed, any morphism can always be written as the composite of such morphisms. 
The proposition below follows directly from the definition of the map $(j, \varepsilon)_{*}$ and can be proven by hand. 
Furthermore it is similar to what was done in \cite[Section 7]{P 17} and so we omit the proof. 
\begin{proposition} \label{proposition: reinterpretation of top horizontal map}
There exists a morphism $W: M \rightsquigarrow M'$ of $\Cob_{\theta, 2n}^{n-1}$ such that the maps
$$
\mb{S}_{W}, \; (j, \varepsilon)_{*}: \mathcal{W}^{n-1}_{\theta, 2n}(M) \longrightarrow \mathcal{W}^{n-1}_{\theta, 2n}(M')
$$
are homotopic. 
The cobordism $W$ can be taken to be an elementary cobordism of index $n$.
\end{proposition}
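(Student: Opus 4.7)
The plan is as follows. Since $\mb{t}_0 \in \mathcal{K}^{n-1}_{2n-1}$, the unique element $a \in \mb{t}_0 \setminus j(\mb{s}_0)$ necessarily satisfies $\delta(a) = n$, the only admissible index in the range $n-1 < \delta(a) < n+1$. I would split into two cases depending on the sign $\varepsilon(a) \in \{+1, -1\}$.

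For $\varepsilon(a) = +1$, the base manifold is unchanged ($M' = M$), and by Definition \ref{defn: pullback +1} the map $(j,\varepsilon)_*$ merely restricts the indexed embedded data. I would take $W = [0, 1] \times M$ to be the trivial cobordism in $\Cob_{\theta, 2n}^{n-1}$; a homotopy $\mb{S}_W \simeq (j,\varepsilon)_*$ is then exhibited by the straight-line rescaling provided by $\Psi$ on the embeddings in $(-\infty, 0] \times \R^{\infty-1}$.

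The substantive case is $\varepsilon(a) = -1$. Here the data at $a$ prescribe an index-$n$ surgery on the $(2n-1)$-dimensional boundary, determined by the sphere $\partial_0 e_0$ restricted to the $a$-component, and $M'$ is the result of this surgery. I would take $W \colon M \rightsquigarrow M'$ to be the elementary index-$n$ handle-cobordism dual to this surgery, equipped with the $\theta$-structure extending $\hat{\ell}_M$ by the structure coming from $(V_0(a), \sigma_0(a), \phi_0(a))$. The guiding geometric observation is that for every $N \in \mathcal{M}^{n-1}_{\theta, 2n}(M)$, the collared half-disk image $e_0(D(V_0^-(a)) \times_a D^c_+(V_0^+(a))) \subset N$ is precisely a ``pre-attached'' copy of the core $D^n \times D^n$ of the handle constituting $W$. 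Consequently, the manifold $N_{e_0, (j,\varepsilon)}$ obtained by performing the boundary surgery (by which $(j,\varepsilon)_*$ is defined in Definition \ref{defn: pullback -1}) is abstractly diffeomorphic as a $\theta$-manifold with boundary to the concatenation $N \cup_M W$ produced by $\mb{S}_W$.

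The main obstacle is upgrading this abstract diffeomorphism to a continuous homotopy of maps $\mathcal{W}^{n-1}_{\theta, 2n}(M; \mb{t}_1) \to \mathcal{W}^{n-1}_{\theta, 2n}(M'; \mb{t}_1)$, parameterized over $\mb{t}_1 \in \mathcal{K}^{n-1}_{2n}$ and continuous in the interior surgery data $(V_1, \sigma_1), e_1$. I would address this by constructing an explicit isotopy of embeddings in $\R \times \R^{\infty - 1}$: first one ``pushes'' the preassembled handle neighborhood of $\partial N$ outward into the cobordism slab $[0, 1] \times \R^{\infty - 1}$ through a one-parameter family of embeddings, then one rescales back via $\Psi$. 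Since $e_1$ lands in the open interior $(-\infty, 0) \times \R^{\infty-1}$ it can be kept disjoint from the deformation region, so the isotopy extends uniformly over the interior surgery data; the $\theta$-structures match throughout by the way $W$ was equipped. The detailed construction of this ambient isotopy parallels the cobordism-category action worked out in Section 7 of \cite{P 17} for the closed case, which I would invoke rather than repeat; the only new bookkeeping is the boundary collar together with the associated half-disk, which is handled by Construction \ref{Construction: half disk with collars} and involves no further homotopy-theoretic input.
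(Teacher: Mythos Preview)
Your proposal is correct and follows essentially the same approach the paper indicates: the paper does not actually give a proof here, stating only that the proposition ``follows directly from the definition of the map $(j,\varepsilon)_*$ and can be proven by hand'' and is ``similar to what was done in \cite[Section 7]{P 17}''. Your argument unpacks exactly this---identifying $\delta(a)=n$, splitting on the sign of $\varepsilon(a)$, recognizing in the $-1$ case that the half-disk $e_0(D(V_0^-(a))\times_a D^c_+(V_0^+(a)))$ already sits inside $N$ as a pre-attached index-$n$ handle so that $N_{e_0,(j,\varepsilon)}\cong N\cup_M W$ for $W$ the trace of the boundary surgery, and then invoking the isotopy machinery of \cite[Section 7]{P 17} to promote this to a homotopy of maps---which is precisely what the paper has in mind.

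One minor remark: in your $\varepsilon(a)=+1$ case you take $W$ to be the trivial cylinder, which is not literally an elementary cobordism of index $n$. This does not affect the downstream application (Claim \ref{claim: map between fibres is a weak equivalence} and Theorem \ref{theorem: stability for n-handles}), since $\mb{S}_W$ for the cylinder is trivially a weak equivalence; the paper's phrasing ``can be taken to be an elementary cobordism of index $n$'' is really aimed at the $-1$ case where the nontrivial content lies.
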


In view of the above proposition, Claim \ref{claim: map between fibres is a weak equivalence} translates to the following theorem.
\begin{theorem} \label{theorem: stability for n-handles}
Let $W: P \rightsquigarrow Q$ be a morphism in $\Cob_{\theta, 2n}^{n-1}$ and suppose that $W$ is an elementary cobordism of index $n$.
Then the map, 
$\mb{S}_{W}: \mathcal{W}^{n-1}_{\theta, 2n}(P) \longrightarrow \mathcal{W}^{n-1}_{\theta, 2n}(Q),$
is a weak homotopy equivalence. 
\end{theorem}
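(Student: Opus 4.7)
The strategy is to identify the concatenation map $\mb{S}_W$ with a composite of natural transformations arising from the categorical structure of $\mathcal{K}^{n-1}_{2n}$, and then to deduce the weak equivalence from the homotopy colimit construction of $\mathcal{W}^{n-1}_{\theta,2n}(-)$.

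First I would record that $W$ is completely determined by the attaching embedding $\phi\colon S^{n-1}\times D^n\hookrightarrow P$ of its single $n$-handle, so that $Q$ is the result of surgery along $\phi$. Inside any concatenation $M\cup_P W$, the cocore of that handle is a canonical properly embedded $(n+1)$-disk meeting $Q$ transversally along its boundary. Together with its (trivial) normal bundle, the cocore determines a preferred interior surgery datum of index $n+1$ placed in a collar of $Q$, of precisely the type required by Definition \ref{defn: colimit decomp of W}.

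Using this cocore datum I would construct, for each $\mb{t}\in\mathcal{K}^{n-1}_{2n}$ and each fresh formal index $a$ with $\delta(a)=n+1$, a lift
\[
\widetilde{\mb{S}}_W\colon \mathcal{W}^{n-1}_{\theta,2n}(P;\mb{t})\;\longrightarrow\;\mathcal{W}^{n-1}_{\theta,2n}(Q;\mb{t}\sqcup\{a\}),
\]
natural in $\mb{t}$. By Definitions \ref{defn: pullback +1} and \ref{defn: pullback -1}, the morphism $(\iota,+1)\colon\mb{t}\hookrightarrow\mb{t}\sqcup\{a\}$ forgets the $a$-indexed datum and recovers $\mb{S}_W$ from $\widetilde{\mb{S}}_W$, whereas the morphism $(\iota,-1)$ performs interior surgery along the cocore, which undoes the $n$-handle of $W$ and returns, up to a canonical diffeomorphism covering the identity of the underlying $\theta$-structure, the original element of $\mathcal{W}^{n-1}_{\theta,2n}(P;\mb{t})$. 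This exhibits $\mb{S}_W$ as, up to homotopy, the pullback along the zig-zag $(\iota,+1)\Leftarrow \mb{t}\sqcup\{a\}\Rightarrow(\iota,-1)$ of the identity, so that after passing to the hocolim over $\mathcal{K}^{n-1}_{2n}$ the map $\mb{S}_W$ becomes identified with a weak equivalence induced by the natural transformations encoded in the $\mathcal{K}^{n-1}_{2n}$-diagram, via Proposition \ref{proposition: homotopy colimit fibre sequence}.

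The main obstacle is the continuous construction of the cocore-based surgery datum over the parameter space of triples $(M,(V,\sigma),e)$: one needs a framed tubular neighborhood theorem with parameters, compatibility of the chosen framing with the $\theta$-structure, and a verification that the resulting element still lies in the $(n-1)$-connected locus $\mathcal{W}^{n-1,c}_{\theta,2n}$ when one works, as one should by Theorem \ref{theorem: parametrized surgery}, with the subfunctor from Definition \ref{defn: W-conn}. A secondary technical point is checking that the two morphisms $(\iota,\pm1)^*\circ\widetilde{\mb{S}}_W$ really become homotopic upon taking hocolims; this is the categorical expression of the fact that the $n$-handle of $W$ and its cocore form a cancelling pair, and it is here that the elementary nature of $W$ (a single handle with canonical cocore of complementary index lying in the allowed range $\{n,n+1\}$ of $\mathcal{K}^{n-1}_{2n}$) is decisive.
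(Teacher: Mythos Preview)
Your proposal has a genuine gap at its core. The surgery data in $\mathcal{W}^{n-1}_{\theta,2n}(Q;\mb{t})$ are, by Definition~\ref{defn: colimit decomp of W}, embeddings into the open half-space $(-\infty,0)\times\R^{\infty-1}$ whose image is disjoint from the boundary $Q$. The cocore of the $n$-handle in the $2n$-dimensional cobordism $W$ is an $n$-disk (not an $(n+1)$-disk) properly embedded in $M\cup_P W$, meeting $Q$ along the belt sphere $S^{n-1}$; it therefore cannot furnish an \emph{interior} surgery datum of the type required. More decisively, the pullback $(\iota,-1)^*\colon\mathcal{W}^{n-1}_{\theta,2n}(Q;\mb{t}\sqcup\{a\})\to\mathcal{W}^{n-1}_{\theta,2n}(Q;\mb{t})$ performs surgery in the interior and leaves the boundary $Q$ unchanged, so the composite $(\iota,-1)^*\circ\widetilde{\mb{S}}_W$ still lands in $\mathcal{W}^{n-1}_{\theta,2n}(Q;\mb{t})$, not in $\mathcal{W}^{n-1}_{\theta,2n}(P;\mb{t})$: it cannot ``return the original element'' when $P\not\cong Q$. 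Your zig-zag thus only produces a homotopy between two maps $\mathcal{W}^{n-1}_{\theta,2n}(P)\to\mathcal{W}^{n-1}_{\theta,2n}(Q)$, neither of which is known to be an equivalence; and Proposition~\ref{proposition: homotopy colimit fibre sequence} concerns homotopy fibres, not the conclusion you need.

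The paper's argument (Section~\ref{subsection: proof of remaining lemma}) is substantially different and explains in Remark~\ref{remark: reason for stabilizing} why a direct categorical argument fails for index-$n$ handles: in middle dimension the cores of the auxiliary embedded handles may generically intersect, so the augmentation of the relevant semi-simplicial resolution is not a weak equivalence before stabilization. The paper instead adjoins an extra spherical boundary component $S$, stabilizes by $H_{n,n}(S)$, invokes the Galatius--Randal-Williams resolution argument \cite[Theorem~4.12]{GRW 16} to obtain Corollary~\ref{corollary: stabilized augmented space}, and then runs a two-out-of-three argument on the traces $T(\sigma)$, $T(\bar\sigma)$, $T(\bar{\bar\sigma})$ (Lemma~\ref{lemma: resolution of arbitrary surgery}). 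If you try to rescue your cocore idea by allowing the datum to meet the boundary, you are in the $\mathcal{W}^{\partial,n-1}_{\theta,2n}$ framework of Section~\ref{section: Manifolds with boundary equipped with surgery data}; but there Proposition~\ref{proposition: reinterpretation of top horizontal map} shows that the resulting fibre map is precisely $\mb{S}_W$ again, so the argument becomes circular.
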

We give the proof of the above theorem in the following subsection. 
With the proof of Theorem \ref{theorem: stability for n-handles} established, the proof of Theorem \ref{theorem: boundary map is quasifibration d = 2n-1} will finally be complete.

\subsection{Proof of Theorem \ref{theorem: stability for n-handles}} \label{subsection: proof of remaining lemma}
The statement of Theorem \ref{theorem: stability for n-handles} is a slight extension of \cite[Theorem 7.7]{P 17}. 
In particular \cite[Theorem 7.7]{P 17} is the same statement as the above theorem but for cobordisms comprised of handles of index strictly higher than $n$. 
The proof of Theorem \ref{theorem: stability for n-handles} is similar to this theorem but with a few important modifications. 
In particular, we will have to work with a stabilized version of the space $\mathcal{W}^{n-1}_{\theta, 2n}(P)$ (see Remark \ref{remark: reason for stabilizing} below for an explanation).
Our first step is the following proposition.
\begin{proposition} \label{proposition: adding an extra component}
Fix $P \in \Ob\Cob_{\theta, 2n}^{n-1}$. 
Let $S \in \Ob\Cob_{\theta, 2n}^{n-1}$ be another object that is diffeomorphic to $S^{2n-1}$, such that $S\cap P = \emptyset$.
Let $V: P\sqcup S \rightsquigarrow P$ be a morphism in $\Cob_{\theta, 2n}^{n-1}$ that is obtained by filling in the sphere $S$ with a disk, i.e.\ $V$ is an elementary cobordism of index $2n$.
Then the map, 
$$
\mb{S}_{V}: \mathcal{W}^{n-1}_{\theta, 2n}(P\sqcup S) \longrightarrow \mathcal{W}^{n-1}_{\theta, 2n}(P),
$$
is a weak homotopy equivalence. 
\end{proposition}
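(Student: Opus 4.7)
The strategy is to reduce the claim to a weak-equivalence statement between underlying moduli spaces and then analyze that map via a bundle-theoretic argument. The proof proceeds in three steps.

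First, it suffices to show that for each fixed $\mathbf{t} \in \mathcal{K}^{n-1}_{2n}$, the map
\[
\mb{S}_{V,\mathbf{t}}: \mathcal{W}^{n-1}_{\theta, 2n}(P \sqcup S; \mathbf{t}) \longrightarrow \mathcal{W}^{n-1}_{\theta, 2n}(P; \mathbf{t})
\]
is a weak homotopy equivalence, since the full proposition then follows by passing to the homotopy colimit over $\mathcal{K}^{n-1}_{2n}$. For fixed $\mathbf{t}$, the surgery data $(V, \sigma, e)$ in the definition of $\mathcal{W}^{n-1}_{\theta, 2n}(P; \mathbf{t})$ is supported in the strict interior $(-\infty, 0)\times\R^{\infty-1}$, which is disjoint from the region near $\{0\}\times\R^{\infty-1}$ where the cobordism $V$ gets glued to $M$. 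Consequently $\mb{S}_{V,\mathbf{t}}$ fits (up to the reparametrization $\Psi$) into a pullback square with the forgetful maps to $\mathcal{M}^{n-1}_{\theta, 2n}(P\sqcup S)$ and $\mathcal{M}^{n-1}_{\theta, 2n}(P)$, and the induced map on fibres of these forgetful maps is a homeomorphism. So it suffices to show that the underlying moduli-space map $\bar{\mb{S}}: \mathcal{M}^{n-1}_{\theta, 2n}(P \sqcup S) \to \mathcal{M}^{n-1}_{\theta, 2n}(P)$ is a weak homotopy equivalence.

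Second, using the standard homeomorphism
\[
\mathcal{M}^{n-1}_{\theta, 2n}(P) \;\cong\; \coprod_{[M']}\bigl(\Emb_{P}(M', \R^{\infty})\times\Bun(TM', \theta_{2n}^{*}\gamma^{2n})\bigr)\big/\Diff(M', P),
\]
where the disjoint union is over diffeomorphism classes of compact $(n-1)$-connected $\theta$-manifolds with $\partial M' = P$, the map $\bar{\mb{S}}$ is induced on each component by the inclusion of structure groups $\Diff(M, P \sqcup S) \hookrightarrow \Diff(M', P)$ (extending a diffeomorphism by the identity on the glued-in disk), where $M' := M\cup_{S}D^{2n}$. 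The homotopy fibre of the associated $B\Diff$-map is identified, via the parametrized isotopy-extension theorem together with the parametrized tubular-neighbourhood theorem, with a space of $\theta$-structured embedded $2n$-disks in $\Int(M')$ disjoint from $P$.

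Third, I show that this homotopy fibre, once combined with the contractible space $\Emb_{P}(M', \R^{\infty})$ and the $\Diff(M', P)$-action is accounted for, becomes weakly contractible. The essential geometric point is that in the ambient space $\R^{\infty}$ there is always enough room to isotope any two embedded $\theta$-disks in $\Int(M')$ to one another ambiently, so that the space of choices of embedded disk, modulo the diffeomorphism action and after incorporating the $\R^{\infty}$-embedding data, is weakly contractible. The main obstacle---and the chief technical work---is carrying out this parametrized contractibility argument while carefully tracking the $\theta$-structure extensions and the boundary compatibility; once this is in hand, $\bar{\mb{S}}$ is a weak equivalence on each component, and the proposition follows by reassembling the above reductions.
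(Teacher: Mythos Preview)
Your approach is entirely different from the paper's, which simply observes that $V$ is an elementary cobordism of index $2n>n$ and then invokes \cite[Theorem~7.7]{P 17}, a result already proven in the author's earlier work stating that $\mb{S}_{W}$ is a weak equivalence whenever $W$ is built from handles of index strictly greater than $n$. No further argument is needed.

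Your direct attempt has a genuine gap in Step~1. The square you describe is \emph{not} a strict pullback, and the induced map on fibres of the forgetful maps is \emph{not} a homeomorphism. Concretely: after applying $\mb{S}_{V,\mb{t}}$, the surgery embedding $\Psi\circ e$ has image contained in $\Psi\bigl((-\infty,0)\times\R^{\infty-1}\bigr)$, a proper sub-half-space of $(-\infty,0)\times\R^{\infty-1}$; moreover the target manifold $M'=\Psi(M\cup V)$ contains the image of the filled-in disk $\Psi(V)$. A general element of the fibre of $\mathcal{W}^{n-1}_{\theta,2n}(P;\mb{t})\to\mathcal{M}^{n-1}_{\theta,2n}(P)$ over $M'$ allows surgery data $e'$ landing anywhere in $(-\infty,0)\times\R^{\infty-1}$ and meeting $M'$ in its disk part $\Psi(V)$. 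Such $e'$ are not in the image of your fibre map. At best the map on fibres is a weak equivalence, but that requires its own isotopy argument (pushing surgery data off the disk and down past the collar), which you have not supplied; and once you need that, the ``reduction'' to $\bar{\mb{S}}$ is no longer a formal step.

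Step~3 is also too vague to function as a proof. The homotopy fibre of $B\Diff_{\partial}(M)\to B\Diff_{\partial}(M')$ is a space of framed embedded disks in $\Int(M')$, and this is \emph{not} contractible in general; what one actually needs is that it becomes contractible after incorporating the $\theta$-structures and using that $\partial M'=P\neq\emptyset$ (so one can push disks into a boundary collar). You gesture at this but the phrase ``in the ambient space $\R^{\infty}$ there is always enough room'' conflates the extrinsic embedding space with the intrinsic problem of embedded disks in $M'$, and the sentence about ``modulo the diffeomorphism action and after incorporating the $\R^{\infty}$-embedding data'' does not identify a concrete space whose contractibility you are asserting. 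The underlying statement about $\bar{\mb{S}}$ is likely true, but making your outline into a proof would require substantially more work than the paper's one-line citation.
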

\begin{proof}
Since $V$ is an elementary cobordism of index $2n$, the theorem follows directly from \cite[Theorem 7.7]{P 17}.
\end{proof}

We now fix an object $S \in \Ob\Cob_{\theta, 2n}^{n-1}$ once and for all that is diffeomorphic to a sphere. 
For any $P \in \Ob\Cob_{\theta, 2n}^{n-1}$ with $P\cap S = \emptyset$, and $\mb{t} \in \mathcal{K}_{2n}^{n-1}$, we define,
\begin{equation} \label{equation: space with extra boundary}
\mathcal{W}^{n-1}_{\theta, 2n}(P, \mb{t})_{S} := \mathcal{W}^{n-1}_{\theta, 2n}(P\sqcup S, \mb{t}).
\end{equation}
The subspace $\mathcal{W}^{n-1, c}_{\theta, 2n}(P, \mb{t})_{S} \subset \mathcal{W}^{n-1}_{\theta, 2n}(P, \mb{t})_{S}$ is defined as in Definition \ref{theorem: parametrized surgery}. 
The cobordism $H_{n, n}(S): S \rightsquigarrow S$ induces a map, 
$$
\mb{S}_{H_{n, n}(S)}: \mathcal{W}^{n-1, c}_{\theta, 2n}(P, \mb{t})_{S} \longrightarrow \mathcal{W}^{n-1, c}_{\theta, 2n}(P, \mb{t})_{S}. 
$$
Using this map, 
we define the space, $\mathcal{W}^{n-1, c}_{\theta, 2n}(P, \mb{t})_{S}^{\stb}$, to be the homotopy colimit of the direct system obtained by iterating this map (this construction is analogous to (\ref{equation: stabilization by s-n by s-n}) but with a fixed boundary component given by the object $S$).
As before, the spaces $\mathcal{W}^{n-1}_{\theta, 2n}(P)_{S}, \mathcal{W}^{n-1, c}_{\theta, 2n}(P)_{S}$, and $\mathcal{W}^{n-1, c}_{\theta, 2n}(P)_{S}^{\stb}$ denote the respective homotopy colimits of the above spaces taken over $\mathcal{K}_{2n}^{n-1}$.

By combining Propositions \ref{proposition: adding an extra component}, Theorem \ref{theorem: parametrized surgery}, and Proposition \ref{proposition: stabiliization equivalence}, all three arrows in the diagram below are weak homotopy equivalences,
\begin{equation} \label{equation: auxiliary homotopy equivalences}
\xymatrix{
\mathcal{W}^{n-1}_{\theta, 2n}(P) & \mathcal{W}^{n-1}_{\theta, 2n}(P)_{S} \ar[l]_{\simeq} \ar@{^{(}->}[r]^{\simeq} & \mathcal{W}^{n-1, c}_{\theta, 2n}(P)_{S} \ar@{^{(}->}[r]^{\simeq} & \mathcal{W}^{n-1, c}_{\theta, 2n}(P)^{\stb}_{S}.
}
\end{equation}
Furthermore these weak homotopy equivalences are functorial in the variable $P \in \Ob\Cob_{\theta, 2n}^{n-1}$.
In view of the these homotopy equivalences, to prove Theorem \ref{theorem: stability for n-handles} it will suffice to prove that the map, 
\begin{equation} \label{equation: stable stability}
\mb{S}_{W}: \mathcal{W}^{n-1, c}_{\theta, 2n}(P)^{\stb}_{S} \longrightarrow \mathcal{W}^{n-1, c}_{\theta, 2n}(Q)^{\stb}_{S},
\end{equation}
is a weak homotopy equivalence for any elementary cobordism $W: P \rightsquigarrow Q$ of index $n$.

As with \cite[Theorem 7.7]{P 17}, the key ingredient to proving that (\ref{equation: stable stability}) is a weak homotopy equivalence is to construct a semi-simplicial resolution of the space $\mathcal{W}^{n-1, c}_{\theta, 2n}(P)^{\stb}_{S}$.
The following definition is the same as \cite[Definition 10.1]{P 17} and should also be compared to the semi-simplicial resolution constructed in \cite{GRW 16}.
\begin{defn} \label{defn: semi-simplicial space 1}
Fix $P \in \Ob\Cob_{\theta, 2n}$ and $\mb{t} \in \mathcal{K}^{n-1}_{2n}$, together with the following data:
\begin{itemize} \itemsep.2cm
\item an embedding 
$\chi: S^{n-1}\times(1, \infty)\times\R^{n-1} \longrightarrow P;$
\item a $1$-parameter family $\hat{\ell}^{\std}_{t}$, $t \in (2, \infty)$, of $\theta$-structures on $D^{n}\times D^{n}$ such that 
$$
\hat{\ell}^{\std}_{t}|_{\partial D^{n}\times D^{n}} = \chi^{*}\hat{\ell}_{P}|_{\partial D^{n}\times(t\cdot e_{1} + D^{n})},
$$
where $e_{1} \in (1, \infty)\times\R^{n-1}$ is the basis vector corresponding to the first coordinate. 
\end{itemize}
The space $\mathcal{M}_{\theta, 2n}^{n-1, c}(P, \chi; \mb{t})_{0}$ consists of tuples, 
$
\left((M, (V, \sigma), e), (t, \phi, \hat{L})\right),
$
where:
\begin{itemize} \itemsep.2cm
\item $(M, (V, \sigma), e)$ is an element of $\mathcal{W}^{n-1, c}_{\theta, 2n}(P, \mb{t})_{S}$;
\item $t \in (1, \infty)$;
\item $\phi: (D^{n}\times D^{n}, \; \partial D^{n}\times D^{n}) \longrightarrow (M, P)$ is an embedding;
\item $\hat{L}$ is a path of $\theta$-structures on $D^{n}\times D^{n}$;
\end{itemize} 
subject to the following conditions:
\begin{enumerate} \itemsep.3cm
\item[(i)] the restriction of $\phi$ to $\partial D^{n}\times D^{n}$ satisfies the equation, 
$\phi(y, v) = \chi(\tfrac{y}{|y|}, v + t\cdot e_{1}),$ 
for all $(y, v) \in \partial D^{n}\times D^{n}$.
\item[(ii)] the image $\phi(D^{n}\times D^{n})$ is disjoint from the image of the embedding $e$;
\item[(iii)] the family of $\theta$-structures $\hat{L}$ satisfies: $\hat{L}(0) = \phi^{*}\hat{\ell}_{M}$, $\hat{L}(1) = \hat{\ell}^{\std}_{t}$, and
$\hat{L}(s)|_{\partial D^{n}\times D^{n}}$ is independent of $s \in [0,1]$. 
\end{enumerate}
For $p \in \Z_{\geq 0}$, $\mathcal{M}_{\theta, 2n}^{n-1, c}(P, \chi; \mb{t})_{p}$ is the space of tuples 
$$
\left((M, (V, \sigma), e), (t_{0}, \phi_{0}, \hat{L}_{0}), \dots, (t_{p}, \phi_{p}, \hat{L}_{p})\right),
$$
subject to the following conditions:
\begin{enumerate} \itemsep.2cm
\item[(a)] each $\left((M, (V, \sigma), e), (t_{i}, \phi_{i}, \hat{L}_{i})\right)$ is an element of $\mathcal{M}_{\theta, 2n}^{n-1, c}(P, \chi; \mb{t})_{0}$; 
\item[(b)] $t_{0} < \cdots < t_{p}$; 
\item[(c)] $\phi_{i}(D^{n}\times D^{n})\cap\phi_{j}(D^{n}\times D^{n}) \; = \; \emptyset$ for all $i \neq j$.
\end{enumerate}
 The assignment $[p] \mapsto \mathcal{M}_{\theta, 2n}^{n-1, c}(P, \chi; \mb{t})_{p}$ defines a semi-simplicial space. 
 The $i$th face map is defined by forgetting what is in the $i$-th coordinate.
 The forgetful maps,
 \begin{equation} \label{equation: forgetful augmentation map}
 \begin{aligned}
 \mathcal{M}_{\theta, 2n}^{n-1, c}(P, \chi; \mb{t})_{p} &\longrightarrow \mathcal{W}_{\theta, 2n}^{n-1, c}(P, \mb{t})_{S}, \\
 \left((M, (V, \sigma), e), (t_{0}, \phi_{0}, \hat{L}_{0}), \dots, (t_{p}, \phi_{p}, \hat{L}_{p})\right) &\mapsto (M, (V, \sigma), e),
 \end{aligned}
 \end{equation}
 yield an augmented semi-simplicial space, 
 $\mathcal{M}_{\theta, 2n}^{n-1, c}(P, \chi; \mb{t})_{\bullet} \; \longrightarrow \; \mathcal{M}_{\theta, 2n}^{n-1, c}(P, \chi; \mb{t})_{-1},$ where we have set,
  $\mathcal{M}_{\theta, 2n}^{n-1, c}(P, \chi; \mb{t})_{-1} := \mathcal{W}_{\theta, 2n}^{n-1, c}(P; \mb{t})_{S}$.
\end{defn}
 
 \begin{remark} \label{remark: reason for stabilizing}
Consider the map induced by the augmentation,
$$
 |\mathcal{M}_{\theta, 2n}^{n-1, c}(P, \chi; \mb{t})_{\bullet}| \; \longrightarrow \; \mathcal{M}_{\theta, 2n}^{n-1, c}(P, \chi; \mb{t})_{-1}.
$$
The key step in the proof of \cite[Theorem 7.7]{P 17} was proving that the analogous augmentation map is a weak homotopy equivalence. 
The proof used in \cite{P 17} (namely Proposition 10.4) will not work to prove that this map is a weak homotopy equivalence. 
This is because the embeddings $\phi$ from Definition \ref{defn: semi-simplicial space 1} are embeddings of index-$n$ handles while the dimension of the ambient space is $2n$, and thus 
the cores of such embedded handles may generically have non-trivial intersections. 
Instead of the simple argument used in \cite{P 17}, we will have to apply a stabilization process to $\mathcal{M}_{\theta, 2n}^{n-1, c}(P, \chi; \mb{t})_{\bullet}$, and then use the argument of Galatius and Randal-Williams from \cite[Theorem 4.12]{GRW 16}.
\end{remark}

Concatenation with the cobordism $H_{n, n}(S): S \rightsquigarrow S$ along $S$ induces a semi-simplicial map, 
$$\mb{S}_{H_{n, n}(S)}: \mathcal{M}_{\theta, 2n}^{n-1, c}(P, \chi; \mb{t})_{\bullet} \longrightarrow \mathcal{M}_{\theta, 2n}^{n-1, c}(P, \chi; \mb{t})_{\bullet}.$$
We define $\mathcal{M}_{\theta, 2n}^{n-1, c}(P, \chi; \mb{t})^{\stb}_{\bullet}$ to be the level-wise homotopy colimit of the direct system obtained by iterating this semi-simplicial map. 
The same forgetful maps (\ref{equation: forgetful augmentation map}) yield the augmented semi-simplicial space, 
 $$
 \mathcal{M}_{\theta, 2n}^{n-1, c}(P, \chi; \mb{t})^{\stb}_{\bullet} \longrightarrow \mathcal{M}_{\theta, 2n}^{n-1, c}(P, \chi; \mb{t})^{\stb}_{-1} \; = \; \mathcal{W}_{\theta, 2n}^{n-1, c}(P; \mb{t})^{\stb}_{S}.
$$
We have the following proposition.
\begin{proposition} \label{proposition: stable augmentation map}
Let $P \in \Ob\Cob_{\theta, 2n}^{n-1}$,  $\mb{t} \in \mathcal{K}_{2n}^{n-1}$, and 
$\chi$ be as in Definition \ref{defn: semi-simplicial space 1}. 
The augmentation map, 
$$
 |\mathcal{M}_{\theta, 2n}^{n-1, c}(P, \chi; \mb{t})^{\stb}_{\bullet}| \longrightarrow \mathcal{M}_{\theta, 2n}^{n-1, c}(P, \chi; \mb{t})^{\stb}_{-1} \; = \; \mathcal{W}_{\theta, 2n}^{n-1, c}(P; \mb{t})^{\stb}_{S},
$$
is a weak homotopy equivalence.
\end{proposition}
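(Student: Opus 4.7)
The strategy is to invoke the general principle for augmented topological flag complexes due to Galatius and Randal-Williams: for an augmented topological flag complex $X_\bullet \to X_{-1}$, the augmentation $|X_\bullet| \to X_{-1}$ is a weak homotopy equivalence provided the fiber over every point of $X_{-1}$ has weakly contractible geometric realization. I would first verify that $\mathcal{M}_{\theta, 2n}^{n-1, c}(P, \chi; \mb{t})^{\stb}_{\bullet}$ is a topological flag complex over its augmentation. Conditions (b) and (c) of Definition \ref{defn: semi-simplicial space 1} are pairwise on the tuples $(t_i, \phi_i, \hat{L}_i)$: an ordered $(p+1)$-tuple is a $p$-simplex iff each pair of entries is a $1$-simplex. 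Hence the flag condition holds at each stabilization stage, and is preserved under the level-wise homotopy colimit.

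\textbf{Reduction to contractibility of the fiber.} Fix an augmentation point $x = (M, (V, \sigma), e) \in \mathcal{W}^{n-1, c}_{\theta, 2n}(P; \mb{t})^{\stb}_{S}$, represented by some stabilization stage $M \cup H_{n,n}(S)^{\cup N_0}$. Denote by $\mathcal{H}^{\stb}_\bullet(x)$ the fiber of the augmentation over $x$: its $p$-simplices are ordered $(p+1)$-tuples $(t_i, \phi_i, \hat{L}_i)$ satisfying (i)--(iii) of Definition \ref{defn: semi-simplicial space 1} relative to the fixed $x$ (after possibly further stabilizing by $\mb{S}_{H_{n,n}(S)}$). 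It then suffices to show that $|\mathcal{H}^{\stb}_\bullet(x)|$ is weakly contractible. By the flag-complex criterion, this in turn reduces to the following parametric statement: every compact family $f : K \to \mathcal{H}^{\stb}_0(x)$ of embedded $n$-handles on $M$ (satisfying the standardization data near the boundary) admits, after sufficiently many further stabilizations, a continuous family $\tilde{f} : K \to \mathcal{H}^{\stb}_0(x)$ of fresh handles disjoint from every handle in the family $f$.

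\textbf{Geometric input from stabilization.} The essential point — the one bypassing the obstacle described in Remark \ref{remark: reason for stabilizing} — is that each new $H_{n, n}(S)$-summand contributes a boundary-connect summand containing a copy of $S^n \times S^n$, hence a pair of dual embedded handles whose cores are the standard halves of $S^n \times S^n$. Given a compact parameter family $K$ of pre-existing configurations, once enough further $H_{n,n}(S)$-summands have been added one may place, continuously over $K$, a fresh handle $\phi : (D^n \times D^n, \partial D^n \times D^n) \to (M, P)$ inside one of these new $S^n \times S^n$-summands, disjoint from every configuration in the family $K$ and with the prescribed boundary. This is the parametrized handle-placement argument underlying the proof of \cite[Theorem 4.12]{GRW 16}; compactness of $K$ and of the simplices of $\mathcal{H}^{\stb}_\bullet(x)$ built along the way reduce the problem to a finite number of disjointness conditions, each absorbed into its own fresh $S^n \times S^n$-summand. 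The resulting extensions promote $\mathcal{H}^{\stb}_0(x)$ to be non-empty and arbitrarily highly connected, hence $|\mathcal{H}^{\stb}_\bullet(x)|$ weakly contractible by the flag criterion.

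\textbf{Main obstacle.} The delicate point is promoting this embedding construction to include condition (iii) of Definition \ref{defn: semi-simplicial space 1} — the path $\hat{L}$ of $\theta$-structures on $D^n \times D^n$ interpolating between $\phi^* \hat{\ell}_M$ and the standard $\hat{\ell}^{\std}_t$, rel boundary. To construct such a path parametrically over $K$ requires that the relevant space of $\theta$-structures on $D^n \times D^n$ (fixed on $\partial D^n \times D^n$) be simply connected, and this follows from the $2$-connectivity of $B$ combined with the $(n-1)$-connectedness hypothesis built into the space $\mathcal{W}^{n-1, c}_{\theta, 2n}$. Once the embedding and the path of $\theta$-structures are produced together, the flag-complex criterion gives the contractibility of $|\mathcal{H}^{\stb}_\bullet(x)|$, and the proposition follows.
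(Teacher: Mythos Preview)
Your overall strategy matches the paper's: the proof is by direct appeal to \cite[Theorem 4.12]{GRW 16}, and the mechanism behind that theorem is indeed the topological flag-complex criterion together with a handle-placement argument that exploits stabilization. Your identification of the flag structure and of the reduction to producing a fresh vertex compatible with a given compact family is correct.

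However, your description of the geometric input is not right, and as stated the argument would not go through. You propose to place the fresh handle $\phi$ \emph{inside} one of the newly added $S^n \times S^n$-summands. This is impossible: by condition (i) of Definition \ref{defn: semi-simplicial space 1}, the attaching region $\phi(\partial D^n \times D^n)$ is prescribed to lie in $P$ via $\chi$, whereas the stabilization $H_{n,n}(S)$ is glued along the \emph{other} boundary component $S$. The new summands are therefore far from $P$, and no handle with the required boundary can live entirely inside them.

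The correct mechanism (which is what \cite[Theorem 4.12]{GRW 16} actually does) is different. One first produces a candidate handle $\phi$ with the prescribed boundary in $P$; by general position its core $\phi(D^n \times \{0\})$ meets the existing cores $\phi_i(D^n \times \{0\})$ transversally in finitely many interior points. Each such intersection point is then removed by a piping move: one tubes the core of $\phi$ into one of the standard $n$-spheres in a fresh $S^n \times S^n$-summand, and uses the dual sphere (which meets it algebraically once) to cancel the intersection. This is exactly where the middle-dimensional obstacle of Remark \ref{remark: reason for stabilizing} is overcome --- not by relocating the handle, but by trading intersections against the hyperbolic summands supplied by stabilization. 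Your paragraph on the $\theta$-structure path $\hat{L}$ is also somewhat off: the relevant obstruction is handled in \cite{GRW 16} not via $2$-connectivity of $B$ but via the specific choice of the family $\hat{\ell}^{\std}_t$ and the fact that the piping moves can be arranged to respect $\theta$-structures.
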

\begin{proof}
This proof of this proposition is the same as \cite[Theorem 4.12]{GRW 16}.
In the case that $\mb{t} = \emptyset$, then the augmented semi-simplicial space $ \mathcal{M}_{\theta, 2n}^{n-1, c}(P, \chi; \mb{t})^{\stb}_{\bullet} \longrightarrow \mathcal{M}_{\theta, 2n}^{n-1, c}(P, \chi; \mb{t})^{\stb}_{-1}$ is isomorphic to the one considered there. 
In the case of general $\mb{t}$, their proof goes through in the exact same way none-the-less.
\end{proof}

Now, the assignment $\mb{t} \mapsto \mathcal{M}_{\theta, 2n}^{n-1, c}(P, \chi; \mb{t})^{\stb}_{\bullet}$ is a contravariant functor on $\mathcal{K}_{2n}^{n-1}$ and we may take the homotopy colimit. 
As usual we denote this homotopy colimit by $\mathcal{M}_{\theta, 2n}^{n-1, c}(P, \chi)^{\stb}_{\bullet}$.
We obtain a new augmented semi-simplicial space, 
$
\mathcal{M}_{\theta, 2n}^{n-1, c}(P, \chi)^{\stb}_{\bullet} \longrightarrow \mathcal{M}_{\theta, 2n}^{n-1, c}(P, \chi)^{\stb}_{-1}.
$
We have the following corollary.
\begin{corollary} \label{corollary: stabilized augmented space}
The augmentation map 
$$
|\mathcal{M}_{\theta, 2n}^{n-1, c}(P, \chi)^{\stb}_{\bullet}| \longrightarrow \mathcal{M}_{\theta, 2n}^{n-1, c}(P, \chi)^{\stb}_{-1} = \mathcal{W}_{\theta, 2n}^{n-1, c}(P)^{\stb}_{S}
$$
is a weak homotopy equivalence.
\end{corollary}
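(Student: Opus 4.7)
The plan is to deduce this directly from Proposition \ref{proposition: stable augmentation map} by a homotopy-colimit-exchange argument, since the only difference between the two statements is that Corollary \ref{corollary: stabilized augmented space} involves an additional homotopy colimit over $\mb{t}\in\mathcal{K}^{n-1}_{2n}$. First I would observe that the augmentation constructed in Definition \ref{defn: semi-simplicial space 1} is natural in $\mb{t}$; that is, for every morphism $(j,\varepsilon)\colon\mb{s}\to\mb{t}$ in $\mathcal{K}^{n-1}_{2n}$ the induced pullback commutes strictly with the forgetful maps (\ref{equation: forgetful augmentation map}) at every simplicial level and commutes with the stabilization map $\mb{S}_{H_{n,n}(S)}$. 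In this way we obtain a natural transformation of $\mathcal{K}^{n-1}_{2n}$-diagrams
\[
\mb{t}\;\longmapsto\;\Bigl(\,|\mathcal{M}_{\theta,2n}^{n-1,c}(P,\chi;\mb{t})^{\stb}_{\bullet}|\;\longrightarrow\;\mathcal{W}_{\theta,2n}^{n-1,c}(P;\mb{t})^{\stb}_{S}\,\Bigr),
\]
whose components are weak homotopy equivalences by Proposition \ref{proposition: stable augmentation map}.

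Next I would use that taking homotopy colimits over a small category preserves level-wise weak equivalences of diagrams of topological spaces. Applied to the natural transformation above, this gives a weak homotopy equivalence
\[
\hocolim_{\mb{t}\in\mathcal{K}^{n-1}_{2n}}|\mathcal{M}_{\theta,2n}^{n-1,c}(P,\chi;\mb{t})^{\stb}_{\bullet}|\;\stackrel{\simeq}{\longrightarrow}\;\hocolim_{\mb{t}\in\mathcal{K}^{n-1}_{2n}}\mathcal{W}_{\theta,2n}^{n-1,c}(P;\mb{t})^{\stb}_{S}\;=\;\mathcal{W}_{\theta,2n}^{n-1,c}(P)^{\stb}_{S}.
\]
The right-hand side is the target of the augmentation we want. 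For the left-hand side I would then exchange the order of homotopy colimit and geometric realization: since $|\--|$ is itself a homotopy colimit over $\Delta^{\op}_{\inj}$ and hocolims commute with hocolims, there is a natural weak equivalence
\[
\hocolim_{\mb{t}}|\mathcal{M}_{\theta,2n}^{n-1,c}(P,\chi;\mb{t})^{\stb}_{\bullet}|\;\simeq\;\Bigl|\,\hocolim_{\mb{t}}\mathcal{M}_{\theta,2n}^{n-1,c}(P,\chi;\mb{t})^{\stb}_{\bullet}\,\Bigr|\;=\;|\mathcal{M}_{\theta,2n}^{n-1,c}(P,\chi)^{\stb}_{\bullet}|,
\]
the last equality being a matter of our notational convention.

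Concatenating these weak equivalences with the augmentation identifies the map in the statement with the weak equivalence obtained from Proposition \ref{proposition: stable augmentation map}, which completes the proof. There is no serious obstacle here; the only point to check carefully is that the homotopy colimit over $\mathcal{K}^{n-1}_{2n}$ and the geometric realization of the semi-simplicial direction commute up to natural weak equivalence, but this is a standard fact (Fubini for homotopy colimits) and is routinely applied in the arguments of \cite{MW 07} and \cite{P 17} that this paper builds upon.
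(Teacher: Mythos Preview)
Your proposal is correct and follows essentially the same approach as the paper: apply Proposition \ref{proposition: stable augmentation map} object-wise, use that homotopy colimits preserve level-wise weak equivalences, and then interchange the homotopy colimit over $\mathcal{K}^{n-1}_{2n}$ with the geometric realization. The paper's proof is terser but makes exactly these two moves.
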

\begin{proof}
By Proposition \ref{proposition: stable augmentation map} it follows that the map,
$$
\hocolim_{\mb{t} \in \mathcal{K}^{n-1}_{2n}}|\mathcal{M}_{\theta, 2n}^{n-1, c}(P, \chi; \mb{t})^{\stb}_{\bullet}| \longrightarrow \hocolim_{\mb{t} \in \mathcal{K}^{n-1}_{2n}}\mathcal{M}_{\theta, 2n}^{n-1, c}(P, \chi; \mb{t})^{\stb}_{-1}
$$
is a weak homotopy equivalence.
The result then follows by combining this with the homotopy equivalence,
$
\displaystyle{\hocolim_{\mb{t} \in \mathcal{K}^{n-1}_{2n}}}|\mathcal{M}_{\theta, 2n}^{n-1, c}(P, \chi; \mb{t})^{\stb}_{\bullet}| \; \simeq \; |\displaystyle{\hocolim_{\mb{t} \in \mathcal{K}^{n-1}_{2n}}}\mathcal{M}_{\theta, 2n}^{n-1, c}(P, \chi; \mb{t})^{\stb}_{\bullet}|.
$
\end{proof}

We now may use the above semi-simplicial resolution to prove Proposition \ref{theorem: stability for n-handles}.
The argument is essentially the same as the proof of \cite[Theorem 7.7]{P 17}.
We sketch the argument below. 
\begin{proof}[Proof of Theorem \ref{theorem: stability for n-handles}]
We will use the notation for surgeries and traces from \cite[Section 9.1]{P 17}.
For an object $P \in \Ob\Cob_{\theta, 2n}$, we let $\Surg_{n-1}(P)$ denote the set of $(n-1)$-surgeries in the object $P$. 
More precisely, an element $\sigma \in \Surg_{n-1}(P)$ is an embedding $\sigma: D^{n}\times D^{n} \longrightarrow \R^{\infty}$ with $\sigma^{-1}(P) = S^{n-1}\times D^{n}$, equipped with a $\theta$-structure $\hat{\ell}_{\sigma}$ on $D^{n}\times D^{n}$ such that, $\sigma^{*}\hat{\ell}_{P} = \hat{\ell}_{\sigma}|_{S^{n-1}\times D^{n}}$. 
Following \cite[Definition 9.1]{P 17} we may form the \textit{trace} of an element $\sigma \in \Surg_{n-1}(P)$, which is a morphism in $\Cob_{\theta, 2n}^{n-1}$,
$$
T(\sigma): P \rightsquigarrow P(\sigma),
$$
where $P(\sigma) \in \Ob\Cob_{\theta, 2n}^{n-1}$ is the result of the surgery on $\sigma$.
Given $\sigma \in \Surg_{n-1}(P)$ we denote by $\bar{\sigma} \in \Surg_{n-1}(P)$ the surgery \textit{dual} to $\sigma$. 
The underlying manifold of the trace $T(\bar{\sigma}): P(\sigma) \rightsquigarrow P$, is just the reflection of the trace $T(\sigma)$ along $\{1/2\}\times\R^{\infty} \subset [0, 1]\times\R^{\infty}$. 
We let 
$$\textstyle{\Surg^{\tr}_{n-1}}(P) \subset \Surg_{n-1}(P)$$ 
denote the subset consisting of those $\sigma$ for which the embedding $\sigma|_{S^{n-1}\times D^{n}}: S^{n-1}\times D^{n} \longrightarrow P$ factors through a disk in $P$. 
We refer to these surgeries in $\Surg^{\tr}_{n-1}(P)$ as \textit{trivial surgeries}.

Let $\sigma \in \Surg^{\tr}_{n-1}(P)$. 
It is easily checked that the composite cobordism, 
$$
T(\bar{\sigma})\circ T(\sigma): P \rightsquigarrow P,
$$
is isomorphic to the morphism $H_{n, n}(P): P \rightsquigarrow P$ from Construction \ref{Construction: stabilize by s-n times s-n}. 
It follows from Proposition \ref{proposition: stabiliization equivalence} that the induced map 
$$
\mb{S}_{T(\bar{\sigma})\circ T(\sigma)}: \mathcal{W}_{\theta, 2n}^{n-1, c}(P)_{S}^{\stb} \; \longrightarrow \; \mathcal{W}_{\theta, 2n}^{n-1, c}(P)_{S}^{\stb}, 
$$
is a weak homotopy equivalence whenever the surgery $\sigma$ is contained in the subset $\Surg^{\tr}_{n-1}(P)$.
We will need to consider the morphism $T(\bar{\sigma})\circ T(\sigma): P \rightsquigarrow P$ for an arbitrary surgery $\sigma \in \Surg_{n-1}(P)$, that isn't necessary trivial. 
In this case $T(\bar{\sigma})\circ T(\sigma)$ is not necessarily diffeomorphic to $H_{n,n}(P)$. 
Using Corollary \ref{corollary: stabilized augmented space}, we can prove the following lemma. 
\begin{lemma} \label{lemma: resolution of arbitrary surgery}
Let $\sigma \in \Surg_{n-1}(P)$. 
The induced map 
$$
\mb{S}_{T(\bar{\sigma})\circ T(\sigma)}: \mathcal{W}_{\theta, 2n}^{n-1, c}(P)_{S}^{\stb} \; \longrightarrow \; \mathcal{W}_{\theta, 2n}^{n-1, c}(P)_{S}^{\stb}
$$
is a weak homotopy equivalence.
\end{lemma}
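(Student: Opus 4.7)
The plan is to apply the augmented semi-simplicial resolution of Corollary \ref{corollary: stabilized augmented space}, which realizes $\mathcal{W}^{n-1, c}_{\theta, 2n}(P)^{\stb}_{S}$ as the geometric realization $|\mathcal{M}^{n-1, c}_{\theta, 2n}(P, \chi)^{\stb}_{\bullet}|$, and to prove the claim level-wise on this semi-simplicial space, reducing to the known trivial-surgery case.

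First, I would choose the auxiliary embedding $\chi: S^{n-1}\times(1, \infty)\times\R^{n-1} \hookrightarrow P$ so that its image is disjoint from the attaching region of $\sigma$. With this choice, concatenation with $T(\bar{\sigma})\circ T(\sigma)$ affects only the portion of $M$ away from the embedded handles $\phi_{i}$ parametrized by the semi-simplicial data, so the self-map $\mb{S}_{T(\bar{\sigma})\circ T(\sigma)}$ lifts to a semi-simplicial self-map of $\mathcal{M}^{n-1, c}_{\theta, 2n}(P, \chi)^{\stb}_{\bullet}$ covering the augmentation. By Corollary \ref{corollary: stabilized augmented space} and standard spectral-sequence arguments it then suffices to show the lifted map is a level-wise weak homotopy equivalence.

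Next, I would exploit the extra embedded $n$-handle carried by each $p$-simplex (any $p \geq 0$) to perform a parametric handle slide that converts $\sigma$ into a trivial surgery $\sigma' \in \Surg^{\tr}_{n-1}(P)$: on the level-$p$ piece, the attaching sphere of $\sigma$ can be isotoped across the belt sphere of $\phi_{0}$ to become an $(n-1)$-sphere bounding a disk in $P$. This family of isotopies, together with compatible isotopies of the associated framings and $\theta$-structures, induces a homotopy on level $p$ between $\mb{S}_{T(\bar{\sigma})\circ T(\sigma)}$ and $\mb{S}_{T(\bar{\sigma'})\circ T(\sigma')}$. For the trivial surgery $\sigma'$, the composite cobordism $T(\bar{\sigma'})\circ T(\sigma')$ is isomorphic to $H_{n, n}(P)$ as a morphism in $\Cob_{\theta, 2n}^{n-1}$, so Proposition \ref{proposition: stabiliization equivalence} together with the chain of equivalences in (\ref{equation: auxiliary homotopy equivalences}) shows that the induced self-map of $\mathcal{W}^{n-1, c}_{\theta, 2n}(P)^{\stb}_{S}$ is a weak homotopy equivalence. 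Passing to geometric realization and applying Corollary \ref{corollary: stabilized augmented space} on source and target completes the argument.

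The main obstacle is implementing the handle slide parametrically in the semi-simplicial family. Because the attaching sphere of $\sigma$ has dimension $n-1$ inside a $(2n-1)$-manifold $P$, a generic attempt at trivialization via isotopy will encounter finitely many transverse intersection points with the co-core sphere of $\phi_{0}$, and these must be eliminated by a Whitney-trick style argument. Here the stabilization by the cobordism $H_{n, n}(S)$ is essential: it provides an abundance of embedded $S^{n}\times S^{n}$-summands in $M$ that furnish the embedded Whitney disks required to cancel the intersections and to realize the isotopy continuously in the simplex data. Once this parametric slide is set up, the remainder of the proof reduces to verifying compatibility with face maps and the augmentation, which follows from the naturality of the construction in Definition \ref{defn: semi-simplicial space 1}.
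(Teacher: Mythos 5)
There is a genuine gap: your proposal omits the decisive condition that the paper imposes on the auxiliary embedding $\chi$, and without it your trivialization step fails. You only require $\chi$ to be disjoint from $\sigma(S^{n-1}\times D^{n})$. The paper's proof additionally requires that $\chi|_{S^{n-1}\times\{2\}\times\{0\}}$ be \emph{isotopic} in $P$ to the attaching sphere $\sigma|_{S^{n-1}\times\{0\}}$. This is what makes the argument work: after performing surgery along the reference handles $\chi(p)$, the surgery datum $\sigma$ viewed in $P(\chi(p))$ lands in $\Surg^{\tr}_{n-1}(P(\chi(p)))$, so $T(\bar{\sigma}_p)\circ T(\sigma_p)\cong H_{n,n}(P(\chi(p)))$ and Proposition \ref{proposition: stabiliization equivalence} applies directly. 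You instead propose to trivialize $\sigma$ by a parametric handle slide over $\phi_0$. But if $\sigma$'s attaching sphere is \emph{not} isotopic to $\chi$'s reference sphere, sliding over $\phi_0$ does not produce a nullisotopic sphere, so there is no trivial $\sigma'\in\Surg^{\tr}_{n-1}(P)$ at the end of the slide. The isotopy condition is not a cosmetic convenience but exactly the hypothesis that your argument is silently assuming.

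A secondary problem is the Whitney-trick passage. The attaching sphere of $\sigma$ is $(n-1)$-dimensional and sits in the $(2n-1)$-manifold $P$; two generic $(n-1)$-submanifolds of a $(2n-1)$-manifold are disjoint, and the cocore of an $n$-handle $\phi_0$ is an $n$-disk whose interior lies in $\Int(M)$, not in $P$. So the intersection-count scenario you describe, and the role you assign to the $S^n\times S^n$-summands as sources of Whitney disks, do not match the dimensions at hand. The paper does not need any Whitney-style cancellation in this lemma. (The stabilization $(\cdot)^{\stb}_S$ is used earlier, in Corollary \ref{corollary: stabilized augmented space} via the Galatius--Randal-Williams resolution argument, not in the handle-slide step.) Your structural idea of reducing to a level-wise statement on the semi-simplicial resolution and using Corollary \ref{corollary: stabilized augmented space} is the right skeleton, and you should also invoke the identification $\mathcal{H}(\chi)_p:\mathcal{W}^{n-1,c}_{\theta,2n}(P(\chi(p)))^{\stb}_S\xrightarrow{\simeq}\mathcal{M}^{n-1,c}_{\theta,2n}(P,\chi)^{\stb}_p$ to turn the level-wise map into a map of the form $\mb{S}_{T(\bar{\sigma}_p)\circ T(\sigma_p)}$; but the crucial missing ingredient is the isotopy alignment of $\chi$ with $\sigma$.
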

The proof of this lemma uses Corollary \ref{corollary: stabilized augmented space} and is proven similarly to \cite[Theorem 9.9]{P 17}. 
We give the proof below.
Assuming Lemma \ref{lemma: resolution of arbitrary surgery} for now let us complete the proof of Theorem \ref{theorem: stability for n-handles}. 
Using the weak homotopy equivalences from (\ref{equation: auxiliary homotopy equivalences}) it will suffice to show that 
$$
\mb{S}_{T(\sigma)}: \mathcal{W}_{\theta, 2n}^{n-1, c}(P)_{S}^{\stb} \; \longrightarrow \; \mathcal{W}_{\theta, 2n}^{n-1, c}(P(\sigma))_{S}^{\stb}
$$
is a weak homotopy equivalence for any $\sigma \in \Surg_{n-1}(P)$.
Let such a $\sigma$ be chosen. 
We may consider the composite, 
$$T(\bar{\bar{\sigma}})\circ T(\bar{\sigma})\circ T(\sigma): P \rightsquigarrow P(\sigma).$$
By Lemma \ref{lemma: resolution of arbitrary surgery} the map 
$$\mb{S}_{T(\bar{\sigma})\circ T(\sigma)} = \mb{S}_{T(\bar{\sigma})}\circ\mb{S}_{T(\sigma)}$$ 
is a weak homotopy equivalence and thus $\mb{S}_{T(\bar{\sigma})}$ induces a a surjection on homotopy groups. 
Similarly, $\mb{S}_{T(\bar{\bar{\sigma}})}\circ\mb{S}_{T(\bar{\sigma})}$ is a weak homotopy equivalence and thus $\mb{S}_{T(\bar{\sigma})}$ induces an injection on homotopy groups. 
Combining this with our first observation it follows that $\mb{S}_{T(\bar{\sigma})}$ is a weak homotopy equivalence. 
Since $\mb{S}_{T(\bar{\sigma})}\circ\mb{S}_{T(\sigma)}$ is a weak homotopy equivalence it follows that $\mb{S}_{T(\sigma)}$ is a weak homotopy equivalence as well. 
This concludes the proof of Theorem \ref{theorem: stability for n-handles}, assuming Lemma \ref{lemma: resolution of arbitrary surgery}. 
\end{proof}

We now prove Lemma \ref{lemma: resolution of arbitrary surgery}. 
The argument is essentially the same as the argument carried out on page 62 in \cite{P 17}.
We provide an outline of the proof. 
Let $\sigma \in \Surg_{n-1}(P)$.
We consider the augmented semi-simplicial space 
$\mathcal{M}_{\theta, 2n}^{n-1, c}(P, \chi)^{\stb}_{\bullet} \longrightarrow \mathcal{M}_{\theta, 2n}^{n-1, c}(P, \chi)^{\stb}_{-1}$
for a very particular choice of embedding $\chi$. 
We choose the embedding $\chi: S^{n-1}\times(1, \infty)\times\R^{n-1} \longrightarrow P$ as in Definition \ref{defn: semi-simplicial space 1}, so that it satisfies the two following further conditions:
\begin{enumerate} \itemsep.2cm
\item[(i)] $\chi(S^{n-1}\times(1, \infty)\times\R^{n-1})\cap \sigma(S^{n-1}\times D^{n}) = \emptyset$; 
\item[(ii)] the restricted embedding $\chi|_{S^{n-1}\times\{2\}\times\{0\}}: S^{n-1} \longrightarrow P$ is isotopic to the embedding $\sigma|_{S^{n-1}\times\{0\}}: S^{n-1} \longrightarrow P$.
\end{enumerate}
With $\chi$ chosen in this way, it follows that $\chi$ determines an embedding in the surgered manifold $P(\sigma)$ as well, and thus it makes sense to form the semi-simplicial space $\mathcal{M}_{\theta, 2n}^{n-1, c}(P(\sigma), \chi)^{\stb}_{\bullet}$.
Using \cite[Construction 10.1]{P 17}, concatenation with the trace $T(\sigma): P \rightsquigarrow P(\sigma)$ induces a semi-simplicial map, 
$$
\mb{S}_{T(\bar{\sigma})\circ T(\sigma), \bullet}: \mathcal{M}_{\theta, 2n}^{n-1, c}(P, \chi)^{\stb}_{\bullet} \longrightarrow \mathcal{M}_{\theta, 2n}^{n-1, c}(P(\sigma), \chi)^{\stb}_{\bullet},
$$
that covers the map from the statement of Lemma \ref{lemma: resolution of arbitrary surgery}. 

For each $p \in \Z_{\geq 0}$, we need to identify the space of $p$-simplices of $\mathcal{M}_{\theta, 2n}^{n-1, c}(P, \chi)^{\stb}_{\bullet}$ with something more familiar. 
We do this by implementing \cite[Construction 10.2]{P 17}.
Choose an embedding,
$\widetilde{\chi}: D^{n}\times(1, \infty)\times\R^{n-1} \longrightarrow \R^{\infty}$
that satisfies:
\begin{itemize} \itemsep.2cm
\item $\widetilde{\chi}^{-1}(P) = \partial D^{n}\times(1, \infty)\times\R^{n-1}$,
\item $\widetilde{\chi}|_{\partial D^{n}\times(1, \infty)\times\R^{n-1}} \; = \; \chi$.
\end{itemize}
For $i \in \Z_{\geq 0}$, we define an embedding, 
$ \chi_{i}: D^{n}\times D^{n} \longrightarrow \R^{\infty},$
by the formula,
$$
\chi_{i}(x, \; y) \; = \; \widetilde{\chi}(x, \; 3(i+1)e_{0} + y).
$$
For each $i$, we define a $\theta$-structure 
$\hat{L}_{i}(\chi)$ on $D^{n}\times D^{n}$ 
by setting, 
$\hat{L}_{i}(\chi) = \chi_{i}^{*}\hat{\ell}^{\std}_{3i+1}.$
Equipped with the $\theta$-structure $\hat{L}_{i}(\chi)$, $\chi_{i}$ can be considered an element of $\Surg_{n-1}(P)$. 
For $p \in \Z_{\geq 0}$, let us denote by $\chi(p)$ the truncated sequence of surgery data $(\chi_{0}, \chi_{1}, \dots, \chi_{p})$.
We let $\bar{\chi}(p)$ denote the sequence of surgery data $(\bar{\chi}_{0}, \bar{\chi}_{1}, \dots, \bar{\chi}_{p})$, where each $\bar{\chi}_{i}$ is the surgery in $P(\chi(p))$ dual to $\chi_{i}$.
Since the embeddings $\bar{\chi}_{i}$ are all disjoint, we may form their simultaneous trace,
$T(\bar{\chi}(p)): P(\chi(p)) \rightsquigarrow P.$
The trace $T(\bar{\chi}(p))$ comes equipped with embeddings  
$$
\psi_{i}: (D^{n}\times D^{n}, S^{n-1}\times D^{n}) \; \longrightarrow \; (T(\bar{\chi}(p)), \; P)
$$
for $i = 0, \dots, p$, where 
$\psi_{i}(D^{n}\times D^{n})\cap\psi_{j}(D^{n}\times D^{n}) = \emptyset$ for $i \neq j$.
These embeddings are the handles attached to $P$ by the surgeries $\chi_{i}$ (see \cite[Construction 9.1, Definition 9.1]{P 17}).
Using this, 
we may define a map
$$
\mathcal{H}(\chi)_{p}: \mathcal{W}^{n-1, c}_{\theta, 2n}(P(\chi(p)))_{S}^{\stb} \; \longrightarrow  \; \mathcal{M}^{n-1, c}_{\theta, 2n}(P)^{\stb}_{p}
$$
by sending 
$x \in \mathcal{W}^{n-1, c}_{\theta, 2n}(P(\chi(p)))_{S}^{\stb}$ 
to the $p$-simplex given by 
$$
\left(\mb{S}_{T(\bar{\chi}(p))}(x); \; (1, \psi_{0}, \hat{L}_{0}(\chi)),\; \dots, \; (3i+1, \psi_{i}, \hat{L}_{i}(\chi)), \; \dots, \; (3p+1, \psi_{p}, \hat{L}_{p}(\chi_{p}))\right),
$$
where each $\hat{L}_{i}(\chi)$ is considered to be the constant one-parameter family of $\theta$-structures as defined above.
By the same argument given in \cite[Lemma 10.6]{P 17} it follows that this map $\mathcal{H}(\chi)_{p}$ is a weak homotopy equivalence for all $p \in \Z_{\geq 0}$.
With the above constructions in place, we can now give the proof of Lemma \ref{lemma: resolution of arbitrary surgery}. 
\begin{proof}[Proof of Lemma \ref{lemma: resolution of arbitrary surgery}]
Let $\sigma \in \Surg_{n-1}(P)$ and $\chi$ be chosen as above so that conditions (i) and (ii) are satisfied.
By condition (i), it follows that for each $p \in \Z_{\geq 0}$, $\sigma(S^{n-1}\times D^{n})$ is contained in $P(\chi(p))$, and thus $\sigma$ determines surgery data in $P(\chi(p))$.
For each $p \in \Z_{\geq 0}$,
we denote this induced surgery data by $\sigma_{p} \in \Surg_{n-1}(P(\chi(p)))$.
By condition (ii) it follows that this induced surgery data is contained in the subset $\Surg^{\tr}_{n-1}(P(\chi(p)))$. 
It follows that the composite cobordism $T(\bar{\sigma}_{p})\circ T(\sigma_{p}): P(\chi(p)) \rightsquigarrow P(\chi(p))$ is diffeomorphic to the cobordism $H_{n, n}(P(\chi(p)))$, and thus the self-map it induces on $\mathcal{W}_{\theta, 2n}^{n-1, c}(P(\chi(p)))^{\stb}_{S}$ is a weak homotopy equivalence.
By the commutative diagram, 
$$
\xymatrix{
\mathcal{W}_{\theta, 2n}^{n-1, c}(P(\chi(p)))^{\stb}_{S} \ar[d]^{\simeq}_{\mathcal{H}(\chi)_{p}} \ar[rrr]^{\mb{S}_{T(\bar{\sigma}_{p})\circ T(\sigma_{p})}}_{\simeq} &&& \mathcal{W}_{\theta, 2n}^{n-1, c}(P(\chi(p)))^{\stb}_{S} \ar[d]^{\simeq}_{\mathcal{H}(\chi)_{p}} \\
\mathcal{M}_{\theta, 2n}^{n-1, c}(P, \chi)^{\stb}_{p} \ar[rrr]^{\mb{S}_{T(\bar{\sigma})\circ T(\sigma), p}} &&& \mathcal{M}_{\theta, 2n}^{n-1, c}(P, \chi)^{\stb}_{p} 
}
$$
it follows that the bottom-horizontal map is a weak homotopy equivalence for all $p$, and thus it follows that the semi-simplicial map $\mb{S}_{T(\bar{\sigma})\circ T(\sigma), \bullet}$ is a level-wise weak homotopy equivalence.
The lemma then follows from the commutative diagram,
$$
\xymatrix{
|\mathcal{M}_{\theta, 2n}^{n-1, c}(P, \chi)^{\stb}_{\bullet}| \ar[rr]^{\simeq} \ar[d]_{\simeq} && |\mathcal{M}_{\theta, 2n}^{n-1, c}(P(\sigma), \chi)^{\stb}_{\bullet}| \ar[d]_{\simeq} \\
\mathcal{W}_{\theta, 2n}^{n-1, c}(P)^{\stb}_{S} \ar[rr]^{\mb{S}_{T(\bar{\sigma})\circ T(\sigma)}} && \mathcal{W}_{\theta, 2n}^{n-1, c}(P(\sigma))^{\stb}_{S}
}
$$
together with Corollary \ref{corollary: stabilized augmented space} which implies that the vertical maps are weak homotopy equivalences.
This completes the proof of Lemma \ref{lemma: resolution of arbitrary surgery} and thus finishes the proof of Theorem \ref{theorem: stability for n-handles}. 
\end{proof}

\section{The Fibre Transport} \label{section: a fibre transport map}
We now show how to use the results of the previous section, namely Theorem \ref{theorem: boundary map is quasifibration d = 2n-1}, to prove part (b) of Theorem \ref{theorem: factorization of index difference} (which is the odd-dimensional case). 
\subsection{The fibre transport} \label{subsection: the fibre transport boundary sequence} 
Fix a positive integer $n$.
Let $d = 2n-1$ and $k = n-1$.
These are the dimensional conditions assumed in the statement of Theorem \ref{theorem: boundary map is quasifibration d = 2n-1}. 
Fix an element $P \in \mathcal{M}_{\theta, 2n-1}$. 
By Theorem \ref{theorem: boundary map is quasifibration d = 2n-1}, we have a homotopy fibre sequence, 
$$\xymatrix{
\mathcal{W}^{n-1}_{\theta, 2n}(P) \ar[r] & \mathcal{W}^{\partial, n-1}_{\theta, 2n} \ar[r] & \mathcal{W}^{n-1}_{\theta, 2n-1}.
}$$
Choosing a basepoint in the fibre $\mathcal{W}^{n-1}_{\theta, 2n}(P)$ yields a fibre-transport map, 
\begin{equation} \label{equation: fibre transfer boundary}
\iota_{2n}: \Omega\mathcal{W}^{n-1}_{\theta, 2n-1} \longrightarrow \mathcal{W}^{n-1}_{\theta, 2n}(P).
\end{equation}
Now, let us choose an element $M \in \mathcal{M}^{\partial}_{\theta, 2n} \subset \mathcal{W}^{\partial, n-1}_{\theta, 2n}$. 
We denote $P := \partial M$.
Consider the restriction map,
\begin{equation} \label{equation: boundary face restriction map}
r: \mathcal{R}^{+}(M) \; \longrightarrow \; \mathcal{R}^{+}(P), \quad
g \mapsto g|_{P}.
\end{equation}
By Proposition \ref{proposition: chernysh restriction theorem}, this map is a quasi-fibration. 
The fibre over $g_{P} \in \mathcal{R}^{+}(P)$ is given by the space of metrics $\mathcal{R}^{+}(M)_{g_{P}}$.
We obtain the fibre sequence 
$$
\xymatrix{
\mathcal{R}^{+}(M)_{g_{P}} \ar[r] & \mathcal{R}^{+}(M) \ar[r]^{r} & \mathcal{R}^{+}(P),
}
$$
and hence a fibre transport map 
\begin{equation} \label{equation: fibre transfer space of metrics}
T: \Omega\mathcal{R}^{+}(P) \longrightarrow \mathcal{R}^{+}(M)_{g_{P}}.
\end{equation}
The following two theorems are the main ingredients in the proof of Theorem \ref{theorem: factorization of index difference}, part (b). 
\begin{theorem} \label{theorem: commutativity of restrictions and transfer}
The following diagram is homotopy commutative
$$
\xymatrix{
\Omega^{2}\mathcal{W}^{n-1}_{\theta, 2n-1} \ar[d]^{\Omega j_{2n-1}} \ar[rr]^{\Omega\iota_{2n}} && \Omega\mathcal{W}^{n-1}_{\theta, 2n}(P) \ar[d]^{j_{2n}} \\
\Omega\mathcal{R}^{+}(P) \ar[rr]^{T} && \mathcal{R}^{+}(M)_{g_{P}},
}
$$
where the vertical maps are the ones constructed in Section \ref{subsection: the fibre transport}, using the fibre-transport. 
\end{theorem}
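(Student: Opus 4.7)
The plan is to construct an intermediary ``doubly decorated'' space which fits the horizontal fibre sequence of Theorem~\ref{theorem: boundary map is quasifibration d = 2n-1} together with the vertical fibre sequence of Theorem~\ref{theorem: fibresequence forget the metric} into a single bicartesian $3\times 3$ grid. The commutativity of the desired square will then follow formally from the compatibility of the iterated connecting maps in such a grid.

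For each $\mathbf{t} \in \widehat{\mathcal{K}}^{n-1}_{2n}$, I would first define $\mathcal{W}^{\psc, \partial, n-1}_{\theta, 2n}(\mathbf{t})$ to consist of tuples $\bigl((M,(V,\sigma), e), g\bigr)$ where $(M, (V,\sigma), e) \in \mathcal{W}^{\partial, n-1}_{\theta, 2n}(\mathbf{t})$ and $g$ is a psc metric on $M$, taking a product form near $\partial M$, that is standardized on the surgery disks prescribed by $e_0$ and $e_1$ in the sense of Definition~\ref{defn: metrics standard on multiple surgeries}. Exactly as in Definition~\ref{defn: functoriality for psc functor}, the parametrized Gromov--Lawson prescription makes $\mathbf{t} \mapsto \mathcal{W}^{\psc, \partial, n-1}_{\theta, 2n}(\mathbf{t})$ into a contravariant functor on $\widehat{\mathcal{K}}^{n-1}_{2n}$; let $\mathcal{W}^{\psc, \partial, n-1}_{\theta, 2n}$ denote its homotopy colimit. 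It carries two natural projections: a forgetful map $F^{\partial} : \mathcal{W}^{\psc, \partial, n-1}_{\theta, 2n} \to \mathcal{W}^{\partial, n-1}_{\theta, 2n}$, and a boundary-restriction map $b^{\psc} : \mathcal{W}^{\psc, \partial, n-1}_{\theta, 2n} \to \mathcal{W}^{\psc, n-1}_{\theta, 2n-1}(\emptyset, \emptyset)$ sending $((M, \ldots), g) \mapsto ((\partial M, \ldots), g|_{\partial M})$.

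The central technical step, and the main obstacle, is to prove that $F^{\partial}$ and $b^{\psc}$ are quasi-fibrations with the expected fibres: the fibre of $F^{\partial}$ over $M$ is $\mathcal{R}^{+}(M)$, and the fibre of $b^{\psc}$ over $(P, g_P)$ is $\mathcal{W}^{\psc, n-1}_{\theta, 2n}(P, g_P)$. Each of these is proven by transcribing the proof of Theorem~\ref{theorem: fibresequence forget the metric}, combining Proposition~\ref{proposition: homotopy colimit fibre sequence} with Theorems~\ref{theorem: walsh chernysh} and~\ref{theorem: perform surgery map} (and Addendum~\ref{Addendum: extension of Walsh}); the one nontrivial verification is that Walsh's parametrized surgery argument still produces a weak equivalence when metrics are constrained to be of product form near a fixed collared boundary, which holds because the Walsh--Chernysh construction is local and can therefore be carried out away from the collar. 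Combined with Chernysh's Proposition~\ref{proposition: chernysh restriction theorem}, the fibre sequences of Theorem~\ref{theorem: fibresequence forget the metric} (applied with $P=\emptyset$ and with $P = P$), and Theorem~\ref{theorem: boundary map is quasifibration d = 2n-1}, one obtains the diagram
\begin{equation*}
\xymatrix{
\mathcal{R}^{+}(M)_{g_{P}} \ar[r] \ar[d] & \mathcal{R}^{+}(M) \ar[r]^{r} \ar[d] & \mathcal{R}^{+}(P) \ar[d] \\
\mathcal{W}^{\psc, n-1}_{\theta, 2n}(P, g_{P}) \ar[r] \ar[d] & \mathcal{W}^{\psc, \partial, n-1}_{\theta, 2n} \ar[r]^{b^{\psc}} \ar[d]^{F^{\partial}} & \mathcal{W}^{\psc, n-1}_{\theta, 2n-1}(\emptyset, \emptyset) \ar[d] \\
\mathcal{W}^{n-1}_{\theta, 2n}(P) \ar[r] & \mathcal{W}^{\partial, n-1}_{\theta, 2n} \ar[r]_{b} & \mathcal{W}^{n-1}_{\theta, 2n-1},
}
\end{equation*}
in which every row and every column is a homotopy fibre sequence.

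Unwinding the definitions of the fibre transports from Section~\ref{subsection: the fibre transport} and equation~(\ref{equation: fibre transfer boundary}), the connecting maps of the top and bottom rows are precisely $T$ and $\iota_{2n}$, while those of the right and left columns are precisely $j_{2n-1}$ and $j_{2n}$. For any bicartesian $3\times 3$ grid of homotopy fibre sequences, the two iterated connecting maps from $\Omega^2$(bottom-right) to (top-left) obtained along the two composite routes agree up to homotopy (and up to a universal sign absorbed into the loop-space convention for fibre transport). Applied to our diagram, this yields the homotopy $j_{2n} \circ \Omega\iota_{2n} \simeq T \circ \Omega j_{2n-1}$, which is the content of the theorem.
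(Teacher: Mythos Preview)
Your proposal is correct and follows essentially the same route as the paper: build the intermediary space $\mathcal{W}^{\partial, \psc, n-1}_{\theta, 2n}$, assemble the $3\times 3$ grid of fibre sequences (the paper's diagram~(\ref{equation: three row commutative diagram})), and read off the desired square from the compatibility of iterated fibre transports. The one point you under-specify is the standardization of metrics near the boundary-touching embeddings $e_0$: this is not just a matter of Walsh--Chernysh being local away from the collar, but requires a specific ``double torpedo'' metric on the collared half-disk and an accompanying Gromov--Lawson surgery equivalence for manifolds with boundary, which the paper imports from Walsh~\cite{Wa 16} as its Theorem~\ref{theorem: gromov lawson on boundary}.
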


\begin{theorem} \label{theorem: compatibility of K-theory classes}
The following diagram is homotopy commutative,
$$\xymatrix{
\Omega^{2}\mathcal{W}^{n-1}_{\theta, 2n-1} \ar[dr]_{\Omega^{2}\bar{\mathcal{A}}_{2n-1}} \ar[rr]^{\Omega\iota_{2n}} && \Omega\mathcal{W}^{n-1}_{\theta, 2n}(P) \ar[dl]^{\Omega\bar{\mathcal{A}}_{2n}} \\
& \Omega^{\infty+2n+1}\KO. &
}$$
\end{theorem}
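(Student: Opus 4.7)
The plan is to upgrade the homotopy fibre sequence of Theorem~\ref{theorem: boundary map is quasifibration d = 2n-1} to a fibre sequence of infinite loop spaces underlying a cofibre sequence of Thom spectra, and to identify the fibre transport $\iota_{2n}$ with the infinite loop map induced by a connecting spectrum map
$$\mu:\Sigma^{-1}\mb{hW}^{n-1}_{\theta,2n}\longrightarrow\mb{hW}^{n-1}_{\theta,2n+1}.$$
Theorem~\ref{theorem: compatibility of K-theory classes} will then follow from the spectrum-level $\KO$-compatibility $\hat{\lambda}_{-(2n+1)}\circ\mu\simeq\Sigma^{-1}\hat{\lambda}_{-2n}$, which is a direct analogue of Proposition~\ref{proposition: commutativity of k-theory orientations}. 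To set up the spectrum-level picture, I would combine Theorems~\ref{theorem: homotopy type of space of long manifolds} and~\ref{theorem: equivalence with long manifolds with boundary} with an analogue of Proposition~\ref{proposition: boundary map long manifold fibre sequence} for the Morse-jet Thom spectra $\mb{hW}^{n-1}_{\theta,\ast}$, obtained by running the scanning and Pontryagin--Thom arguments of Madsen--Weiss~\cite{MW 07} and Genauer~\cite{G 12} with Morse-jet data incorporated as in Section~\ref{subsection: thom spectra} and \cite{P 17}. Under these identifications the outer terms of the geometric fibre sequence become $\Omega^{\infty-1}\mb{hW}^{n-1}_{\theta,2n+1}$ and $\Omega^{\infty-1}\mb{hW}^{n-1}_{\theta,2n}$, and $\iota_{2n}$ is the infinite loop map of $\mu$.

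Granted this spectrum-level picture, the compatibility at the level of Thom classes becomes a direct computation. The map $\mu$ admits an explicit Grassmannian model
$$G^{\mf,n-1}_{\theta,2n}(\R^{2n+N})\longrightarrow G^{\mf,n-1}_{\theta,2n+1}(\R\times\R^{2n+N}),\qquad(V,l,\sigma)\longmapsto(\R\oplus V,\;\pi_1+l,\;\sigma\oplus 0),$$
where $\pi_1$ is projection onto the new coordinate, and this is covered by a canonical bundle isomorphism identifying $U^{n-1,\perp}_{2n,N}$ with the pullback of $U^{n-1,\perp}_{2n+1,N}$. Since the classes $\hat{\lambda}_{-d}$ are assembled from the $\KO$-Thom classes of the spin bundle $U^{n-1,\perp}_{d,N}$ (the spin structure exists by $2$-connectedness of $B$), and these spin structures agree under the bundle isomorphism above, the required identity $\hat{\lambda}_{-(2n+1)}\circ\mu\simeq\Sigma^{-1}\hat{\lambda}_{-2n}$ drops out. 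Applying $\Omega^{\infty+1}$ and unwinding the identifications of the previous paragraph translates this into precisely the commutative triangle of the theorem.

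The main obstacle is the structural input used in the first paragraph, namely the construction of the spectrum-level cofibre sequence refining Theorem~\ref{theorem: boundary map is quasifibration d = 2n-1}. This requires extending the scanning/Pontryagin--Thom machinery of \cite{MW 07}, \cite{G 12}, and \cite{P 17} from the closed case to the relative setting of manifolds with free boundary, producing the Genauer-type fibre sequence at the Thom-spectrum level for $\mb{hW}^{n-1}_{\theta,\ast}$. Once this ingredient is in hand, the identification of $\mu$ and the verification of the $\KO$-compatibility on Thom classes proceed by the same formal pattern already established in Section~\ref{subsection: ko orientation}.
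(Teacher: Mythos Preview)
Your approach is different from the paper's, and while plausible in outline, it takes on a harder obstacle than necessary.

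The paper does \emph{not} build a cofibre sequence of $\mb{hW}$-spectra or identify $\iota_{2n}$ as an infinite loop map. Instead it exploits the fact that the $\KO$-orientation $\hat{\lambda}_{-d}$ on $\mb{hW}^{k}_{\theta,d}$ factors through the forgetful map to $\MT\theta_d$ (this is the content of Proposition~\ref{proposition: commutativity of k-theory orientations}). Concretely, the paper uses the zig-zag of Theorem~\ref{theorem: equivalence with long manifolds with boundary} together with the inclusions $\mathcal{D}^{\mf,k}\hookrightarrow\mathcal{D}$ (which are \emph{not} weak equivalences) to produce a map of fibre sequences from the $\mathcal{W}$-sequence of Theorem~\ref{theorem: boundary map is quasifibration d = 2n-1} into the Genauer fibre sequence on the $\mathcal{D}$-spaces. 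Passing to fibre transports gives a commutative square relating $\iota_{2n}$ to the map $\Omega\mathcal{D}_{\theta,2n}\to\mathcal{D}_{\theta,2n+1}(P)$, which under Proposition~\ref{proposition: boundary map long manifold fibre sequence} becomes the already-understood map $\iota^{0}_{2n}:\Omega^{\infty}\MT\theta_{2n}\to\Omega^{\infty-1}\MT\theta_{2n+1}$. The equation $\mathcal{A}^{-1}_{2n+1}\circ\iota^{0}_{2n}=\mathcal{A}^{0}_{2n}$ from (\ref{equation: compatibility of k-theory orientations}) then pulls back along these horizontal maps to the desired triangle.

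Your route would also work \emph{if} the Genauer-type cofibre sequence for $\mb{hW}^{n-1}_{\theta,\ast}$ is available, together with the identification of its connecting map with your explicit $\mu$. But note what this entails: you would need to identify the homotopy type of the middle space $\mathcal{W}^{\partial,n-1}_{\theta,2n}\simeq\mathcal{D}^{\partial,\mf,n-1}_{\theta,2n+1}$ as an infinite loop space of a Thom spectrum and show that the boundary map $b$ is an infinite loop map compatible with scanning. None of this is in the paper or in \cite{P 17}, and indeed if it were available directly from scanning, the intricate geometric proof of Theorem~\ref{theorem: boundary map is quasifibration d = 2n-1} (via semi-simplicial resolutions and Theorem~\ref{theorem: stability for n-handles}) would be redundant. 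So your ``main obstacle'' is genuinely substantial, whereas the paper's insight is that this obstacle can be bypassed entirely: since the $\KO$-classes in question already factor through $\MT\theta$, one only needs compatibility at the $\MT\theta$-level, where the Genauer sequence is classical.
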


\subsection{Proof of Theorem \ref{theorem: factorization of index difference}, part (b)} \label{subsection: main theorem part b}
We now show how to prove Theorem \ref{theorem: factorization of index difference}, part (b), assuming that Theorems \ref{theorem: commutativity of restrictions and transfer} and \ref{theorem: compatibility of K-theory classes} have been established.
Let $(P, g_{P}) \in \mathcal{M}^{\psc}_{\theta, 2n-1}$ and $M \in \mathcal{M}_{\theta, 2n}(P)$ be chosen as above. 
Consider the fibre transport map 
$T: \Omega\mathcal{R}^{+}(P) \longrightarrow \mathcal{R}^{+}(M)_{g_{P}} $
from (\ref{equation: fibre transfer space of metrics}). 
We will need to use the lemma below which follows from \cite[Theorem 3.6.1]{BERW 16}.
\begin{lemma} \label{lemma: commutativity fibre transport index difference}
The following diagram is homotopy commutative,
$$
\xymatrix{
\Omega\mathcal{R}^{+}(P) \ar[dr]_{-\Omega\inddiff} \ar[rr]^{T} && \mathcal{R}^{+}(M)_{g_{P}} \ar[dl]^{\inddiff} \\
& \Omega^{\infty + 2n + 1}\KO. & 
}
$$
\end{lemma}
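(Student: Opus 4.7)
The plan is to deduce this lemma directly from \cite[Theorem 3.6.1]{BERW 16}, which is the general analytic statement we need, and to check that the hypotheses of that theorem are satisfied in our setup. First I would set up the translation of notation: our $M$ (a compact $2n$-dimensional $\theta$-manifold with boundary) plays the role of BERW's cobordism, while our $P = \partial M$ plays the role of their boundary manifold. The restriction map (\ref{equation: boundary face restriction map}) is the Chernysh restriction map, which by Proposition \ref{proposition: chernysh restriction theorem} is a quasi-fibration with fibre $\mathcal{R}^{+}(M)_{g_{P}}$ over $g_{P}$, so the fibre transport $T: \Omega\mathcal{R}^{+}(P) \longrightarrow \mathcal{R}^{+}(M)_{g_{P}}$ from (\ref{equation: fibre transfer space of metrics}) is well-defined and is the transport map considered by BERW.

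Once this correspondence is in place, \cite[Theorem 3.6.1]{BERW 16} asserts exactly the homotopy commutativity of the triangle formed by $T$, the index difference on the fibre $\mathcal{R}^{+}(M)_{g_{P}}$, and the loop of the index difference on the base $\mathcal{R}^{+}(P)$, up to a sign. The sign originates in BERW's proof from the way the transgression in relative $\KO$-theory trades a loop parameter on the base for a direction that reverses the orientation used in the definition of the index difference, which after passage to infinite loop spaces manifests as multiplication by $-1$. This is precisely the sign phenomenon already recorded in diagram (\ref{equation: commutativity of index difference}) of the introduction, which is the same statement rewritten with $(M,W)$ in place of $(P,M)$.

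The only residual verification is that the collar conventions used in Definition \ref{defn: space of metrics of positive scalar curvature}, the parametrization of the fibre transport from (\ref{equation: fibre transfer space of metrics}), and the definition of $\inddiff$ adopted here (following \cite{E 16}) all match the ones used in \cite{BERW 16}. I expect this to be a routine normalization check rather than the source of any real difficulty, because \cite{E 16} already sets up $\inddiff$ in exactly the form needed by BERW. The main conceptual obstacle---namely the analytic comparison between the Dirac-operator index differences on a manifold with boundary and on its boundary---lies entirely inside \cite[Section 3]{BERW 16} and \cite{E 16}, and nothing beyond those inputs is required for the present lemma.
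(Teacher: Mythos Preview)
Your proposal is correct and matches the paper's approach exactly: the paper does not give an independent proof of this lemma but simply records that it follows from \cite[Theorem 3.6.1]{BERW 16}, and your write-up is just an elaboration of that citation with the routine notational and convention checks spelled out.
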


\begin{proof}[Proof of Theorem \ref{theorem: factorization of index difference}, part (b), assuming Theorems \ref{theorem: commutativity of restrictions and transfer} and \ref{theorem: compatibility of K-theory classes}]
Consider the diagram 
$$
\xymatrix{
\Omega^{2}\mathcal{W}^{n-1}_{\theta, 2n-1} \ar[d]^{\Omega j_{2n-1}} \ar[rr]^{\Omega\iota_{2n}} && \Omega\mathcal{W}^{n-1}_{\theta, 2n}(P) \ar[d]^{j_{2n}} \\
\Omega\mathcal{R}^{+}(P) \ar[dr]_{-\Omega\inddiff} \ar[rr]^{T} && \mathcal{R}^{+}(M)_{g_{P}} \ar[dl]^{\inddiff} \\
& \Omega^{\infty + 2n + 1}\KO. &
}
$$
By Theorem \ref{theorem: commutativity of restrictions and transfer} the upper square is homotopy commutative, and by Lemma \ref{lemma: commutativity fibre transport index difference} the bottom triangle is homotopy commutative; by combining these two results the whole diagram is commutative. 
By Theorem \ref{theorem: factorization of index difference}, part (a), the composite $\inddiff\circ j_{2n}$ agrees with the map $\Omega\bar{\mathcal{A}}_{2n}$.
By Theorem \ref{theorem: compatibility of K-theory classes}, we have, 
$$\Omega\bar{\mathcal{A}}_{2n}\circ\Omega\iota_{2n} = \Omega(\bar{\mathcal{A}}_{2n}\circ\iota_{2n}) \; = \;  \Omega^{2}\bar{\mathcal{A}}_{2n-1},$$
and thus,
$$
\inddiff\circ j_{2n}\circ\Omega\iota_{2n} \; = \; \Omega^{2}\bar{\mathcal{A}}_{2n-1}.
$$
By commutativity of the above diagram it follows that, 
$$
-\Omega\inddiff\circ\Omega j_{2n-1} \; = \; \Omega^{2}\bar{\mathcal{A}}_{2n-1}.
$$
This concludes the proof of Theorem \ref{theorem: factorization of index difference}, part (b).
\end{proof}

\subsection{Proof of Theorem \ref{theorem: compatibility of K-theory classes}}  \label{subsection: proof of compatibility of KO orientations}
We now give the proof of Theorem \ref{theorem: compatibility of K-theory classes}.
Let $P \in \mathcal{M}_{\theta, d-1}$ and $k \in \Z_{\geq -1}$.
Consider the commutative diagram,
\begin{equation} \label{equation: big commutative diagram}
\xymatrix{
\mathcal{W}^{k}_{\theta, d}(P) \ar[d] & \mathcal{L}^{k}_{\theta, d}(P) \ar[l]_{\simeq} \ar[r]^{\simeq \ \ \ \ } & \mathcal{D}^{\mf, k}_{\theta, d+1}(P) \ar[d] \ar@{^{(}->}[r] & \mathcal{D}_{\theta, d+1}(P)  \ar[d]   \\
\mathcal{W}^{\partial, k}_{\theta, d} \ar[d] & \mathcal{L}^{\partial, k}_{\theta, d} \ar[l]_{\simeq} \ar[r]^{\simeq} & \mathcal{D}^{\partial, \mf, k}_{\theta, d+1} \ar[d] \ar@{^{(}->}[r] & \mathcal{D}^{\partial}_{\theta, d+1}  \ar[d] \\
\mathcal{W}^{k}_{\theta, d-1} & \mathcal{L}^{k}_{\theta, d-1} \ar[l]_{\simeq} \ar[r]^{\simeq} & \mathcal{D}^{\mf, k}_{\theta, d} \ar@{^{(}->}[r] & \mathcal{D}_{\theta, d}.
}
\end{equation}
Commutativity of the bottom-left square follows from Theorem \ref{theorem: equivalence with long manifolds with boundary}. 
Commutativity of the upper-left square follows from the construction carried out in Appendix \ref{section: long manifolds to W partial}, which is where we prove that the first two middle-horizontal arrows in the above diagram are weak homotopy equivalences. 
The right-most horizontal arrows are the inclusions, which in general are not weak homotopy equivalences. 
The right-most column is a homotopy fibre sequence for all integers $d$ and manifolds $P$, this follows from the results of \cite{G 12}.
By Theorem \ref{theorem: boundary map is quasifibration d = 2n-1}, the left-most column is a homotopy fibre sequence in the case that $d = 2n$ and $k = n-1$.
Specializing to this particular case ($d = 2n$ and $k = n-1$) the fibre-transports for the column fibre-sequences yield the commutative square,
\begin{equation} \label{equation: commutative square of fibre transports}
\xymatrix{
\Omega\mathcal{W}^{k}_{\theta, d-1} \ar[d] \ar[r] & \Omega\mathcal{D}_{\theta, d} \ar[d] \\
\mathcal{W}^{k}_{\theta, d}(P) \ar[r] & \mathcal{D}_{\theta, d+1}(P).
}
\end{equation}
Composing this commutative square with the following commutative square (which arrises as a result of Proposition \ref{proposition: boundary map long manifold fibre sequence}), 
$$
\xymatrix{
\Omega\mathcal{D}_{\theta, d} \ar[d] \ar[r]^{\simeq} & \Omega^{\infty}\MT\theta_{d} \ar[d]^{\iota_{d}} \\
\mathcal{D}_{\theta, d+1}(P) \ar[r]^{\simeq} & \Omega^{\infty-1}\MT\theta_{d+1},
}
$$
yields the commutative square,
\begin{equation} \label{equation: target commutative diagram}
\xymatrix{
\Omega\mathcal{W}^{k}_{\theta, d-1} \ar[d] \ar[r] & \Omega^{\infty}\MT\theta_{d} \ar[d]^{\iota_{d}} \\
\mathcal{W}^{k}_{\theta, d}(P) \ar[r] & \Omega^{\infty-1}\MT\theta_{d+1}.
}
\end{equation}
Using this commutative square, the proof of Theorem \ref{theorem: compatibility of K-theory classes} then follows from the equation, 
$$\Omega\mathcal{A}_{2n-1} = \iota_{2n}^{*}\mathcal{A}_{2n},$$ 
together with the fact that $\Omega\bar{\mathcal{A}}_{2n-1}$ and $\bar{\mathcal{A}}_{2n}$ are both defined as the pullbacks of $\Omega\mathcal{A}_{2n-1}$ and $\mathcal{A}_{2n}$ via the horizontal maps of (\ref{equation: target commutative diagram}) respectively. 
This concludes the proof of Theorem \ref{theorem: compatibility of K-theory classes}. 
 
\section{Positive scalar curvature metrics on manifolds with boundary} \label{subsection: positive scalar curvature metrics on manifolds with boundary}
In this section we carry out a construction similar to what was done in Section \ref{section: Spaces of Manifold Equipped with PSC Metrics and Surgery Data}, except now we use the space $\mathcal{W}^{\partial}_{\theta, d}$.
This is the main ingredient that will be used in our proof of Theorem \ref{theorem: commutativity of restrictions and transfer}. 
We give the proof of this theorem in Subsection \ref{subsection: proof of boundary map commutativity}.
\subsection{The parametrized Gromov-Lawson construction on manifolds with boundary} \label{subsection: gromov lawson on mfds with bd}
We will need to review some constructions from \cite{Wa 16}.
\begin{Construction}[Metrics on the collared half-disk] \label{Construction: metrics on the collared half-disk}
We will need to recall a special $\psc$ metric constructed in \cite[Section 5.1]{Wa 16}, used when preforming surgery on manifolds with boundary.
We need to work with a collared version of the half-disk,
$$
D^{d}_{+} \; = \; \{(x_{1}, \dots, x_{d}) \in \R^{d} \; | \; x_{1} \geq 0, \; \; x_{1}^{2} + \cdots + x_{d}^{2} \leq 1 \; \}.
$$
Fix a smooth function $\rho: [0, 1] \longrightarrow [0, 1]$ with $\rho(t) = 0$ for $t < 1/8$ and $\rho(t) = t$ for $t > 1/4$.
Consider the $d$-dimensional collared half-disk,
$$
D^{d}_{+, c} \; = \; \{ (x_{1}, \dots, x_{d}) \in \R^{d} \; | \; x_{1} \geq 0, \; \; \rho(x_{1})^{2} + x_{2}^{2} + \cdots + x_{d}^{2} \leq 1 \; \}.
$$
This is of course diffeomorphic to the manifold with corners $D^{c}_{+}(V^{+})$ defined in Construction \ref{Construction: half disk with collars}. 
The boundary decomposes as, 
$$\partial D^{d}_{+, c} = \partial_{0}D^{d}_{+, c}\cup\partial_{1}D^{d}_{+, c},$$ 
and we have,
$$
\partial D^{d}_{+, c}\cap([0, 1/8)\times\R^{d-1}) \; = \; \partial_{0}D^{d}_{+, c}\times[0, 1/8).
$$
We let $\hat{g}^{d}_{\torp}$ denote the $\psc$-metric on $D^{d}_{+, c}$ defined in \cite[Section 5.1]{Wa 16}. 
Identifying $\partial_{0}D^{d}_{+, c}$ and $\partial_{1}D^{d}_{+, c}$ with the $(d-1)$-dimensional disk $D^{d-1}$, 
this metric $\hat{g}^{d}_{\torp}$ has the following properties:
\begin{enumerate} \itemsep.2cm
\item[(i)] $\hat{g}^{d}_{\torp}|_{\partial_{0}D^{d}_{+, c}\times[0, 1/8)} \; = \; g^{d-1}_{\torp} + dt^{2}$;
\item[(ii)] $\hat{g}^{d}_{\torp}|_{\partial_{1}D^{d}_{+, c}} \; = \; g^{d-1}_{\torp}$.
\end{enumerate}
In the above conditions, $g^{d-1}_{\torp}$ is the \textit{torpedo metric} on $D^{d-1}$ used in Section \ref{defn: preliminary definitions psc} (see \cite{Wa 13} for the definition).
\end{Construction}

We now use the metric from the above construction to define a space of $\psc$-metrics on a manifold with boundary that are standard near a chosen embedding of $S^{k-1}\times D^{d-k+1}_{+, c}$.
\begin{defn} \label{defn: metrics compatible with half disk}
Let $W$ be a compact manifold of dimension $d$. 
Fix an embedding 
$$\phi: S^{k-1}\times D^{d-k+1}_{+, c} \longrightarrow W$$ 
that satisfies, 
$$\phi^{-1}(\partial W) = S^{k-1}\times\partial_{0}D^{d-k+1}_{+, c}.$$
The subspace $\mathcal{R}^{+}(W; \phi) \subset \mathcal{R}^{+}(W)$ consists of those $\psc$ metrics $g$ that satisfy, 
$$\phi^{*}g \; = \; g_{S^{k-1}} + \hat{g}^{d-k+1}_{\tor},$$ 
where $\hat{g}^{d-k+1}_{\tor}$ is the metric from Construction \ref{Construction: metrics on the collared half-disk}. 
\end{defn}
Let $\phi: S^{k-1}\times D^{d-k+1}_{+, c} \longrightarrow W$ be an embedding with $\phi^{-1}(\partial W) = S^{k-1}\times\partial_{0}D^{d-k+1}_{+, c}$ as in the above definition.
Consider the surgered manifold, 
$$
\widetilde{W} \; = \; W\setminus\phi(S^{k-1}\times D^{d-k+1}_{+, c})\bigcup D^{k}\times\partial_{1}D^{d-k+1}_{+, c}.
$$
There is a map 
\begin{equation} \label{equation: gromov lawson surgery map}
\mb{S}_{\phi}: \mathcal{R}^{+}(W; \phi) \longrightarrow \mathcal{R}^{+}(\widetilde{W})
\end{equation}
defined by sending a metric $g \in \mathcal{R}^{+}(W; \phi)$ to the metric on $\widetilde{W}$ that is equal to $g$ on the complement $W\setminus\phi(S^{k-1}\times D^{d-k+1}_{+, c})$, and equal to $g_{\tor}^{k} + g_{S^{d-k}}|_{\partial_{1}D^{d-k+1}_{+, c}}$ on $D^{k}\times \partial_{1}D^{d-k+1}_{+, c}$, where $g_{S^{d-k}}$ is the standard round metric on $S^{d-k}$.
The theorem stated below can be viewed as a version of Theorem \ref{theorem: walsh chernysh} for manifolds with boundary. 
\begin{theorem} \label{theorem: gromov lawson on boundary}
Suppose that $k, d-k \geq 3$.
Let $\phi: S^{k-1}\times D^{d-k+1}_{+, c} \longrightarrow W$ be an embedding as above ($\dim(W) = d$).
Then the map $\mb{S}_{\phi}: \mathcal{R}^{+}(W; \phi) \longrightarrow \mathcal{R}^{+}(\widetilde{W})$ from (\ref{equation: gromov lawson surgery map}) is a weak homotopy equivalence.  
\end{theorem}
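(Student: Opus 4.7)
The plan is to adapt the proof of Theorem \ref{theorem: perform surgery map} to the boundary setting, using Walsh's extension of the parametrized Gromov--Lawson construction to manifolds with corners from \cite{Wa 16} in place of Theorem \ref{theorem: walsh chernysh}. Let $\bar\phi: D^{k}\times\partial_{1}D^{d-k+1}_{+, c} \hookrightarrow \widetilde{W}$ denote the dual half-handle, equipped with the collar structure inherited from $\hat{g}^{d-k+1}_{\tor}$ in Construction \ref{Construction: metrics on the collared half-disk}, and let $\mathcal{R}^{+}(\widetilde{W}; \bar\phi) \subset \mathcal{R}^{+}(\widetilde{W})$ be the subspace of $\psc$ metrics that agree with $g^{k}_{\tor} + g_{S^{d-k}}|_{\partial_{1}D^{d-k+1}_{+, c}}$ on the image of $\bar\phi$. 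I would factor $\mb{S}_{\phi}$ as
$$
\xymatrix{
\mathcal{R}^{+}(W; \phi) \ar[r]^{\bar{\mb{S}}_{\phi}} & \mathcal{R}^{+}(\widetilde{W}; \bar\phi) \ar@{^{(}->}[r] & \mathcal{R}^{+}(\widetilde{W}),
}
$$
and prove each arrow is a weak homotopy equivalence separately.

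The first arrow, $\bar{\mb{S}}_{\phi}$, should be a homeomorphism. A metric in either space is completely determined by its restriction to the common complement $W\setminus \phi(S^{k-1}\times D^{d-k+1}_{+, c}) = \widetilde{W}\setminus\bar\phi(D^{k}\times\partial_{1}D^{d-k+1}_{+, c})$, together with the prescribed standard form on the standard piece. The collar compatibility built into $\hat{g}^{d-k+1}_{\tor}$ (conditions (i) and (ii) in Construction \ref{Construction: metrics on the collared half-disk}) guarantees that the resulting glued metrics are smooth and $\psc$ in both directions, yielding mutually inverse continuous maps. This step is essentially the half-disk analogue of the identification used in the proof of Theorem \ref{theorem: perform surgery map}, and the only added work is checking that the corner structure is handled correctly.

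The main obstacle is the second arrow, the inclusion $\mathcal{R}^{+}(\widetilde{W}; \bar\phi) \hookrightarrow \mathcal{R}^{+}(\widetilde{W})$, which requires the boundary version of Theorem \ref{theorem: walsh chernysh}. To prove this, I would invoke the parametrized Gromov--Lawson surgery construction from \cite[Section 5]{Wa 16}, which produces a continuous deformation through $\psc$ metrics, supported in an arbitrarily small tubular neighborhood of an embedded (half-)handle, from any $\psc$ metric to one that is standard on that neighborhood. The codimension hypothesis $k \geq 3$ ensures that the core of the dual handle has high enough codimension for the Gromov--Lawson bending argument to work in $\widetilde{W}$, while $d-k \geq 3$ supplies the analogous control on the boundary face of the handle. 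Since Walsh's construction is local and parametric, the deformation extends over families, giving a retraction of $\mathcal{R}^{+}(\widetilde{W})$ onto $\mathcal{R}^{+}(\widetilde{W}; \bar\phi)$ up to homotopy; combined with Addendum \ref{Addendum: extension of Walsh} (whose argument transfers verbatim to the boundary setting), this yields the weak homotopy equivalence.

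Composing the two steps gives that $\mb{S}_{\phi}$ is a weak homotopy equivalence. The bulk of the work is really in citing \cite{Wa 16} correctly and verifying that the collar conventions in Construction \ref{Construction: metrics on the collared half-disk} match those needed for Walsh's deformation; no new global argument is required.
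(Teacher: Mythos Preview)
Your overall strategy matches the paper's exactly: the same factorization
\[
\mathcal{R}^{+}(W; \phi) \xrightarrow{\bar{\mb{S}}_{\phi}} \mathcal{R}^{+}(\widetilde{W}; \bar\phi) \hookrightarrow \mathcal{R}^{+}(\widetilde{W}),
\]
the same proof that the first map is a homeomorphism (by writing down the explicit inverse), and the same identification of the second map as the one needing the boundary analogue of the Walsh--Chernysh theorem.

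Where you and the paper diverge is in the justification of the second arrow. You argue it directly, citing Walsh's local parametrized deformation from \cite[Section 5]{Wa 16} as a single black box. The paper instead runs a fibre-sequence argument: it uses that the restriction maps $\mathcal{R}^{+}(\widetilde{W}; \bar\phi) \to \mathcal{R}^{+}(\widetilde{M}; \bar\phi_{1})$ and $\mathcal{R}^{+}(\widetilde{W}) \to \mathcal{R}^{+}(\widetilde{M})$ are quasi-fibrations (by \cite{Ch 06} and \cite[Lemma 8.2.2]{Wa 16}), identifies the base map as a weak equivalence by the ordinary closed-manifold Walsh--Chernysh theorem (Theorem \ref{theorem: walsh chernysh}), identifies the fibre map as a weak equivalence by \cite[Section 7.6]{Wa 16} (the fixed-boundary-metric case), and concludes by the five-lemma. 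The advantage of the paper's route is that it reduces to two results that are explicitly stated and proven in \cite{Wa 16}, rather than asserting that Walsh's local construction handles the free-boundary case in one stroke; your direct approach is morally correct but leaves implicit the fact that the deformation must move the boundary metric as well, which is exactly what the fibre-sequence decomposition makes transparent.
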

Theorem \ref{theorem: gromov lawson on boundary} is proven by assembling several results and techniques developed by Walsh in \cite{Wa 16}.
The proof requires a few steps. 
Let $W$ and $\phi: S^{k-1}\times D^{d-k+1}_{+, c} \longrightarrow W$ be as in the statement of Theorem \ref{theorem: gromov lawson on boundary}.
We have,
$$
\widetilde{W} \; = \; W\setminus\phi(S^{k-1}\times D^{d-k+1}_{+, c})\bigcup D^{k}\times\partial_{1}D^{d-k+1}_{+, c},
$$
and we let
$$
\bar{\phi}: D^{k}\times\partial_{1}D^{d-k+1}_{+, c} \longrightarrow \widetilde{W}
$$
be the embedding given by the inclusion of the right-hand factor in the above union.
The map $\mb{S}_{\phi}: \mathcal{R}^{+}(W; \phi) \longrightarrow \mathcal{R}^{+}(\widetilde{W})$ has its image in the space $\mathcal{R}^{+}(\widetilde{W}; \bar{\phi})$, and therefore it factors as a composite,
\begin{equation} \label{equation: factorization of S(phi)}
\xymatrix{
\mathcal{R}^{+}(W; \phi) \ar[r] & \mathcal{R}^{+}(\widetilde{W}; \bar{\phi}) \ar@{^{(}->}[r] & \mathcal{R}^{+}(\widetilde{W}).
}
\end{equation}
To prove Theorem \ref{theorem: gromov lawson on boundary} it will suffice to show that each of these maps is a weak homotopy equivalence. 
The first map is easy.
\begin{proposition} \label{proposition: the first map}
The map $\mathcal{R}^{+}(W; \phi) \longrightarrow \mathcal{R}^{+}(\widetilde{W}; \bar{\phi})$ from (\ref{equation: factorization of S(phi)}) is a homeomorphism.
\end{proposition}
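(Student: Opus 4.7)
The plan is to exhibit an explicit inverse built by reversing the surgery. The key observation is that both $\mathcal{R}^{+}(W;\phi)$ and $\mathcal{R}^{+}(\widetilde{W};\bar{\phi})$ are canonically parametrized by the same space of $\psc$ metrics on the common complement
\[
C \; := \; W \setminus \phi(\Int(S^{k-1}\times D^{d-k+1}_{+,c})) \; = \; \widetilde W \setminus \bar{\phi}(\Int(D^{k}\times \partial_{1}D^{d-k+1}_{+,c})),
\]
subject to the appropriate standard-form condition along the common interface $\phi(S^{k-1}\times \partial_{1}D^{d-k+1}_{+,c})$. Indeed, an element $g \in \mathcal{R}^{+}(W;\phi)$ is completely determined by its restriction $g|_{C}$, since on the handle $\phi(S^{k-1}\times D^{d-k+1}_{+,c})$ the metric is rigidly fixed by the condition $\phi^{*}g = g_{S^{k-1}} + \hat g^{d-k+1}_{\tor}$; symmetrically, $\tilde g \in \mathcal{R}^{+}(\widetilde{W};\bar{\phi})$ is determined by $\tilde g|_{C}$, because $\bar{\phi}^{*}\tilde g = g^{k}_{\tor} + g_{S^{d-k}}|_{\partial_{1}D^{d-k+1}_{+,c}}$ is likewise forced.

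Given this, I would define the inverse $\mb{S}_{\phi}^{-1}: \mathcal{R}^{+}(\widetilde{W};\bar{\phi}) \to \mathcal{R}^{+}(W;\phi)$ as the dual surgery: set $\mb{S}_{\phi}^{-1}(\tilde g)$ equal to $\tilde g$ on $C$, and equal to the fixed standard metric $g_{S^{k-1}} + \hat g^{d-k+1}_{\tor}$ on $\phi(S^{k-1}\times D^{d-k+1}_{+,c})$. The only nontrivial point is that this produces a smooth metric on $W$; this is the mirror image of the analogous verification for $\mb{S}_{\phi}$ itself, and both reduce to the collar properties (i) and (ii) of Construction \ref{Construction: metrics on the collared half-disk}, which ensure that along the common interface the metric has the required product form on each side, so that the two prescriptions glue $C^{\infty}$-smoothly.

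By construction, $\mb{S}_{\phi}$ and $\mb{S}_{\phi}^{-1}$ both act as the identity on $C$-restrictions while imposing the relevant fixed standard metric on the handle being replaced, so they are mutually inverse as set-theoretic maps. Continuity in both directions in the $C^{\infty}$-topology is immediate, since restriction to an open subset is continuous and extension by a fixed smooth metric on a fixed submanifold is continuous. I do not foresee any real obstacle: once the notation is unwound, the proposition is the statement that the forward and backward surgery constructions for metrics standard on the dual handles are strict inverses, exactly as in the closed-manifold analogue recorded in the proof of Theorem \ref{theorem: perform surgery map}, which was attributed directly to Walsh \cite{Wa 13}.
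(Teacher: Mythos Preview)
Your proposal is correct and follows essentially the same approach as the paper: both construct the inverse explicitly as the dual surgery map, sending $\tilde g \in \mathcal{R}^{+}(\widetilde{W};\bar{\phi})$ to the metric on $W$ that agrees with $\tilde g$ on the complement of the dual handle and equals $g_{S^{k-1}} + \hat{g}^{d-k+1}_{\tor}$ on $\phi(S^{k-1}\times D^{d-k+1}_{+,c})$. Your version is slightly more explicit about the common complement $C$ and the smoothness/continuity verifications, but the argument is the same.
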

\begin{proof}
We define an inverse, 
$$
\mathcal{R}^{+}(\widetilde{W}; \bar{\phi}) \longrightarrow \mathcal{R}^{+}(W; \phi), 
$$
by sending a metric $g \in \mathcal{R}^{+}(\widetilde{W}; \bar{\phi})$, to the metric $\bar{g}$ on 
$$
W \; = \; \widetilde{W}\setminus\bar{\phi}(D^{k}\times\partial_{1}D^{d-k+1}_{+, c})\bigcup S^{k-1}\times D^{d-k+1}_{+, c},
$$
defined to be equal to $g$ on $\widetilde{W}\setminus\bar{\phi}(D^{k}\times\partial_{1}D^{d-k+1}_{+, c})$, and equal to $g_{S^{k-1}} + \hat{g}_{\tor}^{d-k+1}$ on the factor $S^{k-1}\times D^{d-k+1}_{+, c}$.
This map is clearly inverse to the first map in (\ref{equation: factorization of S(phi)}) thus finishes the proof of the proposition. 
\end{proof}

We now proceed to work on the second map in (\ref{equation: factorization of S(phi)}). 
We let $M$ denote the boundary, $\partial W$. 
Let $\phi_{0}: S^{k-1}\times\partial_{0}D^{d-k+1}_{+, c} \longrightarrow M$ denote the restriction of $\phi$ to $S^{k-1}\times\partial_{0}D^{d-k+1}_{+, c}$.
Similarly, we let $\widetilde{M}$ denote the boundary of $\widetilde{W}$. 
We let 
$$\bar{\phi}_{1}: D^{k}\times\partial_{0, 1}D^{d-k+1}_{+, c} \; \longrightarrow \; \widetilde{M}$$
be the embedding obtained by restricting $\bar{\phi}$. 
The subspace, 
$\mathcal{R}^{+}(\widetilde{M}; \bar{\phi}_{1}) \subset \mathcal{R}^{+}(\widetilde{M}),$ 
is defined in the same way as in Section \ref{defn: preliminary definitions psc}. 
By repeating the exact same argument used in the proof of the main theorem from \cite{Ch 06} (restated by us as Proposition \ref{proposition: chernysh restriction theorem}), it follows that the restriction map,
$$
\mathcal{R}^{+}(\widetilde{W}; \bar{\phi}) \longrightarrow \mathcal{R}^{+}(\widetilde{M}; \bar{\phi}_{1}), \quad g \mapsto g|_{\widetilde{M}},
$$
is a quasi-fibration (see also \cite[Lemma 8.2.2]{Wa 16}). 
Using this fact, we are now in a position to prove that the second map in the composite (\ref{equation: factorization of S(phi)}) is a weak homotopy equivalence. 
\begin{proposition} \label{proposition: inclusion of phi-bar metric space}
The inclusion, 
$
\mathcal{R}^{+}(\widetilde{W}; \bar{\phi}) \hookrightarrow \mathcal{R}^{+}(\widetilde{W}),
$
is a weak homotopy equivalence. 
\end{proposition}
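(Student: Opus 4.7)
The plan is to reduce the proposition to the closed-manifold Walsh-Chernysh theorem (Theorem \ref{theorem: walsh chernysh}) together with its rel-boundary analogue from \cite{Wa 16}, by exploiting the Chernysh restriction quasi-fibrations. Form the commutative square
\begin{equation*}
\xymatrix{
\mathcal{R}^{+}(\widetilde{W}; \bar{\phi}) \ar@{^{(}->}[r] \ar[d]_{r} & \mathcal{R}^{+}(\widetilde{W}) \ar[d]^{r} \\
\mathcal{R}^{+}(\widetilde{M}; \bar{\phi}_{1}) \ar@{^{(}->}[r] & \mathcal{R}^{+}(\widetilde{M}),
}
\end{equation*}
in which the right-hand column is a quasi-fibration by Proposition \ref{proposition: chernysh restriction theorem}, and the left-hand column is a quasi-fibration by the observation made immediately preceding the statement of the proposition (an adaptation of Chernysh's argument to the subspace cut out by the standard-form condition, as in \cite[Lemma 8.2.2]{Wa 16}). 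It will suffice to show that both the bottom horizontal map and the induced maps on fibres are weak homotopy equivalences; by the standard long exact sequence argument for quasi-fibrations, this forces the top horizontal map to be a weak homotopy equivalence.

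For the bottom horizontal map, observe that after reordering factors the embedding $\bar{\phi}_{1} \colon D^{k}\times\partial_{0,1}D^{d-k+1}_{+, c} \to \widetilde{M}$ has the form of an embedded $S^{d-k-1}\times D^{k}$ in the closed $(d-1)$-dimensional manifold $\widetilde{M}$, and the disk factor has dimension $k \geq 3$ by hypothesis. Applying Theorem \ref{theorem: walsh chernysh} to $\widetilde{M}$ and $\bar{\phi}_{1}$ directly yields that $\mathcal{R}^{+}(\widetilde{M}; \bar{\phi}_{1}) \hookrightarrow \mathcal{R}^{+}(\widetilde{M})$ is a weak homotopy equivalence.

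For the inclusion on fibres over a chosen $h \in \mathcal{R}^{+}(\widetilde{M}; \bar{\phi}_{1})$, I need to show that the inclusion $\mathcal{R}^{+}(\widetilde{W}; \bar{\phi})_{h} \hookrightarrow \mathcal{R}^{+}(\widetilde{W})_{h}$ is a weak homotopy equivalence. This is the manifold-with-boundary version of Walsh-Chernysh, where the boundary metric $h$ is held fixed. Since $h$ already has the required standard form along $\bar{\phi}_{1}$, Walsh's parametrized Gromov-Lawson construction on the collared half-disk from \cite[Section 5, Section 8]{Wa 16}---which by design produces a deformation that is the identity on any prescribed boundary collar metric---applies to produce the required continuous deformation retract of $\mathcal{R}^{+}(\widetilde{W})_{h}$ onto $\mathcal{R}^{+}(\widetilde{W}; \bar{\phi})_{h}$. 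The dimension hypothesis $d-k \geq 3$ ensures the requisite positivity of the warping function in Walsh's construction near the $\bar{\phi}$-neighborhood.

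The main obstacle is the fibre step: because $\bar{\phi}$ reaches the boundary $\widetilde{M}$ of $\widetilde{W}$, one cannot simply invoke the locality of the closed-manifold Walsh-Chernysh deformation, and the deformation must be carefully compatible with the prescribed collar metric $h$. Once the boundary-respecting version from \cite{Wa 16} is cited (or, if needed, its proof adapted to the present notation of collared half-disks in Construction \ref{Construction: metrics on the collared half-disk}), the remainder of the proof is the routine five-lemma-style comparison of two quasi-fibrations via their fibre sequences.
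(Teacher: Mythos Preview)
Your proposal is correct and follows essentially the same approach as the paper: both set up the map of Chernysh-type restriction quasi-fibrations over $\mathcal{R}^{+}(\widetilde{M}; \bar{\phi}_{1}) \hookrightarrow \mathcal{R}^{+}(\widetilde{M})$, invoke Theorem \ref{theorem: walsh chernysh} for the base (using $k \geq 3$) and Walsh's boundary-respecting Gromov--Lawson deformation from \cite{Wa 16} for the fibres, and conclude by the five-lemma. The only cosmetic difference is that the paper cites \cite[Section 7.6]{Wa 16} specifically for the fibrewise step, whereas you point to Sections 5 and 8 of the same reference.
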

\begin{proof}
Fix a metric $g \in \mathcal{R}^{+}(M; \bar{\phi}_{1})$. 
We have a map of fibre sequences, 
$$
\xymatrix{
\mathcal{R}^{+}(\widetilde{W}; \bar{\phi})_{g} \ar[d] \ar[r] & \mathcal{R}^{+}(\widetilde{W})_{g} \ar[d] \\
\mathcal{R}^{+}(\widetilde{W}; \bar{\phi}) \ar[r] \ar[d] & \mathcal{R}^{+}(\widetilde{W}) \ar[d] \\
\mathcal{R}^{+}(\widetilde{M}; \bar{\phi}_{1}) \ar[r] & \mathcal{R}^{+}(\widetilde{M}),
}
$$
where the horizontal maps are inclusions. 
The left column is a fibre sequence by Proposition \ref{proposition: chernysh restriction theorem} and the left-column is a fibre sequence by our observation above (using \cite[Lemma 8.2.2]{Wa 16}).
The bottom-horizontal map is a weak homotopy equivalence by Theorem \ref{theorem: walsh chernysh}. 
The top-horizontal map is a weak homotopy equivalence by what is proven in \cite[Section 7.6]{Wa 16}.
It follows from the five-lemma that the middle-horizontal map is a weak homotopy equivalence. 
This completes the proof of the proposition. 
\end{proof}

With the above proposition established, the proof of Theorem \ref{theorem: gromov lawson on boundary} is complete.

\subsection{The functor $\mathcal{W}^{\partial, \psc}_{\theta, d}(\--)$} \label{subsection: the functor W psc}
We now use the construction from the previous subsection to define a functor analogous to the one defined in Section \ref{subsection: W psc functor}. 
\begin{defn} \label{defn: standard metrics with boundary}
Let $\mb{t} \in \mathcal{K}_{d}$ and $(V, \sigma) \in (G^{\partial, \mf}_{\theta, d+1}(\R^{\infty})_{\loc})^{\mb{t}}$.
We let $g^{\std, -}_{\mb{t}, \partial}$ denote the metric on the manifold $S(V^{-})\times_{\mb{t}}D^{c}_{+}(V^{+})$ with the following property:
\begin{itemize} \itemsep.2cm
\item for each $t \in \mb{t}$ the restriction of $g^{\std, -}_{\mb{t}, \partial}$ to 
$S(V^{-}(i))\times D^{c}_{+}(V^{+}(t)) = S^{\delta(t)-1}\times D_{+, c}^{d-\delta(t) + 1}$ 
agrees with the metric, 
$$g_{S^{\delta(t)-1}} + \hat{g}^{d-\delta(t)+1}_{\tor}.$$
\end{itemize}
Similarly, we let $g^{\std, +}_{\mb{t}, \partial}$ denote the metric on $D(V^{-}(t))\times_{\mb{t}}\partial_{1}D_{+}^{c}(V^{+}(t))$
with the following property:
\begin{itemize} \itemsep.2cm
\item for each $t \in \mb{t}$ the restriction of $g^{\std, +}_{\mb{t}, \partial}$ to $D(V^{-}(t))\times \partial_{1}D_{+}^{c}(V^{+}(t))$ agrees with the metric, 
$$g^{\delta(t)}_{\tor} + g_{S^{d-\delta(t)}}|_{\partial_{1}D_{+}^{c}(V^{+}(t))}.$$ 
\end{itemize}
\end{defn}

The following definition is the main definition of this section. 
\begin{defn} \label{defn: metrics standard on multiple surgeries 2}
Fix $k \in \Z_{\geq -1}$.
Let $\mb{t} = (\mb{t}_{0}, \mb{t}_{1}) \in \widehat{\mathcal{K}}_{d}^{k}$. 
Let $(M, (V, \sigma), e) \in \mathcal{W}^{\partial, k}_{\theta, d}(\mb{t})$ and let 
$$\begin{aligned}
e^{-}_{0}: S(V^{-})\times_{\mb{t}_{0}}D^{c}_{+}(V^{+}) \longrightarrow M, \\
e^{-}_{1}: S(V^{-})\times_{\mb{t}_{1}}D(V^{+}) \longrightarrow M,
\end{aligned}$$
denote the embeddings obtained by restricting $e_{0}$ and $e_{1}$ respectively.
The subspace 
$$
\mathcal{R}^{+}(M; \mb{t}, e) \subset \mathcal{R}^{+}(M)$$
is defined to consist of all metrics $g \in \mathcal{R}^{+}(M)$ such that 
$$
(e^{-}_{0})^{*}g \; = \; g^{\std, -}_{\mb{t}_{0}, \partial} \quad \text{and} \quad 
(e^{-}_{1})^{*}g \; = \; g^{\std, -}_{\mb{t}_{1}}
$$
respectively.
We define the space 
$\mathcal{W}^{\psc, \partial, k}_{\theta, d}(\mb{t})$
to consist of all tuples 
$((M, (V, \sigma), e), g)$ 
such that 
$(M, (V, \sigma), e) \in \mathcal{W}^{k}_{\theta, d}(\mb{t})$ and $g \in \mathcal{R}^{+}(M; \; \mb{t}, e).$ 
\end{defn}

The correspondence $\mb{t} \mapsto \mathcal{W}^{\psc, \partial, k}_{\theta, d}(\mb{t})$ is made into a contravariant functor on $\widehat{\mathcal{K}}_{d}^{k}$ in the same way as was done in Section \ref{subsection: W psc functor}. 
To avoid redundancy we don't repeat the construction here. 
As before, we may form the homotopy colimit over the category $\widehat{\mathcal{K}}_{d}^{k}$. 
We denote,
$$
\mathcal{W}^{\psc, \partial, k}_{\theta, d} := \hocolim_{\mb{t} \in \widehat{\mathcal{K}}_{d}^{k}}\mathcal{W}^{\psc, \partial, k}_{\theta, d}(\mb{t}).
$$
The following theorem is analogous to Theorem \ref{theorem: fibresequence forget the metric}. 
\begin{theorem} \label{theorem: forgetful map fibration}
Let $k, d-k \geq 3$. 
The forgetful map 
$\mathcal{W}^{\partial, \psc, k-1}_{\theta, d} \longrightarrow \mathcal{W}^{\partial, k-1}_{\theta, d}$
is a quasi-fibration. 
Let $M \in \mathcal{M}^{\partial}_{\theta, d}$. 
The fibre over $(M, (\emptyset, \emptyset), \emptyset) \in \mathcal{W}^{\partial, k-1}_{\theta, d}$ is given by the space 
$\mathcal{R}^{+}(M)$.
\end{theorem}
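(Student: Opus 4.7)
The plan is to imitate, in the setting of manifolds with free boundary, the proof of Theorem~\ref{theorem: fibresequence forget the metric}, with Theorem~\ref{theorem: gromov lawson on boundary} taking the role that Theorem~\ref{theorem: perform surgery map} played there. The product decomposition $\widehat{\mathcal{K}}^{k-1}_{d} = \mathcal{K}^{k-1}_{d}\times\mathcal{K}^{k-1}_{d-1}$ is what makes this reduction clean: any morphism factors as one that affects only the boundary surgery data ($\mb{t}_{0}$-factor) followed by one that affects only the interior surgery data ($\mb{t}_{1}$-factor), and the two types of morphisms will be handled by two different surgery theorems.

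First I would verify the level-wise statement. For each $\mb{t} = (\mb{t}_{0}, \mb{t}_{1}) \in \widehat{\mathcal{K}}^{k-1}_{d}$ the forgetful map
$$F_{\mb{t}}: \mathcal{W}^{\partial, \psc, k-1}_{\theta, d}(\mb{t}) \longrightarrow \mathcal{W}^{\partial, k-1}_{\theta, d}(\mb{t})$$
is a Serre fibration by exactly the argument used in the proof of Theorem~\ref{theorem: fibresequence forget the metric} (it follows from the way the moduli of $\psc$-manifolds is topologized, as in (\ref{equation: topology of moduli spaces})), and the fibre over a point $(M, (V, \sigma), e)$ is the constrained metric space $\mathcal{R}^{+}(M; \mb{t}, e)$ of Definition~\ref{defn: metrics standard on multiple surgeries 2}. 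In the special case $\mb{t} = (\emptyset, \emptyset)$ the constraint is vacuous, so the fibre over $(M, (\emptyset, \emptyset), \emptyset)$ is the full space $\mathcal{R}^{+}(M)$.

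Next I would check that every morphism $(j, \varepsilon): \mb{s} \longrightarrow \mb{t}$ in $\widehat{\mathcal{K}}^{k-1}_{d}$ induces a weak homotopy equivalence between these fibres. Using the product structure, I may assume $(j, \varepsilon)$ is non-trivial in exactly one of the two factors, and by composition I may further assume $\mb{t}\setminus j(\mb{s})$ contains exactly one element $a$ with $\varepsilon(a) = \pm 1$. In the interior case (morphism in the $\mathcal{K}^{k-1}_{d-1}$ factor) the induced map $S_{(j, \varepsilon)}$ on constrained metric spaces is precisely the one treated by Proposition~\ref{proposition: homotopy equivalence on fibres}, and it is a weak homotopy equivalence by Theorems~\ref{theorem: walsh chernysh} and \ref{theorem: perform surgery map} (together with Addendum~\ref{Addendum: extension of Walsh} and Remark~\ref{remark: multiple disjoint surgeries} to pass to the presence of the other $\psc$-constraints). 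The main obstacle is the new boundary case (morphism in the $\mathcal{K}^{k-1}_{d}$ factor): here the same strategy applies, but Theorem~\ref{theorem: gromov lawson on boundary} must replace Theorem~\ref{theorem: perform surgery map}. The dimensional hypothesis $k, d-k \geq 3$ is exactly what guarantees that for every index $\delta(t_{0})$ occurring in an object of $\mathcal{K}^{k-1}_{d}$ the hypotheses of Theorem~\ref{theorem: gromov lawson on boundary} hold for the corresponding half-handle surgery, and locality of the underlying Walsh construction from \cite{Wa 16} upgrades that theorem routinely to collections of disjoint boundary surgeries, as needed to incorporate the other $\mb{t}_{0}$- and $\mb{t}_{1}$-constraints already imposed on the metric.

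Finally I would feed this into Proposition~\ref{proposition: homotopy colimit fibre sequence} applied to the natural transformation $F_{\bullet}$ over the indexing category $\widehat{\mathcal{K}}^{k-1}_{d}$. Since each $F_{\mb{t}}$ is a Serre fibration and every morphism of $\widehat{\mathcal{K}}^{k-1}_{d}$ acts by a weak homotopy equivalence on the fibres, the proposition implies that for any object $(M, (V, \sigma), e)$ the inclusion of the level-wise fibre into the homotopy fibre of $F$ is a weak homotopy equivalence. This forces $F$ to be a quasi-fibration and identifies its fibre over $(M, (\emptyset, \emptyset), \emptyset)$ with $\mathcal{R}^{+}(M)$, completing the proof.
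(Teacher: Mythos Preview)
Your proposal is correct and follows essentially the same approach as the paper's proof: both apply Proposition~\ref{proposition: homotopy colimit fibre sequence} to the natural transformation $F_{\bullet}$, observe that each $F_{\mb{t}}$ is a Serre fibration with fibre $\mathcal{R}^{+}(M;\mb{t},e)$, and use Theorem~\ref{theorem: gromov lawson on boundary} to check that morphisms of $\widehat{\mathcal{K}}_{d}^{k-1}$ induce weak equivalences on these fibres. Your write-up is somewhat more explicit than the paper's in separating the boundary-factor and interior-factor morphisms and citing the corresponding surgery theorems for each, but this is elaboration rather than a different argument.
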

\begin{proof}
As was done in the proof of Theorem \ref{theorem: fibresequence forget the metric}, we apply Proposition \ref{proposition: homotopy colimit fibre sequence}. 
Let $(j, \varepsilon): \mb{s} \longrightarrow \mb{t}$ be a morphism of $\widehat{\mathcal{K}}_{d}^{k}$. 
Let $(M, (V, \sigma), e) \in \mathcal{W}^{\partial}_{\theta, d}(\mb{t})$, and let $(M', (V,' \sigma'), e') \in \mathcal{W}^{\partial}_{\theta, d}(\mb{s})$ be the image of $(M, (V, \sigma), e)$ under the map,
$(j, \varepsilon)^{*}: \mathcal{W}^{\partial, k}_{\theta, d}(\mb{t}) \longrightarrow \mathcal{W}^{\partial, k}_{\theta, d}(\mb{s}).$
The map $(j, \varepsilon)^{*}$ yields a map of fibre sequences,
$$
\xymatrix{
\mathcal{R}^{+}(M; \mb{t}, e) \ar[d] \ar[rr] && \mathcal{R}^{+}(M'; \mb{s}, e') \ar[d] \\
\mathcal{W}^{\partial, \psc, k-1}_{\theta, d}(\mb{t}) \ar[d] \ar[rr] && \mathcal{W}^{\partial, \psc, k-1}_{\theta, d}(\mb{s}) \ar[d] \\
\mathcal{W}^{\partial, k-1}_{\theta, d}(\mb{t}) \ar[rr] && \mathcal{W}^{\partial, k-1}_{\theta, d}(\mb{s}).
}
$$
Since $k, d-k \geq 3$, the degrees of the surgeries associated to the embedding $e$ are within the range covered by Theorem \ref{theorem: gromov lawson on boundary}. 
It follows that the top-horizontal map is a weak homotopy equivalence. 
The theorem then follows by application of Proposition \ref{proposition: homotopy colimit fibre sequence}. 
\end{proof}

\begin{remark} \label{remark: main dimensional case of interest}
The particular dimensional case of the above theorem that we will have to use is $d = 2n \geq 6$ and $k = n$. 
We obtain a fibre sequence, 
$
\mathcal{R}^{+}(M) \longrightarrow \mathcal{W}^{\partial, \psc, n-1}_{\theta, 2n} \longrightarrow \mathcal{W}^{\partial, n-1}_{\theta, 2n},
$
for $M \in \mathcal{M}^{\partial}_{\theta, 2n}$.
\end{remark}

\subsection{Proof of Theorem \ref{theorem: commutativity of restrictions and transfer}} \label{subsection: proof of boundary map commutativity}
Fix $M \in \mathcal{M}^{\partial}_{\theta, 2n}$.
Denote $P := \partial M$. 
Fix an element $g_{P} \in \mathcal{R}^{+}(P)$.
By assembling Theorems \ref{theorem: fibresequence forget the metric}, \ref{theorem: boundary map is quasifibration d = 2n-1}, \ref{theorem: forgetful map fibration}, and Proposition \ref{proposition: chernysh restriction theorem}, we obtain the commutative diagram 
\begin{equation} \label{equation: three row commutative diagram}
\xymatrix{
\mathcal{R}^{+}(M)_{g_{P}} \ar[rr] \ar[d] && \mathcal{R}^{+}(M) \ar[rr] \ar[d] && \mathcal{R}^{+}(P) \ar[d]\\
\mathcal{W}^{\psc, n-1}_{\theta, 2n}(P) \ar[rr] \ar[d] && \mathcal{W}^{\partial, \psc, n-1}_{\theta, 2n} \ar[rr] \ar[d] && \mathcal{W}^{\psc, n-1}_{\theta, 2n-1} \ar[d] \\
\mathcal{W}^{n-1}_{\theta, 2n}(P) \ar[rr]  && \mathcal{W}^{\partial, n-1}_{\theta, 2n} \ar[rr]  && \mathcal{W}^{n-1}_{\theta, 2n-1}.
}
\end{equation}
By the results mentioned above, all columns are fibre sequences. 
Furthermore, the top and bottom rows are fibre sequences as well. 
The fibre transports of the columns yield the commutative diagram 
\begin{equation} \label{equation: fibre transport stage 1}
\xymatrix{
\Omega\mathcal{W}^{n-1}_{\theta, 2n}(P) \ar[d] \ar[rr]  && \Omega\mathcal{W}^{\partial, n-1}_{\theta, 2n} \ar[d] \ar[rr]  && \Omega\mathcal{W}^{n-1}_{\theta, 2n-1} \ar[d] \\
\mathcal{R}^{+}(M)_{g_{P}} \ar[rr] && \mathcal{R}^{+}(M) \ar[rr] && \mathcal{R}^{+}(P). 
}
\end{equation}
Both of the rows in (\ref{equation: fibre transport stage 1}) are fibre sequences. 
Thus, the fibre-transports yield the commutative diagram,
\begin{equation} \label{equation: the commutative diagram in question}
\xymatrix{
\Omega^{2}\mathcal{W}^{n-1}_{\theta, 2n-1} \ar[d]^{\Omega j_{2n-1}} \ar[rr]^{\Omega\iota_{2n}} && \Omega\mathcal{W}^{n-1}_{\theta, 2n}(P) \ar[d]^{j_{2n}} \\
\Omega\mathcal{R}^{+}(P) \ar[rr]^{T} && \mathcal{R}^{+}(M)_{g_{P}},
}
\end{equation}
and this is precisely the commutative diagram from the statement of Theorem \ref{theorem: commutativity of restrictions and transfer}.
This completes the proof of Theorem \ref{theorem: commutativity of restrictions and transfer}.

\appendix

\section{Proof of Theorem \ref{theorem: equivalence with long manifolds with boundary}} \label{section: long manifolds to W partial}
We now embark on the proof of Theorem \ref{theorem: equivalence with long manifolds with boundary}.
This proof involves simply mimicking what was done in \cite[Appendix A]{P 17} or \cite[Section 5]{MW 07}.
For this reason we just provide sketches of the proofs and omit the details so as to avoid too much redundancy.

\subsection{Configuration spaces} \label{subsection: configuration spaces}
Our main construction requires use of certain configuration spaces to be defined below. 
Recall from \cite{P 17} the space $\mathcal{D}^{\mf}_{\theta, \loc}$ that consists of tuples $(\bar{x}; (V, \sigma))$ where $\bar{x} \subset \R\times\R^{\infty}$ is a zero-dimensional submanifold (not necessarily compact), and $(V, \sigma): \bar{x} \longrightarrow G_{\theta}^{\mf}(\R\times\R^{\infty})_{\loc}$ is a map, subject to the following condition: for all $\delta > 0$, the set $\bar{x}\cap\left((-\delta, \delta)\times\R^{\infty})\right)$ is a finite set (or in other words is a compact $0$-manifold). 
For all $P \in \mathcal{M}_{\theta, d-1}$, there is a map 
$$\mb{L}: \mathcal{D}^{\mf}_{\theta, d+1}(P) \longrightarrow \mathcal{D}^{\mf}_{\theta, \loc}$$
defined by sending $W \in \mathcal{D}^{\mf}_{\theta, d+1}(P)$ to $(\bar{x}; (V, \sigma))$, where $\bar{x} \subset \R\times\R^{\infty}$ is the set of critical points of the height function $h_{W}: W \longrightarrow \R$, which is Morse. 
For each $x \in \bar{x}$, we have $V(x) = T_{x}W$ and $\sigma(x): T_{x}W\otimes T_{x}W \longrightarrow \R$ is the Hessian associated to $h_{W}$ at the critical point $x$.
We will need to work with a version of the space $\mathcal{D}^{\mf}_{\theta, \loc}$ adapted to receive a map from the space $\mathcal{D}_{\theta, d+1}^{\partial, \mf}(P)$.
Recall from Definition \ref{defn: grassmannian of half-planes} the space $G_{\theta, d+1}^{\partial, \mf}(\R\times\R^{\infty})_{\loc}$. 
\begin{defn} \label{defn: space of boundary configurations}
The space $\mathcal{D}^{\partial, \mf}_{\theta, d+1, \loc}$ consists of pairs $\left((\bar{x}_{0}; (V_{0}, \sigma_{0}, \phi_{0})), \; (\bar{x}_{1}; (V_{1}, \sigma_{1}))\right)$, where:
\begin{enumerate} \itemsep.2cm
\item[(i)] $(\bar{x}_{1}; (V_{1}, \sigma_{1}))$ is an element of $\mathcal{D}^{\mf}_{\theta, d+1, \loc}$;
\item[(ii)] $\bar{x}_{0} \in \R\times\R^{\infty}$ is a zero-dimensional submanifold, and $(V_{0}, \sigma_{0}, \phi_{0}): \bar{x}_{0} \longrightarrow G_{\theta, d+1}^{\partial, \mf}(\R\times\R^{\infty})_{\loc}$ is a map. 
The pair $(\bar{x}_{0}; (V_{0}, \sigma_{0}, \phi_{0}))$ satisfies the following condition: for all $\delta > 0$, the set $\bar{x}\cap\left((-\delta, \delta)\times\R^{\infty})\right)$ is a finite set.
\end{enumerate}
For elements of $\mathcal{D}^{\partial, \mf}_{\theta, \loc}$ we will often compress the notation and sometimes denote, 
$$(\bar{x}, (V, \sigma)) = \left((\bar{x}_{0}; (V_{0}, \sigma_{0}, \phi_{0})), \; (\bar{x}_{1}; (V_{1}, \sigma_{1}))\right).$$
\end{defn}
We have a map 
\begin{equation} \label{equation: localization map manifolds with boundary}
\mb{L}^{\partial}: \mathcal{D}^{\partial, \mf}_{\theta, d+1}(P) \longrightarrow \mathcal{D}^{\partial, \mf}_{\theta, d+1, \loc}
\end{equation}
that is defined by sending $W \in \mathcal{D}^{\partial, \mf}_{\theta, d+1}(P)$ to $\left((\bar{x}_{0}; (V_{0}, \sigma_{0}, \phi_{0})), \; (\bar{x}_{1}; (V_{1}, \sigma_{1}))\right)$, where:
\begin{enumerate} \itemsep.2cm
\item[(i)] $\bar{x}_{0} \in \{0\}\times\R\times\R^{\infty}$ is the set of critical points of $h_{W}$ that occur on the boundary of $W$. 
For each $x \in \bar{x}_{0}$, $V_{0}(x) = T_{x}W$ and $\sigma_{0}(x)$ is the Hessian at $T_{x}W$. 
The linear map $\phi_{0}(x): T_{x}W \longrightarrow \R$ is the defined to be the differential (at $x$) of the map 
$$W \hookrightarrow \R\times(-\infty, 0]\times\R^{\infty-1} \stackrel{\text{proj.}} \longrightarrow (-\infty, 0].$$
The fact that $W$ intersects $\R\times\{0\}\times\R^{\infty-1}$ transversally implies that $\phi_{0}$ is non-zero, and thus the triple $(V_{0}, \sigma_{0}, \phi_{0})$ is indeed an element of $G_{\theta, d+1}^{\partial, \mf}(\R\times\R^{\infty})_{\loc}$.
\item[(ii)] $\bar{x}_{1} \in (0, \infty)\times\R\times\R^{\infty}$ is the set of critical points of $h_{W}|_{\Int(W)}: \Int(W) \longrightarrow \R$, and $(V_{1}, \sigma_{1})$ is determined by the tangent space and Hessian at the points in $\bar{x}_{1}$.
\end{enumerate}
We will call this map $\mb{L}^{\partial}$ the \textit{localization map}.

\subsection{The space $\mathcal{L}^{\partial}_{\theta, d}$} \label{subsection: alternative model}
In this section we construct the space $\mathcal{L}^{\partial}_{\theta, d}$ appearing in the statement of Theorem \ref{theorem: equivalence with long manifolds with boundary}. 
The construction is very similar to what was done in \cite[Appendix A]{P 17}; the difference being that here we work with manifolds with boundary. 
The definition requires a few preliminary steps. 
\begin{Construction} \label{Construction: sadle set}
Let $(V, \sigma) \in G^{\mf}_{\theta, d+1}(\R^{\infty})_{\loc}$.  
Observe that the function 
\begin{equation} \label{equation: morse vector space}
f_{V}: V \longrightarrow \R, \quad f_{V}(v) := \sigma(v, v)
\end{equation}
is a Morse function on $V$ with exactly one critical point at the origin with index equal to $\text{index}(\sigma)$.
Let $(V, \sigma) \in G^{\mf}_{\theta}(\R^{\infty})_{\loc}$. 
The subspace $\sdl(V, \sigma) \subset V$ is defined by 
$$
\sdl(V, \sigma) = \{v \in V \; | \; ||v_{+}||^{2} ||v_{-}||^{2} \leq 1 \;\},
$$
where $v = (v_{-}, v_{+})$ is the coordinate representation of $v$ using the splitting $V = V^{-}\oplus V^{+}$ given by the negative and positive eigenspaces of $\sigma$.
The norm on $V$ is the one induced by the Euclidean inner product on the ambient space in which the subspace $V$ is contained.

If $(\bar{x}; (V, \sigma)) \in \mathcal{D}^{\mf}_{\theta, \loc}$, we may form the space, 
$$
\sdl(V, \sigma; \bar{x}) \; := \; \bigsqcup_{x \in \bar{x}}\sdl(V(x), \sigma(x)).
$$
Let $h_{\bar{x}}: \bar{x} \longrightarrow \R$ denote the height function, i.e.\ the projection $\bar{x} \hookrightarrow \R\times\R^{\infty} \stackrel{\text{proj}} \longrightarrow \R$. 
The function 
$$f_{V, \bar{x}}: \sdl(V, \sigma; \bar{x}) \longrightarrow \R$$
is defined by the formula,
$f_{V, \bar{x}}(z)  =  f_{V(x)}(z) + h_{\bar{x}}(x),$
if $z$ is contained in the path component
$$\sdl(V(x), \sigma(x)) \; \subset \; \sdl(V, \sigma; \bar{x}),$$
where $x \in \bar{x}$. 
\end{Construction}

We need a similar construction for elements of $G^{\partial, \mf}_{\theta, d+1}(\R^{\infty})_{\loc}$.
\begin{defn} \label{defn: sadle construction boundary}
Let $(V, \sigma, \phi) \in G^{\partial, \mf}_{\theta, d+1}(\R^{\infty})_{\loc}$.  
Let $f_{V}: V \longrightarrow \R$ be defined in the same way as in Construction \ref{Construction: sadle set}. 
Let $\bar{V} = \Ker(\phi)$.
We let 
$
\bar{f}_{V}: \bar{V} \longrightarrow \R
$
denote the restriction of $f_{V}$ to $\bar{V}$. 
Since $V^{-} \leq \bar{V}$, it follows that $\bar{f}_{V}$ is Morse with a single critical point at the origin with index equal to $\text{index}(\sigma) = \text{index}(\bar{\sigma})$.
We then define, 
$$
\textstyle{\sdl^{\partial}}(V, \sigma) = \sdl(V, \sigma)\cap\phi^{-1}([0, \infty)).
$$
It follows that $\sdl(\bar{V}, \bar{\sigma}) \subset \sdl^{\partial}(V, \sigma)\cap\phi^{-1}(0)$, and we denote,
$$\partial_{0}\textstyle{\sdl^{\partial}}(V, \sigma) \; := \; \sdl(\bar{V}, \bar{\sigma}).$$
\end{defn}
For $(\bar{x}; (V, \sigma)) := \left((\bar{x}_{0}; (V_{0}, \sigma_{0}, \phi_{0})), \; (\bar{x}_{1}; (V_{1}, \sigma_{1}))\right) \in \mathcal{D}^{\partial, \mf}_{\theta, d+1, \loc}$, we may form the space 
\begin{equation} \label{equation: saddle configuration}
\textstyle{\sdl^{\partial}}(V, \sigma; \bar{x}) \; = \; \left[\displaystyle{\bigsqcup_{x \in \bar{x}_{0}}}\textstyle{\sdl^{\partial}}(V_{0}(x), \sigma_{0}(x))\right]\bigsqcup\left[\displaystyle{\bigsqcup_{y \in \bar{x}_{1}}}\sdl(V_{1}(y), \sigma_{1}(y))\right].
\end{equation}
The first space in the above disjoint union we denote by $\sdl^{\partial}_{0}(V, \sigma; \bar{x})$, while the second space is denoted by $\sdl^{\partial}_{1}(V, \sigma; \bar{x})$.
The function 
$$f_{V, \bar{x}}: \textstyle{\sdl^{\partial}}(V, \sigma; \bar{x}) \longrightarrow \R$$
is defined in the same way as in Construction \ref{Construction: sadle set}. 
As before this function is Morse. 
Some of the critical points occur on the boundary.

Let $W \in \mathcal{D}^{\partial, \mf}_{\theta, d+1}(P)$ and $(\bar{x}; V, \sigma) \in \mathcal{D}^{\partial, \mf}_{\theta, d+1, \loc}$ be elements with $\mb{L}^{\partial}(W) = (\bar{x}; V, \sigma)$, where recall that $\mb{L}^{\partial}$ is the localization map (\ref{equation: localization map manifolds with boundary}). 
A \textit{coupling} between $W$ and $(\bar{x}; V, \sigma)$ is an embedding 
$$\lambda: \left(\textstyle{\sdl^{\partial}}(V, \sigma; \bar{x}), \; \partial_{0}\textstyle{\sdl^{\partial}}(V, \sigma; \bar{x})\right) \longrightarrow \left(W, \; \partial W\right)$$
that satisfies the following conditions:
\begin{enumerate} \itemsep.2cm
\item[(i)] $f_{V, \bar{x}} = h_{W}\circ\lambda$, where recall that $h_{W}: W \longrightarrow \R$ is the height function on $W$;
\item[(ii)] $\hat{\ell}_{\sdl(V, \sigma)} = \lambda^{*}\hat{\ell}_{W}$. 
\end{enumerate}

We are now ready to define the space $\mathcal{L}^{\partial}_{\theta, d+1}$ from the statement of Theorem \ref{theorem: equivalence with long manifolds with boundary}. 
\begin{defn} \label{defn: homotopy colim decomp of L}
Fix $\mb{t} \in \widehat{\mathcal{K}}_{d}$. 
The space $\mathcal{L}^{\partial}_{\theta, d+1}(\mb{t})$ consists of tuples 
$\left(W, (\bar{x}; V, \sigma), \lambda, \delta, \phi\right)$
where:
\begin{enumerate} \itemsep.2cm
\item[(i)] $W \in \mathcal{D}^{\partial, \mf}_{\theta, d+1}$, $(\bar{x}; V, \sigma) \in \mathcal{D}^{\mf}_{\theta, d+1, \loc}$ and $\mb{L}^{\partial}(W) = (\bar{x}; V, \sigma)$;
\item[(ii)] $\delta: \bar{x} \longrightarrow \{-1, 0, +1\}$ is a function subject to the following condition:
the height function,
$h_{\bar{x}}: \bar{x} \longrightarrow \R,$
admits a lower bound on $\delta^{-1}(+)$ and an upper bound on $\delta^{-1}(-1)$. 
\item[(iii)] $\phi: \mb{t} \stackrel{\cong} \longrightarrow \delta^{-1}(0)$ is a function over the set $\{0, 1, \dots, d+1\}$;
\end{enumerate}
\end{defn}
We need to describe how the correspondence $\mb{t} \mapsto \mathcal{L}^{\partial}_{\theta, d+1}$ defines a contravariant functor on $\widehat{\mathcal{K}}_{d}$. 
Let $(j, \varepsilon): \mb{s} \longrightarrow \mb{t}$ be a morphism in $\widehat{\mathcal{K}}_{d}$.
The induced map 
$$
(j, \varepsilon)^{*}: \mathcal{L}^{\partial}_{\theta, d+1}(\mb{t}) \longrightarrow \mathcal{L}^{\partial}_{\theta, d+1}(\mb{s})
$$
sends an element $(W, (\bar{x}; V, \sigma), \lambda, \delta, \phi) \in \mathcal{L}^{\partial}_{\theta, d+1}(\mb{t})$ to $(W, (\bar{x}; V, \sigma), \lambda, \delta', \phi')$ where $\phi' = \phi\circ j$ and 
\begin{equation} \label{equation: functoriality for delta}
\delta'(y) \; = \; 
\begin{cases}
\varepsilon(s) \quad \text{if $y = \phi(s)$ \; where $s \in \mb{s}\setminus j(\mb{t})$,}\\
\delta(y) \quad \text{otherwise.}
\end{cases}
\end{equation}
We then define,
$$
\mathcal{L}^{\partial}_{\theta, d+1} \; = \; \hocolim_{\mb{t} \in \widehat{\mathcal{K}}_{d}}\mathcal{L}^{\partial}_{\theta, d+1}(\mb{t}).
$$
Given an integer $k \in \Z_{\geq-1}$ and $\mb{t} \in \widehat{\mathcal{K}}_{d}^{k}$, the subspace $\mathcal{L}^{\partial, k}_{\theta, d+1}(\mb{t}) \subset \mathcal{L}^{\partial}_{\theta, d+1}(\mb{t})$ consists of those 
$$(W, (\bar{x}; V, \sigma), \lambda, \delta, \phi)$$ 
for which $W \in \mathcal{D}^{\mf, k}_{\theta, d+1}$. 
We may also take the homotopy colimit and define,
$$
\mathcal{L}^{\partial, k}_{\theta, d+1} \; = \; \hocolim_{\mb{t} \in \widehat{\mathcal{K}}_{d}}\mathcal{L}^{\partial, k}_{\theta, d+1}(\mb{t}).
$$
For all $k$ there is a forgetful map, 
$$
\mathcal{L}^{\partial, k}_{\theta, d+1}  \longrightarrow \mathcal{D}^{\partial, \mf, k}_{\theta, d+1}.
$$
The following proposition is proven by tracing through all of the steps that were covered in \cite[Section 5]{MW 07} verbatim. 
For this reason we omit the proof. 
\begin{proposition} \label{proposition: forgetful map is equivalence}
For all $k$, the forgetful map, 
$\mathcal{L}^{\partial, k}_{\theta, d+1}  \longrightarrow \mathcal{D}^{\partial, \mf, k}_{\theta, d+1},$
is a weak homotopy equivalence.
\end{proposition}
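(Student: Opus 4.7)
The plan is to adapt verbatim the proof of \cite[Theorem A.1]{P 17} (itself modeled on \cite[Section 5]{MW 07}) to the present setting with manifolds with free boundary. The key structural observation is that the product decomposition $\widehat{\mathcal{K}}_{d}^{k} = \mathcal{K}_{d}^{k}\times\mathcal{K}_{d-1}^{k}$ reduces the combined boundary-and-interior labeling problem to two independent copies of the labeling problem already handled in \cite{P 17}.

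First I would factor the forgetful map through an intermediate space $\mathcal{L}^{\partial, k, \circ}_{\theta, d+1}$ consisting of pairs $(W, \lambda)$ with $W \in \mathcal{D}^{\partial, \mf, k}_{\theta, d+1}$ and $\lambda$ a coupling, but without any labeling data $(\delta, \phi)$. To see that $\mathcal{L}^{\partial, k, \circ}_{\theta, d+1} \longrightarrow \mathcal{D}^{\partial, \mf, k}_{\theta, d+1}$ is a weak homotopy equivalence, I would show that the fiber over every $W$, namely the space of couplings of $W$, is contractible. For interior critical points this is the classical Morse lemma argument used in \cite{P 17, MW 07}. For a boundary critical point $x \in \bar{x}_{0}$, the condition $V^{-}(x) \leq \Ker(\phi_{0}(x))$ from Definition \ref{defn: grassmannian of half-planes} ensures that the Morse structure of $h_{W}$ near $x$ is compatible with the boundary, so the relative Morse lemma for manifolds with boundary (cf.\ \cite{R 16}) yields a local model identifying a neighborhood of $x$ in $W$ with $\sdl^{\partial}(V_{0}(x), \sigma_{0}(x))$, respecting both the height function and the $\theta$-structure. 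The space of such local charts is then contractible by a standard parametric straightening argument.

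Second, I would analyze the fiber of the remaining map $\mathcal{L}^{\partial, k}_{\theta, d+1} \longrightarrow \mathcal{L}^{\partial, k, \circ}_{\theta, d+1}$. Over a fixed $(W, \lambda)$ this fiber is the homotopy colimit over $\widehat{\mathcal{K}}_{d}^{k}$ of the discrete set of labelings $(\delta, \phi)$ of $\bar{x} = \bar{x}_{0}\sqcup\bar{x}_{1}$ satisfying the bound/unbound condition of Definition \ref{defn: homotopy colim decomp of L}(ii). Because $\widehat{\mathcal{K}}_{d}^{k}$ splits as a product and the labelings decouple accordingly, this colimit factors as an iterated pair of single-category homotopy colimits, one labeling interior critical points via $\mathcal{K}_{d}^{k}$ and one labeling boundary critical points via $\mathcal{K}_{d-1}^{k}$. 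Each of these was shown to be contractible in \cite[Appendix A]{P 17}; the contractibility is purely formal, using the morphism data $\varepsilon \colon \mb{t}\setminus j(\mb{s}) \to \{\pm 1\}$ to collapse the nerve of the indexing category along the inclusions that send a label to $\pm 1$.

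The main obstacle will be verifying the relative parametric Morse lemma at boundary critical points in a form strong enough to yield contractibility of the space of couplings, with full compatibility with $\theta$-structures; this is the only genuinely new input beyond \cite{P 17}, and it is entirely local so no global issues arise. Once this local model is in place, combining the two steps yields the claimed weak homotopy equivalence $\mathcal{L}^{\partial, k}_{\theta, d+1} \longrightarrow \mathcal{D}^{\partial, \mf, k}_{\theta, d+1}$, and the remaining statements of Theorem \ref{theorem: equivalence with long manifolds with boundary}---in particular the weak equivalence $\mathcal{L}^{\partial, k}_{\theta, d} \simeq \mathcal{W}^{\partial, k}_{\theta, d}$ and the homotopy commutativity of diagram (\ref{equation: W to D to L diagram})---follow by tracing the maps through the combinatorial construction, exactly as in \cite[Appendix A]{P 17}.
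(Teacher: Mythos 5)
Your proposal reconstructs exactly the argument the paper invokes: the paper omits the proof entirely, stating that it is obtained ``by tracing through all of the steps that were covered in \cite[Section 5]{MW 07} verbatim,'' and your two-step factorization---first contractibility of the space of couplings of a fixed $W$ via a relative parametric Morse lemma at interior and boundary critical points, then contractibility of the labeling homotopy colimit using the product decomposition $\widehat{\mathcal{K}}_{d}^{k} = \mathcal{K}_{d}^{k}\times\mathcal{K}_{d-1}^{k}$---is precisely what that verbatim adaptation amounts to. You also correctly isolate the one genuinely new ingredient, namely the boundary Morse lemma for critical points of $h_{W}$ lying on $\partial W$ (made possible by the constraint $V^{-}\leq\Ker(\phi)$ in the definition of $G^{\partial,\mf}_{\theta,d+1}(\R^{\infty})_{\loc}$), which the paper leaves implicit.
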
 

\begin{remark} \label{remark: above proposition is not actually necessary}
As explained in Remark \ref{remark: necessity of above proposition}, in order to prove Theorem B (the main result of the paper) we don't actually need to use the fact that the forgetful map 
$\mathcal{L}^{\partial, k}_{\theta, d+1}  \longrightarrow \mathcal{D}^{\partial, \mf, k}_{\theta, d+1}$
is a weak homotopy equivalence; 
all that is needed is the existence of the map. 
What is more important is the weak homotopy equivalence $\mathcal{L}^{\partial, k}_{\theta, d+1} \stackrel{\simeq} \longrightarrow \mathcal{W}^{\partial, k}_{\theta, d+1}$ which we prove in the next section. 
\end{remark}

\subsection{Regularization} \label{subsection: regularization}
We now recall from \cite[Appendix A]{P 17} a construction used to define a map 
$$\mathcal{L}^{\partial}_{\theta, d+1} \longrightarrow \mathcal{W}^{\partial}_{\theta, d}.$$
Choose once and for all a diffeomorphism $\psi: \R \longrightarrow (-\infty, 0)$ such that $\psi(t) = t$ for $t < -\tfrac{1}{2}$, and a smooth nondecreasing function $\varphi: [0, 1] \longrightarrow [0,1]$ such that $\varphi(x) = x$ for $x$ close to $0$, and $\varphi(x) = 1$ for $x$ close to $1$. 
Define,
$$
\psi_{x}(t) = \varphi(x)t + (1 - \varphi(x))\psi(t)
$$
for $x \in [0,1]$. 
Then $\psi_{0} = \psi$ embeds $\R$ in $\R$ with image $(-\infty, 0)$, whereas each $\psi_{x}$ for $x > 0$ is a diffeomorphism $\R \longrightarrow \R$. 

Let $(V, \sigma, \phi) \in G_{\theta, d+1}^{\partial, \mf}(\R^{\infty})_{\loc}$. 
Recall form (\ref{equation: morse vector space}) the Morse function $f_{V}: V \longrightarrow \R$.
We define functions 
$$\begin{aligned}
f^{+}_{V}: \textstyle{\sdl^{\partial}}(V, \sigma)\setminus V^{+} &\longrightarrow \R, \\
 f^{-}_{V}: \textstyle{\sdl^{\partial}}(V, \sigma)\setminus V^{-} &\longrightarrow \R,
 \end{aligned}$$
by the formulae
\begin{equation} \label{equation: regularization functions}
\begin{aligned}
f^{+}_{V}(v) &= \psi^{-1}_{x}(t), \\
f^{-}_{V}(v) &= (-\psi_{x})^{-1}(-t),
\end{aligned}
\end{equation}
where $t = f_{V}(v)$ and $x = ||v_{-}||^{2}||v_{+}||^{2}$. 
The following proposition can be verified by hand.
\begin{proposition} \label{proposition: regularized functions}
The functions $f^{\pm}_{V}$ defined in (\ref{equation: regularization functions}) are proper submersions.
Furthermore, the restrictions of $f^{\pm}_{V}$ to  $\partial_{0}\textstyle{\sdl^{\partial}}(V, \sigma)\setminus\bar{V}^{+}$ are proper submersions as well.
\end{proposition}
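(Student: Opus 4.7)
The plan is to reduce the analysis of $f^{\pm}_{V}$ to a study of the single auxiliary function $F(t, x) := \psi_{x}^{-1}(t)$, where $t(v) := f_{V}(v) = \|v_{+}\|^{2} - \|v_{-}\|^{2}$ and $x(v) := \|v_{-}\|^{2}\|v_{+}\|^{2}$ are the two natural $\sigma$-invariants of a vector $v$. The formula $\psi_{x}(s) = \varphi(x) s + (1 - \varphi(x))\psi(s)$ makes $\psi'_{x}(s) = \varphi(x) + (1 - \varphi(x))\psi'(s)$ a positive convex combination, so $\psi_{x}$ is a strictly increasing diffeomorphism $\R \to \R$ for every $x > 0$ and a strictly increasing embedding $\R \hookrightarrow (-\infty, 0)$ for $x = 0$; by the parametric inverse function theorem $F$ is then smooth on its natural domain with $\partial F / \partial t > 0$ throughout. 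The function $f^{+}_{V}$ is literally $F(t, x)$, and $f^{-}_{V}$ is its analogue under the symmetry $V^{+} \leftrightarrow V^{-}$, $t \leftrightarrow -t$, so I focus the analysis on $f^{+}_{V}$.

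For the submersion assertion I would check that $df^{+}_{V}$ does not vanish at any point of $\sdl^{\partial}(V, \sigma) \setminus V^{+}$. When $v_{+} = 0$ (so necessarily $v_{-} \neq 0$), $dx$ vanishes on every tangent vector in $V^{-}$, while the radial variation $dv_{-} = -v_{-}$ gives $dt > 0$ and hence $df^{+}_{V} \neq 0$ because $\partial_{t} F > 0$. When $v_{+}, v_{-}$ are both nonzero, I would test $df^{+}_{V}$ on the two radial directions $(dv_{-}, dv_{+}) = (v_{-}, 0)$ and $(0, v_{+})$; simultaneous vanishing would force $\partial_{x} F = -\partial_{t} F / \|v_{-}\|^{2}$ and $\partial_{x} F = \partial_{t} F / \|v_{+}\|^{2}$, which requires $\partial_{t} F = 0$ --- incompatible with $\partial_{t} F > 0$.

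For properness I would analyze the only two ways a sequence $v_{n}$ can leave a compact subset of $\sdl^{\partial}(V, \sigma) \setminus V^{+}$: either $v_{n} \to V^{+}$, or $\|v_{n}\| \to \infty$ (which, since $v_{n} \in \sdl(V, \sigma)$, forces $\|(v_{n})_{-}\| \to \infty$ and $\|(v_{n})_{+}\| \to 0$). In the first case $x_{n} \to 0$ while $t_{n}$ tends to a nonnegative limit, and $\psi_{x_{n}}^{-1}(t_{n}) \to +\infty$ because $\psi_{x_{n}}$ must stretch ever further to reach a nonnegative value as $x_{n} \searrow 0$; in the second case $t_{n} \to -\infty$ and $\psi_{x_{n}}^{-1}(t_{n}) \to -\infty$ thanks to $\psi(s) = s$ for $s$ sufficiently negative. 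Either way $f^{+}_{V}(v_{n})$ escapes every compact set, so preimages of compact sets are compact. Finally, the restriction to $\partial_{0}\sdl^{\partial}(V, \sigma) \setminus \bar{V}^{+}$ is handled by identifying $\partial_{0}\sdl^{\partial}(V, \sigma) = \sdl(\bar{V}, \bar{\sigma})$ with $\bar{V}^{-} = V^{-}$ and $\bar{V}^{+} = V^{+} \cap \bar{V}$; under these identifications the restriction of $f^{+}_{V}$ equals $f^{+}_{\bar{V}}$, so the same argument applied to $(\bar{V}, \bar{\sigma})$ closes the proof. The main obstacle will be the submersion verification in the mixed stratum where $v_{+}, v_{-} \neq 0$, which requires keeping track of two independent radial directions simultaneously; properness is essentially forced by the asymptotic design of $\psi_{x}$.
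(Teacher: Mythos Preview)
Your proposal carries out exactly the direct computation the paper intends when it states only that the proposition ``can be verified by hand''; the paper provides no argument beyond that sentence, and your analysis via $F(t,x)=\psi_x^{-1}(t)$ with $\partial_t F>0$ is the natural way to fill it in. One imprecision deserves mention: in your properness dichotomy you assert that if $\|v_n\|\to\infty$ in $\sdl(V,\sigma)$ then necessarily $\|(v_n)_-\|\to\infty$ and $\|(v_n)_+\|\to 0$, but this is false---one can equally well have $\|(v_n)_+\|\to\infty$ and $\|(v_n)_-\|\to 0$, which also falls outside your first case since there $t_n\to+\infty$ rather than to a finite nonnegative limit. This missing sub-case is easily patched: for $s$ in any bounded interval $[-M,M]$, the values $\psi_x(s)$ are uniformly bounded over $x\in[0,1]$ (being convex combinations of $s$ and $\psi(s)$), so $t_n\to+\infty$ forces $s_n=\psi_{x_n}^{-1}(t_n)\to+\infty$. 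With that addition your argument is complete and matches what the paper leaves to the reader.
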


Let $(\bar{x}; (V, \sigma)) \in \mathcal{D}^{\partial, \mf}_{\theta, d+1, \loc}$.
The functions (\ref{equation: regularization functions}) determine functions 
\begin{equation} \label{equation: regularized functions 2}
\begin{aligned}
f^{+}_{V, \bar{x}}: \textstyle{\sdl^{\partial}}(V, \sigma; \bar{x})\setminus V^{+} &\longrightarrow \R, \\
 f^{-}_{V, \bar{x}}: \textstyle{\sdl^{\partial}}(V, \sigma; \bar{x})\setminus V^{-} &\longrightarrow \R,
 \end{aligned}
\end{equation}
defined by 
$$
f^{\pm}_{V, \bar{x}}(z) \; = \; f^{\pm}_{V}(z) + h(x)
$$
for $z$ on $\sdl^{\partial}(V, \sigma)|_{x}$ for each $x \in \bar{x}$.
By Proposition \ref{proposition: regularized functions} it follows that the functions $f^{\pm}_{V, \bar{x}}$ are proper submersions as well. 
We now use the above construction to define the map,
$$\mathcal{L}^{\partial}_{\theta, d+1} \longrightarrow \mathcal{W}^{\partial}_{\theta, d}.$$
\begin{Construction} \label{construction: regularization}
Let $\mb{t} \in \widehat{\mathcal{K}}_{d}$ and 
let $(W, (\bar{x}; V, \sigma), \lambda, \delta, \phi)$ be an element of $\mathcal{L}^{\partial}_{\theta, d+1}(\mb{t})$.
Let $(V_{+1}, \sigma_{+1}), (V_{-1}, \sigma_{-1})$, and $(\widehat{V}, \widehat{\sigma})$ denote the restrictions of $(V, \sigma)$ to the subsets 
$$\delta^{-1}(+1), \; \delta^{-1}(-1), \; \delta^{-1}(0) \; \subset \; \bar{x}.$$
Let us denote the submanifold,
$$W^{\rg} = W\setminus\lambda(V^{+}_{+1}\cup \widehat{V}^{+}\cup V^{-}_{-1}).$$
We define a function 
$f^{\rg}: W^{\rg} \longrightarrow \R$ by
\begin{equation}
f^{\rg}(z) \; = \; 
\begin{cases}
f(z) &\quad \text{if $z \in \Image(\lambda)$,}\\
f^{+}_{V, \bar{x}}(v) &\quad \text{if $z = \lambda(v)$ and $v \in V_{+1}\cup \widehat{V}$,}\\
f^{-}_{V, \bar{x}}(v) &\quad \text{if $z = \lambda(v)$ and $v \in V_{-1}$.}
\end{cases}
\end{equation}
Let $j_{W^{\rg}}: W \longrightarrow (-\infty, 0]\times\R^{\infty-1}$ denote the composite 
$$W \hookrightarrow \R\times(-\infty, 0]\times\R^{\infty-1} \longrightarrow (-\infty, 0]\times\R^{\infty-1}.$$
We re-embed $W^{\rg}$ into $\R\times(-\infty, 0]\times\R^{\infty-1}$ using the product map
$$
f^{\rg}\times j_{W^{\rg}}: W^{\rg} \longrightarrow \R\times((-\infty, 0]\times\R^{\infty-1}),
$$
and we let $\widetilde{W} \subset \R\times(-\infty, 0]\times\R^{\infty-1}$ denote its image. 
By construction, the height function 
$$
h_{\widetilde{W}}: \widetilde{W} \longrightarrow \R
$$
is a proper submersion, and thus,
$$M := h^{-1}_{\widetilde{W}}(0),$$
is a compact $d$-dimensional submanifold of $\{0\}\times(-\infty, 0]\times\R^{\infty-1}$.  
The restriction of $\lambda$ gives embeddings 
$$\begin{aligned}
\widehat{e}_{0}: \left(S(\widehat{V}^{-}_{0})\times_{\mb{t}}D_{+}(\widehat{V}^{+}_{0}), \; S(\widehat{V}^{-}_{0})\times_{\mb{t}}\partial_{0}D_{+}(\widehat{V}^{+}_{0})\right)  &\longrightarrow (M, \partial M), \\
\widehat{e}_{1}: S(\widehat{V}^{-}_{1})\times_{\mb{t}}D(\widehat{V}^{+}_{1}) &\longrightarrow M.
\end{aligned}$$
These embeddings together with $(\widehat{V}, \widehat{\sigma})$ determines an element 
$$(M, (\widehat{V}, \widehat{\sigma}), e) \in \mathcal{W}^{\partial}_{\theta, d+1}(\mb{t}).$$ 
The correspondence 
$$
\left(W, (\bar{x}; V, \sigma), \lambda, \delta, \phi\right) \; \mapsto \; (M, (\widehat{V}, \widehat{\sigma}), e)
$$
defines a map 
$$
\mathcal{L}^{\partial}_{\theta, d+1}(\mb{t}) \longrightarrow \mathcal{W}^{\partial}_{\theta, d+1}(\mb{t}).
$$
These maps determine a natural transformation of functors in $\widehat{\mathcal{K}}_{d}$, and thus it induces a map 
\begin{equation} \label{equation: regularization map}
\text{Reg}: \mathcal{L}^{\partial}_{\theta, d+1} \longrightarrow \mathcal{W}^{\partial}_{\theta, d+1}.
\end{equation}
\end{Construction}

The following theorem is proven in the same way as \cite[Proposition A.13]{P 17}.
\begin{proposition} \label{proposition: level wise equivalence between L and W}
For all $k$ and $\mb{t}$ the map,
$\text{Reg}_{\mb{t}}: \mathcal{L}^{\partial, k}_{\theta, d+1}(\mb{t}) \longrightarrow \mathcal{W}^{\partial, k}_{\theta, d+1}(\mb{t}),$
is a weak homotopy equivalence, and thus the map (\ref{equation: regularization map}) induces the weak homotopy equivalence, 
$
\mathcal{L}^{\partial}_{\theta, d+1} \simeq \mathcal{W}^{\partial}_{\theta, d+1}.
$
\end{proposition}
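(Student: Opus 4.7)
The plan is to construct a homotopy inverse $\text{Thick}_{\mb{t}}: \mathcal{W}^{\partial, k}_{\theta, d+1}(\mb{t}) \longrightarrow \mathcal{L}^{\partial, k}_{\theta, d+1}(\mb{t})$ and verify both composites are homotopic to the identity, adapting the argument of \cite[Proposition A.13]{P 17}. The thickening is defined by sending $(M, (\widehat{V}, \widehat{\sigma}), e)$ to the long manifold $\R \times M$ embedded in $\R \times (-\infty, 0] \times \R^{\infty-1}$, with a standard Morse saddle of index $\delta(t)$ attached at height $0$ via the embedding data for each $t \in \mb{t}$. The height function is the projection onto the first factor, which is Morse with critical points precisely at the attached saddle centers; the saddle couplings $\lambda$ come from the standard model; and we take $\delta \equiv 0$ and $\phi = \text{id}$. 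One verifies directly that $\text{Thick}_{\mb{t}}$ lands in $\mathcal{L}^{\partial, k}_{\theta, d+1}(\mb{t})$ and that $\text{Reg}_{\mb{t}} \circ \text{Thick}_{\mb{t}}$ is canonically homotopic to the identity via the one-parameter family $\psi_x$ used in the regularizing formula of Section \ref{subsection: regularization}.

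For the other composite $\text{Thick}_{\mb{t}} \circ \text{Reg}_{\mb{t}} \simeq \text{id}$, the key observation is that any $(W, (\bar{x}; V, \sigma), \lambda, \delta, \phi) \in \mathcal{L}^{\partial, k}_{\theta, d+1}(\mb{t})$ can be deformed within $\mathcal{L}^{\partial, k}_{\theta, d+1}(\mb{t})$ to one in the image of the thickening. The deformation pushes each critical point with $\delta(x) = +1$ monotonically upward in the height direction, and each critical point with $\delta(x) = -1$ monotonically downward, leaving the $\delta(x) = 0$ critical points fixed. The finiteness bounds imposed in Definition \ref{defn: homotopy colim decomp of L}(ii) — namely that $h_{\bar{x}}$ is bounded below on $\delta^{-1}(+1)$ and above on $\delta^{-1}(-1)$ — ensure there is a well-defined direction to push and the pushing can be parametrized by a convex, hence contractible, parameter space. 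As the pushing parameter tends to its limit, the $\pm 1$ critical points recede to $\pm\infty$ and the long manifold approaches one of the form $\R \times M$ with only $\delta = 0$ critical data, which is precisely the thickening of $\text{Reg}_{\mb{t}}(W, \ldots)$; contractibility of the parameter space supplies the required homotopy.

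The main technical obstacle is continuity of this pushing deformation in the topology of $\mathcal{L}^{\partial, k}_{\theta, d+1}(\mb{t})$ and compatibility with the boundary structure. Boundary critical points $x \in \bar{x}_0$ carry the extra linear functional $\phi_0(x)$ recording transversality to $\R \times \{0\} \times \R^{\infty-1}$, and the pushing must preserve this transversality throughout while simultaneously respecting the collar condition built into the definition of $\mathcal{D}^{\partial, \mf}_{\theta, d+1}$ away from the boundary-critical-point loci. This is handled using the collared half-disk construction of Construction \ref{Construction: half disk with collars}: because the regularizing functions $f_V^{\pm}$ restrict to proper submersions on $\partial_0 \textstyle{\sdl^{\partial}}(V, \sigma) \setminus \bar{V}^{\pm}$ by Proposition \ref{proposition: regularized functions}, translating a boundary saddle along the height axis preserves both the collar structure and the transversality condition, and the deformation glues coherently with the product structure on $\R \times M$ near the boundary. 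With the level-wise weak homotopy equivalence in hand, the equivalence $\mathcal{L}^{\partial, k}_{\theta, d+1} \simeq \mathcal{W}^{\partial, k}_{\theta, d+1}$ on homotopy colimits is immediate since $\hocolim$ over a fixed small category preserves level-wise weak equivalences between diagrams.
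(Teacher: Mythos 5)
Your strategy matches the paper's: construct a map $\mathcal{W}^{\partial, k}_{\theta, d+1}(\mb{t}) \to \mathcal{L}^{\partial, k}_{\theta, d+1}(\mb{t})$ (the paper calls it the long trace $\Trc^{\partial}_{\mb{t}}$, you call it $\text{Thick}_{\mb{t}}$) and verify it is a homotopy inverse to $\text{Reg}_{\mb{t}}$. The paper itself constructs $\Trc^{\partial}_{\mb{t}}$ explicitly (Construction \ref{Construction: long trace}) and then defers the verification of the homotopy inverse property to the analogous argument in \cite[Proposition 5.3.6]{MW 07}; you are attempting to supply that argument, so you are going beyond what the paper writes out.

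However, there are several genuine gaps. First, your description of the thickening as ``$\R \times M$ with a standard Morse saddle attached at height $0$'' misses the essential pushout structure: the long trace is formed by first deleting the closed subset $[0, \infty) \times e(S(V^-)) \times_{\mb{t}} \{0\}$ from $\R \times M$ and then gluing in $\sdl^{\partial}(V, \sigma; \mb{t})$ along the embedding $\phi^{\partial}_{\mb{t}}$ from Construction \ref{Construction: the long trace}. Without this removal step the saddle cannot be embedded compatibly with the height function, so ``attached'' as you describe it would fail to land in $\mathcal{D}^{\partial, \mf}_{\theta, d+1}$.

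Second, and more seriously, your argument for $\text{Thick}_{\mb{t}} \circ \text{Reg}_{\mb{t}} \simeq \text{id}$ addresses only one of several things that must be deformed. Pushing the $\delta(x) = \pm 1$ critical points to $\pm\infty$ (using the boundedness conditions, in the compactly supported topology) is indeed one ingredient, but the limiting manifold is not automatically the long trace of the regularization. You still need to (a) slide the $\delta = 0$ critical points to height $0$, (b) deform the manifold away from the saddle images $\lambda(\sdl^{\partial}(V,\sigma;\bar{x}))$ into the literal product embedding $\R \times M$ demanded by the long trace, and (c) homotope the coupling $\lambda$ to the standard one arising from $\phi^{\partial}_{\mb{t}}$. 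None of (a)--(c) are accomplished by the pushing alone, and you do not discuss them. Finally, the claim that ``contractibility of the parameter space supplies the required homotopy'' is not a substitute for these steps: the deformation parameter is just $[0,1]$, and what needs to be justified is that the family is continuous and converges to $\Trc^{\partial}_{\mb{t}}(\text{Reg}_{\mb{t}}(-))$ as a whole, not merely that $[0,1]$ is contractible.
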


Proposition \ref{proposition: level wise equivalence between L and W} is proven by constructing a homotopy inverse to $\text{Reg}_{\mb{t}}: \mathcal{L}^{\partial, k}_{\theta, d+1}(\mb{t}) \longrightarrow \mathcal{W}^{\partial, k}_{\theta, d+1}(\mb{t})$ for each $\mb{t} \in \widehat{\mathcal{K}}_{d}$. 
This is done implementing the \textit{long trace} construction which is essentially the same as what was done in \cite[Section A.2]{P 17} or \cite[Section 5]{MW 07}.
We give an outline of this in the following subsection. 
\subsection{Proof of Proposition \ref{proposition: level wise equivalence between L and W}} \label{subsection: the long trace}
We recall a construction from \cite[Section 5]{MW 07}.
We will use this construction to define a map $\mathcal{W}^{\partial}_{\theta, d}(\mb{t}) \longrightarrow \mathcal{L}^{\partial}_{\theta, d+1}(\mb{t})$ for each $\mb{t} \in \widehat{\mathcal{K}}_{d}$.
\begin{Construction} \label{Construction: the long trace}
Given $(V, \sigma) \in G^{\mf}_{\theta, d+1}(\R^{\infty})_{\loc}$, the formula
\begin{equation} \label{equation: formula for trace embedding}
v \mapsto \left(f_{V}(v), \; ||v_{-}||v_{+}, \;  ||v_{-}||^{-1}v_{-} \right)
\end{equation}
defines a smooth embedding
\begin{equation} \label{equation: embedding into Disk sphere}
\phi: \sdl(V, \sigma)\setminus V^{+} \longrightarrow \R\times S(V^{-})\times D(V^{+}),
\end{equation}
with complement 
$[0, \infty)\times\{0\}\times S(V^{-}).$
It respects boundaries and is a map over $\R$, where we use the restriction of $f_{V}$ on the source and the function $(t, x, y) \mapsto t$ on the target. 
Now, let $\mb{t} \in \mathcal{K}_{d}$, $(V, \sigma) \in \left(G^{\mf}_{\theta, d+1}(\R^{\infty})_{\loc}\right)^{\mb{t}}$ and consider the space, 
$$
\sdl(V, \sigma; \mb{t}) \; = \; \bigsqcup_{i \in \mb{t}}\sdl(V(i), \sigma(i)). 
$$
We let 
$V^{\pm}(\mb{t}) \subset \sdl(V, \sigma; \mb{t})$ 
denote the subspace given by the disjoint union $\coprod_{i \in \mb{t}}V^{\pm}(i)$. 
By applying the embedding $\phi$
over each component 
$\sdl(V(i), \sigma(i)) \subset \sdl(V, \sigma, \mb{t}),$ 
we obtain an embedding
$$
\phi_{\mb{t}}: \sdl(V, \sigma; \mb{t})\setminus V^{+}(\mb{t}) \longrightarrow \R\times D(V^{+})\times_{\mb{t}}S(V^{-}),
$$
which has complement $[0, \infty)\times\left(\{0\}\times_{\mb{t}}S(V^{-})\right)$.

We will need an analogue of the above construction for elements of $G^{\partial, \mf}_{\theta, d+1}(\R^{\infty})_{\loc}$.
Let $(V, \sigma) \in G^{\partial, \mf}_{\theta, d+1}(\R^{\infty})_{\loc}$ and consider the space $\sdl^{\partial}(V, \sigma)$.
The same formula (\ref{equation: formula for trace embedding}) defines an embedding 
\begin{equation} \label{equation: embedding into half disk sphere}
\phi^{\partial}: \textstyle{\sdl^{\partial}}(V, \sigma)\setminus V^{+} \longrightarrow \R\times S(V^{-})\times D_{+}(V^{+})
\end{equation}
that sends $\partial_{0}\sdl^{\partial}(V, \sigma)\setminus\bar{V}^{+}$ into $\R\times S(V^{-})\times\partial_{0}D_{+}(V^{+})$.
Furthermore, the restriction of $\phi^{\partial}$ to $\partial_{0}\sdl^{\partial}(V, \sigma)\setminus\bar{V}^{+}$ agrees with the embedding (\ref{equation: embedding into Disk sphere}). 
Let $\mb{t} \in \mathcal{K}_{d}$, let $(V, \sigma) \in \left(G^{\partial, \mf}_{\theta, d+1}(\R^{\infty})_{\loc}\right)^{\mb{t}}$, and consider the space, 
$$
\textstyle{\sdl^{\partial}}(V, \sigma; \mb{t}) \; = \; \bigsqcup_{i \in \mb{t}}\textstyle{\sdl^{\partial}}(V(i), \sigma(i)). 
$$
By applying the embedding $\phi^{\partial}$ with $i \in \mb{t}$
over each component 
$\sdl^{\partial}(V(i), \sigma(i)) \subset \sdl^{\partial}(V, \sigma, \mb{t}),$ 
we obtain an embedding
$$
\phi^{\partial}_{\mb{t}}: \textstyle{\sdl^{\partial}}(V, \sigma; \mb{t})\setminus V^{+}(\mb{t}) \longrightarrow \R\times S(V^{-})\times_{\mb{t}}D_{+}(V^{+}),
$$
which has complement, $[0, \infty)\times\left(S(V^{-})\times_{\mb{t}}\{0\}\right)$.
\end{Construction}

We now use the above construction to define a map $\mathcal{W}^{\partial}_{\theta, d}(\mb{t}) \longrightarrow \mathcal{L}^{\partial}_{\theta, d+1}(\mb{t})$. 
\begin{Construction} \label{Construction: long trace}
Let $(M, (V, \sigma), e) \in \mathcal{W}^{\partial}_{\theta, d}(\mb{t})$. 
The submanifold 
$$\textstyle{\Trc}^{\partial}(e) \subset \R\times\R^{\infty}$$ 
is defined to be the pushout of the diagram
\begin{equation} \label{equation: long trace}
\xymatrix{
\textstyle{\sdl^{\partial}}(V, \sigma; \mb{t})\setminus V^{+}(\mb{t}) \ar[d] \ar[rr] && \R\times M\setminus\left([0, \infty)\times e(S(V^{-}))\times_{\mb{t}}\{0\}\right) \\
\textstyle{\sdl^{\partial}}(V, \sigma; \mb{t}), && 
}
\end{equation}
where the horizontal map is defined by composing $\phi^{\partial}$ with the restriction of $e$ to 
$$
\R\times S(V^{-})\times_{\mb{t}}D_{+}(V^{+}) \setminus \left([0, \infty)\times S(V^{-})\times_{\mb{t}}\{0\}\right).
$$
The $\theta$-orientation $\hat{\ell}_{V}$ on $V$ together with the $\theta$-structure $\hat{\ell}_{M}$ on $M$ determines a $\theta$-structure $\hat{\ell}_{\Trc^{\partial}(e)}$ on $\Trc^{\partial}(e)$. 
The height function 
$$\textstyle{\Trc^{\partial}}(e) \hookrightarrow \R\times\R^{\infty} \longrightarrow \R$$ 
is Morse. 
It has one critical point for each $i \in \mb{t}$, with index equal to $\dim(V^{-}(i))$, all of which have value $0$.
It follows that $\Trc^{\partial}(e)$ equipped with its $\theta$-structure $\hat{\ell}_{\Trc^{\partial}(e)}$ determines an element of the space $\mathcal{D}^{\partial}_{\theta, d+1}$.
By remembering the critical points for the height function $\Trc^{\partial}(e) \longrightarrow \R$ and the embedding 
$$
\textstyle{\sdl^{\partial}}(V, \sigma; \mb{t}) \hookrightarrow \Trc^{\partial}(e),
$$
this construction determines an element of $\mathcal{L}^{\partial}_{\theta, d+1}(\mb{t})$, and thus we obtain a map, 
\begin{equation} \label{equation: long trace map}
\textstyle{\Trc^{\partial}}_{\mb{t}}: \mathcal{W}^{\partial}_{\theta, d}(\mb{t}) \longrightarrow \mathcal{L}^{\partial}_{\theta, d+1}(\mb{t}).
\end{equation}
\end{Construction}
The fact that $\textstyle{\Trc^{\partial}_{\mb{t}}}$ is a homotopy inverse to the map $\text{Reg}_{\mb{t}}$ 
for all $\mb{t}$ follows the same argument used in the proof of \cite[Proposition 5.3.6]{MW 07}.
Following their same proof then yields the proof of Proposition \ref{proposition: level wise equivalence between L and W}. 
Combining Proposition \ref{proposition: level wise equivalence between L and W} with Proposition \ref{proposition: forgetful map is equivalence} yields the weak homotopy equivalences 
$$
\xymatrix{
\mathcal{W}^{\partial}_{\theta, d+1} & \mathcal{L}^{\partial}_{\theta, d+1} \ar[l]_{\simeq} \ar[r]^{\simeq} & \mathcal{D}^{\partial, \mf}_{\theta, d+1}
}
$$
asserted in the statement of Theorem \ref{theorem: equivalence with long manifolds with boundary}. 
This finishes the proof of Theorem \ref{theorem: equivalence with long manifolds with boundary}.

\end{document}